\newcommand{\R}{\mathbb{R}}
\newcommand{\D}{\mathcal{D}}
\newcommand{\E}{\mathcal{E}}
\newcommand{\sgn}{\mathrm{sgn}}
\newcommand{\sk}{\langle k \rangle}
\newcommand{\skp}{\langle k' \rangle}
\newcommand{\skm}{\langle k-k' \rangle}
\newcommand{\vk}{\langle \xi \rangle}
\newcommand{\vkp}{\langle \xi' \rangle}
\newcommand{\vkm}{\langle \xi - \xi' \rangle}
\newcommand{\dk}{\langle c \lambda_k t \rangle}
\newcommand{\dkp}{\langle c \lambda_{k'} t \rangle}
\newcommand{\dkm}{\langle c \lambda_{k-k'} t \rangle}
\newtheorem{theorem}{Theorem}[section]
\newtheorem{lemma}[theorem]{Lemma}
\newtheorem{proposition}[theorem]{Proposition}
\newtheorem{corollary}[theorem]{Corollary}
\theoremstyle{definition}
\theoremstyle{remark}
\newtheorem{remark}{Remark}[section]
\numberwithin{equation}{section}
\title{Enhanced Dissipation, Taylor Dispersion, and Inviscid Damping of Couette flow in the Boussinesq system on the Plane}
\author[1]{Ryan Arbon}
\affil[1]{Department of Mathematics, University of California, Los Angeles, 90095, CA. Email: rarbon@math.ucla.edu}
\begin{document}
\maketitle
%\title{Quantitative Hydrodynamic Stability for Couette Flow on Unbounded Domains with Navier Boundary Conditions}
%\author{Ryan Arbon}
%\author{Jacob Bedrossian}
%\address{Department of Mathematics, University of California Los Angeles, Los Angeles, 90095, CA}
% \title[short text for running head]{full title}

%    Only \author and \address are required; other information is
%    optional.  Remove any unused author tags.

%    author one information
% \author[short version for running head]{name for top of paper}
%    author two information

%    \subjclass is required.

%    Abstract is required.
\begin{abstract}
We consider the quantitative asymptotic stability of the stably stratified Couette flow solution to the 2D fully dissipative nonlinear Boussinesq system on $\R^2$ with large Richardson number $R > 1/4$, viscosity $\nu$ and density dissipation $\kappa$. For an initial perturbation $(\omega_{in}, \theta_{in})$ of size $\mu^{1/2 + \epsilon}$ in a low-order anisotropic Sobolev space, for $\mu$ roughly $\min(\nu, \kappa)\left(1 - O(1/\sqrt{R})\right)$ and $\nu$, $\kappa$ comparable, we demonstrate asymptotic stability with explicit enhanced dissipation and Taylor dispersion rates of decay. We also give inviscid damping estimates on the velocity $u$ and the density $\theta$. This is the first result of its type for the Boussinesq system on the fully unbounded domain $\R^2$. We also translate some known linear results from $\mathbb{T} \times \R$ to $\R^2$, and we give an alternative theorem for the nonlinear result.
\end{abstract}

\maketitle

\pagestyle{plain}

\setcounter{tocdepth}{1}
\tableofcontents

%    Text of article.

%    Bibliographies can be prepared with BibTeX using amsplain,
%    amsalpha, or (for "historical" overviews) natbib style.
%\bibliographystyle{amsplain}
%    Insert the bibliography data here.
%\setcounter{tocdepth}{1}
%\tableofcontents

\section{Introduction}\label{introduction}
Consider the fully dissipative 2D Boussinesq system, posed on $(x,y) \in \R^2$:
\begin{equation}\label{full_dissipation}
    \begin{cases}
        &\partial_t v + (v \cdot \nabla) v - \nu \Delta v + \nabla P = - \rho \mathfrak{g} e_2,\\
        &\partial_t \rho + v \cdot \nabla \theta - \kappa \Delta \rho = 0,\\
        &\nabla \cdot v = 0,\\
        &v(0,x,y) = v_{in}(x,y), \; \; \rho(0,x,y) = \rho_{in}(x,y).
    \end{cases}
\end{equation}
In \eqref{full_dissipation}, $v = (v^1, v^2)$ is the fluid velocity, $\rho$ is the scalar temperature (or density depending on model applications), $P$ is the fluid pressure, $e_2 = (0, 1)$, $\nu$ is the fluid viscosity, $\kappa$ is a dissipation parameter in the temperature, and $\mathfrak{g}$ is the gravitational constant. In this paper, we are chiefly concerned with the stability properties of the \textit{stably stratified Couette flow} solution to \eqref{full_dissipation}, namely the equilibrium flow
$$v = u_E \coloneqq (y,0), \quad \rho = \theta_E \coloneqq 1 - b y,$$
with $b > 0$ a parameter. Let $P_E$ be the pressure associated with the stably stratified Couette flow. We now introduce the perturbations
$$v = u + u_E, \quad \rho = b \theta + \theta_E, \quad P = P_E + p,$$
so that \eqref{full_dissipation} becomes the following system of equations for the perturbation:\begin{equation}\label{velocity_perturb_eqn}
    \begin{cases}
        &\partial_t u + y \partial_x u + u \cdot \nabla u + (u^2,0) + \nabla p = \nu \nabla u - R \theta e_2,\\
        & \partial_t \theta + y \partial_x \theta ++ u \cdot \nabla \theta= \kappa \Delta  \theta + u^2,\\
        &\nabla \cdot u = 0,\\
        &u(0,x,y) = u_{in}(x,y), \; \; \theta(0,x,y) = \theta_{in}(x,y),
    \end{cases}
\end{equation}
where $R \coloneqq b \mathfrak{g}$ is the Richardson number. Throughout this paper, we will assume that the Miles-Howard criterion is satisfied, meaning that $R > 1/4$ \cites{howard,miles}. Letting $\nabla^\perp \coloneqq (-\partial_y , \partial_x)$ and upon defining the vorticity $\omega \coloneqq \nabla^\perp \cdot u$, we obtain the vorticity formulation of \eqref{velocity_perturb_eqn}:
\begin{equation}\label{vorticity_perturb_eqn}
    \begin{cases}
        &\partial_t \omega + y \partial_x \omega + u \cdot \nabla \omega = \nu \nabla \omega - R \partial_x \theta,\\
        & \partial_t \theta + y \partial_x \theta + u \cdot \nabla \theta = \kappa \Delta  \theta + \partial_x \psi,\\
        &u = \nabla^\perp \psi, \quad \Delta \psi = \omega,\\
        &\omega(0,x,y) = \omega_{in}(x,y), \; \; \theta(0,x,y) = \theta_{in}(x,y).
    \end{cases}
\end{equation}
We consider the scenario where $\nu$ and $\kappa$ are comparable, as determined by $R$. Specifically, we assume that
\begin{equation}\label{diffusion_assumption}
    \frac{ \max\{\nu, \kappa\} }{ \min\{\nu, \kappa\} } \leq 4 \sqrt{R} -1 - \epsilon
\end{equation}
for some fixed $\epsilon > 0$ and we set
\begin{equation}\label{mu_definition}
    \mu \coloneqq \min\{\nu, \kappa\} \left(1 - \frac{1}{4\sqrt{R}} - \frac{1}{4\sqrt{R}} \frac{\max\{\nu, \kappa\}}{\min\{\nu, \kappa\}}\right).
\end{equation}
The condition \eqref{diffusion_assumption} and definition \eqref{mu_definition} were first used in \cite{cotizelati_delzotto} and then in \cite{bianchini_cotizelati_dolce_2}. Our main object in examining the system \eqref{vorticity_perturb_eqn} is to produce a \textit{quantitative stability estimate}. Quantitative stability theory, and especially the quantitative stability of shear flows in fluid-related systems, has grown widely as a topic of research in the last several years, with advances in understanding (at the nonlinear level) the Navier-Stokes/Euler equations \cites{arbon_bedrossian , bedrossian2023stability, stabilityoverview, Beekie_He_2024,masmoudi_zhao_2, masmoudi_zhao,g_wang_w_wang_2024}, the Boussinesq system \cites{bedrossian_zelati_bianchini_dolce, masmoudi_said-houari_zhao, niu_zhao_2024, wang_2024, zhai_zhao, zillinger}, the MHD equations \cites{Chen_Zi_2024, knobel_zillinger, fwang_zzhang_mhd_2024, zhao_zi, zhang_zi_2023}, and the Boltzmann system \cite{bedrossian2022boltzmann}, among others. The principal goal of quantitative stability can be described as follows.
Suppose that $u_E$ is an equilibrium solution to some PDE system which has a diffusive parameter $\nu$ (e.g. the viscosity in the Navier-Stokes system). Then given two norms $|| \cdot ||_X$ and $||\cdot||_Y$, determine a minimal $\gamma > 0$ (called the stability threshhold) such that
\begin{equation}
    ||u_{in} - u_E||_X \ll \nu^\gamma \implies \begin{cases}
        ||u(t) - u_E(t)||_Y \ll 1, \; \; \forall t \in (0,\infty),\\
        ||u(t) - u_E(t)||_Y \to 0 \; \; \textrm{as} \; \; t \to \infty,
    \end{cases}
\end{equation}
where $u(t) - u_E$ is the solution to the PDE at time $t$ with the initial data $u_{in} - u_E$. For an overview of quantitative stability for shear flows in the Navier-Stokes/Euler system, see \cite{stabilityoverview}. The norms $X$ and $Y$ are generally chosen in such a way as to extract additional information about the stability of the flow, such as a more precise decay rate (note that we may take the norm $Y$ to be time-dependent). In fluid dynamics problems, three important properties to capture are \textit{enhanced dissipation}, \textit{Taylor dispersion}, and \textit{inviscid damping}. We will describe each of these in turn.

Enhanced dissipation is a general phenomenon observed in the advection-diffusion of linear passive scalars by shear flows, where large $x$-frequencies $|k|$ decay with a dissipative time scale faster than that predicted by the heat equation \cite{cotizelati2023}. Here, the $x$ direction is the direction perpendicular to the shear flow. In two dimensions,  the $x$-frequency $k$ with $|k| \gg \nu$ decays like $\exp\left(\nu^{1/3} |k|^{2/3} t \right)$. This phenomenon was first analyzed by Kelvin, who found that perturbations of the linearized periodic Couette flow in the Navier-Stokes system decay at time scales $\nu^{-1/3}$ \cite{kelvin1887}. In the context of the Boussinesq equations, nonlinear enhanced dissipation has been observed for Couette flow in the finite channel $\mathbb{T} \times [-1,1]$ with Navier boundary conditions \cite{masmoudi_zhai_zhao}, for Couette flow on $\mathbb{T} \times \R$ \cites{zhai_zhao, zillinger}, and for Poiseuille flow in the finite channel with Navier boundary conditions \cite{wang_2024}. Enhanced dissipation at the linear level on $\mathbb{T} \times \R$ was examined in \cite{zillinger} and again in \cite{bianchini_cotizelati_dolce_2}. Three-dimensional enhanced dissipation on $\mathbb{T} \times \R \times \mathbb{T}$ was proven in \cite{cotizelati_delzotto} for the linear system.

Taylor dispersion refers to the decay of low frequencies in the advection-diffusion of a passive scalar by a shear flow. The decay rate for these frequencies is $\exp(t k^2/\nu )$. See \cites{aris1956, taylor1953, taylor1954} for some of the original physics-based work on the subject. As of this paper, Taylor dispersion for the velocity or vorticity in \eqref{vorticity_perturb_eqn} has yet to be described by the pure mathematics literature, since Taylor dispersion is an ultra-low frequency phenomenon, while much of the literature focuses on periodic-in-$x$ flows, thereby discarding the low frequencies. The work \cite{cotizelati2023} by Coti Zelati and Gallay demonstrates a hypocoercive scheme which simultaneously proves enhanced dissipation and Taylor dispersion results for advection-diffusion equations in cylinders of arbitrary dimension. The idea of this hypocoercive scheme (with modifications found in \cite{bedrossian2023stability}) was used in \cite{arbon_bedrossian} to present a unified description of enhanced dissipation and Taylor dispersion for Couette flow in the Navier-Stokes equations on unbounded domains. See also \cite{bedrossian2022boltzmann} for similar ideas in the context of the Boltzmann equations.

By analogy with Landau damping in plasma physics (see \cite{ryutov}), inviscid damping refers to the behavior of the velocity in \eqref{velocity_perturb_eqn} (or similar systems) when the viscosity $\nu$ is $0$ (or in the limit as $\nu \to 0$). Generally speaking, one observes algebraic-in-$t$ convergence of the velocity to its $x$-average, with different components decaying at potentially different rates. See \cites{bedrossian_masmoudi_2015, bedrossian2016enhanced} for results on Couette flow in the nonlinear Navier-Stokes/Euler equations. Inviscid damping for shear flows in the Boussinesq system \eqref{vorticity_perturb_eqn} has seen a series of new developments. Yang and Lin described inviscid damping for linearized Couette flow with exponentially stratified density on $\mathbb{T} \times \R$ using hypergeometric functions \cite{yang_lin}. Bianchini, Coti Zelati, and Dolce more recently employed an energy method and symmetrization techniques to prove an invisicd damping result for near Couette flow for exponentially stratified fluids and for the Boussinesq approximation \cite{bianchini_cotizelati_dolce}. In particular, it was found for the Boussinesq system on $\mathbb{T}\times\R$ with $\nu = \kappa = 0$ and $R > 1/4$, that perturbations of the Couette flow satisfy
\begin{equation}\label{periodic_inv_damping}
    \begin{split}
        &||P_{\neq} u^1||_{L^2} + ||P_{\neq} \theta||_{L^2}\lesssim \langle t \rangle^{-1/2}\left(||P_{\neq} \omega_{in}||_{L^2} + ||P_{\neq} \theta_{in}||_{H^1}\right),\\
        &||P_{\neq} u^2||_{L^2}\lesssim \langle t \rangle^{-3/2}\left(||P_{\neq} \omega_{in}||_{H^1} + ||P_{\neq} \theta_{in}||_{H^2}\right),\\
        &||P_{\neq} \omega_{in}||_{L^2} + ||P_{\neq} \nabla \theta||_{L^2} \gtrsim \langle t \rangle^{1/2}\left(||P_{\neq} \omega_{in}||_{L^2_x H^{-1}_y} + ||P_{\neq} \theta_{in}||_{H^1_x L^2_y}\right),\\
        &||P_{\neq} \omega||_{L^2} + ||P_{\neq} \nabla \theta||_{L^2} \lesssim \langle t \rangle^{1/2}\left(||P_{\neq} \omega_{in}||_{L^2} + ||P_{\neq} \theta_{in}||_{H^1}\right),
    \end{split}
\end{equation}
where $P_{\neq}$ denotes to projection to non-zero $x$ frequencies \cite{bianchini_cotizelati_dolce}. See \cites{cotizelati_nualart, cotizelati_nualart_2} for additional linear inviscid damping results proved using Whittaker functions. We note that in the inviscid setting, we observe \textit{growth} of the vorticity, in contrast to the decay of the velocity. See \cite{bedrossian_zelati_bianchini_dolce} for a discussion of the nonlinear inviscid damping and this \textit{shear buoyancy instability} caused by the growth of $\omega$ and $\nabla \theta$. See \cite{bianchini_cotizelati_dolce_2} for a general overview of the 2D Boussinesq stability problem on $\mathbb{T}\times \R$.

Unlike previous works, the setting for our paper is the fully unbounded domain $\R \times \R$ without any periodicity assumptions. Our goal is to demonstrate enhanced dissipation, Taylor dispersion, and inviscid damping. We will do so with respect to two sets of norms. The first of these norms will be as follows. Fix $m > 0,$ $n \in [0, \infty)$, and $J \in [1,\infty)$. The norm $V(0)$ for the initial perturbation will be
\begin{equation}\label{def_initial_norm}
    ||f||_{V(0)} \coloneqq \sum_{0 \leq j \leq 1}|| \langle \partial_x, \partial_y \rangle^n |\partial_x, \partial_y|^{-1/2}  \langle \partial_x \rangle^{m + 1/2} \langle \frac{\partial_x}{\mu}\rangle^{-j/3}\partial_y^j f ||_{L_{x,y}^2}
\end{equation}
and the target norm $V(t)$ is
\begin{equation}\label{def_target_norm}
    ||g||_{V(t)} \coloneqq \sum_{0 \leq j \leq 1}|| \langle \partial_x, \partial_y + t \partial_x\rangle^{n}  |\partial_x, \partial_y + t \partial_x|^{-1/2}  \langle \partial_x \rangle^{m + 1/2} \langle c \lambda(\mu, \partial_x) t \rangle^J \langle \frac{\partial_x}{\mu}\rangle^{-j/3} \partial_y^j g||_{L_{x,y}^2}
\end{equation}
where $c$ is a small constant independent of $\mu$, and $\lambda(\mu,\partial_x)$ is a Fourier multiplier defined on the Fourier side as
\begin{equation}\label{definition_of_lambda_k}
        \lambda(\mu, k) = \begin{cases}
        \mu^{1/3}|k|^{2/3}, & |k| \geq \mu\\
        \frac{|k|^2}{\nu}, & |k| \leq \mu.
    \end{cases}
    \end{equation}
 We will find a stability threshold of $\mu^{1/2 + \delta_*}$, where $\delta_* \in(0,1/12)$ is an arbitrarily small parameter. Currently, the best known result for Couette flow and a linearly stratified density on $\mathbb{T}\times \R$ gives a stability threshold of $\nu^{1/2}$ (assuming $\nu = \kappa$), with the size of the initial perturbation being measured as $||u_{in}||_{H^{s+1}} + ||\theta_{in}||_{H^{s+2}} \lesssim \nu^{1/2}$ for $s \geq 6$ \cites{bianchini_cotizelati_dolce_2, zhai_zhao}. In our case, we use a hypocoercive anisotropic Sobolev norm to obtain much lower regularity in the initial perturbation.

We are now prepared to state the main theorem.

\begin{theorem}\label{main_theorem}
     Suppose $(\omega_{in}, \theta_{in})$ are initial data for \eqref{vorticity_perturb_eqn} and that $R > 1/4$, $\epsilon \in (0,1/2)$ are given. Fix $m \in (1/2, \infty)$, $n \in [0,\infty)$, $J \in [1,\infty)$, and $\delta_* \in (0,1/12)$. Then there exists a constant $\delta = \delta( n,m, J, R, \epsilon, \delta_*) > 0$ independent of $\nu$ and $\kappa$ such that if
\begin{equation}
\begin{split}
        ||\omega_{in} ||_{V(0)} + \sqrt{R}|| \nabla \theta_{in}||_{V(0)} = \zeta \leq \delta \mu^{1/2 + \delta_*},
\end{split}
\end{equation}
    then for all $c = c(R, \epsilon) > 0$ sufficiently small (independent of $\mu$ and $\delta$) and all $\nu, \kappa \in (0,1)$ satisfying \eqref{diffusion_assumption}, the corresponding solution $(\omega,\theta)$ to \eqref{vorticity_perturb_eqn} satisfies the following estimates $\forall t\in [0,\infty)$:
\begin{equation}\label{big_estimate}
\begin{split}
        &||  \omega||_{V(t)} +   \sqrt{R} || \nabla \theta||_{V(t)} \leq 2 \langle t \rangle^{1/2}\zeta,\\
        &|| \partial_x u^1||_{V(t)}  + \sqrt{R}  || \partial_x \theta ||_{V(t)}\leq 2 \langle t \rangle^{-1/2}\zeta,\\
        &|| \partial_x |\partial_x, \partial_y + t \partial_x|^{-1} \partial_x u^2||_{V(t)} \leq 2\langle t \rangle^{-3/2}\zeta,\\
        &C \left(\int_0^t  \langle s \rangle ||\partial_x u^2||_{V(s)}^2 ds\right)^{1/2} \leq \zeta,
        \end{split}
\end{equation}
for some constant $C = C(R, \epsilon, c) > 0$ independent of $\nu$, $\kappa$, and $\delta$.
\end{theorem}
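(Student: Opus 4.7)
The plan is a continuity (bootstrap) argument on the four estimates of \eqref{big_estimate}, carried out after passing to profile variables $\Omega(t,x,y)\coloneqq\omega(t,x-ty,y)$ and $\Theta(t,x,y)\coloneqq\theta(t,x-ty,y)$. This removes the transport term $y\partial_x$ from \eqref{vorticity_perturb_eqn} and converts the Laplacian into the sheared Laplacian $\Delta_L\coloneqq\partial_x^2+(\partial_y-t\partial_x)^2$. Since $\widehat\omega(k,\xi) = \widehat\Omega(k,\xi+tk)$, after the frequency change $\eta=\xi+tk$ the multipliers $\langle\partial_x,\partial_y+t\partial_x\rangle^n$ and $|\partial_x,\partial_y+t\partial_x|^{-1/2}$ of \eqref{def_target_norm} become the stationary weights $\langle k,\eta\rangle^n$ and $|k,\eta|^{-1/2}$ acting on $\widehat\Omega(k,\eta)$, while the factors $\langle c\lambda(\mu,k)t\rangle^J$ and $\langle k/\mu\rangle^{-j/3}(i(\eta-tk))^j$ remain as explicit time-dependent multipliers. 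I would then assume on a maximal interval $[0,T^*)$ that each of the four bounds of \eqref{big_estimate} holds with the constant $2$ replaced by, say, $4$, and improve each to $3$ to contradict the maximality of $T^*$.

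The linear analysis would follow the symmetrization and hypocoercive energy method of Bianchini--Coti~Zelati--Dolce combined with the Coti~Zelati--Gallay scheme adapted to $\R^2$ in \cite{arbon_bedrossian}: pair $\Omega$ with $\sqrt{R}\,\nabla\Theta$ in an augmented energy functional comparable to $\|\omega\|_{V(t)}^2+R\|\nabla\theta\|_{V(t)}^2$ and equipped with the usual hypocoercive cross terms. The Miles--Howard condition $R>1/4$ is what makes the resulting quadratic form coercive up to lower-order corrections and what pins down the $\langle t\rangle^{\pm1/2}$ growth/decay rates of the first two lines of \eqref{big_estimate}, mirroring \eqref{periodic_inv_damping}. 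The improved rates $\langle t\rangle^{-1/2}$ and $\langle t\rangle^{-3/2}$ for $\partial_x u^1$ and $\partial_x u^2$ come from the profile-frame Biot--Savart law $\widehat{U^1}(k,\eta) = i(\eta-tk)|k,\eta-tk|^{-2}\widehat\Omega$, $\widehat{U^2}(k,\eta) = -ik|k,\eta-tk|^{-2}\widehat\Omega$, once the homogeneous weight $|k,\eta|^{-1/2}$ in $V(t)$ is integrated against. The fourth line of \eqref{big_estimate} is an $L^2_t$ dissipative estimate that drops directly out of the energy identity: differentiating $\langle c\lambda(\mu,k)t\rangle^J$ in $t$ generates an extra dissipation of size $Jc\lambda(\mu,k)\langle c\lambda(\mu,k)t\rangle^{-1}$ which, for $c = c(R,\epsilon)$ small enough and combined with the parabolic dissipation $\mu|k,\eta|^2$ from $\Delta_L$, absorbs the cross-term errors and recovers the time-integrated control on $\partial_x u^2$.

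For the nonlinearity in \eqref{vorticity_perturb_eqn}, namely $u\cdot\nabla\omega$ and $u\cdot\nabla\theta$, I would work in Fourier on the profile side and decompose the bilinear terms by paraproduct-style frequency localization into high--low, low--high, and high--high pieces. In each piece one factor is taken in the $V(t)$ norm and the other is estimated by the decaying velocity norms $\|\partial_x u^1\|_{V(t)}$, $\|\partial_x u^2\|_{V(t)}$, $\|\partial_x\theta\|_{V(t)}$ furnished by the bootstrap; the integrated $L^2_t$ control on $\partial_x u^2$ is used to soak up the resonant contributions where $\eta\approx tk$. The resulting bilinear estimates take the schematic form $\mu^{-1/2}\zeta^2$, so the hypothesis $\zeta\leq\delta\mu^{1/2+\delta_*}$ supplies the extra factor $\mu^{\delta_*}$ needed to close the bootstrap for $\delta=\delta(n,m,J,R,\epsilon,\delta_*)$ sufficiently small.

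The principal obstacle I expect is the ultra-low-frequency (Taylor-dispersion) regime $|k|\leq\mu$, where $\lambda(\mu,k) = k^2/\nu$ is much weaker than the enhanced-dissipation rate $\mu^{1/3}|k|^{2/3}$ available at higher frequencies, and where, on the unbounded domain $\R^2$, the low-frequency singularity of the homogeneous weight $|k,\eta|^{-1/2}$ cannot be discarded by projecting off a zero $x$-mode as in the $\mathbb{T}\times\R$ analyses of \cites{bianchini_cotizelati_dolce_2,zhai_zhao}. Controlling the nonlinear interactions at these frequencies against the inviscid $\langle t\rangle^{1/2}$ shear-buoyancy growth of $\omega$ and $\nabla\theta$ is what ultimately forces the threshold $\mu^{1/2+\delta_*}$ rather than the linear-level $\mu^{1/2}$; the hypocoercive correction $\langle k/\mu\rangle^{-j/3}\partial_y^j$ built into $V(t)$ is the device I would use to convert the missing enhanced dissipation into $\partial_y$-regularity gains, and closing the nonlinear estimate uniformly in this regime is where the bulk of the technical work will lie.
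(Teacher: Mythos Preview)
Your proposal is correct and follows essentially the same route as the paper. The paper also passes to the moving frame, introduces the Bianchini--Coti~Zelati--Dolce style symmetrized variables (here $Z=\langle\partial_X\rangle^{1/2}p^{-1/4}\Omega$, $Q=\sqrt{R}\,\mathrm{sgn}(\partial_X)\langle\partial_X\rangle^{1/2}p^{1/4}\Theta$), builds a hypocoercive energy functional $E_k$ containing the cross term $\tfrac{1}{2\sqrt{R}}\tfrac{\partial_t p}{|k|p^{1/2}}Z_k\bar Q_k$ together with the Coti~Zelati--Gallay weights $\alpha_k,\beta_k$ and the inviscid-damping multiplier $\mathfrak J_k=\tfrac12\arctan(\eta/k-t)$, and then runs a bootstrap on the single nonlinear energy $\mathcal E[Z,Q]$ via a bilinear estimate of the schematic shape $\mu^{-1/2-\delta_*}\mathcal E^{1/2}\mathcal D$.

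The only structural difference is that the paper does not bootstrap the four lines of \eqref{big_estimate} separately as you describe; it first proves a single ``symmetric'' theorem controlling $\|Z\|_{\tilde H(t)}+\|Q\|_{\tilde H(t)}$ plus an $L^2_t$ dissipation term, and then reads off all four estimates of \eqref{big_estimate} a posteriori by multiplier arithmetic (e.g.\ $p^{1/4}\lesssim\langle t\rangle^{1/2}\langle k,\eta\rangle^{1/2}$ gives the first line, $p^{-1/4}\lesssim|k|^{-1}\langle t\rangle^{-1/2}|k,\eta|^{1/2}$ gives the second, etc.). This is cleaner than bootstrapping four quantities, but your formulation would close as well since the norms are equivalent after symmetrization. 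Your identification of the Taylor-dispersion regime $|k|\le\mu$ as the principal difficulty, and of the anisotropic weight $\langle k/\mu\rangle^{-j/3}\partial_y^j$ as the device that handles it, matches the paper exactly.
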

\begin{remark}
    We first comment on the inclusion of inverse powers of $|\partial_x, \partial_y + t \partial_x|$ in the norm $V(t)$ \eqref{big_estimate}. We note that, when considered as Fourier multipliers, $|\partial_x, \partial_y + t \partial_x|^{-1/2} \gtrsim \langle \partial_x, \partial_y + t \partial_x\rangle^{-1/2}.$ So then if $n \geq 1/2$, then the estimates on $(\omega, \sqrt{R} \nabla \theta)$ and $(\partial_x u^1, \sqrt{R} \partial_x \theta)$ can be interpreted as controlling strictly positive regularity. Taking $n \geq 3/2$ does the same for $\partial_x u^2$.

    Furthermore, one can make alternate inviscid damping statements on $u$ and $\theta$ with $\langle \partial_x, \partial_y + t \partial_x \rangle^{-1/2}$ replacing $|\partial_x, \partial_y + t \partial_x|^{-1/2}$ in $V(t)$. This moderately changes the conclusion of theorem. For example, one has
    $$|| \langle t \partial_x \rangle^{1/2}  |\partial_x|^{1/2} u^1||_{V(t)} +  \sqrt{R} ||\langle t \partial_x \rangle^{1/2}  |\partial_x|^{1/2} \theta||_{V(t)}\leq 2\zeta$$
    with the aforementioned replacement in $V(t)$.
    We note that we are also able to obtain inviscid damping, enhanced dissipation, and Taylor dispersion at the linear level, see Corollaries \ref{inviscid_damping_and_vorticity_growth} and \ref{enhanced_dissipation_linear} in Section \ref{linear_results}. These result eliminate the regularity requirement of $\langle \partial_x \rangle^{m + 1/2}$ present in the statement of Theorem \ref{main_theorem}.
\end{remark}

In addition to the norms $V(0)$ and $V(t)$ of \eqref{def_initial_norm} and \eqref{def_target_norm}, there exists another set of norms for which we can state an alternate theorem. Letting $\hat{f}$ denote the Fourier transform of a function in the $(x,k)$ Fourier pair, define 
\begin{equation}\label{def_initial_norm_2}
    ||f||_{W(0)} \coloneqq \sum_{0 \leq j \leq 1}|| \langle \partial_x, \partial_y \rangle^n |\partial_x, \partial_y|^{-1/2}  \langle \partial_x \rangle^{m}|\partial_x|^{1/2} \langle \frac{\partial_x}{\mu}\rangle^{-j/3}\partial_y^j f ||_{L_{x,y}^2} + ||\langle k, \partial_y \rangle^n |k|^{1/2} |k, \partial_y|^{-1/2} \hat{f}||_{L^\infty_k L^2_y}
\end{equation}
and
\begin{equation}\label{def_target_norm_2}
\begin{split}
        ||g||_{W(t)} &\coloneqq \sum_{0 \leq j \leq 1}|| \langle \partial_x, \partial_y + t \partial_x\rangle^{n}  |\partial_x, \partial_y + t \partial_x|^{-1/2}  \langle \partial_x \rangle^{m} |\partial_x|^{1/2} \langle c \lambda(\mu, \partial_x) t \rangle^J \langle \frac{\partial_x}{\mu}\rangle^{-j/3} \partial_y^j g||_{L_{x,y}^2}\\
        &\quad \quad +  || \langle k, \partial_y + t k\rangle^{n} |k|^{1/2} |k, \partial_y + t k|^{-1/2} \hat{g} ||_{L^\infty_k L^2_y}.
\end{split}
\end{equation}
Then we have the following alternate theorem.
\begin{theorem}\label{alt_theorem}
    Under the same assumptions and set-up as \ref{main_theorem} with $W(0)$ replacing $V(0)$, we have the following estimates $\forall t \in [0,\infty)$:
    \begin{equation}
\begin{split}
        &||  \omega||_{V(t)} +   \sqrt{R} || \nabla \theta||_{V(t)} \leq 4 \langle t \rangle^{1/2}\zeta,\\
        &|| \partial_x u^1||_{V(t)}  + \sqrt{R}  || \partial_x \theta ||_{V(t)}\leq 4 \langle t \rangle^{-1/2}\zeta,\\
        &|| \partial_x |\partial_x, \partial_y + t \partial_x|^{-1} \partial_x u^2||_{V(t)} \leq 4\langle t \rangle^{-3/2}\zeta,\\
        &C \left(\int_0^t  \langle s \rangle \sum_{0 \leq j \leq 1}|| \langle \partial_x, \partial_y + s \partial_x\rangle^{n}  |\partial_x, \partial_y + s \partial_x|^{-1/2}  \langle \partial_x \rangle^{m} |\partial_x|^{1/2} \langle c \lambda(\mu, \partial_x) s \rangle^J \langle \frac{\partial_x}{\mu}\rangle^{-j/3} \partial_y^j u^2(s)||_{L_{x,y}^2}^2 ds\right)^{1/2} \leq 4\zeta,\\
        &C || \langle s \rangle^{1/2} \langle k, \partial_y + s k\rangle^{n} |k|^{1/2} |k, \partial_y + s k|^{-1/2} u^2 ||_{L^\infty_k L^2([0,t]) L^2_y} \leq 4 \zeta.
        \end{split}
\end{equation}
\end{theorem}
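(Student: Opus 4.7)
The strategy is to recycle the hypocoercive energy machinery from the proof of Theorem \ref{main_theorem} and run it in two norms simultaneously: the $V(t)$-type energy used there, together with a companion $L^\infty_k L^2_y$ energy that tracks the extra $L^\infty_k$ ingredient built into $W(0)$. Since the linearized part of \eqref{vorticity_perturb_eqn} commutes with translations in $x$, its symbol is diagonal in $k$, so at each fixed $k$ the argument reduces to a one-dimensional hypocoercive estimate in $y$ for the pair $(\hat\omega(t,k,\cdot),\widehat{\nabla\theta}(t,k,\cdot))$. Carrying the identity of \cites{cotizelati2023, arbon_bedrossian} through at this per-$k$ level, with the multipliers $\langle k,\partial_y+tk\rangle^n |k|^{1/2} |k,\partial_y+tk|^{-1/2}$ inserted, and then taking the supremum in $k$, yields the final line of Theorem \ref{alt_theorem}; the $L^2([0,t])$ integrability in $s$ is furnished by the same dissipation margin in the enhanced dissipation/Taylor dispersion scheme that drives the analogous control in the proof of Theorem \ref{main_theorem}.

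To recover the three $V(t)$ conclusions from $W(0)$ data, I would split each output in $x$-frequency into a high-frequency part $|k|\geq 1$ and a low-frequency part $|k|\leq 1$. On $|k|\geq 1$ one has $|k|^{1/2}\sim \langle k\rangle^{1/2}$, so the $L^2_{k,\xi}$ piece of the $W$-weight agrees up to constants with the $V$-weight, and the corresponding portion of the $V(t)$ bound is inherited directly from the proof of Theorem \ref{main_theorem}. On $|k|\leq 1$ I would use the elementary comparison
\[
\|g\|_{L^2_{k,\xi}(\{|k|\leq 1\}\times \R)} \leq \sqrt{2}\,\|g\|_{L^\infty_k L^2_\xi}
\]
to transfer the low-frequency $V(t)$ mass to the companion $L^\infty_k L^2_y$ quantity propagated above. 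The factor $|k|^{1/2}$ present in the $L^\infty_k$ piece of $W$ but absent from the $V$-weight near $k=0$ must be absorbed using the enhanced dissipation weight $\langle c\lambda(\mu,k)t\rangle^J$ together with $\lambda(\mu,k)=k^2/\nu$ for $|k|\leq\mu$, which confines the surviving mass to a ball of radius $\sim\sqrt{\nu/t}$ about $k=0$ and thereby supplies the integrability in $k$ that closes the low-frequency estimate.

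The main obstacle is handling the nonlinear transport terms in the companion $L^\infty_k L^2_y$ estimate. In the $x$-Fourier picture the nonlinearity becomes a convolution in $k$, and to close the $L^\infty_k L^2_y$ estimate alongside the $L^2_{k,y}$ one requires convolution estimates of Young type, schematically
\[
\|\hat u *_k \widehat{\nabla\omega}\|_{L^\infty_k L^2_y}\lesssim \|\hat u\|_{L^2_k L^2_y}\|\widehat{\nabla\omega}\|_{L^2_k L^\infty_y}+\|\hat u\|_{L^2_k L^\infty_y}\|\widehat{\nabla\omega}\|_{L^2_k L^2_y},
\]
with the $L^\infty_y$ factors supplied by one-dimensional Sobolev embedding from the $V(t)$ control on $y$-derivatives. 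These have to be interfaced with the anisotropic multipliers $\langle k,\xi+tk\rangle^n$, $|k,\xi+tk|^{-1/2}$, $\langle k\rangle^{m+1/2}$, and $\langle c\lambda(\mu,k)t\rangle^J$, so commutator manipulations analogous to (but more delicate than) those performed for the $V$-norm in Theorem \ref{main_theorem} must now be executed in the $L^\infty_k$ setting. Once these nonlinear contributions are placed in the correct form, a bootstrap of the same shape as in Theorem \ref{main_theorem} closes under the smallness hypothesis $\zeta\leq\delta\mu^{1/2+\delta_*}$ and delivers the four estimates.
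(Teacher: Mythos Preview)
Your plan is in the right spirit---run the hypocoercive machinery simultaneously in an $L^2_k$-based energy and an $L^\infty_k$-companion and use Young-in-$k$ convolution bounds for the nonlinearity in the latter---but two structural points are handled differently in the paper, and one of your steps has a genuine gap.

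First, the paper does \emph{not} keep the $V$-symmetrization and try to recover $V(t)$ conclusions from a $W$-type companion by high/low-$k$ splitting. Instead it changes the symmetrization itself, replacing $Z=\langle\partial_X\rangle^{1/2}p^{-1/4}\Omega$ by $Z=|\partial_X|^{1/2}p^{-1/4}\Omega$ (and similarly for $Q$). With this choice the $W$-weights are exactly the natural weights on $(Z,Q)$, so no translation between $|k|^{1/2}$ and $\langle k\rangle^{1/2}$ is ever needed. Your proposed mechanism for that translation---using the Taylor-dispersion weight $\langle c\lambda(\mu,k)t\rangle^J$ to confine to $|k|\lesssim\sqrt{\nu/t}$ and thereby ``supply integrability in $k$''---does not close: passing from the $L^\infty_k$ companion (which carries a factor $|k|^{1/2}$) to the $V$-norm at low $k$ requires absorbing $|k|^{-1/2}$ in $L^2_k$, and $\int_{|k|\lesssim a}|k|^{-1}\,dk$ diverges logarithmically for any $a>0$. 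Moreover, the $L^\infty_k$ companion in the paper carries \emph{no} decay weight $\langle c\lambda_k t\rangle^J$, so this mechanism is not even available. The change of symmetrization is what sidesteps this entirely.

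Second, the bootstrap cannot be run ``of the same shape as in Theorem~\ref{main_theorem}'' once the $L^\infty_k$ energy is present: $\sup_k$ does not commute with $\frac{d}{dt}$, so a differential inequality like Lemma~\ref{bootstrap_lemma} is no longer adequate. The paper instead formulates a \emph{time-integrated} bootstrap (Lemma~\ref{bootstrap_lemma_alt}), with the dissipation defined as $\sup_k\int_0^T(\cdots)\,dt$ rather than $\int_0^T\sup_k(\cdots)\,dt$. This ordering is essential and forces a specific pattern in the nonlinear estimates: one first applies Young in $\eta$, then H\"older in $t$, then Young in $k$, using Minkowski to interchange $L^p_t$ and $L^q_k$ where needed (see the $T_\infty$ computations). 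Your schematic $\|\hat u*_k\widehat{\nabla\omega}\|_{L^\infty_kL^2_y}$ bound is the right idea, but without the time-integrated framework and the careful norm-ordering you will not be able to place every factor in a controlled term of $\D$ or $\sup_t\E$.
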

\begin{remark}
    We note the presence of the $L^\infty_k$ norm in the statement of Theorem \ref{alt_theorem}. The purpose of this norm is to allow control of, roughly speaking, an additional half-derivative at low frequencies. Similar ideas have been incorporated in \cite{arbon_bedrossian} and \cite{bedrossian2022boltzmann}. Meanwhile, Theorem \ref{main_theorem} does not require this $L^\infty_k$ portion in the norm, since in Theorem \ref{main_theorem}, we are propagating control over $\langle \partial_x \rangle^{1/2}|\partial_x, \partial_x, \partial_y + t \partial_x|^{-1/2}$, which gives the additional half-derivative at low frequencies.
\end{remark}
The proofs of the main theorem \ref{main_theorem} and the alternate theorem \ref{alt_theorem} are quite similar. Sections \ref{outline_section}-\ref{nonlinear_section} will be focused largely on the proof of Theorem \ref{main_theorem}. Section \ref{alt_section} will sketch the proof of Theorem \ref{alt_theorem}, highlighting key differences from the proof of Theorem \ref{main_theorem}.

\section{Outline}\label{outline_section}

The key to proving Theorem \ref{main_theorem} will be the use of an energy method involving symmetrized variables in a moving reference frame, as in \cites{bedrossian_zelati_bianchini_dolce, bianchini_cotizelati_dolce, zhai_zhao}. We note also that symmetrized variable methods have found applications in other PDE systems, such as the MHD system \cites{fwang_zzhang_mhd_2024, zhao_zi}. Our principal vehicle will be Theorem \ref{symmetric_theorem}, from which Theorem \ref{main_theorem} will follow as a corollary in Section \ref{proof_of_main}. We begin by setting up the statement of Theorem \ref{symmetric_theorem}.

\subsection{Symmetrized Variables and the Moving Reference Frame}\label{symmetry_section}

We begin by recasting \eqref{vorticity_perturb_eqn} into a moving frame of reference. We make the change of variables
\begin{equation}\label{moving_reference_frame}
    X = x - yt, \;\; Y = y, \; \; \Omega(t,X,Y) = \omega(t,x,y), \; \; \Theta(t,X,Y) = \theta(t,x,y).
\end{equation}
In general, we use capital letters to refer to coordinates and variables in the moving reference frame. Let us define 
$$p(t,\partial_X, \partial_Y) \coloneqq -\partial_X^2 - (\partial_Y - t\partial_X)^2, \; \; \nabla_t := (\partial_X, \partial_Y - t \partial_X).$$
Then \eqref{vorticity_perturb_eqn} in the reference frame \eqref{moving_reference_frame} becomes
\begin{equation}\label{moving_reference_equations}
    \begin{cases}
        &\partial_t \Omega + U \cdot \nabla_t \Omega = - \nu p\Omega - R \partial_X \Theta,\\
        & \partial_t \Theta + U \cdot \nabla_t\Theta = -\kappa p \Theta - \partial_X p^{-1} \Omega,\\
        &U = \nabla_t^\perp \Psi, \quad - p \Psi = \Omega,\\
        &\Omega(0,X,Y) = \omega_{in}(x,y), \; \; \Theta(0,X,Y) = \theta_{in}(x,y).
    \end{cases}
\end{equation}
We now introduce the symmetrized variables $(z,q)$ in the stationary reference frame, or $(Z,Q)$ in the moving reference frame, as follows:
\begin{equation}\label{def_of_symmetrized_coords}
    \begin{split}
        &z \coloneqq \langle \partial_x \rangle^{1/2}|\nabla|^{-1/2} \omega, \;\; \; q \coloneqq \sqrt{R} \partial_x |\partial_x|^{-1} \langle \partial_x \rangle^{1/2} |\nabla|^{1/2}\theta,\\
        &Z \coloneqq \langle \partial_X \rangle^{1/2}p^{-1/4} \Omega, \; \; \; Q \coloneqq \sqrt{R} \partial_X |\partial_X|^{-1} \langle \partial_X \rangle^{1/2} p^{1/4}  \Theta.
    \end{split}
\end{equation}
We note that the definition \eqref{def_of_symmetrized_coords} differs slightly, at low-in-$(x,k)$ frequencies, from the symmetrization in \cites{bianchini_cotizelati_dolce, zhai_zhao}. The slightly more traditional symmetrization leads to Theorem \ref{alt_theorem}, see Section \ref{alt_section}. In terms of the symmetrized variables, \eqref{moving_reference_equations} becomes
\begin{equation}\label{Symmetrized_moving_reference_equations}
    \begin{cases}
        &\partial_t Z - \frac{1}{4}\frac{\partial_t p}{p}Z +\langle \partial_X \rangle^{1/2}p^{-1/4}\left(U \cdot \nabla_t \Omega\right) = - \nu p Z - \sqrt{R} |\partial_X|p^{-1/2} Q,\\
        & \partial_t Q + \frac{1}{4}\frac{\partial_t p}{p}Q+ \sqrt{R} \partial_X |\partial_X|^{-1} \langle \partial_X \rangle^{1/2} p^{1/4} \left(U \cdot \nabla_t\Theta\right) = -\kappa p Q + \sqrt{R} |\partial_X| p^{-1/2} Z,\\
        &U = \nabla_t^\perp \Psi, \quad - p \Psi = \Omega,\\
        &Z(0,X,Y) = Z_{in}(X,Y), \; \; Q(0,X,Y) = Q_{in}(X,Y).
    \end{cases}
\end{equation}
In \eqref{Symmetrized_moving_reference_equations}, we note that $Z_{in}(X,Y) = z_{in}(x,y) = \langle \partial_x\rangle^{1/2} |\nabla|^{-1/2} \omega_{in}(x,y)$ and $Q_{in}(X,Y) = q_{in}(x,y) = \sqrt{R} \partial_x |\partial_x|^{-1}\langle\partial_x\rangle^{1/2} |\nabla|^{1/2} \theta_{in}(x,y)$. Our quantitative stability norms for $(Z,Q)$ will be defined as
$$||F||_{\tilde{H}(0)} \coloneqq \sum_{0 \leq j \leq 1}|| \langle \partial_X, \partial_Y \rangle ^n \langle \partial_X \rangle^m \langle \frac{\partial_X}{\mu}\rangle^{-j/3}\partial_Y^j F ||_{L_{X,Y}^2} ,$$
$$||G||_{\tilde{H}(t)} = \sum_{0 \leq j \leq 1}|| \langle c \lambda(\mu, \partial_x) t \rangle^J \langle \partial_X, \partial_Y \rangle^n \langle \partial_X \rangle^m \langle \frac{\partial_X}{\mu}\rangle^{-j/3} (\partial_Y - t \partial_X)^j G||_{L_{X,Y}^2} .$$
We now state our main theorem for the symmetrized variables.
\begin{theorem}\label{symmetric_theorem}
    Suppose $(Z_{in}, Q_{in})$ are initial data for \eqref{Symmetrized_moving_reference_equations} and that $R > 1/4$, $\epsilon \in (0,1/2)$ are given. Then for all $m \in (0, \infty)$, $n \in [0,\infty)$, $J \in [1,\infty)$, and $\delta_* \in (0,1/12)$, there exists a constant $\delta = \delta(n, m,J,R, \epsilon, \delta_*) > 0$ independent of $\nu$ and $\kappa$ such that if
\begin{equation}
\begin{split}
        ||Z_{in}||_{\tilde{H}(0)} + ||Q_{in}||_{\tilde{H}(0)} = \zeta \leq \delta \mu^{1/2 +  \delta_*},
\end{split}
\end{equation}
    then for all $c = c(R, \epsilon) > 0$ sufficiently small (independent of $\mu$ and $\delta$) and all $\nu, \kappa \in (0,1)$ satisfying \eqref{diffusion_assumption}, the corresponding solution $(Z,Q)$ to \eqref{vorticity_perturb_eqn} satisfies the following estimate:
\begin{equation}\label{symm_estimate}
        ||Z||_{\tilde{H}(t)} + ||Q||_{\tilde{H}(t)} + C \left(\int_0^t ||\frac{\partial_X }{|\nabla_t|} Z||_{\tilde{H}(s)}^2 + || \frac{ \partial_X }{|\nabla_t|} Q||_{\tilde{H}(s)}^2 ds\right)^{1/2} \leq \zeta, \quad \forall t\in [0,\infty),
\end{equation}
for some constant $C = C(R, \epsilon, c) > 0$ independent of $\nu$, $\kappa$, and $\delta$.
\end{theorem}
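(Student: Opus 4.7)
The plan is a bootstrap/continuity argument on the quantity appearing on the left of \eqref{symm_estimate}. Let $\mathcal{E}(t) := \|Z\|_{\tilde{H}(t)} + \|Q\|_{\tilde{H}(t)} + C\big(\int_0^t \|\tfrac{\partial_X}{|\nabla_t|}Z\|_{\tilde{H}(s)}^2 + \|\tfrac{\partial_X}{|\nabla_t|}Q\|_{\tilde{H}(s)}^2\,ds\big)^{1/2}$. I would assume as a bootstrap hypothesis that $\mathcal{E}(t) \le 2\zeta$ on a maximal subinterval $[0,T^\ast) \subseteq [0,\infty)$ and then use smallness of $\zeta/\mu^{1/2+\delta_\ast}$ to improve this to $\mathcal{E}(t) \le \zeta$, which by continuity forces $T^\ast = \infty$.

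The baseline linear identity comes from Fourier-transforming in $X$, which decouples the problem into a $1$D-in-$Y$ evolution at each fixed $k$, and testing \eqref{Symmetrized_moving_reference_equations} against $(\hat{Z},\hat{Q})$. The symmetrization in \eqref{def_of_symmetrized_coords} is designed precisely so that the coupling $\sqrt{R}|k|p(t,k,\partial_Y)^{-1/2}$ carries opposite signs in the $\hat{Z}$ and $\hat{Q}$ equations and drops out of the $L^2_Y$ inner product; one is left with the full dissipation $-\nu\|p^{1/2}\hat{Z}\|^2 - \kappa\|p^{1/2}\hat{Q}\|^2$ plus a manageable error from $\tfrac{1}{4}(\partial_t p)/p$. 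Including the outer weights $\langle\partial_X,\partial_Y\rangle^n\langle\partial_X\rangle^m$ commutes with the linear flow, so the nontrivial work lies in the hypocoercive multiplier $\langle c\lambda(\mu,\partial_X)t\rangle^J \langle\partial_X/\mu\rangle^{-j/3}(\partial_Y - t\partial_X)^j$ summed over $j=0,1$: differentiating the time weight produces a factor of order $\lambda(\mu,k)$ that must be absorbed by the dissipation. In the regime $|k|\ge\mu$ this means controlling $\mu^{1/3}|k|^{2/3}$ by $\nu p$, which is achieved by the Villani-style cross-term between $j=0$ and $j=1$ as in \cites{cotizelati2023, bedrossian2023stability, arbon_bedrossian}; on the range $|k|\le\mu$ the Taylor-dispersion rate $|k|^2/\nu$ is dominated directly by $\nu(\partial_Y - tk)^2$ after the same hypocoercive mixing. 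The ratio condition \eqref{diffusion_assumption} and the choice of $\mu$ in \eqref{mu_definition} are exactly what guarantee that the $(\partial_t p)/p$ error fits inside the dissipative spectral gap, and taking $c = c(R,\epsilon)$ small then leaves enough slack to extract the $L^2_t$ coercive terms on $\frac{\partial_X}{|\nabla_t|}Z$ and $\frac{\partial_X}{|\nabla_t|}Q$ that appear in \eqref{symm_estimate}.

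The principal obstacle is the nonlinear transport contributions $\langle\partial_X\rangle^{1/2}p^{-1/4}(U\cdot\nabla_t\Omega)$ and its $\Theta$-analogue. Since $U = \nabla_t^\perp p^{-1}\Omega$ and $Z = \langle\partial_X\rangle^{1/2}p^{-1/4}\Omega$, each such term is cubic in $(Z,Q)$ with one factor carrying an extra $\partial_X|\nabla_t|^{-1}$ relative to the others, matching exactly the structure of the coercive integral on the left of \eqref{symm_estimate}. The aim is therefore to prove trilinear bounds of the schematic form $\|\textrm{nonlinearity}\|_{\tilde{H}(t)} \lesssim \|(Z,Q)\|_{\tilde{H}(t)}\big(\|\tfrac{\partial_X}{|\nabla_t|}Z\|_{\tilde{H}(t)} + \|\tfrac{\partial_X}{|\nabla_t|}Q\|_{\tilde{H}(t)}\big)$, so that integrating in time and invoking the bootstrap hypothesis produces a total nonlinear contribution of size $O(\mathcal{E}^3) = O(\zeta^2 \cdot \zeta/\mu^{1/2+\delta_\ast})$, which is absorbed into $\zeta^2$ by the smallness assumption. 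Carrying out these paraproduct estimates inside the anisotropic weighted space $\tilde{H}(t)$---distributing the $\langle\partial_X\rangle^m$ and $\langle\partial_X/\mu\rangle^{-j/3}$ multipliers favorably, commuting $p^{-1/4}$ and $p^{1/4}$ past the transport operator, and controlling the low-low frequency regime where inviscid damping is weakest and Taylor dispersion has not yet activated---is the technical heart of the argument. The restriction $\delta_\ast < 1/12$ provides the slack in powers of $\mu$ needed for these commutator estimates to close without disturbing the bootstrap inequality.
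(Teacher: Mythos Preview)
Your bootstrap framework and the general architecture (hypocoercive energy plus trilinear nonlinear bounds closing at the $\mu^{1/2+\delta_*}$ threshold) match the paper. But there is a concrete gap in the linear step: the $\tfrac14\tfrac{\partial_t p}{p}$ terms in \eqref{Symmetrized_moving_reference_equations} are \emph{not} a ``manageable error'' absorbable by dissipation or by the ratio condition \eqref{diffusion_assumption}. After testing against $(\hat Z,\hat Q)$ the antisymmetric coupling $\pm\sqrt{R}|k|p^{-1/2}$ does drop out, but what remains is $\tfrac12\tfrac{\partial_t p}{p}\bigl(|\hat Z|^2-|\hat Q|^2\bigr)$, and since $\bigl|\tfrac{\partial_t p}{p}\bigr|=\tfrac{2|k||\eta-kt|}{p}$ is of order one independently of $\nu,\kappa$, it cannot be dominated by $-\nu p|\hat Z|^2-\kappa p|\hat Q|^2$ in the small-viscosity regime. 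The paper's remedy is to build into the pointwise functional $E_k$ (see \eqref{linear_energy}) the explicit cross-term $\tfrac{1}{2\sqrt R}\mathrm{Re}\bigl(\tfrac{\partial_t p}{|k|p^{1/2}}Z_k\bar Q_k\bigr)$: differentiating it and feeding in the $\pm\sqrt R|k|p^{-1/2}$ coupling produces exactly $-\tfrac12\tfrac{\partial_t p}{p}(|Z_k|^2-|Q_k|^2)$, an algebraic cancellation (compare \eqref{lin_gam_z}--\eqref{lin_gam_main}). The condition \eqref{diffusion_assumption} and the definition of $\mu$ enter only afterward, to absorb the residual $\tfrac{\nu+\kappa}{2\sqrt R}\tfrac{\partial_t p}{|k|p^{1/2}}\,p\,\mathrm{Re}(Z_k\bar Q_k)$ that this cross-term generates upon hitting the dissipation (see \eqref{leftover_gamma_gamma}).

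A second omission is the source of the time-integrated inviscid-damping term $\int_0^t\|\tfrac{\partial_X}{|\nabla_t|}(Z,Q)\|_{\tilde H(s)}^2\,ds$ on the left of \eqref{symm_estimate}. This coercivity does not emerge from $-\nu p-\kappa p$ (which scales with $\nu,\kappa$ and would give the wrong constant $C$), nor from the $j=0,1$ hypocoercive pair alone. The paper extracts it from the additional multiplier $\mathfrak J_k=\tfrac12\arctan(\eta/k-t)$ of \eqref{definition_of_J_k}, whose time derivative is exactly $-\tfrac12 k^2/p$; together with the companion weight $N_k=\exp\bigl(-\tfrac{\partial_t p}{|k|p^{1/2}}\tfrac{1}{2\sqrt R-1}\bigr)$ of \eqref{definition_of_n_k}, this supplies the $D_{k,\tau}$ and $D_{k,\rho}$ pieces of the dissipation \eqref{linear_dissipation} on which both Proposition \ref{linear_proposition} and the nonlinear balance of Lemma \ref{key_lemma} rely. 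Your energy must carry $N_k+c_\tau\mathfrak J_k$ (and its $Z\bar Q$ cross-term counterparts) in addition to the Villani-type $\alpha_k(\eta-kt)^2$ and $\beta_k k(\eta-kt)$ corrections; the bare $\tilde H(t)$ norm is equivalent to $\E$ but does not yield the correct differential inequality upon differentiation.
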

In Section \ref{linear_section} we will prove enhanced dissipation, Taylor dispersion, and inviscid damping-type estimates for the linearization of the symmetrized system \eqref{Symmetrized_moving_reference_equations} in a hypocoercive norm (see \eqref{linear_system} and Proposition \ref{linear_proposition}). Second, in Section \ref{nonlinear_section} we will complete the proof of Theorem \ref{symmetric_theorem} by performing a bootstrap argument to extend a version of the linearized estimates to the nonlinear setting under a small initial data condition (see Lemma \ref{bootstrap_lemma}).

\subsection{Outline of the Linearized Problem}
As in \cites{bianchini_cotizelati_dolce}, we will examine the linear system pointwise in frequency space. Letting $(k,\eta)$ be the frequencies conjugate to $(X,Y)$, we begin by removing the nonlinearity from the symmetrized system \eqref{Symmetrized_moving_reference_equations} to obtain the following linear system
\begin{equation}\label{linear_system}
    \begin{cases}
        &\partial_t Z_k(t,\eta) =  \frac{1}{4} \frac{\partial_t p}{p} Z_k(t,\eta) -\sqrt{R} |k| p^{-1/2} Q_k(t,\eta) - \nu p Z_k(t,\eta),\\
        &\partial_t Q_k(t, \eta) = -\frac{1}{4} \frac{\partial_t p}{p}Q_k(t,\eta) + \sqrt{R} |k| p^{-1/2}Z_k(t,\eta) - \kappa p Q(t,\eta),
    \end{cases}
\end{equation}
where we have abused notation slightly, identifying $p$ with its Fourier multiplier $k^2 + (\eta - kt)^2$ and writing $Z_k, Q_k$ for the Fourier transforms of $Z, Q$. Noticing the decoupling of \eqref{linear_system} in frequency space, we introduce the pointwise (in frequency) energy functional for a solution pair $(Z_k(\eta), Q_k(\eta))$ of \eqref{linear_system} (or a solution pair of \eqref{Symmetrized_moving_reference_equations} at Fourier frequency $(k,\eta)$): 
\begin{equation}\label{linear_energy}
\begin{split}
 E_k[Z_k, Q_k] &\coloneqq  \left(N_k + c_\tau \mathfrak{J}_k\right) |Z_k|^2 + \left(N_k + c_\tau \mathfrak{J}_k\right) |Q_k|^2 + \frac{1}{2\sqrt{R}}\mathrm{Re}\left(\frac{\partial_t p}{ |k| p^{1/2}} \left(N_k + c_\tau \mathfrak{J}_k\right) Z_k\bar{Q}_k \right)\\
 & \quad + c_\alpha \alpha_k\biggl(\left(N_k + c_\tau \mathfrak{J}_k\right) (\eta - kt)^2 |Z_k|^2 + \left(N_k + c_\tau \mathfrak{J}_k\right)  (\eta - kt)^2 |Q_k|^2 \\
 &\quad \quad \quad \quad + \frac{1}{2\sqrt{R}}\mathrm{Re}\left(\frac{\partial_t p}{ |k| p^{1/2}} \left(N_k + c_\tau \mathfrak{J}_k\right)  (\eta - kt)^2 Z_k\bar{Q}_k \right)\biggr)\\
 &\quad+ c_\beta \beta_k \biggl( k(\eta -kt) |Z_k|^2 +k(\eta -kt) |Q_k|^2 - \frac{1}{2\sqrt{R}} \mathrm{Re}\left(\frac{\partial_t p}{|k| p^{1/2}}k(\eta-kt) Z_k \bar{Q}_k\right)\biggr),
\end{split}
\end{equation}
where
\begin{equation}\label{definition_of_alpha_k}
    \alpha_k \coloneqq
    \begin{cases}
        1, & |k| <  \mu,\\
        \mu^{2/3} |k|^{-2/3}, & |k| \geq  \mu,
    \end{cases}
\end{equation}
\begin{equation}\label{definition_of_beta_k}
    \beta_k \coloneqq
    \begin{cases}
         \mu^{-1}, & |k| < \mu,\\
         \mu^{1/3} |k|^{-4/3}, & |k| \geq \mu, 
    \end{cases}
\end{equation}
the constants $c_\alpha, c_\beta, c_\tau > 0$ are independent of $\nu$, $\kappa$, and $k$ (but potentially dependent on $R$) to be specified later, the operator $N_k$ is the Fourier multiplier
\begin{equation}\label{definition_of_n_k}
N_k\coloneqq \exp\left(-\frac{\partial_t p}{|k| p^{1/2}} \frac{1}{2\sqrt{R} -1}\right),
\end{equation}
and $\mathfrak{J}_k$ is the \textit{inviscid damping operator} first introduced in \cite{bedrossian2023stability} and used in \cite{arbon_bedrossian}. Although these works relied on defining $\mathfrak{J}_k$ as a physical-in-$y$ singular integral operator, we simply define $\mathfrak{J}_k$ as the Fourier multiplier
\begin{equation}\label{definition_of_J_k}
        \mathfrak{J}_k(t,\eta) \coloneqq \frac{1}{2}\arctan\left(\frac{\eta}{k} - t \right).
\end{equation}
We note also the hypocoercive structure of \eqref{linear_energy}; a similar norm without the inviscid damping operator was used in \cite{cotizelati2023} to prove enhanced dissipation and Taylor dispersion for the advection of passive scalars. In contrast with \cites{arbon_bedrossian, bedrossian2023stability}, the $k$-by-$k$ energy functional here incorporates cross-terms such as $$\frac{1}{2\sqrt{R}}\mathrm{Re}\left(\frac{\partial_t p}{ |k| p^{1/2}} \left(N_k + c_\tau \mathfrak{J}_k\right) Z_k\bar{Q}_k \right).$$
These cross terms arise naturally in the stability theory of the 2D Boussinesq equations with symmetrized variables and have been used extensively \cites{bedrossian_zelati_bianchini_dolce, bianchini_cotizelati_dolce, bianchini_cotizelati_dolce_2, zhai_zhao}.
Written as Fourier multipliers, we immediately see that $N_k$ and $\mathfrak{J}_k$ are bounded, with
\begin{equation}\label{boundedness}
    | \mathfrak{J}_k| \leq \frac{\pi}{4}, \; \; \; \exp\left(-\frac{2}{2\sqrt{R} - 1}\right) \leq |N_k| \leq 1.
\end{equation}
Additionally, we have the useful facts that
\begin{equation}\label{time_derivs}
    \partial_t \mathfrak{J}_k = -\frac{1}{2}\frac{k^2}{p}, \; \; \; \partial_t \left(\frac{\partial_t p}{|k|p^{1/2}}\right) = 2 \frac{|k|^3}{p^{3/2}}, \; \; \; \left|\frac{\partial_t p}{|k|p^{1/2}}\right| \leq 2.
\end{equation}
Associated to the $k$-by-$k$ energy functional is the $k$-by-$k$ dissipation functional:
\begin{equation}\label{linear_dissipation}
    \begin{split}
        D_k[Z_k, Q_k] &\coloneqq \mu p\left(|Z_k|^2 + |Q_k|^2 \right) +c_\tau |k|^2 p^{-1}\left(|Z_k|^2 + |Q_k|^2\right)\\
        &\quad + c_\alpha \alpha_k \mu p(\eta-kt)^2\left(|Z_k|^2 + |Q_k|^2\right) + c_\tau c_\alpha \alpha_k |k|^2 (\eta-kt)^2 p^{-1} \left(|Z_k|^2 + |Q_k|^2\right)\\
        &\quad  +\frac{1}{\sqrt{R}}|k|^3 p^{-3/2}\left(|Z|_k^2 + |Q_k|^2\right) + \frac{c_\alpha}{\sqrt{R}} \alpha_k |k|^3 (\eta-kt)^2 p^{-3/2}\left(|Z_k|^2 + |Q_k|^2\right)\\
        &\quad + c_\beta \beta_k |k|^2 \left( |Z_k|^2 + |Q_k|^2\right)\\
        &\eqqcolon D_{k, \gamma} + c_\tau D_{k, \tau} + c_\alpha D_{k, \alpha} + c_\tau c_\alpha D_{k, \tau \alpha} + \frac{1}{\sqrt{R}} D_{k,\rho} + \frac{c_\alpha}{\sqrt{R}} D_{k,\rho}+ c_\beta D_{k, \beta}.
    \end{split}
\end{equation}
For the sake of convenience, we define $c_\rho \coloneqq \frac{1}{\sqrt{R}}$. We are now prepared to state our main result at the linear level, which will be a key ingredient in the proof of Theorem \ref{symmetric_theorem}:
\begin{proposition}\label{linear_proposition}
    Fix $\epsilon \in (0,1/2)$ as in \eqref{diffusion_assumption}, and let $R > 1/4$ be given. There exist constants $c_\alpha$, $c_\beta$, and $c_\tau$, and constants $c_0 = c_0(c_\tau, c_\alpha, c_\beta, R, \epsilon)$, $c_1 = c_1(c_\tau, c_\alpha, c_\beta, R, \epsilon)$ which can be chosen independently of $\nu$ and $\kappa$ such that, for any $H^1$ solution $(Z_k, Q_k)$ to \eqref{linear_system}, the following holds for any $\eta \in \R$ and all $k \neq 0$:
    $$\frac{d}{dt} E_k[Z_k, Q_k]\leq - c_1 D_k[Z_k, Q_k]- c_0\lambda_k E_k[Z_k, Q_k].$$
    In particular, this implies that for any $c>0$ sufficiently small
    $$\frac{d}{dt} E_k[Z_k, Q_k] \leq - 8c D_k[Z_k, Q_k] - 8c\lambda_k E_k[Z_k, Q_k],$$
    and the following inviscid damping, enhanced dissipation, and Taylor dispersion estimate holds:
    \begin{equation}
        \begin{split}
        e^{2c\lambda_k t} E_k[Z_k, Q_k] +& \frac{1}{4}c_\tau \int_0^t e^{2 c \lambda_k s} |k|^2 p^{-1} \left( |Z_k|^2 + |Q_k|^2 \right)   ds\\
        &\quad \frac{1}{2R}\int_0^t e^{2 c \lambda_k s} |k|^3 p^{-3/2} \left(|Z_k|^2 + |Q_k|^2 \right)   ds\leq E_k[z_k(0), q_k(0)], \; \; \forall t \geq 0.
        \end{split}
    \end{equation}
\end{proposition}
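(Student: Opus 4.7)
\medskip

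\noindent\textbf{Proof proposal for Proposition \ref{linear_proposition}.} The plan is to compute $\frac{d}{dt}E_k[Z_k,Q_k]$ directly, substitute the linear system \eqref{linear_system}, and then arrange the resulting terms into (a) coercive dissipation of the form $D_k$, (b) cross-term cancellations engineered by the multiplier $N_k$, and (c) small error terms absorbable by the dissipation once the constants $c_\alpha, c_\beta, c_\tau$ are chosen in the proper order. Finally, one inverts a pointwise-in-frequency Poincar\'e-type inequality $D_k[Z_k,Q_k] \gtrsim \lambda_k E_k[Z_k,Q_k]$ to turn the differential inequality into an exponential enhanced-dissipation / Taylor-dispersion bound.

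\medskip

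\noindent\textbf{Step 1: Differentiate the base energy.} Start with the block $(N_k + c_\tau \mathfrak{J}_k)(|Z_k|^2+|Q_k|^2) + \frac{1}{2\sqrt{R}}\mathrm{Re}\!\left(\frac{\partial_t p}{|k|p^{1/2}}(N_k+c_\tau \mathfrak{J}_k) Z_k\bar Q_k\right)$. Using \eqref{linear_system}, the terms $\nu p|Z_k|^2+\kappa p|Q_k|^2$ produce the coercive dissipation $D_{k,\gamma}$, and the term $\pm\frac{1}{4}\frac{\partial_t p}{p}|Z_k|^2$ from the symmetrization is antisymmetric in $(Z_k,Q_k)$ and cancels. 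The coupling $\mp\sqrt{R}|k|p^{-1/2}$ between $Z_k$ and $Q_k$, when paired against $|Z_k|^2+|Q_k|^2$, is purely imaginary after taking real parts, so it cancels there as well. The critical cross-term $Z_k\bar Q_k$, when differentiated, produces via the coupling a contribution proportional to $\frac{\partial_t p}{p}(N_k+c_\tau \mathfrak{J}_k)(|Z_k|^2-|Q_k|^2)$ which is antisymmetric, and a diagonal contribution $-\frac{|k|^2}{\sqrt{R}\, p}(N_k+c_\tau \mathfrak{J}_k)(|Z_k|^2+|Q_k|^2)$ which is \emph{sign-wrong}: this is the term that the multiplier $N_k$ is designed to cancel.

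\medskip

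\noindent\textbf{Step 2: The $N_k$ cancellation.} Using $\partial_t N_k = -N_k \cdot \frac{2|k|^3}{p^{3/2}(2\sqrt{R}-1)}$ from \eqref{time_derivs}, the time derivative of $N_k(|Z_k|^2+|Q_k|^2)$ supplies exactly the right $\frac{|k|^3}{p^{3/2}}$ multiple to combine with the $\frac{|k|^2}{\sqrt{R}p}\cdot\frac{\partial_t p}{|k|p^{1/2}}$ from Step 1. Using the elementary inequality $\frac{|\partial_t p|}{|k|p^{1/2}}\leq 2$ and the identity $\partial_t(\frac{\partial_t p}{|k|p^{1/2}}) = 2\frac{|k|^3}{p^{3/2}}$, a short computation shows that the choice of the exponent $\frac{1}{2\sqrt{R}-1}$ in \eqref{definition_of_n_k} produces a net coefficient $\frac{1}{\sqrt{R}}|k|^3p^{-3/2}(|Z_k|^2+|Q_k|^2)$ with the \emph{correct} (coercive) sign; this yields the $D_{k,\rho}$ piece of \eqref{linear_dissipation}. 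The $c_\tau \partial_t\mathfrak{J}_k = -\tfrac{c_\tau}{2}k^2p^{-1}$ term contributes $D_{k,\tau}$ analogously via $\partial_t \mathfrak{J}_k$ in \eqref{time_derivs}.

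\medskip

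\noindent\textbf{Step 3: Treat the $\alpha_k(\eta-kt)^2$ and $\beta_k k(\eta-kt)$ hypocoercivity blocks.} Repeating Steps 1--2 on the $\alpha_k(\eta-kt)^2$-weighted block gives the dissipations $D_{k,\alpha}$, $D_{k,\tau\alpha}$, and the second $D_{k,\rho}$-type piece with $\alpha_k(\eta-kt)^2$ attached. However, one now picks up a new error when $\partial_t$ hits $(\eta-kt)^2$: this produces a term of size $|k||\eta-kt|(N_k+c_\tau\mathfrak{J}_k)(|Z_k|^2+|Q_k|^2)$. This is precisely the role of the $\beta_k k(\eta-kt)$ block: differentiating $\beta_k k(\eta-kt)(|Z_k|^2+|Q_k|^2)$ in time produces $-\beta_k k^2(|Z_k|^2+|Q_k|^2) = -D_{k,\beta}$, a genuinely coercive term, while the cross-coupling from $\sqrt{R}|k|p^{-1/2}$ now produces a term of order $\beta_k|k|^2|\eta-kt|p^{-1/2}Z_k\bar Q_k$ that is controlled by Cauchy--Schwarz against $D_{k,\gamma}$ and $D_{k,\alpha}$. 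The Cauchy--Schwarz step is where the small parameters $c_\beta \ll c_\alpha \ll c_\tau \ll 1$ must be chosen, in this order, so that every error is swallowed.

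\medskip

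\noindent\textbf{Step 4: Convert $D_k \geq \lambda_k E_k$ and conclude.} Once the differential inequality $\frac{d}{dt}E_k \leq -c_1 D_k$ is in hand, I verify pointwise in $(k,\eta)$ that $D_k[Z_k,Q_k]\gtrsim \lambda_k E_k[Z_k,Q_k]$ by splitting into the two regimes of \eqref{definition_of_lambda_k}: for $|k|\geq \mu$, the combination $\mu p + c_\tau|k|^2 p^{-1}$ is minimized (at the worst time $t=\eta/k$) by $\mu^{1/3}|k|^{2/3}$, giving enhanced dissipation; for $|k|\leq \mu$, the $c_\beta \beta_k k^2 = c_\beta k^2/\mu$ piece of $D_{k,\beta}$ provides the Taylor dispersion rate $|k|^2/\nu$ (using $\mu \sim \nu$). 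The $\alpha_k(\eta-kt)^2$ and $\beta_k k(\eta-kt)$ weights in $E_k$ are designed so that the part of $E_k$ with $(\eta-kt)$-weights is dominated by $\mu p$-type dissipation, while the $N_k+c_\tau\mathfrak{J}_k$-weighted base part is dominated by $c_\tau|k|^2p^{-1}$. Combining the resulting inequality $\frac{d}{dt}E_k \leq -c_1 D_k - c_0\lambda_k E_k$ with Gr\"onwall and integrating the dissipations produces the stated estimate.

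\medskip

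\noindent\textbf{Main obstacle.} The hardest step is Step 3: the interaction between the $(\eta-kt)^2$-weighted block and the cross-coupling, because the cross term $\mathrm{Re}(\frac{\partial_t p}{|k|p^{1/2}} (\cdot) Z_k\bar Q_k)$ with the $(\eta-kt)^2$ weight produces large errors whenever $|\eta-kt|$ is comparable to $|k|$. Balancing these errors forces a careful ordering of constants and exploits the factor $\frac{1}{2\sqrt{R}-1}$ in $N_k$, which is only available because of the Miles--Howard condition $R>1/4$. Verifying that the Cauchy--Schwarz losses are swallowed \emph{uniformly} in $(k,\eta,t)$ and independently of $\nu,\kappa$ is the central technical check.
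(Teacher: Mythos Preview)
Your overall framework matches the paper's: the $N_k$ cancellation producing $D_{k,\rho}$, the inviscid damping operator $\mathfrak{J}_k$ producing $D_{k,\tau}$, the hypocoercive $\alpha$- and $\beta$-blocks, and the final coercivity $D_k\gtrsim\lambda_k E_k$ are exactly the mechanisms in Lemmas~\ref{hypo_coerc} and~\ref{j_lemma} and the concluding argument \eqref{first_hypo}--\eqref{fourth_hypo}.

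However, the ordering of constants you claim in Step~3 is backwards. You write $c_\beta\ll c_\alpha\ll c_\tau$, but the required relations are $c_\alpha\lesssim c_\beta$ \emph{and} $c_\beta^2\lesssim c_\alpha$ (with $c_\tau$ fixed small independently). The reason: when $\partial_t$ hits $(\eta-kt)^2$ in the $\alpha$-block, the error $c_\alpha\,\alpha_k|k||\eta-kt|(|Z_k|^2+|Q_k|^2)$ is controlled via the identity $\alpha_k|k|=\sqrt{\lambda_k\mu}$ and Young's inequality, producing a piece $c_\alpha\lambda_k(|Z_k|^2+|Q_k|^2)$ that can only be absorbed by $c_\beta D_{k,\beta}=c_\beta\lambda_k(|Z_k|^2+|Q_k|^2)$; this forces $c_\alpha\lesssim c_\beta$ (see \eqref{first_terms} and the condition \eqref{initial_smallness}). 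Going the other direction, the diffusive errors from the $\beta$-block in \eqref{beta_est} are split via Young against $D_{k,\gamma}$ and $D_{k,\alpha}$, and the $D_{k,\alpha}$ contribution carries a coefficient of order $c_\beta^2/c_\alpha$ (see \eqref{third_youngs} and \eqref{third_smallness}), forcing $c_\beta^2\lesssim c_\alpha$. With your stated hierarchy $c_\beta\ll c_\alpha$, the $\alpha$-block error would overwhelm $D_{k,\beta}$ and the absorption would fail. The paper resolves both constraints by taking $c_\alpha$ roughly of order $c_\beta^2$ (up to $R,\epsilon$-dependent factors); see the explicit choice in the remark following \eqref{fourth_hypo}.
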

The proof of Proposition \ref{linear_proposition} will be contained in Section \ref{linear_section}.
\subsection{Outline of the Nonlinear Problem}
The fully nonlinear system \eqref{Symmetrized_moving_reference_equations} naturally re-couples distinct Fourier modes, so we can no longer solely use the function $E_k$. Instead, we use a modification of the norm from \cite{arbon_bedrossian} and we define 
\begin{equation}\label{nonlinear_energy}
    \begin{split}
        &\E[Z, Q] \coloneqq \iint_{\R^2} \frac{\dk^{2J}}{M_k(t)} \langle k, \eta \rangle^{2n} \langle k \rangle^{2m}E_k[Z_k, Q_k] d\eta dk,
    \end{split}
\end{equation}
where $c$ is a sufficiently small constant determined in Proposition \ref{linear_proposition} and (as in \cite{arbon_bedrossian}) $M_k$ is the solution to the ODE:
\begin{equation}\label{correction_ODE}
    \begin{split}
        \dot{M}_k(t) &=  c J^2 \lambda_k \frac{( c \lambda_k t )^2}{\langle c \lambda_k t \rangle^{4}} M_k(t),\\
        M_k(0) &= 1.
    \end{split}
\end{equation}
We note that $M_k$ is uniformly bounded and is uniformly bounded way from zero in both $k$ and $t$, so that $||M_k(t)^{-1} f||_{L^2_k L^2_\eta} \approx ||f||_{L^2_k L^2_\eta}$ for all $f \in L^2(\R \times \R)$. The purpose of $M_k$ is simply to absorb terms from $\frac{d}{dt} \E$ when the time-derivative falls on the decay term $\langle c \lambda_k t \rangle^{2J}$. We note that $\E[Z,Q] \approx ||Z, Q||_{\tilde{H}(t)}^2$, where $\tilde{H}(t)$ is the target norm in Theorem \ref{symmetric_theorem}.
In a similar manner, we define the nonlinear dissipation:
\begin{equation}\label{nonlinear_dissipation}
    \begin{split}
        &\D[Z, Q] \coloneqq \int_{\R^2} \frac{\dk^{2J}}{M_k(t)} \langle k, \eta \rangle^{2n} \langle k \rangle^{2m} 
 D_k[Z_k, Q_k] d\eta dk.
    \end{split}
\end{equation}
In Section \ref{nonlinear_section}, it will be helpful for the sake of readability to distinguish between terms of the nonlinear dissipation \eqref{nonlinear_dissipation} arising from distinct terms of the linear dissipation \eqref{linear_dissipation}, and so we define
\begin{equation}\label{definition_of_diss}
    \D_{*}[Z,Q] \coloneqq \int_\R \frac{\dk^{2J}}{M_k(t)} \langle k, \eta \rangle^{2n} \langle k \rangle^{2m} c_{*} D_{k,*}[Z_k, Q_k] dk
\end{equation}
for $* \in \{\gamma, \tau, \alpha, \tau \alpha, \beta ,\rho, \rho\alpha\}$, letting $c_{\tau \alpha } = c_\tau c_\alpha$ and $c_{\rho \alpha} = c_\rho c_\alpha$. As in \cites{arbon_bedrossian, bedrossian2023stability, zhai_zhao} (among others), the key to proving Theorem \ref{symmetric_theorem} will be a bootstrap argument based on the following lemma:
\begin{lemma}\label{bootstrap_lemma}
    Let $(Z_{in}, Q_{in})$ be initial datum for \eqref{Symmetrized_moving_reference_equations} such that $\E[Z_{in}, Q_{in}] < \infty$. Then there exists a constant $C > 0$ depending only on $m$, $J$, $R$, $\epsilon$ and the choice of $c$ such that
    \begin{equation}\label{bootstrap_equation}
        \frac{d}{dt}\E[Z,Q] \leq - 4 c \ \D[Z,Q] + \mu^{-1/2 - \delta_*} 
 C^{1/2}\E^{1/2}[Z,Q] \D[Z,Q] .
    \end{equation}
\end{lemma}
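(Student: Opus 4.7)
The plan is to differentiate $\E[Z,Q]$ in time, apply Proposition \ref{linear_proposition} pointwise in $(k,\eta)$ for the linear contribution, absorb the weight derivatives into the enhanced--dissipation decay via the design of $M_k$, and reduce everything to a paraproduct estimate on the two transport nonlinearities in \eqref{Symmetrized_moving_reference_equations}. Writing $W_k(t) \coloneqq \dk^{2J}/M_k(t)$, a direct differentiation gives
\begin{equation*}
\frac{d}{dt}\E[Z,Q] = \iint_{\R^2} \dot W_k\, \langle k,\eta\rangle^{2n}\langle k\rangle^{2m} E_k\,d\eta\,dk + \iint_{\R^2} W_k\, \langle k,\eta\rangle^{2n}\langle k\rangle^{2m} \frac{d}{dt}E_k\,d\eta\,dk.
\end{equation*}
The time derivative of $E_k$ splits into the linear contribution governed by \eqref{linear_system} plus a quadratic piece $\mathcal R_k$ coming from the advection terms $\langle \partial_X\rangle^{1/2}p^{-1/4}(U\cdot\nabla_t\Omega)$ and $\sqrt R\,\partial_X|\partial_X|^{-1}\langle\partial_X\rangle^{1/2}p^{1/4}(U\cdot\nabla_t\Theta)$. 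Proposition \ref{linear_proposition} controls the linear part by $-8cD_k - 8c\lambda_k E_k$. A direct computation from \eqref{correction_ODE} gives the completing-the-square identity $\dot W_k/W_k = 2Jc\lambda_k(c\lambda_k t)/\dk^2 - cJ^2\lambda_k(c\lambda_k t)^2/\dk^4 \leq c\lambda_k$ for all $t\geq 0$, so the weight-derivative integral is absorbed by a fraction of $-8c\lambda_k W_k E_k$. Together these bound the linear and weight contributions by $-4c\D[Z,Q]$, reducing \eqref{bootstrap_equation} to the nonlinear estimate $|\mathcal N|\leq \mu^{-1/2-\delta_*}C^{1/2}\E^{1/2}\D$, where
\begin{equation*}
\mathcal N \coloneqq \iint_{\R^2} W_k\, \langle k,\eta\rangle^{2n}\langle k\rangle^{2m}\,\mathcal R_k\,d\eta\,dk.
\end{equation*}

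For the nonlinear bound I would perform a Fourier-side Littlewood--Paley paraproduct expansion of each product $U\cdot \nabla_t \Omega$, $U\cdot\nabla_t\Theta$ in the convolution variable $k'$, splitting into low-high, high-low and high-high pieces in $(k',k-k')$. The velocity is expressed in terms of $Z$ via $\hat U = ik_t^\perp p^{-1}\cdot p^{1/4}\langle\partial_X\rangle^{-1/2}\hat Z$, which supplies two gained powers of $p^{-1/2}$ and lets the symmetrization multipliers distribute symmetrically between the two factors. The divergence-free identity $\nabla_t\cdot U = 0$ kills the leading transport contribution in the diagonal paraproduct, leaving commutators of the top weight $\langle k,\eta\rangle^n\langle k\rangle^m$ against the convolution that are estimated by a standard telescoping argument. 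The cross-term contributions produced by the $\mathrm{Re}(Z_k\bar Q_k)$ parts of $E_k$ carry an extra multiplier $\partial_t p/(|k|p^{1/2})$, bounded by $2$ by \eqref{time_derivs}, so they are treated identically to the diagonal ones. In each off-diagonal piece one places the full gradient on the high-frequency factor, absorbs it using $\D_\tau + \D_{\tau\alpha}$ from \eqref{linear_dissipation}, and estimates the low-frequency factor in $L^\infty_{X,Y}$ via the anisotropic Sobolev embedding afforded by $\langle k,\eta\rangle^n$, $\langle k\rangle^m$ and the $|\partial_X,\partial_Y|^{-1/2}\langle\partial_X\rangle^{1/2}$-type factor built into \eqref{def_of_symmetrized_coords}; the high-high piece is absorbed by the viscous dissipation $\D_\gamma$.

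The main obstacle is the genuinely low-frequency regime of the transporting velocity, $|k'|\leq\mu$, where $\lambda_{k'}$ degrades to the weak Taylor-dispersion rate $|k'|^2/\nu$ and the $\langle\partial_X/\mu\rangle^{-j/3}$ weight in $\tilde H(t)$ no longer provides any gain, so the estimate cannot be closed by absorbing into $\D_\tau$ or $\D_{\tau\alpha}$. Instead, I would borrow from the unweighted viscous dissipation $\D_\gamma \gtrsim \mu p(|Z|^2+|Q|^2)$, paying a $\mu^{-1/2-\delta_*}$ loss: the exponent $-1/2$ compensates for the half-derivative missing at the bottom of the symmetrization \eqref{def_of_symmetrized_coords}, and the extra $-\delta_*$ is the slack needed to run the Sobolev embedding in the anisotropic $\tilde H$-norm uniformly in $\mu\in(0,1)$. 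The resulting constant $C$ depends only on $m, n, J, R, \epsilon, c$ but not on $\mu$, as required, and combining the linear and nonlinear bounds gives \eqref{bootstrap_equation}.
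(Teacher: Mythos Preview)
Your reduction to the nonlinear bound $|\mathcal N|\lesssim \mu^{-1/2-\delta_*}\E^{1/2}\D$ matches the paper (equations \eqref{nonlinear_time_deriv}--\eqref{nonlin_deriv}), and the completing-the-square bound $\dot W_k/W_k\le c\lambda_k$ is correct. However, the nonlinear estimate is the entire substance of the lemma---the paper's Section \ref{nonlinear_section}, via Lemma \ref{key_lemma}, runs over twenty pages---and your sketch contains a concrete misattribution of which dissipation term does what. You claim the full gradient on the high-frequency factor is absorbed by $\D_\tau+\D_{\tau\alpha}$, but $D_{k,\tau}=k^2p^{-1}(|Z_k|^2+|Q_k|^2)$ is uniformly bounded and cannot absorb a $\nabla_t$. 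In the paper it is $\D_\gamma\sim\mu p(|Z|^2+|Q|^2)$ that absorbs the derivative $|\eta-kt|$ of the transported factor, at cost exactly $\mu^{-1/2}$; this, not a low-frequency defect of the symmetrization, is the origin of the threshold exponent, while the extra $\delta_*$ arises from $L^1_\eta\to L^2_\eta$ interpolations requiring a weight $\langle\eta-kt\rangle^{1/2+2\delta_*}$ (Appendix Lemmas \ref{new_energy_lemmas}--\ref{main_D_tau_alpha_lemma}). The inviscid-damping dissipations $\D_\tau,\D_{\tau\alpha}$ are instead used to control the transporting velocity $U$, which already carries $p^{-1}$ from the stream function.

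The paper also takes a different route from your paraproduct/commutator scheme: it never invokes the divergence-free cancellation but applies the triangle inequality directly (e.g.\ $p_{\xi'}^{1/4}\lesssim p_\xi^{1/4}+p_{\xi-\xi'}^{1/4}$, producing the $Z_1/Z_2$ splittings as in \eqref{split_gamma_z_x_terms}), decomposes $\mathcal N=T_\gamma+T_\alpha+T_\beta$ according to the three blocks of $E_k$ in \eqref{T_main}, and then splits each into $x$/$y$-derivative and LH/HL frequency pieces, closing term-by-term with Young's convolution inequality in $(k,\eta)$ and the interpolation lemmas that balance $\D_\gamma,\D_\tau,\D_\alpha,\D_{\tau\alpha},\D_\beta$. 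A commutator approach might in principle be carried through, but the time-dependent symmetrization multiplier $p^{\pm1/4}$ makes the commutator $[\langle\partial_X\rangle^{1/2}p^{-1/4},\,U\cdot\nabla_t]$ nontrivial to bound uniformly in $\mu$, and you have not indicated how; that is the missing step.
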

\begin{remark}
    Assuming Lemma \ref{bootstrap_lemma}, it is easy to prove Theorem \ref{symmetric_theorem}. Indeed, suppose that $\E[Z_{in},Q_{in}] \leq  c^2 \mu^{1 + 2\delta_*} C^{-1}$ and let $$T = \sup\{t \in \R: \E[Z,Q](t) \leq 4 c^2 \mu^{1 + 2\delta_*} C^{-1}\}.$$ 
    By continuity, $T > 0$. Now for any $t \leq T$, $\sup_{s \in [0,t]} \E(s) \leq \E(T)$. Then by \eqref{bootstrap_equation}, $$\frac{d}{dt}\E(t) \leq - 4 c \D(t) + \mu^{-1/2 - \delta_*} 
 C^{1/2} \E(T) \D(t) \leq -2 c \D(t) dt \leq 0,$$
 for all $t \in (0,T)$. Thus $\E(t) + c \int_0^t \D dt$ is monotonically non-increasing on $[0,T)$, and so $T = \infty$, which proves Theorem \ref{symmetric_theorem}.
\end{remark}

\subsection{Proof of Main Theorem}\label{proof_of_main}

We present here the proof of Theorem \ref{main_theorem} assuming Theorem \ref{symmetric_theorem}. Immediately from the definition of the symmetrized coordinates \eqref{def_of_symmetrized_coords}, if $$||\omega_{in}||_{V(0)} + \sqrt{R}||\nabla \theta_{in}||_{V(0)} \leq \delta \mu^{1/2 + \delta_*}$$ for some $\delta > 0$, then 
$$||Z_{in}||_{\tilde{H}(0)} + ||Q_{in}||_{\tilde{H}(0)} \leq \delta \mu^{1/2 + \delta_*},$$
and so we can apply Theorem \ref{symmetric_theorem} to $(Z,Q)$ if $\delta$ is sufficiently small.
We then note that 
$$p^{1/4} \lesssim \langle t \rangle^{1/2} (k^2 + \eta^2)^{1/4} \lesssim \langle t \rangle^{1/2}\langle k, \eta \rangle^{1/2}$$
and 
$$p^{-1/4} \lesssim |k|^{-1/2} \langle k t \rangle^{-1/2} \langle k, \eta \rangle^{1/2}, \; \; \; p^{-1/4} \lesssim |k|^{-1} \langle t \rangle^{-1/2} (k^2 + \eta^2)^{1/4}.$$ We then have the following:
\begin{equation}\label{decay_exp}
    \begin{split}
         || \omega ||_{V(t)} &=  \sum_{0 \leq j \leq 1}|| \langle \partial_x, \partial_y  + t \partial_x \rangle^{n}|\partial_x, \partial_y + t \partial_x|^{-1/2}\langle \partial_x \rangle^{m+1/2}\langle c \lambda(\mu, \partial_x) t \rangle^J \langle \frac{\partial_x}{\mu}\rangle^{-j/3} \partial_y^j \omega||_{L_{x,y}^2}\\
        &= \sum_{0 \leq j \leq 1}|| \langle k, \eta \rangle^{n}|k, \eta|^{-1/2} \langle \partial_x \rangle^{m+1/2} \langle c \lambda_k t \rangle^J \alpha^j |\eta|^j |k|^{1/2} \Omega_k||_{L_{k, \eta}^2}\\
        &= \sum_{0 \leq j \leq 1}|| \langle k, \eta \rangle^{n}|k, \eta|^{-1/2} \langle k \rangle^m \langle c \lambda_k t \rangle^J \alpha^j |\eta|^j p^{1/4} Z_k||_{L_{k, \eta}^2}\\
        &\lesssim \langle t \rangle^{1/2}\sum_{0 \leq j \leq 1}|| \langle k, \eta \rangle^{n} \langle k \rangle^m \langle c \lambda_k t \rangle^J \alpha^j |\eta|^j Z_k||_{L_{k, \eta}^2} = \langle t \rangle^{1/2}||Z||_{\tilde{H}(t)}.
    \end{split}
\end{equation}
Similar reasoning gives
\begin{equation}\label{decay_from_change}
    \begin{split}
        \sqrt{R} ||  \nabla\theta ||_{V(t)} \lesssim \langle t \rangle^{1/2}|| Q ||_{\tilde{H}(t)},& \quad \quad|| \partial_x u^1||_{V(t)} \lesssim \langle t \rangle^{-1/2}||Z||_{\tilde{H}(t)},\\
        \sqrt{R}|| \partial_x \theta||_{V(t)} \lesssim \langle t \rangle^{-1/2}||Q||_{\tilde{H}(t)},& \quad \quad || \partial_x u^2||_{V(t)} \lesssim  \langle t \rangle^{-1/2} || \partial_x p^{-1/2} Z ||_{\tilde{H}(t)},
    \end{split}
\end{equation}
and
\begin{equation}\label{decay_2}
    ||\partial_x |\partial_x, \partial_y + t \partial_x|^{-1} \partial_x u^2||_{V(t)} \lesssim  \langle t \rangle^{-3/2}|| Z ||_{\tilde{H}(t)}.
\end{equation}
Combining \eqref{decay_exp}, \eqref{decay_from_change}, and \eqref{decay_2} with \eqref{symm_estimate} from the conclusion of Theorem \ref{symmetric_theorem}, we obtain \eqref{big_estimate} and hence Theorem \ref{main_theorem}.

\section{Linear Estimates}\label{linear_section}

The proof of Proposition \ref{linear_proposition} relies on the computation  each term of $\frac{d}{dt} E_k$ in the linear case, together with careful selection of the coefficients $c_\alpha, c_\beta,$ and $c_\tau$. We begin with the following lemma concerning the principal hypocoercive terms of the time derivative of \eqref{linear_energy}:
\begin{lemma}\label{hypo_coerc}
    For $(Z_k, Q_k)$ solving \eqref{linear_system} with $k \neq 0$, the following holds:
    \begin{equation}\label{gamma_est}
    \begin{split}    \partial_t\biggl( N_k |Z_k|^2 + N_k |Q_k|^2 + &\frac{1}{2\sqrt{R}}\mathrm{Re}\left(\frac{\partial_t p}{ |k| p^{1/2}} N_k Z_k\bar{Q}_k \right) \biggr) + 2\mu p N_k\left(|Z_k|^2 + |Q_k|^2\right)\\
    &\quad\quad\quad + \frac{1}{2\sqrt{R}} N_k \frac{|k|^3}{p^{3/4}} \left(|Z_k|^2 + |Q_k|^2\right)\leq 0,
        \end{split}
    \end{equation}
    \begin{equation}\label{alpha_est}
    \begin{split}    \alpha_k \partial_t\biggl( N_k |(\eta -kt)Z_k|^2 + &N_k |(\eta -kt)Q_k|^2 + \frac{1}{2\sqrt{R}}\mathrm{Re}\left(\frac{\partial_t p}{ |k| p^{1/2}} N_k (\eta -kt)^2 Z_k\bar{Q}_k \right) \biggr)\\ &+ 2\mu \alpha_k  p N_k (\eta -kt)^2 \left(|Z_k|^2 + |Q_k|^2\right) + \frac{1}{2\sqrt{R}} \alpha_k N_k \frac{|k|^3}{p^{3/4}} (\eta -kt)^2 \left(|Z_k|^2 + |Q_k|^2\right)\\
    &\quad \leq \lambda_k \left(1 + \frac{1}{2\sqrt{R}}\right)N_k\left(|Z_k|^2 + |Q_k|^2\right) + \mu p N_k\left(|Z_k|^2 + |Q_k|^2\right),
        \end{split}
    \end{equation}
    \begin{equation}\label{beta_est}
    \begin{split}    \beta_k \partial_t \biggl( k(\eta -kt) |Z_k|^2 + &k(\eta -kt) |Q_k|^2 - \frac{1}{2\sqrt{R}} \mathrm{Re}\left(\frac{\partial_t p}{|k| p^{1/2}}k(\eta-kt) Z_k \bar{Q}_k\right)\biggr) + \lambda_k\left(|Z_k|^2 + |Q_k|^2\right)\\
    & \leq \beta_k | \mathrm{Re} \frac{1}{\sqrt{R}} k(\eta-kt) \frac{|k|^3}{p^{3/4}} | + 2 \nu \beta_k p |k(\eta-kt)| |Z_k|^2\\
    &\quad + 2 \kappa \beta_k p |k(\eta-kt)| |Q_k|^2 + \beta_k \frac{\partial_t p}{|k| p^{1/2}} \frac{\nu + \kappa}{2\sqrt{R}}|k(\eta-kt)| |\mathrm{Re}(Z_k \bar{Q}_k)|.
        \end{split}
    \end{equation}

\end{lemma}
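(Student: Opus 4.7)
The lemma is proved by direct computation of $\partial_t$ applied to each piece, substituting \eqref{linear_system} for $\partial_t Z_k, \partial_t Q_k$ and using the identities \eqref{time_derivs}. The central structural fact driving all three estimates is that the skew-symmetric coupling $\pm\sqrt{R}|k|p^{-1/2}$ between $Z_k$ and $Q_k$ produces an exact cancellation in $\partial_t(|Z_k|^2 + |Q_k|^2)$, while the cross term $\tfrac{1}{2\sqrt{R}}\mathrm{Re}(\tfrac{\partial_t p}{|k|p^{1/2}} N_k Z_k \bar{Q}_k)$ is engineered so that its derivative---through the $\sqrt{R}|k|p^{-1/2}(|Z_k|^2 - |Q_k|^2)$ piece inside $\partial_t \mathrm{Re}(Z_k \bar{Q}_k)$---exactly cancels the $\tfrac{1}{2}\tfrac{\partial_t p}{p}(|Z_k|^2 - |Q_k|^2)$ residual left behind by the time-dependent coefficients $\pm \tfrac{1}{4}\tfrac{\partial_t p}{p}$ in \eqref{linear_system}.

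For \eqref{gamma_est}, after these two cancellations, the remaining contributions are: the direct viscous dissipation $2pN_k(\nu|Z_k|^2 + \kappa|Q_k|^2) \geq 2\mu p N_k(|Z_k|^2 + |Q_k|^2)$; the contribution from $\partial_t N_k = -\tfrac{2|k|^3}{(2\sqrt{R}-1)p^{3/2}}N_k$, which supplies the $\tfrac{|k|^3}{p^{3/2}}N_k(|Z_k|^2 + |Q_k|^2)$ dissipation; and cross terms in $\mathrm{Re}(Z_k \bar{Q}_k)$ produced by $\partial_t N_k$ and $\partial_t(\tfrac{\partial_t p}{|k| p^{1/2}}) = \tfrac{2|k|^3}{p^{3/2}}$, which are dominated via $|\mathrm{Re}(Z_k \bar{Q}_k)| \leq \tfrac{1}{2}(|Z_k|^2 + |Q_k|^2)$ together with $|\tfrac{\partial_t p}{|k| p^{1/2}}| \leq 2$ and absorbed by the two dissipations; the constant $\tfrac{1}{2\sqrt{R}-1}$ inside \eqref{definition_of_n_k} is tuned precisely to balance the coefficients after this absorption. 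For \eqref{alpha_est} the same pattern repeats with the extra factor $\alpha_k(\eta - kt)^2$; the genuinely new term comes from $\partial_t((\eta - kt)^2) = \partial_t p$, producing $\alpha_k \partial_t p \cdot N_k(|Z_k|^2 + |Q_k|^2)$ plus an analogous cross-term contribution. I split this via AM-GM as
\[
\alpha_k |\partial_t p| = 2\alpha_k|k||\eta - kt| \leq \tfrac{\alpha_k k^2}{\lambda_k} + \alpha_k \lambda_k (\eta - kt)^2,
\]
and then check from \eqref{definition_of_alpha_k}--\eqref{definition_of_lambda_k} (using $\mu \sim \nu$ from \eqref{diffusion_assumption} in the low-frequency regime) that $\alpha_k k^2/\lambda_k \leq \lambda_k$ and $\alpha_k \lambda_k \leq \mu$. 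Since $(\eta-kt)^2 \leq p$, this delivers exactly the $\lambda_k(1 + \tfrac{1}{2\sqrt{R}}) N_k(\cdot) + \mu p N_k(\cdot)$ errors on the RHS, with the $\tfrac{1}{2\sqrt{R}}$ contribution coming from the cross-term counterpart.

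For \eqref{beta_est} the picture is dual: $\partial_t(k(\eta-kt)) = -k^2$ is unambiguously negative, producing the good term $-\beta_k k^2(|Z_k|^2 + |Q_k|^2)$, and \eqref{definition_of_beta_k} together with \eqref{diffusion_assumption} give $\beta_k k^2 \geq \lambda_k$ in both frequency regimes (for $|k|\geq \mu$, $\beta_k k^2 = \mu^{1/3}|k|^{2/3} = \lambda_k$ exactly; for $|k|<\mu$, $\beta_k k^2 = k^2/\mu \sim k^2/\nu = \lambda_k$), yielding the $\lambda_k(|Z_k|^2+|Q_k|^2)$ term on the LHS. The sign on the cross term in the beta-energy is flipped to $-\tfrac{1}{2\sqrt{R}}$ specifically so that the $\tfrac{\partial_t p}{p}$-residual cancellation is preserved despite the sign of $\partial_t(k(\eta-kt)) = -k^2$ differing from that of $\partial_t((\eta-kt)^2) = \partial_t p$; the remaining derivative terms fall out directly as the explicit error expressions on the RHS of \eqref{beta_est}, with $\nu\beta_k p|k(\eta-kt)||Z_k|^2$ and $\kappa\beta_k p |k(\eta-kt)||Q_k|^2$ coming from feeding $-\nu p Z_k, -\kappa p Q_k$ into $\partial_t|Z_k|^2, \partial_t|Q_k|^2$. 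The chief obstacle throughout is sign bookkeeping: a single incorrect sign on any $\tfrac{\partial_t p}{|k|p^{1/2}}$ prefactor causes the $\tfrac{\partial_t p}{p}(|Z_k|^2 - |Q_k|^2)$ residuals to reinforce rather than cancel, breaking the estimate entirely. The AM-GM split in \eqref{alpha_est} is the main technical step; everything else is careful but essentially mechanical differentiation.
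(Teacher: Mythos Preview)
Your approach is essentially the same as the paper's, and your identification of the cancellation mechanism and the overall structure of all three estimates is correct. There is, however, one concrete error in your AM--GM split for \eqref{alpha_est}.

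You write
\[
2\alpha_k|k||\eta-kt| \leq \frac{\alpha_k k^2}{\lambda_k} + \alpha_k \lambda_k (\eta-kt)^2
\]
and then assert $\alpha_k k^2/\lambda_k \leq \lambda_k$. This is correct at high frequencies (both sides equal $\mu^{1/3}|k|^{2/3}$), but it fails at low frequencies: when $|k|<\mu$ one has $\alpha_k=1$, so $\alpha_k k^2/\lambda_k = \nu$ (or $\mu$, depending on which convention for $\lambda_k$ one uses at low frequencies), whereas $\lambda_k \sim k^2/\nu \to 0$ as $|k|\to 0$. The inequality $\nu \leq k^2/\nu$ is the wrong way around for small $|k|$.

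The paper's fix is to choose a different weight in the AM--GM. The key algebraic identity is $\alpha_k|k| = \sqrt{\lambda_k \mu}$ in both frequency regimes (exactly at high frequencies; up to the harmless $\mu/\nu$ ratio at low frequencies). With this, one writes
\[
2\alpha_k|k(\eta-kt)| = 2\sqrt{\lambda_k}\cdot \sqrt{\mu}\,|\eta-kt| \leq \lambda_k + \mu(\eta-kt)^2 \leq \lambda_k + \mu p,
\]
which lands directly on the right-hand side of \eqref{alpha_est}. Your other check, $\alpha_k\lambda_k \leq \mu$, is correct and in fact equivalent to this identity. The rest of your argument---the $\partial_t p/p$ cancellations, the role of the $1/(2\sqrt{R}-1)$ constant in $N_k$, and the $\beta$-estimate---matches the paper.
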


\begin{proof}
The proofs of \eqref{gamma_est}, \eqref{alpha_est}, and \eqref{beta_est} are straightforward, relying on direct computation, \eqref{boundedness}, \eqref{time_derivs}, the triangle inequality, and Young's product inequality. The chief complexity comes from the number of terms. We begin by proving \eqref{gamma_est}. By definition of $N_k$ \eqref{definition_of_n_k}  and using that $(Z_k,Q_k)$ solve \eqref{linear_system}, we obtain the following:
\begin{equation}\label{lin_gam_z}
    \begin{split}
        \partial_t \biggl(N_k|Z_k|^2 \biggr) &= -N_k\frac{1}{2\sqrt{R} -1} \partial_t\left(\frac{\partial_t p}{|k| p^{1/2}}\right)|Z_k|^2\\
        & \quad + 2N_k \biggl(-\frac{1}{4} \frac{\partial_t p}{p} |Z_k|^2 -\sqrt{R} |k| p^{-1/2} \mathrm{Re}( Q_k \bar{Z_k}) - \nu p |Z_k|^2 \biggr),
    \end{split}
\end{equation}
\begin{equation}\label{lin_gam_q}
    \begin{split}
        \partial_t \biggl(N_k|Q_k|^2 \biggr) &= -N_k\frac{1}{2\sqrt{R} -1} \partial_t\left(\frac{\partial_t p}{|k| p^{1/2}}\right)|Q_k|^2\\
        & \quad + 2N_k\biggl(\frac{1}{4} \frac{\partial_t p}{p}|Q_k|^2 + \sqrt{R} |k| p^{-1/2}\mathrm{Re}(Z_k \bar{Q}_k) - \kappa p |Q_k|^2 \biggr),
    \end{split}
\end{equation}
\begin{equation}\label{lin_gam_mix}
    \begin{split}
        \frac{1}{2\sqrt{R}} \partial_t \mathrm{Re}\left(\frac{\partial_t p}{ |k| p^{1/2}} N_k Z_k\bar{Q}_k \right) &= \frac{1}{2\sqrt{R}} \biggr( \partial_t \left(\frac{\partial_t p}{|k| p^{1/2}} \right) \mathrm{Re}\left(N_k Z_k \bar{Q_k}\right) \\
        &\quad\quad+ \mathrm{Re}\left(-N_k \frac{1}{2\sqrt{R} -1} \frac{\partial_t p}{ |k| p^{1/2}}  \partial_t \left( \frac{\partial_t p}{|k| p^{1/2}} \right)  Z_k\bar{Q}_k \right)\\
        &\quad\quad +\mathrm{Re}\left(\frac{\partial_t p}{ |k| p^{1/2}}N_k\left(-\frac{1}{4}\frac{\partial_t p}{p}Z_k -\sqrt{R} \frac{|k|}{p^{1/2}}Q_k - \nu p Z_k\right)\bar{Q}_k \right)\\
        &\quad\quad +\mathrm{Re}\left(\frac{\partial_t p}{ |k| p^{1/2}}N_k Z_k \left(\frac{1}{4} \frac{\partial_t p}{p} \bar{Q}_k + \sqrt{R}\frac{|k|}{p^{1/2}} \bar{Z_k} - \kappa p \bar{Q}_k\right)\right).
    \end{split}
\end{equation}
Adding \eqref{lin_gam_z}, \eqref{lin_gam_q}, and \eqref{lin_gam_mix} yields
\begin{equation}\label{lin_gam_main}
\begin{split}
        \partial_t \biggl( N_k |Z_k|^2 + N_k &|Q_k|^2 + \frac{1}{2\sqrt{R}}\mathrm{Re}\left(\frac{\partial_t p}{ |k| p^{1/2}}N_k Z_k\bar{Q}_k \right) \biggr)\\
        &= - N_k \frac{1}{2\sqrt{R}-1} \partial_t \biggl(\frac{\partial_t p}{|k| p^{1/2}}\biggr)\biggl(|Z_k|^2 + |Q_k|^2\biggr)\\
        &\quad + \frac{1}{2\sqrt{R}}\partial_t \left(\frac{\partial_t p}{|k|p^{1/2}}\right)N_k\biggl( 1 - \frac{1}{2\sqrt{R} -1}\frac{\partial_t p}{|k| p^{1/2}}\biggr) \mathrm{Re}(Z_k \bar{Q}_k) \\
        &\quad - 2 N_k\left(\nu p |Z_k|^2 + \kappa p |Q_k|^2\right) -\frac{\nu+\kappa}{2\sqrt{R}} \frac{\partial_t p}{|k| p^{1/2}}N_kp\mathrm{Re}(Z_k \bar{Q}_k).
\end{split}
\end{equation}
Now we compute using the triangle inequality, Young's product inequality, and \eqref{boundedness} , \eqref{time_derivs}:
\begin{equation}\label{leftover_gamma_rho}
    \begin{split}
        \frac{1}{2\sqrt{R}}\biggl|\partial_t \left(\frac{\partial_t p}{|k|p^{1/2}}\right)&N_k \biggl(1 - \frac{1}{2\sqrt{R} -1}\frac{\partial_t p}{|k| p^{1/2}} \ \biggr)\mathrm{Re}(Z_k \bar{Q}_k) \biggr | \\
        &\leq \left(\frac{1}{2\sqrt{R}-1} - \frac{1}{4\sqrt{R}}\right)\left|\partial_t \left(\frac{\partial_t p}{|k|p^{1/2}}\right)\right|\left(N_k |Z_k|^2 + N_k|Q_k|^2\right),
        \end{split}
\end{equation}
and
\begin{equation}\label{leftover_gamma_gamma}
    \begin{split}
        \frac{\nu+\kappa}{2\sqrt{R}}\left|\frac{\partial_t p}{|k| p^{1/2}}N_kp\mathrm{Re}(Z_k \bar{Q}_k)\right| \leq \frac{\nu + \kappa}{2\sqrt{R}} N_k p\left(|Z_k|^2 + |Q_k|^2\right).
    \end{split}
\end{equation}
Combining \eqref{lin_gam_main}, \eqref{leftover_gamma_gamma}, \eqref{leftover_gamma_rho}, as well as using the assumption \eqref{diffusion_assumption}, the definition of $\mu$ \eqref{mu_definition}, and \eqref{time_derivs}, we arrive at \eqref{gamma_est} as desired. We now turn our attention to \eqref{alpha_est}. By performing computations similar to those used in \eqref{lin_gam_z}, \eqref{lin_gam_q}, and \eqref{lin_gam_mix} one can show that
\begin{equation}\label{lin_alph_main}
    \begin{split}
        \partial_t \biggl( N_k (\eta-kt)^2|Z_k|^2 + N_k &(\eta-kt)^2 |Q_k|^2 + \frac{1}{2\sqrt{R}}\mathrm{Re}\left(\frac{\partial_t p}{ |k| p^{1/2}}N_k(\eta-kt)^2 Z_k\bar{Q}_k \right) \biggr)\\ &= - N_k \frac{1}{2\sqrt{R}-1} \partial_t \biggl(\frac{\partial_t p}{|k| p^{1/2}}\biggr)(\eta-kt)^2\biggl(|Z_k|^2 + |Q_k|^2\biggr)\\
        &\quad \quad + \frac{1}{2\sqrt{R}}\partial_t \left(\frac{\partial_t p}{|k|p^{1/2}}\right)N_k\biggl( 1 - \frac{1}{2\sqrt{R} -1}\frac{\partial_t p}{|k| p^{1/2}}\biggr)(\eta-kt)^2\mathrm{Re}(Z_k \bar{Q}_k) \\
        &\quad \quad - 2  N_k(\eta-kt)^2\left(\nu p |Z_k|^2 + \kappa p |Q_k|^2\right) -\frac{\nu+\kappa}{2\sqrt{R}} (\eta-kt)^2\frac{\partial_t p}{|k| p^{1/2}}N_kp\mathrm{Re}(Z_k \bar{Q}_k)\\
        &\quad\quad -2 k(\eta-kt) N_k \left(|Z_k|^2 + |Q_k|^2\right) - \frac{1}{\sqrt{R}} k(\eta-kt) \mathrm{Re}\left(\frac{\partial_t p}{|k| p^{1/2}} N_k Z_k Q_k\right).
    \end{split}
\end{equation}
By the same techniques used in obtaining \eqref{leftover_gamma_gamma} and \eqref{leftover_gamma_rho}, together with \eqref{boundedness} and \eqref{time_derivs}, we obtain
\begin{equation}\label{lin_alph_main_2}
    \begin{split}
          \partial_t\biggl( N_k |(\eta -kt)Z_k|^2 + &N_k |(\eta -kt)Q_k|^2 + \frac{1}{2\sqrt{R}}\mathrm{Re}\left(\frac{\partial_t p}{ |k| p^{1/2}} N_k (\eta -kt)^2 Z_k\bar{Q}_k \right) \biggr)\\ &+ 2\mu  p N_k (\eta -kt)^2 \left(|Z_k|^2 + |Q_k|^2\right) + \frac{1}{2\sqrt{R}} N_k \frac{|k|^3}{p^{3/4}} (\eta -kt)^2 \left(|Z_k|^2 + |Q_k|^2\right)\\
    &\quad \leq 2 |k(\eta-kt)| N_k \left(|Z_k|^2 + |Q_k|^2\right) +  \frac{2}{\sqrt{R}} |k(\eta-kt)| |\mathrm{Re}\left( N_k Z_k Q_k\right)|.
    \end{split}
\end{equation}
After multiplying by $\alpha_k$, the second-to-last term on the right-hand side of \eqref{lin_alph_main_2} can be estimated via Cauchy-Schwarz, Young's product inequality, and the definitions of $\alpha_k$ \eqref{definition_of_alpha_k} and $\lambda_k$ \eqref{definition_of_lambda_k}:
\begin{equation}\label{first_terms}
\begin{split}
        2 \alpha_k |k(\eta-kt)| N_k\left(|Z_k|^2 + |Q_k|^2\right) & \leq 2 N_k \left(\sqrt{\lambda_k}|Z_k| \sqrt{\mu}|(\eta-kt)Z_k|\right) + 2 N_k \left(\sqrt{\lambda_k}|Q_k| \sqrt{\mu}|(\eta-kt)Q_k|\right)\\
        &\leq \lambda_k N_k \left(|Z_k|^2+|Q_k|^2\right) + \mu_k N_k p \left(|Z_k|^2+|Q_k|^2\right).
\end{split}
\end{equation}
The final term of \eqref{lin_alph_main_2} is estimated in a similar fashion:
\begin{equation}\label{middle_term}
    \begin{split}
    \alpha_k\frac{2}{\sqrt{R}} |k(\eta-kt)| \mathrm{Re}\left( N_k Z_k Q_k\right)| & \leq \frac{2\sqrt{\lambda_k \mu}}{\sqrt{R}}N_k\left(|Z_k||Q_k|\right)\\
    &\leq \frac{1}{2\sqrt{R}}N_k\left(\lambda + \mu\right)\left(|Z_k|^2 + |Q_k|^2\right).
    \end{split}
\end{equation}
Together, \eqref{lin_alph_main_2}, \eqref{first_terms}, and \eqref{middle_term} give \eqref{alpha_est}. To complete the proof of Lemma \ref{hypo_coerc}, we need to estimate the $\beta$ terms and obtain \eqref{beta_est}. We compute the relevant time derivatives as follows:
\begin{equation}\label{lin_bet_z}
    \begin{split}
        \partial_t k (\eta -kt) |Z_k|^2 &= - k^2 |Z_k|^2 +2 k (\eta -kt)\left(-\frac{1}{4} \frac{\partial_t p}{p} |Z_k|^2 - \sqrt{R} |k| p^{-1/2} \mathrm{Re}(Q_x \bar{Z}_k) - \nu p|Z_k|^2\right),
    \end{split}
\end{equation}
\begin{equation}\label{lin_bet_q}
    \begin{split}
        \partial_t k (\eta -kt) |Z_k|^2 &= - k^2 |Q_k|^2 +2 k (\eta -kt)\left(\frac{1}{4} \frac{\partial_t p}{p} |Q_k|^2 + \sqrt{R} |k| p^{-1/2} \mathrm{Re}(Z_k \bar{Q}_k) - \kappa p|Q_k|^2\right),
    \end{split}
\end{equation}
\begin{equation}\label{lin_bet_mix}
    \begin{split}
        \frac{1}{2\sqrt{R}} \partial_t \mathrm{Re}\left(\frac{\partial_t p}{ |k| p^{1/2}} k(\eta - kt) Z_k\bar{Q}_k \right) &= \frac{1}{2\sqrt{R}} \biggl( \partial_t \biggl(\frac{\partial_t p}{ |k| p^{1/2}}\biggr) k(\eta - kt)\mathrm{Re}( Z_k\bar{Q}_k) - k^2 \mathrm{Re}\left(\frac{\partial_t p}{ |k| p^{1/2}} Z_k\bar{Q}_k \right)\\
        &\quad + \frac{\partial_t p}{ |k| p^{1/2}}k(\eta -kt)\mathrm{Re}\left((-\frac{1}{4} \frac{\partial_t p}{p} Z_k - \sqrt{R} |k| p^{-1/2} Q_k - \nu p Z_k)\bar{Q}_k\right)\\
        &\quad + \frac{\partial_t p}{ |k| p^{1/2}}k(\eta -kt)\mathrm{Re}\left(\bar{Z}(\frac{1}{4} \frac{\partial_t p}{p} Q_k + \sqrt{R} |k| p^{-1/2} Q_k - \kappa p Q_k)\right)\biggr).
    \end{split}
\end{equation}
Summing \eqref{lin_bet_z}, \eqref{lin_bet_q}, and \eqref{lin_bet_mix} we arrive at
\begin{equation}\label{lin_main_bet}
    \begin{split}
        \partial_t\biggl( k(\eta -kt) |Z_k|^2 + k(\eta -kt) |Q_k|^2 - \frac{1}{2\sqrt{R}} &\mathrm{Re}\left(\frac{\partial_t p}{|k| p^{1/2}}k(\eta-kt) Z_k \bar{Q}_k\right)\biggr) + k^2\left(|Z_k|^2 + |Q_k|^2\right)\\
        &=
        \frac{1}{2\sqrt{R}} \partial_t \biggl( \frac{\partial_t p}{|k| p^{1/2}} \biggr)\mathrm{Re}(Z_k \bar{Q}_k) - \nu p|Z_k|^2 - \kappa p |Q_k|^2\\
        &\quad - \frac{\partial_t p}{|k| p^{1/2}} \frac{\nu + \kappa}{2\sqrt{R}} k(\eta-kt) \mathrm{Re}(Z_k \bar{Q}_k).
    \end{split}
\end{equation}
Applying absolute value signs to the right-hand-side of \eqref{lin_main_bet}, multiplying through by $\beta_k$ and noting that $k^2 \beta_k = \lambda_k$, we obtain \eqref{beta_est} as desired.
\end{proof}

We now need to estimate the $\tau$ and $\tau \alpha$ terms. The only changes, as compared with \eqref{gamma_est} and \eqref{alpha_est}, will arise from the time derivative hitting $\mathfrak{J}_k$.
We also avoid some simplifications of the $\tau \alpha$ terms that are present in \eqref{alpha_est}, as these will require more subtle estimates found in the final proof of Proposition \ref{linear_proposition}.
\begin{lemma}\label{j_lemma}
    The following two estimates hold: 
    \begin{equation}\label{tau_est}
        \begin{split}
        \partial_t\biggl(\mathfrak{J}_k |Z_k|^2 + \mathfrak{J}_k|Q_k|^2 + \mathrm{Re}\biggl(\frac{1}{2 \sqrt{R}} \frac{\partial_t p}{|k| p^{1/2}}\mathfrak{J}_k Z_k \bar{Q}_k\biggr)\biggr) &+ \left(\frac{1}{2} - \frac{1}{4\sqrt{R}}\right)k^2 p^{-1}\left(|Z_k|^2 + |Q_k|^2\right)\\
        &\leq 16 \pi \frac{R}{\epsilon}  \mu p \left(|Z_k|^2 + |Q_k|^2\right)\\
        & \quad + \frac{\pi}{8\sqrt{R}}|k|^{3/2}p^{-3/4}\left(|Z_k|^2 + |Q_k|^2\right),
        \end{split}
    \end{equation}\label{alpha_tau_est}
        \begin{equation}
        \begin{split}
        \alpha_k\partial_t\biggl(\mathfrak{J}_k (\eta-kt)^2|Z_k|^2 + \mathfrak{J}_k(\eta-kt)^2|Q_k|^2 + &\mathrm{Re}\biggl(\frac{1}{2 \sqrt{R}} \frac{\partial_t p}{|k| p^{1/2}}\mathfrak{J}_k (\eta-kt)^2 Z_k \bar{Q}_k\biggr)\biggr)\\
        &+ \alpha_k\left(\frac{1}{2} - \frac{1}{4\sqrt{R}}\right)k^2 p (\eta-kt)^2\left(|Z_k|^2 + |Q_k|^2\right)\\
        &\quad \leq \alpha_k 16 \pi \frac{R}{\epsilon}  \mu p (\eta-kt)^2\left(|Z_k|^2 + |Q_k|^2\right)\\
        &\quad\quad  + \alpha_k\frac{\pi}{8\sqrt{R}}|k|^{3/2}p^{-3/4}(\eta-kt)^2\left(|Z_k|^2 + |Q_k|^2\right)\\
        & \quad\quad  + \frac{\pi}{4} D_{k,\beta}^{1/2} D_{k,\gamma}^{1/2}.
        \end{split}
    \end{equation}

\end{lemma}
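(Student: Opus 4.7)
The plan is to mirror the proof of Lemma \ref{hypo_coerc}, replacing the multiplier $N_k$ by $\mathfrak{J}_k$ throughout. Since $\mathfrak{J}_k$ is uniformly bounded via \eqref{boundedness} and its time derivative is $\partial_t\mathfrak{J}_k = -\frac{1}{2}k^2 p^{-1}$ by \eqref{time_derivs}, the algebraic structure of the derivation of \eqref{gamma_est} transports verbatim, with the essential change that the principal coercive term now comes directly from $\partial_t\mathfrak{J}_k$ rather than from $\partial_t N_k$. This is the source of the $\bigl(\frac{1}{2} - \frac{1}{4\sqrt R}\bigr)k^2 p^{-1}$ factor on the left of \eqref{tau_est}.

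To prove \eqref{tau_est}, I would compute $\partial_t(\mathfrak{J}_k|Z_k|^2)$, $\partial_t(\mathfrak{J}_k|Q_k|^2)$, and $\partial_t\bigl(\frac{1}{2\sqrt R}\mathrm{Re}\bigl(\frac{\partial_t p}{|k|p^{1/2}}\mathfrak{J}_k Z_k\bar Q_k\bigr)\bigr)$ in exact parallel with \eqref{lin_gam_z}, \eqref{lin_gam_q}, \eqref{lin_gam_mix}, then sum. The same cancellations that lead to \eqref{lin_gam_main} occur: the $\pm 2\sqrt R |k|p^{-1/2}\mathfrak{J}_k\mathrm{Re}(Z_k\bar Q_k)$ coupling contributions from $\partial_t|Z_k|^2$ and $\partial_t|Q_k|^2$ cancel by antisymmetry, and the two $\pm\frac{1}{2}\mathfrak{J}_k\frac{\partial_t p}{p}(|Z_k|^2-|Q_k|^2)$ contributions, one from the sum of pure derivatives and one from the coupling piece of the cross derivative, cancel as well. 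What remains is (a) the coercive term $-\frac{k^2}{2p}(|Z_k|^2+|Q_k|^2)$ from $\partial_t\mathfrak{J}_k$; (b) the small mixed term $-\frac{k^2}{4\sqrt R\,p}\frac{\partial_t p}{|k|p^{1/2}}\mathrm{Re}(Z_k\bar Q_k)$ from $\partial_t\mathfrak{J}_k$ hitting the cross piece; (c) the error $\frac{1}{\sqrt R}\frac{|k|^3}{p^{3/2}}\mathfrak{J}_k\mathrm{Re}(Z_k\bar Q_k)$ from $\partial_t\bigl(\frac{\partial_t p}{|k|p^{1/2}}\bigr)$ via \eqref{time_derivs}; and (d) the dissipation errors $-2\mathfrak{J}_k(\nu p|Z_k|^2 + \kappa p|Q_k|^2) - \frac{\nu+\kappa}{2\sqrt R}\frac{\partial_t p}{|k|p^{1/2}}\mathfrak{J}_k\, p\,\mathrm{Re}(Z_k\bar Q_k)$.

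Each is estimated using \eqref{boundedness}, \eqref{time_derivs}, Cauchy--Schwarz, and Young's inequality. Term (b) is bounded by $\frac{1}{4\sqrt R}k^2 p^{-1}(|Z_k|^2+|Q_k|^2)$ using $\bigl|\frac{\partial_t p}{|k|p^{1/2}}\bigr|\leq 2$, then absorbed into (a), leaving the $\frac{1}{2} - \frac{1}{4\sqrt R}$ coefficient on the LHS. Term (c) is bounded by $\frac{\pi}{8\sqrt R}\frac{|k|^3}{p^{3/2}}(|Z_k|^2+|Q_k|^2)$, and since $|k|^2\leq p$ one has $\frac{|k|^3}{p^{3/2}}\leq |k|^{3/2}p^{-3/4}$, matching the second term on the right of \eqref{tau_est}. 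Term (d) is controlled by $\lesssim(\nu+\kappa)p(|Z_k|^2+|Q_k|^2)$, and the assumptions \eqref{diffusion_assumption}--\eqref{mu_definition} force $\max(\nu,\kappa)\lesssim R\epsilon^{-1}\mu$, producing the $16\pi R/\epsilon\cdot \mu p$ coefficient.

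For \eqref{alpha_tau_est} the computation is identical except that each summand carries an extra factor $(\eta-kt)^2$ and an overall $\alpha_k$, and all the estimates above transfer directly. The only genuinely new contributions come from $\partial_t(\eta-kt)^2 = -2k(\eta-kt)$, which produce $-2\alpha_k k(\eta-kt)\mathfrak{J}_k(|Z_k|^2+|Q_k|^2)$ and $-\frac{\alpha_k k(\eta-kt)}{\sqrt R}\frac{\partial_t p}{|k|p^{1/2}}\mathfrak{J}_k\mathrm{Re}(Z_k\bar Q_k)$. I would bound these by writing $\alpha_k|k(\eta-kt)| = (\alpha_k k^2)^{1/2}(\alpha_k(\eta-kt)^2)^{1/2}$ and verifying, case-by-case using \eqref{definition_of_alpha_k}, \eqref{definition_of_beta_k}, \eqref{definition_of_lambda_k}, that in both regimes $|k|<\mu$ and $|k|\geq \mu$ one has the uniform estimate $\alpha_k|k(\eta-kt)|\leq (\beta_k k^2\mu p)^{1/2}$; Cauchy--Schwarz in $(Z_k,Q_k)$ then yields exactly the cross term $\frac{\pi}{4}D_{k,\beta}^{1/2}D_{k,\gamma}^{1/2}$. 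The main obstacle throughout is the bookkeeping needed to secure the cancellation of the $\frac{\partial_t p}{p}(|Z_k|^2-|Q_k|^2)$ contributions, so that no sign-indefinite term survives; the secondary nontrivial point is the frequency-uniform identity $\alpha_k|k(\eta-kt)|\leq (\beta_k k^2\mu p)^{1/2}$, which depends on the exact matching of $\alpha_k$ and $\beta_k$ in the two frequency regimes.
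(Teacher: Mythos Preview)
Your proposal is correct and follows essentially the same route as the paper: compute the $\mathfrak{J}_k$-weighted energy identity analogously to \eqref{lin_gam_z}--\eqref{lin_gam_main} (the paper records this as \eqref{lin_tau_main}), then bound the sign-indefinite dissipation block exactly as in the paper's \eqref{jk_diff_est}, and for \eqref{alpha_tau_est} handle the extra $\partial_t(\eta-kt)^2$ contributions via the frequency identity $\alpha_k|k| = (\lambda_k\mu)^{1/2} = (\beta_k k^2\mu)^{1/2}$, which is precisely what the paper uses in \eqref{first_terms}. One small caveat: your claim that Cauchy--Schwarz yields \emph{exactly} $\tfrac{\pi}{4}D_{k,\beta}^{1/2}D_{k,\gamma}^{1/2}$ is optimistic; a direct count of both extra terms (the pure one and the cross one) gives a constant closer to $\tfrac{\pi}{2}+\tfrac{\pi}{4\sqrt R}$, but this is harmless since the constant is absorbed by the later choice of $c_\alpha,c_\beta,c_\tau$ in \eqref{first_youngs}.
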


\begin{proof}
    We begin with the proof of \eqref{tau_est}. Computing the time derivatives as in the proof of \eqref{gamma_est} and \eqref{time_derivs}, we find
\begin{equation}\label{lin_tau_main}
    \begin{split}
         \partial_t\biggl(\mathfrak{J}_k |Z_k|^2 + \mathfrak{J}_k|Q_k|^2 + &\mathrm{Re}\biggl(\frac{1}{2 \sqrt{R}} \frac{\partial_t p}{|k| p^{1/2}} \mathfrak{J}_k Z_k \bar{Q}_k\biggr)\biggr)  + \frac{1}{2}k^2 p^{-1}\left(|Z_k|^2 + |Q_k|^2\right) \\
        &\quad = -2 \nu p \mathfrak{J}_k |Z_k|^2 - 2\kappa p \mathfrak{J}_k |Q_k|^2\\
        &\quad \quad -\frac{1}{2} k^2 p^{-1} \mathrm{Re}\biggl(\frac{1}{2 \sqrt{R}} \frac{\partial_t p}{|k| p^{1/2}} Z_k \bar{Q}_k\biggr)  + \mathrm{Re}\biggl(\frac{1}{2 \sqrt{R}} \partial_t\biggl(\frac{\partial_t p}{|k| p^{1/2}}\biggr) \mathfrak{J}_k Z_k \bar{Q}_k\biggr)\\
        &\quad\quad -\frac{(\nu+\kappa)}{2\sqrt{R}} \frac{\partial_t p}{|k| p^{1/2}} p\mathfrak{J}_k \mathrm{Re}(Z_k \bar{Q_k}).
    \end{split}
\end{equation}
By Young's inequality, Cauchy-Schwarz, the assumption \eqref{diffusion_assumption}, the definition of $\mu$, and \eqref{boundedness}, \eqref{time_derivs}, we see
\begin{equation}\label{jk_diff_est}
    \begin{split}
        2 \nu p |\mathfrak{J}_k| |Z_k|^2 + 2\kappa |\mathfrak{J}_k| |Q_k|^2 + &\frac{(\nu+\kappa)}{2\sqrt{R}} \frac{\partial_t p}{|k| p^{1/2}} p|\mathfrak{J}_k| |\mathrm{Re}(Z_k \bar{Q_k})|\\
        &\leq \frac{\pi}{2} p \left( \left(\nu + \frac{\nu+ \kappa}{4\sqrt{R}}\right)   + \left(\kappa + \frac{\nu+ \kappa}{4\sqrt{R}}\right)|Q_k|^2 \right)\\
        &\leq 16 \pi \frac{R}{\epsilon} \mu p \left(|Z_k|^2 + |Q_k|^2\right).
    \end{split}
\end{equation}
Using \eqref{jk_diff_est}, we apply absolute values to the right-hand-side of \eqref{lin_tau_main}, use Young's inequality and \eqref{time_derivs} and obtain
\begin{equation}
\begin{split}
    \partial_t\biggl(\mathfrak{J}_k |Z_k|^2 + \mathfrak{J}_k|Q_k|^2 + \mathrm{Re}\biggl(\frac{1}{2 \sqrt{R}} \frac{\partial_t p}{|k| p^{1/2}}\mathfrak{J}_k Z_k \bar{Q}_k\biggr)\biggr) &+ \left(\frac{1}{2} - \frac{1}{4\sqrt{R}}\right)k^2 p^{-1}\left(|Z_k|^2 + |Q_k|^2\right)\\
        &\leq 16 \pi \frac{R}{\epsilon}  \mu p \left(|Z_k|^2 + |Q_k|^2\right)\\
        & \quad + \frac{\pi}{8\sqrt{R}}|k|^{3/2}p^{-3/4}\left(|Z_k|^2 + |Q_k|^2\right),
\end{split}
\end{equation}
giving \eqref{tau_est}. The proof of \eqref{alpha_tau_est} is similar to the proof of \eqref{tau_est}, with similar modifications to those found in the proof of \eqref{alpha_est}.
\end{proof}

We now complete the proof of Proposition \ref{linear_proposition}.
\begin{proof}
    By Lemma \ref{hypo_coerc}, Lemma \ref{j_lemma}, and the definition of $N_k$ \eqref{definition_of_n_k}, we have:
    \begin{equation}\label{first_hypo}
        \begin{split}
            \partial_t E_k + &2 D_{k,\gamma} + \frac{1}{2R} D_{k,\rho} + 2 c_\alpha D_{k, \alpha} + \frac{1}{2\sqrt{R}} c_\alpha D_{k,\rho \alpha} + \frac{c_\tau}{2}\left(1 - \frac{1}{2\sqrt{R}}\right) D_{k,\tau}\\
            &+ \frac{c_\tau c_\alpha}{2}\left(1 - \frac{1}{2\sqrt{R}}\right)D_{k, \tau \alpha}  + c_\beta D_{k,\beta} \leq c_\alpha\left(1 + \frac{1}{2\sqrt{R}}\right)e^{\frac{2}{2\sqrt{R}-1}}D_{k,\beta} +  c_\alpha\left(1 + \frac{1}{2R}\right)e^{\frac{2}{2\sqrt{R}-1}}D_{k,\gamma}\\
            &\quad + 16 \pi c_\tau  \frac{R}{\epsilon} D_{k,\gamma} + c_\tau \frac{\pi}{8\sqrt{R}} D_{k, \rho} +16 \pi c_\tau c_\alpha  \frac{R}{\epsilon}  D_{k,\alpha} + c_\tau c_\alpha \frac{\pi}{8\sqrt{R}} D_{k, \rho \alpha} + c_\tau c_\alpha \frac{\pi}{4}D_{k, \beta}^{1/2} D_{k,\gamma}^{1/2}\\
            &\quad +c_\beta\beta_k | \mathrm{Re} \frac{1}{\sqrt{R}} k(\eta-kt) \frac{|k|^3}{p^{3/4}} Z_k \bar{Q}_k | + 2 \nu c_\beta \beta_k p |k(\eta-kt)| |Z_k|^2\\
    &\quad + 2 \kappa c_\beta \beta_k p |k(\eta-kt)| |Q_k|^2 + c_\beta \beta_k \frac{\partial_t p}{|k| p^{1/2}} \frac{\nu + \kappa}{2\sqrt{R}} |k(\eta-kt)||\mathrm{Re}(Z_k \bar{Q}_k)|.
        \end{split}
    \end{equation}
    We impose the following smallness conditions on $c_\alpha, c_\beta$, and $c_\tau$:
\begin{equation}\label{initial_smallness}
        c_\tau \leq \frac{1}{32 \pi}\min\{\frac{1}{8}, \frac{\epsilon}{R} \}, \quad c_\alpha \left(1+\frac{1}{2\sqrt{R}}\right)e^{\frac{2}{2\sqrt{R}-1}} < \min\{\frac{1}{25} c_\beta, \frac{1}{2 \pi}\}.
    \end{equation}
    We note that by \eqref{initial_smallness}, Young's product inequality, and $R > 1/4$, we have
    \begin{equation}\label{first_youngs}
         c_\tau c_\alpha \frac{\pi}{4} D_{k, \beta}^{1/2} D_{k,\gamma}^{1/2} \leq \frac{1}{10} c_\beta D_{k,\beta} + \frac{5}{2} \frac{c_\tau^2 c_\alpha^2}{c_\beta}D_{k,\gamma} \leq \frac{1}{10} c_\beta D_{k,\beta} + \frac{5}{32}D_{k,\gamma}.
    \end{equation}
    Under the conditions in \eqref{initial_smallness}, the expression \eqref{first_hypo} implies (using \eqref{first_youngs}) the simpler expression
        \begin{equation}\label{second_hypo}
        \begin{split}
            \partial_t E_k + &\frac{7}{10} D_{k,\gamma} + \frac{1}{2\sqrt{R}} \frac{3}{4} D_{k,\rho} +  c_\alpha D_{k, \alpha} + \frac{1}{2\sqrt{R}} \frac{3}{4} c_\alpha D_{k,\rho \alpha} + \frac{c_\tau}{2}\left(1 - \frac{1}{2\sqrt{R}}\right) D_{k,\tau}\\
            & + \frac{c_\tau c_\alpha}{2}\left(1 - \frac{1}{2\sqrt{R}}\right)D_{k, \tau \alpha}  + \frac{2}{5}  c_\beta D_{k,\beta} \leq \frac{2}{5}c_\beta D_{k,\beta} + c_\beta\beta_k | \mathrm{Re} \frac{1}{\sqrt{R}} k(\eta-kt) \frac{|k|^3}{p^{3/4}} Z_k \bar{Q}_k|\\
            &\quad + 2 \nu c_\beta \beta_k p |k(\eta-kt)| |Z_k|^2 + 2 \kappa c_\beta \beta_k p |k(\eta-kt)| |Q_k|^2 +  c_\beta \beta_k \frac{\partial_t p}{|k| p^{1/2}} \frac{\nu + \kappa}{2\sqrt{R}} |k(\eta-kt)||\mathrm{Re}(Z_k \bar{Q}_k)|.
        \end{split}
    \end{equation}
    We now impose the additional criterion
\begin{equation}\label{second_smallness}
    \frac{c_\beta^2}{c_\alpha} < \frac{1}{16}.
\end{equation}
    Noting that $\beta_k |k| \leq \alpha_k$, we estimate using \eqref{second_smallness}
    \begin{equation}\label{second_youngs}
        \begin{split}
            c_\beta\beta_k | \mathrm{Re} \frac{1}{\sqrt{R}} k(\eta-kt) \frac{|k|^3}{p^{3/4}} Z_k \bar{Q}_k| & \leq c_\beta \beta_k \frac{1}{2 \sqrt{R}} \frac{|k|^3}{p^{3/4}}|\mathrm{Re}\left( \frac{\sqrt{c_\alpha c_\beta}}{\sqrt{c_\alpha c_\beta}}k Z_k (\eta-kt) \bar{Q}_k + \frac{\sqrt{c_\alpha c_\beta}}{\sqrt{c_\alpha c_\beta}}(\eta - kt) Z_k k \bar{Q}_k  \right) |\\
            &\leq  \beta_k \frac{1}{2 \sqrt{R}} \frac{|k|^3}{p^{3/4}}\left( \frac{c_\beta^2}{c_\alpha}\frac{k^2}{2}( |Z_k|^2 + |Q_k|^2) + c_\alpha \frac{(\eta-kt)^2}{2}(|Z_k|^2 + |Q_k|^2)\right)\\
            &\leq \frac{1}{4} \frac{1}{2\sqrt{R}} D_{k,\rho} + \frac{1}{4} \frac{1}{2 \sqrt{R}} c_\alpha D_{k, \rho \alpha}.
        \end{split}
    \end{equation}
    Now we treat the final terms using Young's convolution inequality, \eqref{time_derivs}, the assumption \eqref{diffusion_assumption}, and the definition of $\mu$ \eqref{mu_definition}:
    \begin{equation}\label{third_youngs}
        \begin{split}
        2 c_\beta \beta_k |k||\eta-kt| p &\biggl(   \nu |Z_k|^2 + \kappa |Q_k|^2 + \frac{\nu + \kappa}{2 \sqrt{R}} \mathrm{Re}(Z_k \bar{Q}_k)\biggr)\\
        &\leq 2 c_\beta |\eta-kt| p \mu^{-1} \biggl(   \nu \alpha_k^{1/2} |Z_k|^2 + \kappa  \alpha_k^{1/2} |Q_k|^2 + \frac{\nu + \kappa}{2 \sqrt{R}} \alpha_k^{1/2}|Z_k||Q_k|\biggr)\\
        &\leq 2 c_\beta p \biggl( \frac{\nu}{2} \biggl(\frac{\epsilon}{32 R c_\beta} |Z_k|^2 + \alpha_k\frac{32c_\beta R^2}{\epsilon}(\eta-kt)^2 |Z_k|^2 \biggr) \\
        &\quad \quad \quad \quad + \frac{\kappa}{2} \biggl(\frac{\epsilon}{32 R c_\beta} |Q_k|^2 + \alpha_k \frac{32c_\beta R^2}{\epsilon}(\eta-kt)^2 |Q_k|^2 \biggr)\biggr)\\
        &\quad \quad \quad \quad + \frac{\nu + \kappa}{8 \sqrt{R}}\biggl(\frac{\epsilon}{32 R c_\beta}(|Z_k|^2 + |Q_k|^2) + \frac{32 R c_\beta}{\epsilon}(\eta-kt)^2(|Z_k|^2 + |Q_k|^2)\biggr)\\
        &\leq \frac{\epsilon}{32 R c_\beta} p \biggl(\biggl( \nu + \frac{\nu + \kappa}{4 \sqrt{R}}\biggr)|Z_k|^2 +  \biggl( \kappa + \frac{\nu + \kappa}{4 \sqrt{R}}\biggr)|Q_k|^2\biggr)\\
        &\quad \quad \quad + \frac{32 c_\beta^2 R^2}{\epsilon} \alpha_k (\eta-kt)^2 \biggl(\biggl( \nu + \frac{\nu + \kappa}{4 \sqrt{R}}\biggr)|Z_k|^2 +  \biggl( \kappa + \frac{\nu + \kappa}{4 \sqrt{R}}\biggr)|Q_k|^2\biggr)\\
        &\leq \frac{1}{2} D_{k, \gamma} + \frac{512 c_\beta^2 R^2 }{\epsilon^2}.
        \end{split}
    \end{equation}
 Our final smallness assumption will be therefore be
 \begin{equation}\label{third_smallness}
        \frac{c_\beta^2}{c_\alpha} < \frac{\epsilon^2}{1024 R^2}.
    \end{equation}
    Applying \eqref{second_youngs}, \eqref{third_youngs}, and \eqref{third_smallness} to \eqref{second_hypo}, we arrive at
    \begin{equation}\label{third_hypo}
        \begin{split}
            \partial_t E_k + &\frac{1}{5} D_{k,\gamma} + \frac{1}{2\sqrt{R}} \frac{1}{2} D_{k,\rho} +  \frac{1}{2} c_\alpha D_{k, \alpha} + \frac{1}{2\sqrt{R}} \frac{1}{2} c_\alpha D_{k,\rho \alpha} + \frac{c_\tau}{2} \left(1 - \frac{1}{2\sqrt{R}}\right)D_{k,\tau} + \frac{c_\tau c_\alpha}{2}\left(1 - \frac{1}{2\sqrt{R}}\right)D_{k, \tau \alpha}\\
            &\quad + \frac{2}{5}  c_\beta D_{k,\beta} \leq \frac{2}{5}  c_\beta D_{k,\beta}.
        \end{split}
    \end{equation}
    Next, we observe that $\beta_k |k|^2 = \lambda_k$ and $\alpha_k \lambda_k \leq \nu$. Additionally, \eqref{third_smallness} implies that $c_\beta^2 \leq \frac{c_\alpha}{4} + \frac{(1-c_\tau)}{4}$. Thus 
    $$\lambda_k E_k \approx_R \lambda_k(1 + c_\alpha \alpha(\eta-kt)^2)(|Z_k|^2 + |Q_k|^2) \lesssim c_\beta D_{k,\beta} + D_{k,\gamma}.$$
    We therefore obtain
        \begin{equation}\label{fourth_hypo}
        \begin{split}
            \partial_t E_k + &\frac{1}{10} D_{k,\gamma} + \frac{1}{2\sqrt{R}} \frac{1}{2} D_{k,\rho} +  \frac{1}{2} c_\alpha D_{k, \alpha} + \frac{1}{2\sqrt{R}} \frac{1}{2} c_\alpha D_{k,\rho \alpha} + \frac{c_\tau}{2} \left(1 - \frac{1}{2\sqrt{R}}\right)D_{k,\tau} + \frac{c_\tau c_\alpha}{2}\left(1 - \frac{1}{2\sqrt{R}}\right)D_{k, \tau \alpha}\\
            &\quad + \frac{1}{5}  c_\beta D_{k,\beta} + \lambda_k E_k\leq \frac{2}{5}  c_\beta D_{k,\beta}.
        \end{split}
    \end{equation}
    yielding Proposition \ref{linear_proposition} as desired.
\end{proof}
\begin{remark}
Gathering the smallness assumptions \eqref{initial_smallness}, \eqref{second_smallness}, and \eqref{third_smallness}, we state all assumptions together as
\begin{equation}\label{all_assumptions}
    c_\tau \leq \frac{1}{32 \pi }\min\{\frac{1}{8}, \frac{\epsilon}{R} \}, \quad c_\alpha \left(1+\frac{1}{2\sqrt{R}}\right)e^{\frac{2}{2\sqrt{R}-1}} < \min\{\frac{1}{25} c_\beta, \frac{1}{2 \pi}\}, \quad \frac{c_\beta^2}{c_\alpha} < \min\{\frac{\epsilon^2}{1024 R^2}, \frac{1}{16}\}. 
\end{equation}
We note that $R > 1/4$ and $\epsilon \in (0,1/2)$. Hence the following is an explicit choice of constants satisfying \eqref{all_assumptions}:
\begin{equation}
\begin{split}
     &c_\tau = \frac{1}{32 \pi }\min\{\frac{1}{8}, \frac{\epsilon}{R} \}, \quad c_\alpha = \frac{1}{50,000} \frac{\epsilon^2}{1024 R^2}\left(1+\frac{1}{2\sqrt{R}}\right)^{-2}e^{-\frac{4}{2\sqrt{R}-1}},\\ &c_\beta = \frac{1}{100} \frac{\epsilon^2}{1024 R^2}\left(1+\frac{1}{2\sqrt{R}}\right)^{-1/2}e^{-\frac{1}{2\sqrt{R}-1}}.
\end{split}
\end{equation}
\end{remark}

\subsection{Linear Estimates in Original Variables}\label{linear_results}
Just as Theorem \eqref{symmetric_theorem} implies estimates for the original system \eqref{vorticity_perturb_eqn} in Theorem \ref{main_theorem}, the pointwise-in-frequency estimates in Proposition \ref{linear_proposition} imply certain corollaries for the untransformed linear system, which we write as
\begin{equation}\label{2D_linearized_vorticity}
\begin{cases}
    \partial_t \omega + y \partial_x \omega = \nu \Delta \omega - R \partial_x \theta,\\
    \partial_t \theta + \partial_x \theta = \kappa \Delta \theta + \partial_x \psi,\\
    \Delta \psi = \omega.
\end{cases}
\end{equation}
Our first lemma is an immediate consequence of Proposition \ref{linear_proposition}, and follows from undoing the change $Z \to \Omega$, $Q \to \Theta$, and noticing that multiplication by $\langle k \rangle^{1/2}$ commutes with the linearized system.
\begin{lemma}\label{inviscid_estimate_pointwise_2D}
    Let $\nu = \kappa = 0$. Then the solution to \eqref{2D_linearized_vorticity} in the moving frame satisfies \begin{equation}\label{inviscid_pointwise_estimate}
       |p^{-1/4} \Omega_k(t)|^2 + |p^{1/4} \Theta_k (t)|^2 \approx |(k^2 + \eta^2)^{-1/4} \Omega_k(0)|^2 + |(k^2 + \eta^2)^{1/4} \Theta_k(0)|^2.
    \end{equation}
    Now suppose $\nu ,\kappa$ satisfy \eqref{diffusion_assumption}. Then there exist constants $c_1 = c_1(R,\epsilon) > 0$, $c_2 = c_2(R,\epsilon) > 0$  such that the solution to \eqref{2D_linearized_vorticity} in the moving frame satisfies
\begin{equation}\label{viscous_pointwise}
    \begin{split}
            |p^{-1/4} &\left(1 + c_2 \alpha_k i(\eta -kt) \right)\Omega_k(t)|^2 + |p^{1/4} \left(1+ c_2 \alpha_k i(\eta -kt) \right)\Theta_k (t)|^2\\ &\quad \lesssim \exp\left(-c_1\lambda_k t\right)|(k^2 + \eta^2)^{-1/4}\left(1 + c_2 \alpha_k i\eta\right) \Omega_k(0)|^2 + |(k^2 + \eta^2)^{1/2} \left(1 + c_2\alpha_k i\eta\right) \Theta_k(0)|^2.
    \end{split}
    \end{equation}
\end{lemma}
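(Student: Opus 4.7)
The plan is to work at each fixed spatial frequency $k \neq 0$. Both the linearized system \eqref{2D_linearized_vorticity} in the moving frame and the symmetrization \eqref{def_of_symmetrized_coords} act diagonally in $k$, with $Z_k = \langle k \rangle^{1/2} p^{-1/4} \Omega_k$ and $|Q_k|^2 = R \langle k \rangle \, |p^{1/4} \Theta_k|^2$. Consequently, the quantities on both sides of \eqref{inviscid_pointwise_estimate} and \eqref{viscous_pointwise} are, up to a constant depending only on $R$, equal to $\langle k \rangle^{-1}(|Z_k|^2 + |Q_k|^2)$ combined with the claimed $(\eta - kt)$- or $\eta$-multipliers; the factor $\langle k \rangle^{-1}$ is time-independent and cancels from both sides. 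So it suffices to produce the corresponding pointwise bounds for $(Z_k, Q_k)$.

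For the inviscid case $\nu = \kappa = 0$, I would use the reduced energy obtained from \eqref{linear_energy} by setting $c_\alpha = c_\beta = c_\tau = 0$, namely
\[
E_k^0 \;=\; N_k\bigl(|Z_k|^2 + |Q_k|^2\bigr) + \frac{1}{2\sqrt{R}} \mathrm{Re}\!\left( \frac{\partial_t p}{|k| p^{1/2}} N_k Z_k \bar Q_k \right).
\]
The bounds \eqref{boundedness} on $N_k$ together with $|\partial_t p / (|k| p^{1/2})| \leq 2$ from \eqref{time_derivs} and $\sqrt{R} > 1/2$ imply $E_k^0 \approx_R |Z_k|^2 + |Q_k|^2$. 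Equation \eqref{gamma_est} with $\mu = 0$ gives the one-sided bound $\partial_t E_k^0 \leq 0$, which yields the upper half of \eqref{inviscid_pointwise_estimate}. For the matching lower bound, I would revisit the identity \eqref{lin_gam_main} with $\nu = \kappa = 0$ and use \eqref{time_derivs} together with Cauchy--Schwarz to deduce the two-sided bound $|\partial_t E_k^0| \leq C_R \, |k|^3 p^{-3/2} E_k^0$. An explicit change of variable $u = \eta - ks$ then yields the uniform estimate $\int_0^t |k|^3 p^{-3/2}(s,k,\eta)\, ds \leq 2$ for all $k \neq 0$, $\eta \in \R$, $t \geq 0$, and Gronwall produces $E_k^0(t) \geq e^{-2 C_R} E_k^0(0)$. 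Translating back via $p(0) = k^2 + \eta^2$ completes \eqref{inviscid_pointwise_estimate}.

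For the viscous case, Proposition \ref{linear_proposition} directly supplies $E_k[Z_k(t), Q_k(t)] \leq e^{-2 c \lambda_k t} E_k[Z_k(0), Q_k(0)]$, delivering the decay factor with $c_1 = 2 c$. It remains to convert this into the stated complex-multiplier norm. The $\alpha$-terms of $E_k$ contribute the real factor $1 + c_\alpha \alpha_k (\eta - kt)^2$ against $|Z_k|^2 + |Q_k|^2$, and a direct algebraic comparison shows this is $R$-equivalent to $|1 + c_2 \alpha_k^{1/2} i (\eta - kt)|^2 = 1 + c_2^2 \alpha_k (\eta - kt)^2$ upon choosing $c_2$ so that $c_2^2 \sim c_\alpha$; the argument works in both directions uniformly in $(k, \eta)$ because $0 < \alpha_k \leq 1$ by \eqref{definition_of_alpha_k}. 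The cross terms $\mathrm{Re}(\tfrac{\partial_t p}{|k| p^{1/2}} N_k Z_k \bar Q_k)$ inside $E_k$ are absorbed into $|Z_k|^2 + |Q_k|^2$ exactly as in the inviscid case, and the same comparison at $t = 0$ (with $\eta - kt$ replaced by $\eta$) bounds $E_k[Z_k(0), Q_k(0)]$ from above by the initial-data quantity on the right of \eqref{viscous_pointwise}.

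The main obstacle is the algebraic calibration of $c_2$ so that the real hypocoercive weight $1 + c_\alpha \alpha_k (\eta - kt)^2$ inside $E_k$ matches the complex modulus squared $|1 + c_2 \alpha_k^{1/2} i (\eta - kt)|^2$ two-sidedly and uniformly in $(k, \eta)$, using the precise range $\alpha_k \in (0, 1]$ and the constants fixed in the proof of Proposition \ref{linear_proposition}. Past this matching, everything is bookkeeping: Proposition \ref{linear_proposition} supplies the decay in the viscous case, and the uniform bound $\int_0^t |k|^3 p^{-3/2} \, ds \leq 2$ closes the Gronwall step in the inviscid case.
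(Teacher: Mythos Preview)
Your proposal is correct and follows the same route the paper takes: translate Proposition \ref{linear_proposition} back from $(Z_k,Q_k)$ to $(\Omega_k,\Theta_k)$ via the symmetrization, using that $\langle k\rangle^{1/2}$ commutes with the linear flow so that the extra factor cancels pointwise. The paper's own argument is a single sentence (``immediate consequence of Proposition \ref{linear_proposition}, undo the change $Z\to\Omega$, $Q\to\Theta$''), so your write-up in fact supplies more than the paper does --- in particular, for the inviscid lower bound the paper simply defers to \cite{bianchini_cotizelati_dolce}, whereas you give the explicit Gronwall argument based on \eqref{lin_gam_main} with $\nu=\kappa=0$ and the uniform bound $\int_0^t |k|^3 p^{-3/2}\,ds\le 2$. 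That is exactly the mechanism behind the cited reference, so there is no divergence in method.

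Two small remarks. First, when you match $E_k$ to the complex multiplier you correctly write $|1+c_2\alpha_k^{1/2}i(\eta-kt)|^2=1+c_2^2\alpha_k(\eta-kt)^2$, which is what is needed to be equivalent to the $1+c_\alpha\alpha_k(\eta-kt)^2$ weight in $E_k$; the statement of the lemma literally has $\alpha_k$ rather than $\alpha_k^{1/2}$ in the multiplier, which appears to be a typo (compare Corollary \ref{enhanced_dissipation_linear}, where the weight is $\langle \partial_x/\mu\rangle^{-j/3}\approx\alpha_k^{j/2}$). Second, when asserting $E_k\approx (1+c_\alpha\alpha_k(\eta-kt)^2)(|Z_k|^2+|Q_k|^2)$ you should also note that the $\beta$- and $\mathfrak{J}_k$-contributions in \eqref{linear_energy} are absorbed by the smallness of $c_\beta,c_\tau$ fixed in \eqref{all_assumptions}; this is the content of the line ``$\lambda_k E_k\approx_R\lambda_k(1+c_\alpha\alpha(\eta-kt)^2)(|Z_k|^2+|Q_k|^2)$'' in the proof of Proposition \ref{linear_proposition}, and uses $\beta_k|k|\le\alpha_k^{1/2}$.
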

The inviscid pointwise estimate \eqref{inviscid_pointwise_estimate} is known to hold on the domain $\mathbb{T} \times \R$ (see \cite{bianchini_cotizelati_dolce}). In fact, one can use the techiniques of \cite{bianchini_cotizelati_dolce} to arrive at \eqref{inviscid_pointwise_estimate} directly. Once one has \eqref{inviscid_pointwise_estimate}, the following corollary can be obtained by undoing the change to the moving reference, as done in Section \ref{proof_of_main} (see also \cite{bianchini_cotizelati_dolce} where this is done more explicitly in the $\mathbb{T} \times \R$ case).
\begin{corollary}[Linear Inviscid Damping and Vorticity Growth]\label{inviscid_damping_and_vorticity_growth}
    Suppose that $\nu = \kappa = 0$, and that $\omega, \theta$ solve \eqref{2D_linearized_vorticity}. Then for $d = 0, 1, 2$ we have the inviscid damping estimates on $u$ and $\theta$:
    \begin{equation}
        \begin{split}
            &||u^1(t)||_{\dot{H}^{1}_x L^2_y} +||\langle t \rangle^{1/2} \theta(t)||_{\dot{H}^{1}_x L^2_y} \lesssim \langle t \rangle^{-1/2} \left(||\omega_{in}||_{L^2_{x,y}}+ ||\nabla \theta_{in}||_{L^2_{x,y}}\right),\\
            &||u^2(t)||_{\dot{H}_x^{1 + d/2}L^2_y} \lesssim \langle t \rangle^{-(1+d)/2}\left( |||\nabla|^{d/2}\omega_{in}||_{L^2_{x,y}} + |||\nabla|^{1+d/2}\theta_{in}||_{L^2_{x,y}}\right).
        \end{split}
    \end{equation}
    Meanwhile, we have the following inviscid upper and lower bounds on the growth of the vorticity and $\nabla \theta$:
    \begin{equation}
        \begin{split}
            \langle t \rangle^{1/2}\left(|| \partial_x u_{in} ||_{L^2_{x,y}}+ ||\partial_x \theta_{in}||_{L^2_{x,y}}\right)& \lesssim||\omega(t)||_{L^2_{x,y}} + ||\nabla \theta(t)||_{L^2_{x,y}}\lesssim \langle t \rangle^{1/2} \left(||\omega_{in}||_{L^2_{x,y}} + ||\nabla \theta_{in}||_{L^2_{x,y}}\right).
        \end{split}
    \end{equation}
\end{corollary}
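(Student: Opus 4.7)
The plan is to unwind Lemma \ref{inviscid_estimate_pointwise_2D} into the stationary frame and then multiply by the Biot--Savart symbols to read off each claimed norm. The frame change \eqref{moving_reference_frame} transforms Fourier transforms by $\hat{\omega}(t,k,\xi) = \hat{\Omega}(t,k,\xi+kt)$ and $\hat{\theta}(t,k,\xi) = \hat{\Theta}(t,k,\xi+kt)$, under which the symbol $p(t,k,\xi+kt)$ becomes simply $k^2 + \xi^2$. Thus \eqref{inviscid_pointwise_estimate} translates into the two-sided pointwise comparison
\begin{equation*}
(k^2+\xi^2)^{-1/2}|\hat{\omega}(t,k,\xi)|^2 + (k^2+\xi^2)^{1/2}|\hat{\theta}(t,k,\xi)|^2 \approx (k^2+(\xi+kt)^2)^{-1/2}|\hat{\omega}_{in}(k,\xi+kt)|^2 + (k^2+(\xi+kt)^2)^{1/2}|\hat{\theta}_{in}(k,\xi+kt)|^2.
\end{equation*}
After multiplying this by the appropriate Biot--Savart weight---$\frac{k^2\xi^2}{(k^2+\xi^2)^2}$ for $\|u^1\|_{\dot H^1_x L^2_y}^2$, $k^2$ for $\|\theta\|_{\dot H^1_x L^2_y}^2$, and $\frac{|k|^{4+d}}{(k^2+\xi^2)^2}$ for $\|u^2\|_{\dot H^{1+d/2}_x L^2_y}^2$---and changing variables $\eta = \xi + kt$, everything reduces to the estimation of a single scalar Fourier multiplier.

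The key bound is
\begin{equation*}
\frac{k^2}{(k^2+\eta^2)^{1/2}(k^2+(\eta-kt)^2)^{1/2}} \lesssim \langle t\rangle^{-1},
\end{equation*}
which I verify by splitting into the regions $|\eta|\leq |k|t/2$ (so $|\eta-kt|\gtrsim |k|t$) and $|\eta|\geq |k|t/2$ (so $(k^2+\eta^2)^{1/2}\gtrsim |k|t$), in each case keeping the remaining factor at the trivial lower bound $|k|$; combined with the universal bound $\leq 1$, this yields the claimed $\langle t\rangle^{-1}$ decay. The $u^1$ and $\theta$ estimates follow from a single application of this bound. For $u^2$, I pull the excess factor $(k^2+(\eta-kt)^2)^{(2-d)/2}\geq |k|^{2-d}$ out of $(k^2+(\eta-kt)^2)^{3/2}$ (this is exactly where the restriction $d\in\{0,1,2\}$ enters), which reduces the desired estimate to the $(1+d)$-th power of the scalar bound and yields the rate $\langle t\rangle^{-(1+d)/2}$.

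The vorticity growth upper bound comes instead from the crude pointwise estimate $(k^2+(\eta-kt)^2)^{1/2}\lesssim \langle t\rangle(k^2+\eta^2)^{1/2}$ applied to the same translated version of \eqref{inviscid_pointwise_estimate}, integrated without any Biot--Savart factor. For the matching lower bound I use the $\gtrsim$ direction of the comparison: multiplying by $\frac{k^2}{(k^2+\eta^2)^{1/2}}$ and integrating, the left-hand side becomes $\|\partial_x u_{in}\|_{L^2}^2 + \|\partial_x\theta_{in}\|_{L^2}^2$ via the Biot--Savart identity $\frac{k^2}{k^2+\eta^2}|\hat\omega_{in}|^2 = |\widehat{\partial_x u_{in}}|^2$, while the right-hand side is controlled by $\langle t\rangle^{-1}(\|\omega(t)\|_{L^2}^2+\|\nabla\theta(t)\|_{L^2}^2)$ via the same scalar multiplier bound (invoked with $\xi$ in place of $\eta$ and $-t$ in place of $t$, under which the bound is symmetric). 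No step is genuinely difficult; the main (minor) obstacle is the bookkeeping of matching each Biot--Savart weight and each initial-data Sobolev power against the scalar bound so that every factor collapses cleanly.
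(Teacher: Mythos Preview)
Your proposal is correct and follows the same route the paper indicates: the paper's proof is only the one-line sketch ``undo the change to the moving reference frame, as done in Section~\ref{proof_of_main},'' and your argument carries this out explicitly by translating \eqref{inviscid_pointwise_estimate} to stationary frequencies, attaching the Biot--Savart symbols, and reducing to the scalar bound $\frac{k^2}{(k^2+\eta^2)^{1/2}(k^2+(\eta-kt)^2)^{1/2}}\lesssim\langle t\rangle^{-1}$. This is precisely the kind of multiplier manipulation alluded to in Section~\ref{proof_of_main} (compare \eqref{decay_exp}--\eqref{decay_2}), so there is no methodological difference to report.
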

For the viscous case, we obtain the following corrolary, which combines the inviscid damping, enhanced dissipation, and Taylor dispersion, expressed in the hypocoercive norm.

\begin{corollary}[Linear Enhanced Dissipation, Taylor Dispersion, Inviscid Damping, and Transient Vorticity Growth]\label{enhanced_dissipation_linear}
    Suppose that $\nu, \kappa$ satisfy \eqref{diffusion_assumption} and $(\omega, \theta)$ satisfy \eqref{2D_linearized_vorticity}. Then for $d = 0, 1, 2$, there exist constants $c_1, c_2 > 0$ independent of $\nu$ and $\kappa$, but depending on $R$ and $\epsilon$ such that
    \begin{equation}
        \begin{split}
            &\sum_{j=0}^1|| e^{c_1 \lambda_k t} c_2^j \langle \frac{\partial_x}{\mu} \rangle^{-j/3} \partial_y^j u^1(t)||_{\dot{H}^{1}_x L^2_y} +||  e^{c_1 \lambda_k t} c_2^j \langle \frac{\partial_x}{\mu} \rangle^{-j/3} \partial_y^j \theta(t)||_{\dot{H}^{1}_x L^2_y} \lesssim \langle t \rangle^{-1/2}\left(||\omega_{in}||_{L^2_{x,y}}+ ||\nabla \theta_{in}||_{L^2_{x,y}}\right),\\
            &\sum_{j=0}^1|| e^{c_1 \lambda_k t} c_2^j \langle \frac{\partial_x}{\mu} \rangle^{-j/3} \partial_y^j u^2(t)||_{\dot{H}_x^{1 + d/2}L^2_y} \lesssim  \langle t  \rangle^{(1+d)/2}\left(|||\nabla|^{d/2}\omega_{in}||_{L^2_{x,y}} + |||\nabla|^{1+d/2}\theta_{in}||_{L^2_{x,y}}\right),\\
            &\sum_{j=0}^1|| e^{c_1 \lambda_k t} c_2^j \langle \frac{\partial_x}{\mu} \rangle^{-j/3} \partial_y^j \omega(t)||_{L^2_{x,y}} + || e^{c_1 \lambda_k t} c_2^j \langle \frac{\partial_x}{\mu} \rangle^{-j/3} \partial_y^j \nabla \theta(t)||_{L^2_{x,y}} \lesssim \langle t \rangle^{1/2} \left(||\omega_{in}||_{L^2_{x,y}} + ||\nabla \theta_{in}||_{L^2_{x,y}}\right).
        \end{split}
    \end{equation}
\end{corollary}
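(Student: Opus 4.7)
The plan is to deduce Corollary \ref{enhanced_dissipation_linear} directly from the pointwise-in-frequency estimate \eqref{viscous_pointwise} of Lemma \ref{inviscid_estimate_pointwise_2D}, using the same moving-frame change-of-variables strategy employed in Section \ref{proof_of_main} to pass from Theorem \ref{symmetric_theorem} to Theorem \ref{main_theorem}. The first step is to rewrite each spatial norm on the left-hand side of the corollary as a weighted $L^2$ norm in moving-frame Fourier variables $(k,\eta)$. Using Plancherel together with the identities $\partial_x=\partial_X$, $\partial_y=\partial_Y-t\partial_X$, $-\Delta = p$, and the representations $U^1 = (\partial_Y-t\partial_X)p^{-1}\Omega$, $U^2=-\partial_X p^{-1}\Omega$ in the moving frame, each of the quantities $\partial_x u^1$, $u^2$, $\omega$, $\nabla\theta$ translates into a concrete Fourier multiplier acting on $\Omega_k(\eta,t)$ or $\Theta_k(\eta,t)$, while the weighted derivative $c_2^j\langle\partial_x/\mu\rangle^{-j/3}\partial_y^j$ becomes $c_2^j\alpha_k^{j/2}(i(\eta-kt))^j$ (recalling that $\alpha_k^{1/2}\approx\langle k/\mu\rangle^{-1/3}$).

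Next, the exponential $e^{c_1\lambda_k t}$ in the corollary is absorbed by the decay factor $\exp(-c_1\lambda_k t)$ in \eqref{viscous_pointwise} after halving the constant $c_1$. Expanding $|1+c_2\alpha_k^{1/2}i(\eta-kt)|^2 = 1+c_2^2\alpha_k(\eta-kt)^2$ reproduces precisely the $j\in\{0,1\}$ terms appearing in the sum on the left-hand side of the corollary. This reduces the corollary to three pointwise Fourier-multiplier inequalities, one for each group of estimates (inviscid damping for $u^1,\theta$; inviscid damping for $u^2$; vorticity and $\nabla\theta$ growth). Each such inequality is handled by the elementary $p$-bounds already used in Section \ref{proof_of_main}, namely
\begin{equation}
p^{1/4}\lesssim\langle t\rangle^{1/2}\langle k,\eta\rangle^{1/2},\qquad p^{-1/4}\lesssim|k|^{-1}\langle t\rangle^{-1/2}(k^2+\eta^2)^{1/4},\qquad |\eta-kt|\le p^{1/2},
\end{equation}
augmented by a frequency splitting $|\eta|\le|k|t/2$ versus $|\eta|>|k|t/2$ in order to extract the required $\langle t\rangle^{-1/2}$ and $\langle t\rangle^{-(1+d)/2}$ decay rates from multipliers like $k(\eta-kt)/p$ and $|k|^{1+d/2}/p$; the growth rate $\langle t\rangle^{1/2}$ for $\omega$ and $\nabla\theta$ comes from pulling out the $p^{1/4}$ factor in the opposite direction.

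Integrating the resulting pointwise bound in $(k,\eta)$ and invoking Plancherel to return to the $(x,y)$ variables yields the claimed estimates, provided the initial-data norms on the right-hand side dominate the weights appearing at $t=0$; this is immediate since $\alpha_k^{1/2}\le 1$. The main obstacle is purely bookkeeping: one must verify that the competing weights behave uniformly on the two frequency regimes $|k|\geq\mu$ and $|k|<\mu$ where $\lambda_k$ and $\alpha_k$ change definition, and that the exponential factor $e^{c_1\lambda_k t}$ does not interact badly with the polynomial $\langle t\rangle^{\pm 1/2}$ gains on any portion of Fourier space. Beyond Lemma \ref{inviscid_estimate_pointwise_2D} and the multiplier inequalities of Section \ref{proof_of_main}, no new analytic input is required; the proof is essentially a translation and repackaging of the pointwise symmetrized estimate into the unsymmetrized physical variables.
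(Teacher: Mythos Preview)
Your proposal is correct and follows essentially the same approach as the paper: the paper does not give a detailed proof of this corollary, but indicates that it follows from the pointwise-in-frequency estimate of Lemma \ref{inviscid_estimate_pointwise_2D} by undoing the moving-reference-frame change of variables exactly as in Section \ref{proof_of_main}, which is precisely what you outline. Your write-up in fact supplies more of the bookkeeping (the identification $\langle k/\mu\rangle^{-1/3}\approx\alpha_k^{1/2}$, the expansion $|1+c_2\alpha_k^{1/2}i(\eta-kt)|^2=1+c_2^2\alpha_k(\eta-kt)^2$ matching the $j$-sum, and the frequency splitting for the $p$-multiplier bounds) than the paper does.
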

Lastly, we observe that in the linear case, all of these estimates are based off of Lemma \ref{inviscid_estimate_pointwise_2D}. Multiplying both sides of either \eqref{inviscid_pointwise_estimate} or \eqref{viscous_pointwise} by a Fourier multiplier would enable one to write different estimates. In particular, on the Fourier side one could write a variety of $L^p$ estimates.

\section{Non-linear Computations}\label{nonlinear_section}

To complete the proof of Theorem \ref{symmetric_theorem}, it suffices to prove Lemma \ref{bootstrap_lemma}. Due to our work in Section \ref{linear_section} on the linearized estimates, our remaining work will focus on estimating the terms in the energy arising from the nonlinearity in \eqref{Symmetrized_moving_reference_equations}. To be precise, we define, in a similar manner as \cites{arbon_bedrossian, bedrossian2023stability}:
\begin{equation}\label{lin_and_nonlin}
    \begin{split}
        & \mathbb{L}_k^{(Z)} \coloneqq \frac{1}{4}\frac{\partial_t p}{p}Z_k - \nu p Z_k - \sqrt{R} |k|p^{-1/2} Q_k,\\
        &\mathbb{L}_k^{(Q)} \coloneqq \frac{1}{4}\frac{\partial_t p}{p}Q_k-\kappa p Q_k + \sqrt{R} |k| p^{-1/2} Z_k,\\
        &\mathbb{NL}_k^{(Z)} \coloneqq - \langle k \rangle^{1/2}p^{-1/4}\left(U \cdot \nabla_t \Omega\right)_k,\\
        &\mathbb{NL}_k^{(Q)} \coloneqq - \sqrt{R} i \sgn(k) \langle k \rangle^{1/2} p^{1/4} \left(U \cdot \nabla_t\Theta\right)_k.
    \end{split}
\end{equation}

Using the definition of $\E$ \eqref{nonlinear_energy}, the definition of $M_k(t)$ \eqref{correction_ODE}, and the definitions above \eqref{lin_and_nonlin}, we compute
\begin{equation}\label{nonlinear_time_deriv}
    \begin{split}
        \frac{d}{dt} \E[Z, Q] &= \iint_{\R^2} \langle k, \eta\rangle^{2n} \langle k \rangle^{2m} \frac{d}{dt} \left( \frac{\dk^{2J}}{M_k(t)} E_k[Z_k, Q_k]\right) dk d\eta\\
        &= \mathcal{L}_a + \mathcal{L}_b + \mathcal{NL} + \iint_{\R^2} \langle k, \eta \rangle^{2m}\left( J c \lambda_k \frac{2 c \lambda_k t}{\langle c \lambda_k t \rangle^2} \frac{\dk^{2J}}{M_k(t)} -\frac{\dot{M}_k(t)}{M_k(t)} \frac{\dk^{2J}}{M_k(t)}\right) E_k dk d\eta,
    \end{split}
\end{equation}
where
\begin{equation}
    \begin{split}
        \mathcal{L}_a &= \iint_{\R^2} \langle k, \eta\rangle^{2n} \langle k \rangle^{2m} \frac{\dk^{2J}}{M_k(t)} \biggl( \partial_t\left(N_k + c_\tau \mathfrak{J}_k\right) |Z_k|^2 + \partial_t\left(N_k + c_\tau \mathfrak{J}_k\right) |Q_k|^2 + \frac{1}{2\sqrt{R}}\mathrm{Re}\left(\partial_t \biggl(\frac{\partial_t p}{ |k| p^{1/2}}\left(N_k + \mathfrak{J}_k\right)\biggr)  Z_k\bar{Q}_k \right)\\
 & \quad \quad \quad + \partial_t\biggl(\left(N_k + c_\tau \mathfrak{J}_k\right) (\eta - kt)^2 \biggr) \biggl(|Z_k|^2 + |Q_k|^2\biggr)+ \frac{c_\alpha \alpha_k}{2\sqrt{R}}\partial_t\biggl(\frac{\partial_t p}{ |k| p^{1/2}} \left(N_k + \mathfrak{J}_k\right)  (\eta - kt)^2\biggr)\mathrm{Re}\left( Z_k\bar{Q}_k \right)\biggr) \\
 &\quad \quad \quad+ c_\beta \beta_k \biggl( k\partial_t(\eta -kt) |Z_k|^2 +k\partial_t(\eta -kt) |Q_k|^2 - \frac{1}{2\sqrt{R}} \mathrm{Re}\left(\partial_t\biggl(\frac{\partial_t p}{|k| p^{1/2}}k(\eta-kt)\biggr) Z_k \bar{Q}_k\right)\biggr) \biggr) dk d\eta,
    \end{split}
\end{equation}
\begin{equation}\label{def_of_linear_time_deriv}
    \begin{split}
        \mathcal{L}_b &\coloneqq \iint_{\R^2} \frac{\dk^{2J}}{M_k(t)} \langle k, \eta\rangle^{2n} \langle k \rangle^{2m} \biggl( (N_k + c_\tau\mathfrak{J}_k)\biggl(2\mathrm{Re}(\mathbb{L}_k^{(Z)} \bar{Z}_k) + 2\mathrm{Re}(\mathbb{L}_k^{(Q)} \bar{Q}_k) \frac{1}{2\sqrt{R}}\frac{\partial_t p}{|k|p^{1/2}}\biggl( \mathrm{Re}(\mathbb{L}_k^{(Z)} \bar{Q}_k) + \mathrm{Re}(\bar{Z}_k \mathbb{L}_k^{(Q)}\biggr)\biggr)\\
        &\quad \quad \quad + c_\alpha \alpha_k(N_k + c_\tau \mathfrak{J}_k)(\eta-kt)^2\biggl(2\mathrm{Re}(\mathbb{L}_k^{(Z)} \bar{Z}_k) + 2\mathrm{Re}(\mathbb{L}_k^{(Q)} \bar{Q}_k) + \frac{1}{2\sqrt{R}}\frac{\partial_t p}{|k|p^{1/2}}\biggl( \mathrm{Re}(\mathbb{L}_k^{(Z)} \bar{Q}_k) + \mathrm{Re}(\bar{Z}_k \mathbb{L}_k^{(Q)}\biggr) \biggr)\\
        &\quad \quad \quad + c_\beta \beta_k k(\eta-kt) \biggl( 2\mathrm{Re}(\mathbb{L}_k \bar{Z}_k) + 2\mathrm{Re}(\mathbb{L}_k^{(Q)} \bar{Q}_k) - \frac{1}{2\sqrt{R}}\frac{\partial_t p}{|k|p^{1/2}}\biggl( \mathrm{Re}(\mathbb{L}_k^{(Z)} \bar{Q}_k) + \mathrm{Re}(\bar{Z}_k \mathbb{L}_k^{(Q)}\biggr) \biggr) dk d\eta,
    \end{split}
\end{equation}
and $\mathcal{NL}$ is defined analogously to \eqref{def_of_linear_time_deriv} with $\mathbb{NL}_k^{(Z)}$ and $\mathbb{NL}_k^{(Q)}$ replacing $\mathbb{L}_k^{(Z)}$ and $\mathbb{L}_k^{(Q)}$, respectively. By Proposition \ref{linear_proposition}, we have
\begin{equation}\label{non_linear_set_up_with_linear}
    \mathcal{L}_a + \mathcal{L}_b \leq - 8 c \D - 8 c \int_{\R^2} \lambda_k \frac{\dk^{2J}}{M_k(t)} \langle k, \eta\rangle^{2n} \langle k \rangle^{2m}E_k[Z_k, Q_k] dk d\eta.
\end{equation}
Using Young's product inequality, as shown in \cite{arbon_bedrossian}, one can show that
\begin{equation}\label{non_linear_set_up_with_youngs}
\begin{split}
        \iint_{\R^2} \langle k, \eta\rangle^{2n} \langle k \rangle^{2m} &\left( J c \lambda_k \frac{2 c \lambda_k t}{\langle c \lambda_k t \rangle^2} \frac{\dk^{2J}}{M_k(t)} -\frac{\dot{M}_k(t)}{M_k(t)} \frac{\dk^{2J}}{M_k(t)}\right)  E_k[Z_k,Q_k] dk d\eta\\
        &\leq \iint_{\R^2} c \lambda_k \langle k, \eta\rangle^{2n} \langle k \rangle^{2m} \frac{\dk^{2J}}{M_k(t)}  E_k[Z_k,Q_k] dk d\eta.
\end{split}
\end{equation}
Taking \eqref{non_linear_set_up_with_linear} and \eqref{non_linear_set_up_with_youngs} applied to \eqref{nonlinear_time_deriv}, we arrive at
\begin{equation}\label{nonlin_deriv}
    \frac{d}{dt}\mathcal{E} \leq - 4c \D + \mathcal{NL}.
\end{equation}
We have therefore reduced the proof of Lemma \ref{bootstrap_lemma} to showing
\begin{equation}
    \mathcal{NL} \lesssim \mu^{-1/2-\delta_*} \E^{1/2} \D.
\end{equation}
We begin by decomposing $\mathcal{NL}$ into
\begin{equation}
    \mathcal{NL} = T_\gamma + T_\alpha + T_\beta,
\end{equation}
where we define
\begin{equation}\label{T_main}
    \begin{split}
        T_\gamma & \coloneqq \iint_{\R^2} \frac{\dk^{2J}}{M_k(t)} \langle k, \eta\rangle^{2n} \langle k \rangle^{2m} (N_k + c_\tau\mathfrak{J}_k)\biggl(2\mathrm{Re}(\mathbb{NL}_k^{(Z)} \bar{Z}_k) + 2\mathrm{Re}(\mathbb{NL}_k^{(Q)} \bar{Q}_k)\\
        &\quad \quad \quad  \quad \quad + \frac{1}{2\sqrt{R}}\frac{\partial_t p}{|k|p^{1/2}}\biggl( \mathrm{Re}(\mathbb{NL}_k^{(Z)} \bar{Q}_k) + \mathrm{Re}(\bar{Z}_k \mathbb{NL}_k^{(Q)})\biggr) \biggr) dk d \eta \\
        &\eqqcolon T_{\gamma, Z} + T_{\gamma, Q} + T_{\gamma, m_1} + T_{\gamma, m_2},\\
                T_\alpha &\coloneqq c_\alpha \iint_{\R^2} \frac{\dk^{2J}}{M_k(t)} \langle k, \eta\rangle^{2n} \langle k \rangle^{2m} (N_k + c_\tau\mathfrak{J}_k)(\eta-kt)^2\biggl(2\mathrm{Re}(\mathbb{NL}_k^{(Z)} \bar{Z}_k) + 2\mathrm{Re}(\mathbb{NL}_k^{(Q)} \bar{Q}_k)\\
        &\quad \quad \quad  \quad \quad + \frac{1}{2\sqrt{R}}\frac{\partial_t p}{|k|p^{1/2}}\biggl( \mathrm{Re}(\mathbb{NL}_k^{(Z)} \bar{Q}_k) + \mathrm{Re}(\bar{Z}_k \mathbb{NL}_k^{(Q)})\biggr) \biggr) dk d \eta\\
        &\eqqcolon T_{\alpha, Z} + T_{\alpha, Q} + T_{\alpha, m_1} + T_{\alpha, m_2},\\
        T_\beta &\coloneqq c_\beta \iint_{\R^2} \frac{\dk^{2J}}{M_k(t)} \langle k, \eta\rangle^{2n} \langle k \rangle^{2m} \beta_k k (\eta-kt) \biggl(2\mathrm{Re}(\mathbb{NL}_k^{(Z)} \bar{Z}_k) + 2\mathrm{Re}(\mathbb{NL}_k^{(Q)} \bar{Q}_k)\\
        &\quad \quad \quad  \quad \quad - \frac{1}{2\sqrt{R}}\frac{\partial_t p}{|k|p^{1/2}}\biggl( \mathrm{Re}(\mathbb{NL}_k^{(Z)} \bar{Q}_k) + \mathrm{Re}(\bar{Z}_k \mathbb{NL}_k^{(Q)})\biggr) \biggr) dk d \eta \\
 &\eqqcolon T_{\beta, Z} + T_{\beta, Q} + T_{\beta, m_1} + T_{\beta, m_2},
    \end{split}
\end{equation}
So it suffices to show following lemma:
\begin{lemma}\label{key_lemma}
    For $T_\gamma$, $T_\alpha$, and $T_\beta$ as defined in \eqref{T_main}, we have
\begin{equation}\label{bootstrap_bound}
        |T_\gamma| + |T_\alpha| + |T_\beta| \lesssim \mu^{-1/2 - \delta_*} \D \E^{1/2},
    \end{equation}
    where the implicit constant is independent of $\nu, \kappa$, and $\mu$, but is allowed to depend on $\delta_*$, $\epsilon$, $m$, $J$, and $R$.
\end{lemma}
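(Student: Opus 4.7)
The plan is to expand each of $T_\gamma$, $T_\alpha$, $T_\beta$ on the Fourier side, write the nonlinearities $\mathbb{NL}_k^{(Z)}$ and $\mathbb{NL}_k^{(Q)}$ as convolutions, and then carry out a paraproduct-style decomposition together with Cauchy--Schwarz to pair one factor with the dissipation $\D$ and the other with $\E^{1/2}$. First I would rewrite $U \cdot \nabla_t \Omega$ and $U \cdot \nabla_t \Theta$ using the relations $U = \nabla_t^\perp p^{-1}\Omega$, $\Omega = \langle \partial_X\rangle^{-1/2} p^{1/4} Z$, and $\Theta = \frac{1}{\sqrt{R}} \partial_X|\partial_X|^{-1}\langle \partial_X\rangle^{-1/2} p^{-1/4} Q$, so that every term in $T_*$ is expressed as a convolution integral of the form $\int F(k,\eta,k',\eta')\, \hat{U}_{k'}(\eta')\, \widehat{\nabla_t \Omega}_{k-k'}(\eta-\eta')\, d k' d\eta'$ paired against $\bar Z_k$ or $\bar Q_k$, with analogous terms involving $Q$. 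Using the incompressibility of $U$, I would symmetrize the integrand so that no derivative ever falls on the highest-frequency factor that is already controlled by $\D$.

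Next, I would split the convolution into the three standard frequency regions: (i) $|k-k',\eta-\eta'-(k-k')t| \lesssim |k',\eta'-k't|$, i.e.~the low--high paraproduct where $U$ sits at high frequency and transports a low-frequency factor of $\Omega$ or $\Theta$; (ii) the high--low case where $U$ is at low frequency; and (iii) the resonant/high--high region. In each region I would use the symbolic bounds
\[
\langle k,\eta\rangle^{n}\langle k\rangle^{m}\langle c\lambda_k t\rangle^{J} \lesssim \langle k',\eta'\rangle^{n}\langle k'\rangle^{m}\langle c\lambda_{k'} t\rangle^{J} + \langle k-k',\eta-\eta'\rangle^{n}\langle k-k'\rangle^{m}\langle c\lambda_{k-k'} t\rangle^{J},
\]
with the same decomposition applied to the anisotropic weight $\langle \partial_X/\mu\rangle^{-j/3}\partial_Y^j$, to route the Sobolev weights to the factor that will be estimated in $\E^{1/2}$. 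On the other factor I would recover the dissipation structure: the $c_\tau D_{k,\tau}$ piece provides the inviscid-damping gain $|k|p^{-1/2}$ (hence $\partial_X|\nabla_t|^{-1}$), the $c_\beta D_{k,\beta}$ piece provides $\lambda_k^{1/2}$, and $D_{k,\gamma}$, $D_{k,\alpha}$ provide the $\mu^{1/2} p^{1/2}$ and $\mu^{1/2} p^{1/2}(\eta-kt)$ factors needed to absorb derivatives hitting $\Omega$ or $\Theta$.

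The crucial step, and the main obstacle, is the low-$k$ frequency region of the transporting velocity $U$, where the Biot--Savart law $U = \nabla_t^\perp p^{-1}\Omega$ combined with the symmetrization produces factors $|k'|^{-1/2}$ (from undoing $\langle \partial_X\rangle^{1/2}$ at low $x$-frequencies) and $p^{-3/4}$. In the regime $|k'|\leq \mu$, where $\lambda_{k'}=|k'|^2/\nu$, this singularity cannot be absorbed by enhanced dissipation alone; I would pay for it using Taylor dispersion, i.e.~the $D_{k,\beta}$ term, together with a Bernstein-type inequality $\|\hat U_{k'}\|_{L^\infty_{k'} L^2_\eta}\lesssim \mu^{-1/2}\|\hat U_{k'}\|_{L^2_{k',\eta}}$ on the dyadic low-frequency shell $|k'|\sim \mu$. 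It is exactly this step that produces the loss of $\mu^{-1/2-\delta_*}$: the extra $\mu^{-\delta_*}$ comes from summing over the dyadic shells $\mu\leq |k'|\leq 1$ using a small sliver of the high-regularity weight $\langle k'\rangle^m$ (requiring $m>0$) and the $|k'|^{2\delta_*}$ margin absorbed against $\langle \partial_X/\mu\rangle^{-j/3}$.

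Finally, with these pairings in place, I would close the estimate separately on $T_\gamma$, $T_\alpha$, $T_\beta$ (and their mixed $m_1,m_2$ subterms). The $T_\gamma$ and $T_\alpha$ pieces are structurally identical to the nonlinear bounds already established in \cite{arbon_bedrossian} for Navier--Stokes, with the only new feature being the presence of the cross terms coming from $\frac{\partial_t p}{|k|p^{1/2}} Z_k \bar Q_k$; these are handled by the elementary bound $|\partial_t p|/(|k|p^{1/2})\leq 2$ and the symmetry $|Z_k\bar Q_k|\leq \frac{1}{2}(|Z_k|^2+|Q_k|^2)$ to reduce to the diagonal case. The $T_\beta$ piece is the most delicate, since the weight $\beta_k k(\eta-kt)$ has a large factor in $(\eta-kt)$; here I would route the $(\eta-kt)$ derivative onto the high-frequency factor and absorb it with the $D_{k,\tau\alpha}$ or $D_{k,\alpha}$ contribution to $\D$. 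Collecting all cases and summing against $\D\,\E^{1/2}$ yields the claimed bound \eqref{bootstrap_bound}.
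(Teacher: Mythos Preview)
Your broad strategy --- Fourier expansion, paraproduct decomposition, pair one factor with $\D$ and the other with $\E^{1/2}$ --- matches the paper, but several of your concrete mechanisms are off and would not close the estimate as written.

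First, the paper decomposes only in the horizontal frequency $k$ (LH: $|k-k'|<|k'|/2$, HL otherwise), with further subcases $|k|\gtrless\mu$, $|k'|\gtrless\mu$; your proposed full-$(\xi,\xi')$ paraproduct is not what is done and would not interact well with the anisotropic weights $A(k),B(k),\lambda_k$, which depend on $k$ alone. Second, there is no $|k'|^{-1/2}$ singularity here: the symmetrization in this lemma uses $\langle\partial_X\rangle^{1/2}$, so the factors are $\langle k'\rangle^{-1/2}$, which are bounded. The loss $\mu^{-\delta_*}$ does not come from a low-$k$ Bernstein estimate (the $L^\infty_k$ device is used only in the alternate Theorem \ref{alt_theorem}); it comes from interpolation in $\eta$: to pass from $L^1_\eta$ to $L^2_\eta$ one inserts $\langle\eta-kt\rangle^{1/2+2\delta_*}$, and the extra $|\eta-kt|^{2\delta_*}$ is bounded by $\E^{1/2-\delta_*}(\mu^{-1}\D_\gamma)^{\delta_*}$ (see the appendix Lemmas \ref{new_energy_lemmas}--\ref{main_D_tau_alpha_lemma}). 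Third, the claim that $T_\gamma$, $T_\alpha$ reduce to the Navier--Stokes case glosses over the main new difficulty: the symmetrization introduces $p_\xi^{-1/4}$, $p_{\xi-\xi'}^{-3/4}$, $p_{\xi'}^{\pm 1/4}$ into the convolution, and in $\mathbb{NL}_k^{(Q)}$ a factor $p_\xi^{+1/4}$ that grows like $\langle t\rangle^{1/2}$. The paper handles these by an additional splitting $T_{*,Z}=T_{*,Z_1}+T_{*,Z_2}$ (and similarly for $Q$) together with triangle-type inequalities $p_\xi^{1/4}\lesssim p_{\xi-\xi'}^{1/4}+p_{\xi'}^{1/4}$ and $1\lesssim p_{\xi'}^{-1}\bigl(|k'|^2+|\eta'-k't|(\ldots)\bigr)$; this accounts for most of the case analysis and is not captured by your outline. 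Finally, no incompressibility/commutator trick (``symmetrize so no derivative falls on the highest-frequency factor'') is used or obviously available in the presence of the $p^{\pm1/4}$ weights; derivative loss is absorbed directly by $\D_\gamma$ and $\D_\alpha$.
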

The remainder of this paper will be dedicated to the proof of Lemma \ref{key_lemma}. Before giving the details, we introduce some general notation which we will employ in the following sections.\\

Each of the $T_*$, $* \in \{\gamma, \alpha, \beta\}$, contains expressions $\mathbb{NL}_k^{(Z)}$ and $\mathbb{NL}_k^{(Q)}$, which on the Fourier side are properly expressed as convolutions in $k$ and $\eta$. To keep track of the convolutions, we introduce the notation $\xi \coloneqq (k, \eta)$. Furthermore, the Fourier multiplier $p = k^2 + (\eta-kt)^2$ will be found in various parts of the convolution. We therefore define
\begin{equation}
    p_\xi \coloneqq k^2 + (\eta-kt)^2, \; \; p_{\xi'} \coloneqq (k')^2 + (\eta' -k't)^2, \; \; p_{\xi - \xi'} \coloneqq (k-k')^2 + ((\eta-\eta') - (k-k')t)^2.
\end{equation}
Furthermore, we make a slight adjustment to the notation for $Z$ and $Q$. Until now, we have written $Z_k$ and $Q_k$ for the Fourier transforms of $Z$ and $Q$, respectively. We will now write $Z_\xi$ and $Q_\xi$, since we will be dealing with terms arising from convolutions in both $k$ and $\eta$. With this notation, we write out explicitly
\begin{equation}\label{conv_NL_Z}
    \begin{split}
        \mathbb{NL}_k^{(Z)} & = -\sk^{1/2} p_\xi^{-1/4} (U \cdot \nabla_t \Omega)_k\\
        &= -\sk^{1/2} p_\xi^{-1/4} \iint_{\R^2} |k-k'|^{-1/2} p_{\xi -\xi'}^{-3/4} i (k-k') Z_{\xi-\xi'} |k'|^{-1/2} p_{\xi'}^{1/4} i(\eta'-k't) Z_{\xi'}d\eta' dk'\\
        & \quad + \sk^{1/2} p_\xi^{-1/4} \iint_{\R^2} |k-k'|^{-1/2} p_{\xi -\xi'}^{-3/4} i ((\eta-\eta') - (k-k')t) Z_{\xi-\xi'} |k'|^{-1/2} p_{\xi'}^{1/4} i k' Z_{\xi'}d\eta' dk',
    \end{split}
\end{equation}
and
\begin{equation}\label{conv_NL_Q}
    \begin{split}
        \mathbb{NL}_k^{(Q)} & = -\sqrt{R} i \sgn(k)\sk^{1/2} p_\xi^{1/4} (U \cdot \nabla_t \Theta)_k\\
        &=  - \sgn(k)\sk^{1/2} p_\xi^{1/4}  \iint_{\R^2} |k-k'|^{-1/2} p_{\xi -\xi'}^{-3/4} i (k-k') Z_{\xi-\xi'} k'^{-1} |k'|^{1/2} p_{\xi'}^{-1/4} i(\eta'-k't) Q_{\xi'}d\eta' dk'\\
        & \quad + \sgn(k)\sk^{1/2} p_\xi^{1/4} \iint_{\R^2} |k-k'|^{-1/2} p_{\xi -\xi'}^{-3/4} i ((\eta-\eta') - (k-k')t) Z_{\xi-\xi'} k'^{-1} |k'|^{1/2} p_{\xi'}^{-1/4} i k' Q_{\xi'}d\eta' dk'.
    \end{split}
\end{equation}
Using \eqref{conv_NL_Z} and \eqref{conv_NL_Q}, we will generally break $T_{*,Z}$ into $T_{*, Z}^x$ and $T_{*, Z}^y$, and we will break $T_{*, Q}$ into $T_{*, Q}^x$ and $T_{*, Q}^y$. The terms $T_{*, Z}^x$ and $T_{*, Q}^x$ will contain the factors $i(k-k')Z_{\xi-\xi'}$ (meaning the $x$-derivatives fall on $Z_{\xi-\xi'}$), while  $T_{*, Z}^x$ and $T_{*, Q}^x$ will contain the factors $i((\eta-\eta') - (k-k')t)Z_{\xi-\xi'}$ (meaning the $y$-derivatives fall on $Z_{\xi-\xi'}$). For example, we have
\begin{equation}
    \begin{split}
    T_{\gamma, Z}^x &= - \iint_{\R^2} \frac{\dk^{2J}}{M_k(t)} \langle \xi \rangle^{2n}\langle k \rangle^{2m}  (N_k + c_\tau\mathfrak{J}_k)\mathrm{Re}\biggl(\iint_{\R^2} |k|^{1/2} p_\xi^{-1/4} \bar{Z}_\xi\\
    &\quad \quad \quad \quad \quad \quad |k-k'|^{-1/2} p_{\xi -\xi'}^{-3/4} i (k-k') Z_{\xi-\xi'} |k'|^{-1/2} p_{\xi'}^{1/4} i(\eta'-k't) Z_{\xi'}d\eta' dk'\biggr) d\eta dk
    \end{split}
\end{equation}
and
\begin{equation}
    \begin{split}
    T_{\gamma, Z}^y &= \iint_{\R^2} \frac{\dk^{2J}}{M_k(t)} \langle \xi \rangle^{2n}\langle k \rangle^{2m}  (N_k + c_\tau\mathfrak{J}_k)\mathrm{Re}\biggl(\iint_{\R^2} |k|^{1/2} p_\xi^{1/4} \bar{Z}\xi\\
    &\quad \quad \quad \quad \quad \quad |k-k'|^{-1/2} p_{\xi -\xi'}^{-3/4} i ((\eta-\eta') - (k-k')t) Z_{\xi-\xi'} |k'|^{-1/2} p_{\xi'}^{1/4} i k' Z_{\xi'}d\eta' dk'\biggr) d\eta dk,
    \end{split}
\end{equation}
so that $T_{\gamma, Z} = T_{\gamma, Z}^x + T_{\gamma, Z}^y$. We will call $T_{*, Z}^x$ and  $T_{*, Z}^y$ the $x$-\textit{derivatives} and the $y$-\textit{derivatives}, respectively, with similar naming for the breakdown of the $T_{*,Q}$ terms.
Additionally, we will not explicitly compute the bounds on the terms $T_{*, m_1}$ and $T_{*, m_2}$ of mixed-type. The arguments for these follow identically from the arguments used on $T_{*, Z}$ and $T_{*, Q}$. For additional information, see Section \ref{mixed_gamma_terms}.

Throughout our proof of Lemma \ref{key_lemma} we will continually make decompositions of the $T_*$ in frequency space. The most common of these will separate ``low-high" terms from ``high-low" terms. The ``low-high", or $LH$ terms, will occur when $|k-k'| < |k'|/2$, while the ``high-low" or $HL$ terms correspond to the case $|k-k'| \geq |k'|/2$. We will employ the following shorthand for the corresponding characteristic functions:
\begin{equation}
    1_{LH} \coloneqq 1_{|k-k'| < |k'|/2}, \;\;\; 1_{HL} \coloneqq 1_{|k-k'| \geq |k'|/2}.
\end{equation}
Lastly, we introduce additional notation for further restrictions on the domain of integration. We use the pairs $(A,B)$ as a subscript. A term containing $(A,B)$, for $A,B \in \{H,L,\cdot\}$ indicates that the $k$ frequencies are at frequency-type $A$ and the $k'$ frequencies are of type $B$. The symbol $H$ refers to frequencies which have magnitude $\geq \mu$, and the symbol $L$ refers to frequencies with magnitude $< \mu$. The symbol $\cdot$ is used to represent no restrictions on a frequency.
To see how this is used in practice, alongside other minor notation clarifications, see Section \ref{gamma_terms_for_Z}. We are now ready to begin the proof of Lemma \ref{key_lemma}.
\subsection{Gamma Terms for Z}\label{gamma_terms_for_Z}

We recall that
\begin{equation}
\begin{split}
T_{\gamma,Z} &=  - \iint_{\R^2} \frac{\dk^{2J}}{M_k(t)} \langle \xi \rangle^{2n}\langle k \rangle^{2m}  (N_k + c_\tau\mathfrak{J}_k)\mathrm{Re}\biggl(\iint_{\R^2} \langle k \rangle^{1/2} p_\xi^{-1/4} \bar{Z}_\xi\\
    &\quad \quad \quad \quad \quad \quad \langle k-k' \rangle^{-1/2} p_{\xi -\xi'}^{-3/4} i (k-k') Z_{\xi-\xi'} \langle k' \rangle^{-1/2} p_{\xi'}^{1/4} i(\eta'-k't) Z_{\xi'}d\eta' dk'\biggr) d\eta dk\\
    &\quad +\iint_{\R^2} \frac{\dk^{2J}}{M_k(t)} \langle \xi \rangle^{2n}\langle k \rangle^{2m}  (N_k + c_\tau\mathfrak{J}_k)\mathrm{Re}\biggl(\iint_{\R^2} \langle k \rangle^{1/2} p_\xi^{1/4} \bar{Z}_\xi\\
    &\quad \quad \quad \quad \quad \quad \langle k-k' \rangle^{-1/2} p_{\xi -\xi'}^{-3/4} i ((\eta-\eta') - (k-k')t) Z_{\xi-\xi'} \langle k'\rangle^{-1/2} p_{\xi'}^{1/4} i k' Z_{\xi'}d\eta' dk'\biggr) d\eta dk\\
    &\eqqcolon T_{\gamma, Z}^x + T_{\gamma, Z}^y.
\end{split}
\end{equation}
We will bound $T_{\gamma,Z}^x$ in Section \ref{gamma_x_deriv_section} and we will bound $T_{\gamma,Z}^y$ in Section \ref{gamma_y_deriv_section}.
\subsubsection{x-derivatives}\label{gamma_x_deriv_section}
By considering the Fourier side in $y$ and using boundedness of $N_k$, $M_k$, and $\mathfrak{J}_k$ as Fourier multipliers, we have by the triangle inequality,
\begin{equation}\label{split_gamma_z_x_terms}
    \begin{split}
        |T_{\gamma, Z}^x| &\lesssim \iiiint_{\R^4}  \dk^{2J} \langle \xi \rangle^{n} \langle k \rangle^{2m} \langle k \rangle^{1/2} |Z_\xi| \vkm^n \skm^{-1/2} |k-k'| p_{\xi-\xi'}^{-3/4} |Z_{\xi -\xi'}| \skp^{-1/2}\\
        & \quad \quad \quad \quad \quad \quad \langle \xi' \rangle^{n}|\eta' -k't| Z_{\xi'} d\eta' d\eta dk' dk\\
        &\quad + \iiiint_{\R^4} \dk^{2J}\langle \xi \rangle^{n}   \langle k \rangle^{2m} \sk^{1/2} p_\xi^{-1/4} |Z_\xi| \langle \xi - \xi'\rangle^n \skm^{-1/2} |k-k'|^{1/2} p_{\xi-\xi'}^{-1/2} |Z_{\xi -\xi'}| \\
        & \quad \quad \quad \quad \quad \quad \quad \vkp^n \skp^{-1/2} |\eta' -k't| Z_{\xi'} d\eta' d\eta dk' dk\\
        &\eqqcolon T_{\gamma, Z_1}^x + T_{\gamma, Z_2}^x.
    \end{split}
\end{equation}
We begin by addressing $T_{\gamma, Z_2}^x$, as it is simpler than $T_{\gamma, Z_1}^x$. We define the decomposition into low-high and high-low terms as
\begin{equation}
    \begin{split}
        T_{\gamma,Z_2}^x &= \iiiint_{\R^4} \left(1_{LH} + 1_{HL}\right) \dk^{2J} \langle \xi \rangle^{n} \langle k \rangle^{2m} \langle k \rangle^{1/2} p_\xi^{-1/4} |Z_\xi|\\
        &\quad \quad \quad \quad \quad \langle \xi - \xi' \rangle^n \langle k-k' \rangle^{-1/2} |k-k'| p_{\xi-\xi'}^{-1/2} |Z_{\xi-\xi'}|\vkp^n \langle k' \rangle^{-1/2}|(\eta' -k't) Z_{\xi'}| d\eta' d\eta dk' dk\\
        &\coloneqq T_{\gamma,Z_2, LH}^x + T_{\gamma,Z_2, HL}^x.
    \end{split}
\end{equation}
In general, we will use the $LH$ subscript to denote the low-high terms and $HL$ to denote the high-low terms throughout Section \ref{nonlinear_section}. For $T_{\gamma, Z_2, LH}^x$ we use the fact that we are in the $LH$ regime, and so $|k-k'|^{1/2} \lesssim |k'|^{1/2}$. Then by Young's convolution inequality, we find
\begin{equation}\label{gamma_z2_LH}
    \begin{split}
        T_{\gamma, Z_2, LH}^x &\lesssim \iiiint_{\R^4} 1_{LH}\dk^{J} \langle \xi\rangle^{n} \sk^m \min(|k|^{1/4}, 1) p_\xi^{-1/4} |Z_\xi| \vkm^{n} p_{\xi -\xi'}^{-1/2} |Z_{\xi-\xi'}|\\
        &\quad \quad \quad \quad (|k-k'|^{-1/4} 1_{|k| \leq 1} + 1_{|k| \geq 1})\dkp^J \vkp^n \skp^m |(\eta' - k't) Z_{\xi'}| d\eta' d\eta dk' dk\\
        & \lesssim || \dk^J \langle \xi \rangle^n \langle k \rangle^m \min(|k|^{1/4}, 1) p_\xi^{-1/4} Z_\xi ||_{L^2_k L^1_\eta}  \\
        & \quad \quad || \vk^n \sk^{-1/2} |k| \max(|k|^{-1/4}, 1) p_\xi^{-1/2} Z_\xi||_{L^1_k L^2_\eta}|| \dk^J \vk^n \sk^m (\eta-kt) Z_\xi ||_{L^2_\xi}.
    \end{split}
\end{equation}
    The first factor in \eqref{gamma_z2_LH} is controlled by interpolating in $\eta$ as done in Lemma \ref{new_energy_lemmas}. For the second factor, we interpolate in $k$ as
\begin{equation}\label{interp_in_k_example}
    \begin{split}
         || \vk^n \sk^{-1/2} |k| &\max(|k|^{-1/4}, 1) p_\xi^{-1/2} Z_\xi||_{L^1_k L^2_\eta}\\ &\lesssim \left(\int_\R \max(|k|^{-1/2},1) \langle k \rangle^{-1 - 2m} dk \right)^{1/2} || \vk^n \sk^{m} |k| p_\xi^{-1/2} Z_\xi||_{L^2_\xi}\lesssim \D_\tau^{1/2},
    \end{split}
\end{equation}
where we have crucially used $m > 0$. To handle the final factor, we simply recall the definition of $D_\gamma$ in \eqref{definition_of_diss}. Altogether, we find
\begin{equation}    
    \begin{split}
        T_{\gamma, Z_2, LH}^x \lesssim (\E^{1/2} + \mu^{-\delta_*} \D_\gamma^{\delta_*} \E^{1/2 - \delta_*}) \D_\tau^{1/2} \mu^{-1/2} \D_\gamma^{1/2},
    \end{split}
\end{equation}
which suffices for the purposes of Lemma \ref{key_lemma} since $\D_\tau \lesssim \E$. To bound $T_{\gamma, Z_2, HL}^x$, we begin by using that $|k| \lesssim |k-k|$ in the $HL$ regime. This yields
\begin{equation}\label{gamma_z_2_x_HL}
    \begin{split}
        T_{\gamma, Z_2, HL}^x &\lesssim \iiiint_{\R^4}  1_{HL}\dk^{J} \langle \xi\rangle^{n} \sk^m  p_\xi^{-1/4} |Z_\xi| \dkm^J \langle \xi -\xi'\rangle^n \skm^m \\
        &\quad\quad\quad\quad  |k-k'| p_{\xi-\xi'}^{-1/2} |Z_{\xi-\xi'}| \vk^n
        \langle k ' \rangle^{-1/2} |(\eta' - k't) Z_{\xi'}| d\eta' d\eta dk' dk.
    \end{split}
\end{equation}
Now we additionally split along the sets 
\begin{equation}\label{set_def_E}
    \mathcal{O} \coloneqq \{ (\xi, \xi') \in \R^4 : |k| \leq |k'|\}
\end{equation} and $\mathcal{O}^c$. Then by exploiting $|k| \leq |k'|$ on $\mathcal{O}$ and $|k'| < |k|$ on $\mathcal{O}^c$, we have by interpolation, $m > 0$, Young's inequality, and Lemma \ref{new_energy_lemmas},
\begin{equation}
    \begin{split}
        T_{\gamma, Z_2, HL}^x &\lesssim \iiiint_{\R^4}  1_{HL} \biggl(1_{\mathcal{O}} \dkp^J\skp^{m+1/2} \sk^{-1/2} + 1_{\mathcal{O}^c}\dk^J\sk^{m} (1_{|k| < 1} |k|^{1/4} |k'|^{-1/4} + 1_{|k| \geq 1}) \biggr) \\
        &\quad\quad\quad\quad  \langle \xi\rangle^{n}   p_\xi^{-1/4} |Z_\xi| \dkm^J \langle \xi -\xi'\rangle^n \skm^m |k-k'| p_{\xi-\xi'}^{-1/2} |Z_{\xi-\xi'}|\\
        &\quad\quad\quad\quad \vkp^n
        \langle k ' \rangle^{-1/2} |(\eta' - k't) Z_{\xi'}| d\eta' d\eta dk' dk\\
        &\lesssim || \langle \xi \rangle^n \sk^{-1/2} p_\xi^{-1/4} Z_\xi||_{L^1_\xi} || \dk^J \vk^n \sk^m |k| p_\xi^{-1/2} Z_\xi||_{L^2_\xi} || \dk^J \vk^n \sk^m  |\eta - kt|  Z_\xi||_{L^2_\xi}\\
        & \quad + ||\dk^J \langle \xi \rangle^n \sk^m \min(|k|^{1/4}, 1) p_\xi^{-1/4}||_{L^2_k L^1_\eta} || \dk^J \vk^n \sk^m |k| p_\xi^{-1/2} Z_\xi||_{L^2_\xi} \\
        &\quad \quad \quad || \vk^n \sk^{-1/2} \max(|k|^{-1/4}, 1) |\eta - kt|  Z_\xi||_{L^1_k L^2_\eta}\\
        &\lesssim (\E^{1/2} + \mu^{-\delta_*} \D_\gamma^{\delta_*} \E^{1/2 - \delta_*}) \D_\tau^{1/2} \mu^{-1/2} \D_\gamma^{1/2}.
    \end{split}
\end{equation}
We now turn our attention to  $T_{\gamma, Z_1}^x$ term, and split as
\begin{equation}\label{splitting_of_T_gamma_Z1_x}
    \begin{split}
        T_{\gamma, Z_1}^x &=  \iiiint_{\R^4}\left(1_{LH} + 1_{HL}\right) \dk^{2J} \langle \xi \rangle^{n} \langle k \rangle^{2m} \langle k \rangle^{1/2}  |Z_\xi| \vkm^n\\
        &\quad \quad \quad \quad \langle k-k' \rangle^{-1/2}|k-k'| p_{\xi-\xi'}^{-3/4} |Z_{\xi-\xi'}| \vkp^n \langle k'\rangle^{-1/2}|(\eta'-k't)Z_{\xi'}| d\eta' d\eta dk' dk\\
        &\eqqcolon T_{\gamma, Z_1, LH}^x + T_{\gamma, Z_1, HL}^x.
    \end{split}
\end{equation}
To estimate $T_{\gamma, Z_1, HL}^x$, we compute by interpolation, $m > 0$, Lemma \ref{main_D_tau_lemma}, and the fact that $|k|,|k'| \lesssim |k-k'|$  on the support of the integrand:
\begin{equation}
    \begin{split}
        T_{\gamma, Z_1, HL}^x &\lesssim \iiiint_{\R^4} 1_{HL} \dk^{J} \langle \xi \rangle^{n} \langle k \rangle^{m} |Z_\xi| \dkm^J \vkm^n \skm^m |k-k'| \min(|k-k'|^{1/4},1) p_{\xi-\xi'}^{-3/4}\\
        & \quad \quad \quad \quad \quad  \quad  |Z_{\xi-\xi'}| \vkp^n \langle k'\rangle^{-1/2} (1 + 1_{|k-k'| \leq 1}|k'|^{-1/4})|(\eta'-k't)Z_{\xi'}| d\eta' d\eta dk' dk\\
        &\lesssim || \dk^J \vk^n \sk^{m} Z_\xi||_{L_\xi^2}|| \dk^J \sk^m \min(|k|,|k|^{5/4}) p^{-3/4} Z_\xi ||_{L_k^2 L_\eta^1}\\& \quad \quad || \vk^n \sk^{-1/2}\max(|k|^{-1/4},1) |\eta - kt| Z_\xi ||_{L^1_k L^2_\eta}\\
        &\lesssim \E^{1/2} (\D_\tau^{1/2} + \mu^{-\delta^*} \D_\gamma^{\delta_*} \D_\tau^{1/2 - \delta_*}) \mu^{-1/2} \D_\gamma^{1/2}.
    \end{split}
\end{equation}
 We now address the LH term.  Using that $|k| \approx |k'|$ in the $LH$ case, together with Young's inequality and Lemma \ref{main_D_tau_lemma}, we have
\begin{equation}
    \begin{split}
        T_{\gamma, Z_1, LH}^x &\lesssim  \iiiint_{\R^4} 1_{LH} \dk^{J}\langle \xi \rangle^{n} \sk^m |Z_\xi|  \vkm^n \langle k-k'\rangle^{-1/2} |k-k'| p_{\xi-\xi'}^{-3/4} |Z_{\xi-\xi'}|\\
        &\quad \quad \quad \quad \dkp^J \vkp^n \skp^m |(\eta' -k't) Z_{\xi'}| d\eta' d\eta dk' dk\\
        &\lesssim ||\dk^{J}\langle \xi \rangle^{n} \sk^m  Z_\xi||_{L^2_\xi} || \vk^n \sk^{-1/2} |k| p_\xi^{-3/4} Z_{\xi}||_{L^1_\xi}\\
        &\quad \quad ||\dk^{J}\langle \xi \rangle^{n} \sk^m  (\eta -kt) Z_\xi||_{L^2_\xi}\\
        &\lesssim \E^{1/2}( \D_\tau^{1/2} + \mu^{-\delta_*} \D_\gamma^{ \delta_*} \D_\tau^{1/2 - \delta_*}) \mu^{-1/2} \D_\gamma^{1/2}.
    \end{split}
\end{equation}
This completes the $T_{\gamma, Z}^x$ terms.

\subsubsection{y-derivatives}\label{gamma_y_deriv_section}
We now turn our attention to $T_{\gamma,Z}^y$. By boundedness of $N_k$ and $\mathfrak{J}_k$, we have 
\begin{equation}
    \begin{split}
        |T_{\gamma, Z}^y| &\lesssim \iiiint_{\R^4} \dk^{2J} \langle \xi \rangle^{n} \sk^{2m} \langle k \rangle^{1/2} |Z_\xi| \vkm^n \langle k-k'\rangle^{-1/2} p_{\xi-\xi'}^{-3/4}\\
        &\quad \quad \quad |(\eta - \eta') + (k-k')t| |Z_{\xi-\xi'}| \vkp^n \langle k'\rangle^{-1/2} |k'| |Z_{\xi'}| d\eta' d\eta dk' dk\\
        &\quad + \iiiint_{\R^4} \dk^{2J} \langle \xi \rangle^{n} \sk^{2m} \langle k \rangle^{1/2} p_{\xi}^{-1/4}|Z_\xi| \vkm^n \langle k-k' \rangle^{-1/2} p_{\xi-\xi'}^{-1/2}\\
        &\quad \quad \quad \quad |(\eta - \eta') + (k-k')t| |Z_{\xi-\xi'}| \vkp^n \langle k' \rangle^{-1/2}|k'| |Z_{\xi'}| d\eta' d\eta dk' dk\\
        &\eqqcolon T_{\gamma, Z_1}^y + T_{\gamma, Z_2}^y,
    \end{split}
\end{equation}
where we have performed a splitting similar to that of $T_{\gamma, Z}^x$ in \eqref{split_gamma_z_x_terms}. Let us begin with $T_{\gamma, Z_1}^y$, which we divide in frequency space according to the decomposition
\begin{equation}\label{T_gamma_Z1_splitting}
    \begin{split}
       T_{\gamma, Z_1}^y &=  \iiiint_{\R^4} \biggl(1_{LH}(1_{|k| \geq \mu} + 1_{|k| < \mu}) + 1_{HL}(1_{|k'| \geq \mu} + 1_{|k'| < \mu}1_{|k| \geq \mu} + 1_{|k'| < \mu}1_{|k|< \mu})\biggr)\\
       &\quad \quad \quad \quad \dk^{2J}\langle \xi \rangle^{n} \sk^{2m} \langle k \rangle^{1/2} |Z_\xi| \vkm^n \langle k-k' \rangle^{-1/2} p_{\xi-\xi'}^{-3/4} |(\eta - \eta' + (k-k')t) Z_{\xi-\xi'}|\\
       &\quad \quad \quad \quad \vkp^n \langle k' \rangle^{-1/2} |k'| |Z_{\xi'}|  d\eta' d\eta dk' dk\\
       &\eqqcolon T_{\gamma, Z_1, LH, (H,\cdot)}^y + T_{\gamma, Z_1, LH, (L,\cdot)}^y + T_{\gamma, Z_1, HL, (\cdot,H)}^y + T_{\gamma, Z_1, HL, (H, L)}^y + T_{\gamma, Z_1, HL, (L, L)}^y
    \end{split}
\end{equation}
We note the large number of terms in \eqref{T_gamma_Z1_splitting} stands in contrast with the $T_{\gamma, Z}^x$ terms. In general the $y$-derivatives will require more subcases than the $x$-derivatives. Starting with $T_{\gamma, Z_1, LH, (H,\cdot)}^y$, we find by interpolation, Young's inequality, $|k| \approx |k'|$ on the support of the integrand, boundedness of the Riesz Transform, and a variation on Lemma \ref{new_energy_lemmas},
\begin{equation}
    \begin{split}
        T_{\gamma, Z_1, LH,(H,\cdot)}^y &\lesssim \int_{|k| \geq \mu}\iiiint_{\R^3} 1_{LH}\dk^{J} \langle \xi \rangle^{n} \sk^m |k|^{2/3 + 2\delta_*/3} |Z_\xi| \vkm^n \langle k-k' \rangle^{-1/2}  \\
        &\quad \quad \quad \quad p_{\xi-\xi'}^{-1/4} |Z_{\xi-\xi'}| 
        \dkp^{J} \langle \xi' \rangle^n \skp^m |k'|^{1/3 - 2\delta_*/3} |Z_{\xi'}|  d\eta' d\eta dk' dk\\
        &\lesssim \mu^{-1/12 + \delta_*/6} \D_\beta^{1/4 - \delta_*/2} \mu^{-1/4 - \delta_*/2} \D_\gamma^{1/4 + \delta_*/2} || \vk^n \sk^{-1/2} p_\xi^{-1/4} Z_\xi||_{L^1_\xi}\\
        &\quad \quad \mu^{-1/6 + \delta_*/3}\D_\beta^{1/2 - \delta_*} \E^{\delta_*}\\
        &\lesssim  \mu^{-1/2} \D_\beta^{1/4 - \delta_*/2} \D_\gamma^{1/4 + \delta_*/2} \mu^{-\delta_*} \D_\gamma^{\delta_*} \E^{1/2 - \delta_*} \D_\beta^{1/2 - \delta_*} \E^{\delta_*}\\
        &\lesssim \mu^{-1/2 - \delta_*} \D \E^{1/2}.
    \end{split}
\end{equation}
Now for $T_{\gamma, Z_1, LH, (L,\cdot)}^y$, we apply the triangle inequality to write $1 = p_\xi^{-1/4}p_{\xi}^{1/4} \lesssim p_\xi^{-1/4}(p_{\xi-\xi'}^{1/4} + p_{\xi'}^{1/4})$. This then implies
\begin{equation}\label{decomp_of_Z1_LH_at_low_freq}
\begin{split}
    T_{\gamma, Z_1, LH, (L,\cdot)}^y &\lesssim \int_{|k| \leq \mu }\iiiint_{\R^3} 1_{LH}\dk^{2J} \langle \xi \rangle^{n} \sk^m p_{\xi}^{-1/4} |Z_\xi| \vkm^n \langle k-k'\rangle^{-1/2}  p_{\xi-\xi'}^{-1/2}\\
    &\quad \quad \quad \quad \quad |(\eta - \eta' + (k-k')t) Z_{\xi-\xi'}| \vkp^n \skp^m  |k'| |Z_{\xi'}| d\eta' d\eta dk' dk\\
    &+ \int_{|k| \leq \mu }\iiiint_{\R^3} 1_{LH}\dk^{2J} \langle \xi \rangle^{n} \sk^m p_{\xi}^{-1/4} |Z_\xi| \vkm^n \langle k-k'\rangle^{-1/2}  p_{\xi-\xi'}^{-3/4}\\
    &\quad \quad \quad \quad \quad |(\eta - \eta' + (k-k')t) Z_{\xi-\xi'}| \vkp^n \skp^m p_{\xi'}^{1/4} |k'| |Z_{\xi'}| d\eta' d\eta dk' dk\\
    &\eqqcolon (I) + (I').
\end{split}
\end{equation}
For $(I)$, we simply have $(I) \lesssim T_{\gamma, Z_2, LH}^y$, which is controlled in \eqref{gamma_Z2_LH_y}. To address $(I')$, we note that in the $LH$ regime, $|k| \approx |k'|$, implying that $|k|, |k'|, |k-k'| \lesssim \mu$ and so we utilize Young's inequality, boundedness of the Riesz Transform, and Lemma \ref{new_energy_lemmas}. Our usage of Lemma \ref{new_energy_lemmas} is slightly modified to gain powers of $\mu$ via H\"older's inequality in $k$.
\begin{equation}
    \begin{split}
        (I') &\lesssim \int_{|k| \leq \mu }\iiiint_{\R^3} 1_{LH}\dk^{J} \langle \xi \rangle^{n} \sk^m |k| p_{\xi}^{-1/2} |Z_\xi| \vkm^n \langle k-k'\rangle^{-1/2}  p_{\xi-\xi'}^{-3/4}\\
    &\quad \quad \quad \quad \quad |(\eta - \eta' + (k-k')t) Z_{\xi-\xi'}| \dkp^{J}\vkp^n \skp^m p_{\xi'}^{1/2}|Z_{\xi'}| d\eta' d\eta dk' dk\\
    &\lesssim  || \dk^{J} \langle \xi \rangle^{n} \sk^m |k| p_{\xi}^{-1/2} Z_\xi ||_{L^2_\xi} || \vk^n\sk^{-1/2} p_\xi^{-1/4} Z_\xi||_{L^1_{|k| < \mu}L^1_\eta} || \dk^{J} \vk^n \sk^m  p_\xi^{1/2} Z_\xi||_{L^2_\xi}\\
    &\lesssim \D_\tau^{1/2} (\mu^{1/4}\E^{1/2} + \mu^{1/2-\delta_*} \D_\gamma^{\delta_*} \E^{1/2 - \delta_*}) \mu^{-1/2} \D_\gamma^{1/2}.
    \end{split}
\end{equation}
We turn our attention to the $HL$ terms for $Z_1$. In the $T_{\gamma, Z_1, HL, (\cdot, H)}^y$ case, we use that $|k'| \lesssim |k-k'|$ to move additional frequencies to the $k-k'$ factor. Then we use Young's inequality to place the $\xi'$ term in $L^1_{|k| \geq \mu} L^2_\eta$. Hence by interpolation, $m > 0$ and Lemma \ref{D_taualpha_and_alpha},
\begin{equation}\label{T_gamma_Z1_HL_y_high_prime}
    \begin{split}
        T_{\gamma, Z_1, HL, (\cdot, H)}^y & \lesssim \int_{|k'| \geq \mu} \iiint_{\R^3} 1_{HL }\dk^{J}\langle \xi \rangle^{n} \sk^m  |Z_\xi| \dkm^{J} \vkm^n \skm^m |k-k'|^{2/3}\\    
        &\quad \quad \quad \quad  \min(|k-k'|^{1/4}, 1) p_{\xi-\xi'}^{-3/4}|(\eta - \eta') - (k-k')t| | Z_{\xi-\xi'}| \\
       &\quad \quad \quad \quad  \vkp^n \langle k' \rangle^{-1/2} \max(|k'|^{-1/4},1) |k'|^{1/3} |Z_{\xi'}|  d\eta' d\eta dk dk'\\
       &\lesssim ||\dk^{J}\langle \xi \rangle^{n} \sk^m  Z_\xi||_{L^2_\xi}\\
       &\quad \quad ||\dk^{J}\vk^n \sk^m \min(|k|^{1/4}, 1) |k|^{2/3} |\eta - kt| p_\xi^{-3/4} Z_\xi||_{L^2_{|k| \geq \mu} L^1_\eta}\\
       &\quad \quad ||\langle k'\rangle^{-1/2} \max(|k|^{-1/4},1) |k|^{1/3} Z_\xi||_{L^1_{|k| \geq \mu}L^2_\eta}\\
       &\lesssim \E^{1/2}(\mu^{-1/3} \D_{\tau \alpha}^{1/2} + \mu^{-1/3 - \delta_*} \D_{\tau\alpha}^{1/2 - \delta_*} \D_\alpha^{\delta_*}) \mu^{-1/6} \D_\beta^{1/2}.
    \end{split}
\end{equation}
For $T_{\gamma, Z_1, HL, (H, L)}^y$, we use the fact that this implies $|k'| \leq |k|$ and $|k-k'| \gtrsim \mu$. This allows us to proceed in a similar fashion to \eqref{T_gamma_Z1_HL_y_high_prime}, applying Young's inequality, interpolation with $m > 0$, and Lemma \ref{D_taualpha_and_alpha},
\begin{equation}
    \begin{split}
        T_{\gamma, Z_1, HL, (H, L)}^y & \lesssim \int_{|k| \geq \mu} \int_{|k'| < \mu}\iint_{\R^2} 1_{HL } \dk^{J}\langle \xi \rangle^{n} \sk^m  |k|^{1/3} |Z_\xi| \dkm^{J} \vk^n\\
        &\quad \quad \quad \quad \skm^m |k-k'|^{2/3} \min(|k-k'|^{1/4}, 1) p_{\xi-\xi'}^{-3/4} |(\eta - \eta') - (k-k')t|  | Z_{\xi-\xi'}|\\
       &\quad \quad \quad \quad  \vkp^n \langle k' \rangle^{-1/2} \max(|k'|^{-1/4},1)  |Z_{\xi'}|  d\eta' d\eta dk' dk\\
       &\lesssim \mu^{-1/6} \D_\beta^{1/2}(\mu^{-1/3} \D_{\tau \alpha}^{1/2} + \mu^{-1/3 - \delta_*} \D_{\tau\alpha}^{1/2 - \delta_*} \D_\alpha^{\delta_*}) \E^{1/2}.
    \end{split}
\end{equation}
Lastly, when $|k| \lesssim \mu$ and $|k'| \lesssim \mu$, all $k$-based frequencies are $\lesssim \mu$. We then apply integration by parts to the $\partial_Y - t\partial_X$ derivative on the $k-k'$ factor. One could view this as an application of the triangle inequality. Then we have by Young's inequality, H\"older's inequality in $k$, and Lemma \ref{main_D_tau_lemma},
\begin{equation}
    \begin{split}
    T_{\gamma, Z_1, HL, (L, L)}^y & \lesssim \int_{|k| < \mu}\int_{|k'| < \mu} \iint_{\R^2} 1_{HL } \dk^{J}\langle \xi \rangle^{n} \sk^m |\eta -kt| |Z_\xi| \dkm^{J}  \vkm^n  \\
       &\quad \quad \quad \quad  \skm^m p_{\xi-\xi'}^{-3/4} |k-k'| | Z_{\xi-\xi'}|\vkp^n \langle k' \rangle^{-1/2} |Z_{\xi'}|  d\eta' d\eta dk' dk\\
       &+ \int_{|k| < \mu}\int_{|k'| < \mu} \iint_{\R^2} 1_{HL } \dk^{J}\langle \xi \rangle^{n} \sk^m |Z_\xi| \dkm^{J} \vkm^n\\
       &\quad \quad \quad \quad \skm^m  p_{\xi-\xi'}^{-3/4} |k-k'| | Z_{\xi-\xi'}|\vkp^n \langle k' \rangle^{-1/2}|\eta' -k't| |Z_{\xi'}|  d\eta' d\eta dk' dk\\
       &\lesssim \mu^{-1/2} \D_\gamma^{1/2} || \dk^J \vk^n \sk^m |k| p_\xi^{-3/4}||_{L^1_{|k| \lesssim \mu} L^1_\eta} \E^{1/2}\\
       & \quad + \E^{1/2} || \dk^J \vk^n \sk^m |k| p_\xi^{-3/4}||_{L^1_{|k| \lesssim \mu} L^1_\eta}\mu^{-1/2} \D_\gamma^{1/2}\\
       &\lesssim \mu^{-1/2}\D_{\gamma}^{1/2} (\D_\tau^{1/2} + \mu^{-\delta_*} \D_\gamma^{\delta_*} \D_\tau^{1/2 - \delta_*}) \E^{1/2}.
       \end{split}
\end{equation}
This completes the estimate for $T_{\gamma, Z_1}^y$ answering Lemma \ref{key_lemma}. For $T_{\gamma, Z_2}^y$, we use the frequency decomposition
\begin{equation}
    \begin{split}
       T_{\gamma, Z_2}^y &=  \iiiint_{\R^4} \biggl(1_{LH} + 1_{HL}\biggr)\dk^{2J}\langle \xi \rangle^{n} \sk^{2m} \langle k \rangle^{1/2} p_\xi^{-1/4} |Z_\xi|\\
       &\quad \quad \quad \quad \vkm^n \langle k-k' \rangle^{-1/2} p_{\xi-\xi'}^{-1/2} |(\eta - \eta' + (k-k')t) Z_{\xi-\xi'}| \langle k' \rangle^{-1/2} |k'| |Z_{\xi'}|  d\eta' d\eta dk' dk\\
       &\eqqcolon T_{\gamma, Z_2, LH}^y  + T_{\gamma, Z_2, HL}^y.
    \end{split}
\end{equation}
Dealing with $T_{\gamma, Z_2, LH}^y$ first, we apply $1 = p_\xi^{-1/4}p_\xi^{1/4} \lesssim p_\xi^{-1/4}(p_{\xi-\xi'}^{1/4} + p_{\xi'}^{1/4})$. We then use boundedness of the Riesz Transform, Young's inequality, interpolation, Lemma \ref{main_D_gamma_lemma}, $m > 0$, and $|k| \approx |k'|$:
\begin{equation}\label{gamma_Z2_LH_y}
    \begin{split}
        T_{\gamma, Z_2, LH}^y & \lesssim \iiint_{\R^4} 1_{LH} \dk^J \dkp^J \langle \xi \rangle^{n} \sk^{m} \skp^m |k| p_{\xi}^{-1/2}|Z_\xi| \langle k-k' \rangle^{-1/2} \vkm^n \vkp^n  \\
        &\quad\quad \quad \quad \biggl( p_{\xi-\xi'}^{1/4}|Z_{\xi-\xi'}||Z_{\xi'}| + (|k-k'|^{-1/4} |k'|^{1/4} 1_{|k'| \leq 1} + 1_{|k'| > 1})|Z_{\xi-\xi'}|p_{\xi'}^{1/4} |Z_{\xi'}|\biggr)d\eta' d\eta dk'\\
        &\lesssim \D_\tau^{1/2} \biggl( || \vk^n \langle k \rangle^{-1/2} \max(|k|^{-1/4},1) p_\xi^{1/4} Z_\xi||_{L^1_k L^2_\eta} || \dk^J \vk^n \sk^m \min(1, |k|^{1/4})  Z_\xi||_{L^2_k L^1_\eta}\\
        &\quad \quad \quad \quad \quad +
        || \vk^n \langle k \rangle^{-1/2} Z_\xi||_{L^1_k L^1_\eta} ||\dk^J \vk^n\sk^m p_\xi^{1/4} Z_\xi||_{L^2_k L^2_\eta} \biggr)\\
        &\lesssim \D_\tau^{1/2} \biggl(\mu^{-1/4}\D_\gamma^{1/4} \E^{1/4} (\mu^{-1/4 - \delta_*} \D_\gamma^{1/4 + \delta_*} \E^{1/4 - \delta_*} + \mu^{-1/4} \D_\gamma^{1/4} \E^{1/4})\\
        &\quad \quad \quad \quad \quad + (\mu^{-1/4 - \delta_*} \D_\gamma^{1/4 + \delta_*} \E^{1/4 - \delta_*} + \mu^{-1/4} \D_\gamma^{1/4} \E^{1/4})\mu^{-1/4}\D_\gamma^{1/4} \E^{1/4} \biggr)\\
&\lesssim \mu^{-1/2 - \delta_*} \D \E^{1/2}.
    \end{split}
\end{equation}
which suffices since $\D_\tau \lesssim \E$. Lastly, we consider $T_{\gamma, Z_2, HL}$. Here, we begin by noting that in the $HL$ regime
\begin{equation}
    1 \lesssim p_{\xi'}^{-1}\left(|k'| |k-k'| + |\eta' - k't|\left(|\eta-kt| + |\eta - \eta' + (k-k' )t| \right)\right).
\end{equation}
This gives us
\begin{equation}\label{decomposed_HL_gamma_y}
\begin{split}
         T_{\gamma, Z_2, HL}^y & \lesssim \iiiint_{\R^4 }1_{HL}\dk^{J}\langle \xi \rangle^{n} \sk^m  p_\xi^{-1/4} |Z_\xi| \dkm^{J} \vkm^n \skm^m\\
       &\quad \quad \quad \quad |k-k'| |Z_{\xi-\xi'}| \vkp^n \langle k' \rangle^{-1/2} |k'|^{2} p_{\xi'}^{-1} |Z_{\xi'}|  d\eta' d\eta dk' dk\\
       &\quad + \iiiint_{\R^4 }1_{HL}\dk^{J}\langle \xi \rangle^{n} \sk^m  p_\xi^{-1/4} |Z_\xi| \dkm^{J}\vkm^n \skm^m\\
       &\quad \quad \quad \quad  |\eta - \eta' + (k-k' )t| |Z_{\xi-\xi'}|  \vkp^n \langle k' \rangle^{-1/2}|\eta' - k't| |k'| p_{\xi'}^{-1} |Z_{\xi'}|  d\eta' d\eta dk' dk\\
       &\quad + \iiiint_{\R^4 }1_{HL}\dk^{J}\langle \xi \rangle^{n} \sk^m |\eta-kt|  p_\xi^{-1/4} |Z_\xi| \dkm^{J} \vkm^n \skm^m\\
       &\quad \quad \quad \quad |Z_{\xi-\xi'}| \vkp^n \langle k' \rangle^{-1/2} |k'|  |\eta' - k't|p_{\xi'}^{-1} |Z_{\xi'}|  d\eta' d\eta dk' dk\\
       &\eqqcolon T_{\gamma, Z_2, HL}^{y,a} + T_{\gamma, Z_2, HL}^{y,b} + T_{\gamma, Z_2, HL}^{y,c}.
\end{split}
\end{equation}
For $T_{\gamma, Z_2, HL}^{y,a}$  we split along $|k| \geq 1$ and $|k| < 1$, then apply Young's inequality, interpolation with $m > 0$ and Lemma \ref{new_energy_lemmas} to find
\begin{equation}\label{gamma_Z2_HL}
    \begin{split}
        T_{\gamma, Z_2, HL}^{y,a} & \lesssim ||\dk^{J} \sk^n \vk^m  p_\xi^{-1/4} Z_\xi ||_{L^2_{|k| \geq 1}L^1_\eta} || \dk^J \vk^n \sk^m |k| Z_\xi||_{L^2_\xi} ||\vk^n \sk^{-1/2} |k| p_\xi^{-1/2} Z_\xi||_{L^1_k L^2_\eta}\\
        &\quad +
         ||\dk^{J} \sk^n \vk^m p_\xi^{-1/4} Z_\xi ||_{L^1_{|k| < 1}L^1_\eta} || \dk^J \vk^n \sk^m |k| Z_\xi||_{L^2_\xi} ||\vk^n \sk^{-1/2} |k| p_\xi^{-1/2} Z_\xi||_{L^2_\xi}\\
        &\lesssim (\E^{1/2} + \mu^{-\delta_*} \D_\gamma^{\delta_*} \E^{1/2 - \delta_*}) \mu^{-1/2} \D_\gamma^{1/2} \D_\tau^{1/2}.
    \end{split}
\end{equation}
For the second term of \eqref{decomposed_HL_gamma_y}, we again split along $|k| \geq 1$ and $|k| < 1$, together with Young's inequality, interpolation, and Lemma \ref{new_energy_lemmas}:
\begin{equation}
\begin{split}
    T_{\gamma, Z_2, HL}^{y,b} & \lesssim ||\dk^J \langle \xi\rangle^n \sk^m   p_\xi^{-1/4} Z_{\xi}  ||_{L^2_{|k| \geq 1}L^1_\eta}  ||\dk^J \langle \xi\rangle^n \sk^m  |\eta - kt| Z_\xi||_{L^2_\xi}\\
    &\quad \quad \quad || \vk^n \sk^{-1/2} |k| p_\xi^{-1/2} Z_{\xi}||_{L^1_k L^2_{\eta}}\\
    &\quad + ||\dk^J \langle \xi\rangle^n \sk^m   p_\xi^{-1/4} Z_{\xi}  ||_{L^1_{|k| < 1}L^1_\eta}  ||\dk^J \langle \xi\rangle^n \sk^m  |\eta - kt| Z_\xi||_{L^2_\xi}\\
    &\quad \quad \quad \quad  || \vk^n \sk^{-1/2} |k| p_\xi^{-1/2} Z_{\xi}||_{L^2_\xi}\\
    &\lesssim (\E^{1/2} + \mu^{-\delta_*} \D_\gamma^{\delta_*} \E^{1/2 - \delta_*}) \mu^{-1/2} \D_\gamma^{1/2} \D_\tau^{1/2}.
\end{split}
\end{equation}
Lastly, we consider $T_{\gamma, Z_2, HL}^{y,c}$. Here, we split along $|k-k'| \geq 1$ and $|k-k'|<1$. We then use boundedness of the Riesz Transform, Young's inequality, interpolation, and Lemma \ref{main_D_gamma_lemma}:
\begin{equation}
    \begin{split}
        T_{\gamma, Z_2, HL}^{y,c} & \lesssim ||\dk^J \langle \xi\rangle^n \sk^m  |\eta - kt|^{1/2} Z_{\xi}  ||_{L^2_\xi}  || \dk^J \vk^n \sk^m  Z_\xi||_{L^2_{|k| \geq 1}L^1_\eta}\\
        &\quad ||\vk^n \sk^{-1/2} |k| p_\xi^{-1/2} Z_\xi||_{L^1_k L^2_\eta}\\
        &\quad +||\dk^J \langle \xi\rangle^n \sk^m  |\eta - kt|^{1/2} Z_{\xi}||_{L^2_\xi}|| \dk^J \vk^n \sk^m  Z_\xi||_{L^1_{|k| < 1}L^1_\eta}\\
        & \quad \quad ||\vk^n\sk^{-1/2} |k| p_\xi^{-1/2} Z_\xi||_{L^2_\xi}\\
        &\lesssim \mu^{-1/4} \D_\gamma^{1/4} \E^{1/4} \mu^{-1/4}\D_\gamma^{1/4}(\E^{1/4} + \mu^{-\delta_*} \D_\gamma^{\delta_*} \E^{1/4 - \delta_*}) \D_\tau^{1/2}.
    \end{split}
\end{equation}
which concludes the necessary bounds on $T_{\gamma, Z}$.
\subsection{Gamma Terms for Q}
Using the decomposition \eqref{conv_NL_Q}, we have explicitly
\begin{equation}
    \begin{split}
        T_{\gamma,Q} &=  -\iint_{\R^2} \frac{\dk^{2J}}{M_k(t)} \langle \xi \rangle^{2n}\langle k \rangle^{2m}  (N_k + c_\tau\mathfrak{J}_k)\mathrm{Re}\biggl(\iint_{\R^2} \sgn(k) \langle k \rangle^{1/2} p_\xi^{1/4} \bar{Q}_\xi\\
    &\quad \quad \quad \quad \quad \quad \langle k-k' \rangle^{-1/2} p_{\xi -\xi'}^{-3/4} i (k-k') Z_{\xi-\xi'} \sgn(k') \langle k' \rangle^{-1/2} p_{\xi'}^{-1/4}  i(\eta'-k't) Q_{\xi'}d\eta' dk'\biggr) d\eta dk\\
    &\quad +\iint_{\R^2} \frac{\dk^{2J}}{M_k(t)}  \langle \xi \rangle^{2n}\langle k \rangle^{2m}  (N_k + c_\tau\mathfrak{J}_k)\mathrm{Re}\biggl(\iint_{\R^2} \sgn(k) \langle k \rangle^{1/2} p_\xi^{1/4} \bar{Q}_\xi\\
    &\quad \quad \quad \quad \quad \quad \langle k-k' \rangle^{-1/2} p_{\xi -\xi'}^{-3/4} i ((\eta-\eta') - (k-k')t) Z_{\xi-\xi'} \sgn(k') \langle k' \rangle^{-1/2} p_{\xi'}^{-1/4} i k' Q_{\xi'}d\eta' dk'\biggr) d\eta dk\\
    &\eqqcolon T_{\gamma, Q}^x + T_{\gamma, Q}^y.
    \end{split}
\end{equation}
We will treat $T_{\gamma, Q}^x$ in Section \ref{x_derivs_Q} and $T_{\gamma, Q}^y$ in Section \ref{y_dervis_Q}.

\subsubsection{x-derivatives}\label{x_derivs_Q}
Employing a similar decomposition as in \eqref{split_gamma_z_x_terms}, we have by the triangle inequality:
\begin{equation}
    \begin{split}
        |T_{\gamma, Q}^x| & \lesssim \iiiint_{\R^4} \langle \xi \rangle^{n} \sk^{2m}  \langle k \rangle^{1/2} |Q_\xi| \vkm^n \langle k-k' \rangle^{-1/2} |k-k'| p_{\xi-\xi'}^{-3/4}\\
        &\quad \quad \quad \quad |Z_{\xi-\xi'}|  \vkp^n \skp^{-1/2} | \eta' -k't| |Q_{\xi'}| d\eta' d\eta dk' dk\\
        &\quad + \iiiint_{\R^4} \dk^{2J} \langle \xi \rangle^{n} \sk^{2m} \sk^{1/2} |Q_\xi| \vkm^n \skm^{-1/2}\\
        &\quad \quad \quad \quad \quad |k-k'| p_{\xi-\xi'}^{-1/2} |Z_{\xi-\xi'}| \vkp^n p_{\xi'}^{-1/4} \skp^{-1/2} |\eta'-k't||Q_{\xi'}| d\eta' d\eta dk' dk\\
        &\eqqcolon T_{\gamma, Q_1}^x + T_{\gamma, Q_2}^x.
    \end{split}
\end{equation}
We notice that $T_{\gamma, Q_1}^x$ is schematically identical to $T_{\gamma, Z_1}^x$, and hence the exact same arguments yield
\begin{equation}
    T_{\gamma, Q_1}^x \lesssim \mu^{-1/2 -\delta_*} \D \E^{1/2},
\end{equation}
as desired. We will focus our attention on $T_{\gamma, Q_2}^x$. We begin with introducing the decomposition
\begin{equation}
    \begin{split}
         T_{\gamma, Q_2}^x& \lesssim \iiiint_{\R^4} \left(1_{LH}+ 1_{HL}\right) \dk^{2J} \langle \xi \rangle^{n} \sk^{2m} \sk^{1/2} |Q_\xi| \vkm^n \skm^{-1/2}\\
         & \quad \quad \quad \quad |k-k'| p_{\xi-\xi'}^{-1/2} |Z_{\xi-\xi'}| \vkp^n p_{\xi'}^{-1/4} \skp^{-1/2} |\eta'-k't||Q_{\xi'}| d\eta' d\eta dk' dk\\
         &\eqqcolon T_{\gamma, Q_2, LH}^x  +  T_{\gamma, Q_2, HL}^x.
    \end{split}
\end{equation}
We start with the $LH$ term. Since $|k-k'| \lesssim |k|$ and $|k'| \approx |k|$ in the LH regime:
\begin{equation}\label{gamma_Q2_LH_1}
    \begin{split}
        T_{\gamma, Q_2, LH}^x &\lesssim \iiiint_{\R^4} 1_{LH}\dk^{J} \langle \xi \rangle^{n} \sk^m \max(|k|^{1/4}, 1)  |Q_\xi| \langle \xi - \xi'\rangle^n \skm^{-1/2}\\
        &\quad \quad \quad \quad \max(|k-k'|^{-1/4},1) |k-k'| p_{\xi-\xi'}^{-1/2} |Z_{\xi-\xi'}|  \dkp^{J}\\
         &\quad \quad \quad \quad \vkp^n\skp^m p_{\xi'}^{-1/4}|\eta'-k't||Q_{\xi'}| d\eta' d\eta dk' dk.
    \end{split}
\end{equation}
We now utilize Young's inequality twice, boundedness of the Riesz Transform (which implies $\D_\tau \lesssim \E^{1/2})$, and Lemma \ref{main_D_gamma_lemma} to obtain 
\begin{equation}\label{gamma_Q2_LH_2}
    \begin{split}
        T_{\gamma, Q_2, LH}^x &\lesssim ||\dk^{J} \langle \xi \rangle^{n} \sk^m \max(|k|^{1/4}, 1)  Q_\xi ||_{L^2_k L^1_\eta}|| \vk^n \sk^{-1/2} \max(|k|^{-1/4},1) |k| p_\xi^{-1/2} Z_\xi||_{L^1_k L^2_\eta}\\
        &\quad\quad \quad ||\dk^{J} \vk^n \sk^m |\eta- kt|^{1/2} Q_\xi ||_{L^2_\xi}\\
        &\lesssim \mu^{-1/4}\left(\D_\gamma^{1/4} \E^{1/4} + \mu^{-\delta_*} D_{\gamma}^{1/4+\delta_*}\E^{1/2-\delta_*}\right) \D_\tau^{1/2} \E^{1/4} \mu^{-1/4} \D_\gamma^{1/4} \E^{1/4}.
    \end{split}
\end{equation}
The $HL$ term follows similar principles. We begin by utilizing that $|k|,|k'| \lesssim |k-k'|$ to write
\begin{equation}
    \begin{split}
        T_{\gamma, Q_2, HL}^x &\lesssim \iiiint_{\R^4} 1_{HL}\dk^{J} \langle \xi \rangle^{n}  |Q_\xi| \dkm^{J} \langle \xi - \xi'\rangle^n \skm^m |k-k'| \\
         &\quad \quad \quad \quad   p_{\xi-\xi'}^{-1/2} |Z_{\xi-\xi'}| \vk^n|\left(1_{|k'| \geq 1} + 1_{|k'| < 1}|k-k'|^{-1/4}|k'|^{1/4}\right)\\
         &\quad \quad \quad \quad \skp^{-1/2} p_{\xi'}^{-1/4} |\eta'-k't||Q_{\xi'}| d\eta' d\eta dk' dk.
    \end{split}
\end{equation}
We are now in a form similar to \eqref{gamma_Q2_LH_1}, and so we use Young's inequality, interpolation and boundedness of the Riesz Transform. However, we use  Lemma \ref{main_D_gamma_lemma} to find
\begin{equation}
    \begin{split}
        T_{\gamma, Q_2, HL}^x &\lesssim ||\dk^{J} \langle \xi \rangle^{n} \sk^m  Q_\xi ||_{L^2_k L^1_\eta}||\dk^{J} \langle \xi \rangle^{n} \sk^m |k| p_\xi^{-1/2} Z_\xi||_{L^2_k L^2_\eta}\\
        &\quad \quad \quad \quad || \vk^n \sk^{-1/2} \max(|k|^{-1/4},1)  |\eta- kt|^{1/2} Q_\xi ||_{L^1_k L^2_\eta}\\
        &\lesssim (\mu^{-1/4 - \delta_*} \D_\gamma^{1/4 + \delta_*} \E^{1/4 - \delta_*} + \mu^{-1/8} \D_\gamma^{1/8} \E^{3/8}) \D_\tau^{1/2} || \vk^n\sk^m  \min(|k|^{1/4},1) |\eta- kt|^{1/2} Q_\xi ||_{L^2_\xi}.
    \end{split}
\end{equation}
We then use that
\begin{equation}
    || \vk^n\sk^m  \min(|k|^{1/4},1) |\eta- kt|^{1/2} Q_\xi ||_{L^2_\xi} \lesssim \mu^{-3/8} \D_\gamma^{3/8} \E^{1/8}
\end{equation}
and
\begin{equation}
    || \vk^n\sk^m  \min(|k|^{1/4},1) |\eta- kt|^{1/2} Q_\xi ||_{L^2_\xi} \lesssim \mu^{-1/4} \D_\gamma^{1/4} \E^{1/4}
\end{equation}
alongside $\D_\tau \lesssim \E$ to find
\begin{equation}
    T_{\gamma, Q_2, HL}^x \lesssim \mu^{-1/2 - \delta_*} \D \E^{1/2}.
\end{equation}
This completes the estimates on $T_{\gamma, Q}^x$.

\subsubsection{y-derivatives}\label{y_dervis_Q}
For the $y$ derivatives, we decompose on the Fourier side as
\begin{equation}
    \begin{split}
        |T_{\gamma, Q}^y|
        &\lesssim \iiiint_{\R^4} \dk^{2J}\langle \xi \rangle^{n} \sk^{2m} \langle k \rangle^{1/2} |Q_\xi| \vkm^n \langle k-k' \rangle^{-1/2} p_{\xi -\xi'}^{-3/4}\\
        &\quad \quad \quad \quad |\eta - \eta' - (k-k')t| |Z_{\xi -\xi'}| \vkp^n \langle k' \rangle^{-1/2} |k'| |Q_{\xi'}| d\eta' d\eta dk' dk\\
        &\quad + \iiiint_{\R^4} \dk^{2J}\langle \xi \rangle^{n} \sk^{2m} \langle k \rangle^{1/2} |Q_\xi|  \vkm^n \langle k-k' \rangle^{-1/2} p_{\xi -\xi'}^{-1/2}\\
        &\quad \quad \quad \quad \quad |\eta - \eta' - (k-k')t| |Z_{\xi -\xi'}| \vkp^n p_{\xi'}^{-1/4}\langle k' \rangle^{-1/2} |k'||Q_{\xi'}| d\eta' d\eta dk' dk.\\
        &\eqqcolon T_{\gamma, Q_1}^y + T_{\gamma, Q_2}^y.
    \end{split}
\end{equation}
Similar to the $T_{\gamma, Q}^x$ terms, the term $T_{\gamma, Q_1}^y$ can be handled by the same techniques used in estimating $T_{\gamma, Z_1}^y$. For $T_{\gamma, Q_2}^y$, we note that if we split into $LH$ and $HL$ cases in the usual manner, we find
\begin{equation}
    \begin{split}
         T_{\gamma, Q_2, LH}^y &\lesssim \iiiint_{\R^4} 1_{LH}\dk^{J}\langle \xi \rangle^{n} \sk^m \langle k \rangle^{1/2} |Q_\xi| \vkm^n \langle k-k'\rangle^{-1/2}p_{\xi -\xi'}^{-1/2} |\eta - \eta' - (k-k')t|\\
         &\quad \quad \quad \quad |Z_{\xi -\xi'}| \dkp^{J} \vkp^n \skp^m p_{\xi'}^{-1/4}\langle k' \rangle^{-1/2} |k'| |Q_{\xi'}| d\eta' d\eta dk' dk,
    \end{split}
\end{equation}
which can be controlled in the same way as $T_{\gamma, Z_2, LH}^y$ in \eqref{gamma_Z2_LH_y}, only with the role of the $\xi$ and $\xi'$ factors reversed. For $T_{\gamma, Q_2, HL}^y$, we note that
\begin{equation}\label{gamma_Q2_HL}
    \begin{split}
        T_{\gamma, Q_2, HL}^y & \lesssim \iiiint_{\R^4} 1_{HL}\dk^{J}\langle \xi \rangle^{n} \sk^m |Q_\xi| \dkm^{J}\langle \xi - \xi'\rangle^n \sk^m p_{\xi -\xi'}^{-1/2}\\ &\quad \quad \quad \quad|\eta - \eta' - (k-k')t| |Z_{\xi -\xi'}| \vkp^n p_{\xi'}^{-1/4} \langle k' \rangle^{-1/2} |k'| |Q_{\xi'}| d\eta' d\eta dk' dk.
    \end{split}
\end{equation}
Then by using the decomposition
\begin{equation}
\begin{split}
    1 \lesssim p_{\xi - \xi'}^{-1}p_{\xi - \xi'}\lesssim p_{\xi-\xi'}^{-1}\biggl( |k-k'|^2 + |(\eta- \eta') - (k-k')t|^{3/2}(|\eta' - k't|^{1/2} + |\eta - kt|^{1/2})\biggr),
\end{split}
\end{equation}
we see that through Young;s inequality, we may bound \eqref{gamma_Q2_HL} by
\begin{equation}
    \begin{split}
        T_{\gamma, Q_2, HL}^y & \lesssim || \dk^J \vk^n \sk^m Q_\xi||_{L^2_\xi} || \dk^J \vk^n \sk^m |k| p_\xi^{-1/2} Z_\xi||_{L^2_k L^1_\eta}|| \vk^n\sk^{-1/2}|k| p_\xi^{-1/4} Q_\xi||_{L^1_k L^2_\eta}\\
        &\quad + || \dk^J \vk^n \sk^m Q_\xi||_{L^2_\xi} || \dk^J \vk^n \sk^m |k|^{1/2} \min(|k|^{1/4}, 1) p_\xi^{-1/4} Z_\xi||_{L^2_k L^1_\eta}\\
        &\quad \quad || \vk^n\sk^{-1/2} \max(|k|^{-1/4},1)|k|^{1/2} |\eta-kt|^{1/2} p_\xi^{-1/4} Q_\xi||_{L^1_k L^2_\eta}\\
        & \quad + || \dk^J \vk^n \sk^m |\eta - kt|^{1/2} Q_\xi||_{L^2_\xi} || \dk^J \vk^n \sk^m |k|^{1/2} \min(|k|^{1/4}, 1) p_\xi^{-1/4} Z_\xi||_{L^2_k L^1_\eta}\\
        &\quad \quad || \vk^n\sk^{-1/2} \max(|k|^{-1/4},1)|k|^{1/2} p_\xi^{-1/4} Q_\xi||_{L^1_k L^2_\eta}.
    \end{split}
\end{equation}
Then we employ Lemma \ref{main_D_tau_lemma}, alongside interpolation in $k$ with $m > 0$ to find 
\begin{equation}
    \begin{split}
        T_{\gamma, Q_2, HL}^y & \lesssim \E^{1/2}(\D_\tau^{1/2} + \mu^{-1/4 - \delta_*}\D_\gamma^{1/4+\delta_*} \D_\tau^{1/4})\D_\tau^{1/4} \mu^{-1/4} \D_\gamma^{1/4}\\
        & \quad +  \E^{1/2}(\D_\tau^{1/4} \mu^{-1/4} \D_\gamma^{1/4} + \mu^{-1/4 - \delta_*}\D_\gamma^{1/4+\delta_*} \D_\tau^{1/4})\D_\tau^{1/4} \mu^{-1/4} \D_\gamma^{1/4}\\
        &\quad + \E^{1/4} \mu^{-1/4} \D_\gamma^{1/4} (\D_\tau^{1/4} \mu^{-1/4} \D_\gamma^{1/4} + \mu^{-1/4 - \delta_*}\D_\gamma^{1/4+\delta_*} \D_\tau^{1/4}) \D_\tau^{1/4} \E^{1/4}\\
        &\lesssim \mu^{-1/2 - \delta_*} \D \E^{1/2}.
    \end{split}
\end{equation}
This concludes the discussion of the $T_{\gamma, Q}$ terms.

\subsection{Mixed Gamma Terms}\label{mixed_gamma_terms}
We now turn to the terms of mixed type
\begin{equation}
    \begin{split}
        T_{\gamma, m_1} + T_{\gamma, m_2} &= \iint_{\R^2} \frac{\dk^{2J}}{M_k(t)} \langle k, \eta \rangle^{2n} \sk^{2m} \frac{1}{2\sqrt{R}}\frac{\partial_t p}{|k|p^{1/2}}\biggl( \mathrm{Re}(\mathbb{NL}_k^{(Z)} \bar{Q}_k) + \mathrm{Re}(\bar{Z}_\xi \mathbb{NL}_k^{(Q)})\biggr) \biggr) dk d \eta.
    \end{split}
\end{equation}
Importantly, we will not explicitly prove the bound \eqref{bootstrap_bound} for $T_{\gamma, m_1}$ or for $T_{\gamma, m_2}$. Indeed, after expanding out the nonlinear terms, we see by the triangle inequality and boundedness of $\frac{\partial_t p}{|k|p^{1/2}}$, that $T_{\gamma, m_1}$ can be bounded using schematically identical arguments to those used to bound $T_{\gamma, Z}$, and the same is true for $T_{\gamma, m_2}$ and $T_{\gamma, Q}$. The only true differences are hidden in the implicit dependence of all constants on $R$.

\subsection{Alpha Terms for Z}

We split into $x$-derivative terms and $y$-derivative terms as follows:
\begin{equation}
\begin{split}
T_{\alpha, Z} &=  - c_\alpha\iint_{\R^2} \frac{\dk^{2J}}{M_k(t)} \langle \xi \rangle^{2n} \sk^{2m} A(k)^2 (N_k + c_\tau\mathfrak{J}_k)(\eta -kt)^2\mathrm{Re}\biggl(\iint_{\R^2} \langle k \rangle^{1/2} p_\xi^{-1/4} \bar{Z}_\xi\\
    &\quad \quad \quad \quad \quad \quad \langle k-k' \rangle^{-1/2} p_{\xi -\xi'}^{-3/4} i (k-k') Z_{\xi-\xi'} \langle k' \rangle^{-1/2} p_{\xi'}^{1/4} i(\eta'-k't) Z_{\xi'}d\eta' dk'\biggr) d\eta dk\\
    &\quad + c_\alpha\iint_{\R^2} \frac{\dk^{2J}}{M_k(t)} \langle \xi \rangle^{2n} \sk^{2m}   A(k)^2(N_k + c_\tau\mathfrak{J}_k)(\eta -kt)^2\mathrm{Re}\biggl(\iint_{\R^2} \langle k \rangle^{1/2} p_\xi^{1/4} \bar{Z}_\xi\\
    &\quad \quad \quad \quad \quad \quad \langle k-k' \rangle^{-1/2} p_{\xi -\xi'}^{-3/4} i ((\eta-\eta') - (k-k')t) Z_{\xi-\xi'} \langle k' \rangle^{-1/2} p_{\xi'}^{1/4} i k' Z_{\xi'}d\eta' dk'\biggr) d\eta dk\\
    &\eqqcolon T_{\alpha, Z}^x + T_{\alpha, Z}^y.
\end{split}
\end{equation}

\subsubsection{x-derivatives}

By boundedness of $N_k$, $\mathfrak{J}_k$, and $M_k$, as well as the triangle inequality, we find
\begin{equation}
    \begin{split}
        |T_{\alpha, Z}^x| &\lesssim \iiiint_{\R^4} \dk^{2J} \langle \xi \rangle^{n} \sk^{2m} A(k)^2  \langle k \rangle^{1/2} |\eta - kt|^2 |Z_\xi|\\
        &\quad \quad \quad \quad \vkm^n \skm^{-1/2} |k-k'| p_{\xi-\xi'}^{-3/4} |Z_{\xi - \xi'}| \vkp^n \langle k' \rangle^{-1/2}|\eta' -k't| |Z_{\xi'}| d\eta' d\eta dk' dk\\
        &\quad +\iiiint_{\R^4} \dk^{2J} \langle \xi \rangle^{n} \sk^{2m} A(k)^2  \langle k \rangle^{1/2} |\eta - kt|^2 p_\xi^{-1/4} |Z_\xi| \vkm^n\\
        &\quad \quad \quad \quad \quad \skm^{-1/2} |k-k'| p_{\xi-\xi'}^{-1/2} |Z_{\xi - \xi'}| \vkp^n \langle k' \rangle^{-1/2}|\eta' -k't| |Z_{\xi'}| d\eta' d\eta dk' dk\\
        &\eqqcolon T_{\alpha, Z_1}^x + T_{\alpha, Z_2}^x.
    \end{split}
\end{equation}
We start by treating $T_{\alpha, Z_1}^x$, decomposing it as
\begin{equation}
    \begin{split}
        T_{\alpha, Z_1}^x &= \iiiint_{\R^4}\left(1_{LH} + 1_{HL}(1_{|k| \geq \mu} + 1_{|k| < \mu}1_{|k'| \geq \mu} + 1_{|k|< \mu}1_{|k'|< \mu})\right) \dk^{2J} \langle \xi \rangle^{n} \sk^{2m} A(k)^2 \langle k \rangle^{1/2} \\
        &\quad \quad \quad \quad  |\eta - kt|^2 |Z_\xi| \vkm^n\skm^{-1/2} |k-k'| p_{\xi-\xi'}^{-3/4} |Z_{\xi - \xi'}| \vkp^n \skp^{-1/2} |\eta' -k't| |Z_{\xi'}| d\eta' d\eta dk' dk\\
        &\eqqcolon T_{\alpha,Z_1, LH}^x + T_{\alpha,Z_1, HL, (H, \cdot)}^x + T_{\alpha,Z_1, HL, (L, H)}^x + T_{\alpha,Z_1, HL, (L, L)}^x.
    \end{split}
\end{equation}
To treat the $LH$ case, we begin by noting that since $|k| \approx |k'|$, we have $A(k) \approx A(k')$. Then by Young's inequality and Lemma \ref{main_D_tau_lemma}, we have
\begin{equation}
    \begin{split}
        T_{\alpha, Z_1, LH}^x &\lesssim \iiiint_{\R^4}1_{LH}\dk^{J} \langle \xi \rangle^{n} \sk^m A(k)  |\eta - kt|^2 |Z_\xi|  \vkm^n \skm^{-1/2} |k-k'| \\
        &\quad \quad \quad \quad p_{\xi-\xi'}^{-3/4} |Z_{\xi - \xi'}| \dkp^J \vkp^n \sk^m A(k') |\eta' -k't| |Z_{\xi'}| d\eta' d\eta dk dk'\\
        &\lesssim \mu^{-1/2} \D_{\alpha}^{1/2} ||\vk^n \sk^{-1/2} |k| p_{\xi}^{-3/4} Z_\xi||_{L^1_\xi} \E^{1/2}\\
        &\lesssim \mu^{-1/2} \D_{\alpha}^{1/2} \left(\D_\tau^{1/2} + \mu^{-\delta_*} \D_\gamma^{\delta_*} \D_\tau^{1/2 - \delta_*}\right) \E^{1/2}.
    \end{split}
\end{equation}
Examining the $HL$ cases, we begin with the $|k| \geq \mu$ case. A simple calculation shows
\begin{equation}\label{HL_alpha_key_equation}
    \langle k \rangle^{m+1/2} A(k)^2 \lesssim (1_{1 \geq |k| \geq \mu}|k|^{-1/2} + 1_{|k| > 1}) A(k) \langle k-k' \rangle^{m} |k-k'|^{1/2} A(k').
\end{equation}
Treating $T_{\alpha, Z_1, HL, (H, \cdot)}^x$ first, we use \eqref{HL_alpha_key_equation}.  We also distinguish between $|k| \geq 1$ and $|k| < 1$, applying Young's inequality to each case. We then use interpolation and Lemma \ref{main_D_tau_lemma}:
\begin{equation}\label{alpha_z1_x_HL}
    \begin{split}
        T_{\alpha, Z_1, HL, (H, \cdot)}^x &\lesssim \int_{|k| \geq \mu}\iiint_{\R^3}1_{HL}\dk^{J} \langle \xi \rangle^{n} \sk^m A(k) \max(|k|^{-1/2},1) |\eta - kt|^2 |Z_\xi| \dkm^J  \vkm^n  \\
        &\quad \quad \quad \quad \skm^m |k-k'|  p_{\xi-\xi'}^{-3/4} |Z_{\xi - \xi'}| \vkp^n A(k') \langle k' \rangle^{-1/2} |\eta' -k't| |Z_{\xi'}| d\eta' d\eta dk dk'\\
        &\lesssim ||\langle c \lambda_k t \rangle^J \langle \xi \rangle^n \sk^m A(k) |\eta-kt|^2 Z_\xi||_{L^2_{|k| \geq 1}L^2_\eta} || \langle c \lambda_k t \rangle^J \vk^n \sk^m |k| p_\xi^{-3/4} Z_\xi||_{L^2_{|k| \geq 1} L^1_\eta}\\
        &\quad  || \vk^n \sk^{-1/2} A(k) |\eta -kt| Z_\xi||_{L^1_k L^2_\eta}\\
        &\quad + ||\langle c \lambda_k t \rangle^J \langle \xi \rangle^n \sk^m |k|^{-1/2} A(k) |\eta-kt|^2 Z_\xi||_{L^1_{\mu < |k| < 1}L^2_\eta} \\
        &\quad \quad  || \langle c \lambda_k t \rangle^J \vk^n \sk^{m} |k| \min(|k|^{1/2},1) p_\xi^{-3/4} Z_\xi||_{L^2_{|k| \geq \mu} L^1_\eta} || \vk^n A(k) |\eta -kt| Z_\xi||_{L^2_\xi}\\
        &\lesssim \mu^{-1/2}\D_\alpha^{1/2}\left(\D_\tau^{1/2} + \mu^{-\delta_*} \D_\gamma^{\delta_*} \D_\tau^{1/2-\delta_*}\right) \E^{1/2}.\\
        &\quad + \mu^{-1/2} \ln(1/\mu)^{1/2}\D_\alpha^{1/2}\left(\D_\tau^{1/2} + \mu^{-\delta_*/2} \D_\gamma^{\delta_*/2} \D_\tau^{1/2-\delta_*/2}\right) \E^{1/2}\\
        &\lesssim \mu^{-1/2 - \delta_*} \D \E^{1/2}.
    \end{split}
\end{equation}
For $|k| < \mu$, $|k'| \geq \mu$ in $T_{\alpha, Z_1, HL, (L, H)}$, we use $1 \lesssim \mu^{-1/2} |k-k'|^{1/2} \mu^{1/3} |k'|^{-1/3}$. Then by Young's inequality, $\D_\tau \lesssim \E$, interpolation, and Lemma \ref{main_D_tau_lemma}
\begin{equation}\label{alpha_z1_HL_L_x}
\begin{split}
    T_{\alpha, Z_1, HL, (L, H)} & \lesssim \int_{|k| < \mu} \int_{|k'| \geq \mu} \iint_{\R^2}1_{HL}\dk^{J} \langle \xi \rangle^{n} \sk^m A(k) |\eta - kt|^2 |Z_\xi| \dkm^J \vkm^n  \\
        &\quad \quad \quad \quad \skm^{m-1/2} \mu^{-1/2} |k-k'|^{3/2} p_{\xi-\xi'}^{-3/4} |Z_{\xi - \xi'}| \vkp^n\\
        &\quad \quad \quad \quad A(k') \langle k' \rangle^{-1/2} |\eta' -k't| |Z_{\xi'}| d\eta' d\eta dk dk'\\
        &\lesssim ||\langle c \lambda_k t \rangle^J \langle \xi \rangle^n \sk^m A(k) |\eta-kt|^2 Z_\xi||_{L^1_{|k| < \mu} L^2_\eta} \\
        &\quad \quad  || \langle c \lambda_k t \rangle^J \vk^n \sk^m |k| \min(|k|^{1/2},1) p_\xi^{-3/4} Z_\xi||_{L^2_k L^1_\eta} || \vk^n  A(k) |\eta -kt| Z_\xi||_{L^2_\xi}\\
        &\lesssim \mu^{1/2-1/2}\D_\alpha^{1/2}(\D_\tau^{1/2} + \mu^{-\delta_*} \D_\gamma^{\delta_*} \D_{\tau}^{1/2 - \delta_*}) \E^{1/2}.
\end{split}
\end{equation}
Lastly, we examine $|k|, |k'| < \mu$. Note that this also implies $|k-k'| \lesssim \mu$. We have by interpolation, Young's inequality, and a variation on Lemma \ref{main_D_tau_lemma},
\begin{equation}
\begin{split}
    T_{\alpha, Z_1, HL, (L, L)}^x & \lesssim \int_{|k| < \mu} \int_{|k| < \mu} \iint_{\R^2}1_{HL}\dk^{J} \langle \xi \rangle^{n} \sk^m A(k) |\eta - kt|^2 |Z_\xi| \dkm^J  \vkm^n \\
        &\quad \quad \quad \quad \skm^{m} |k-k'| p_{\xi-\xi'}^{-3/4} |Z_{\xi - \xi'}| \vkp^n \langle k' \rangle^{-1/2} |\eta' -k't| |Z_{\xi'}| d\eta' d\eta dk dk'\\
        &\lesssim ||\langle c \lambda_k t \rangle^J \langle \xi \rangle^n \sk^m A(k) |\eta-kt|^2 Z_\xi||_{L^2_\xi} || \langle c \lambda_k t \rangle^J \vk^n \sk^m |k| p_\xi^{-3/4} Z_\xi||_{L^1_{|k| \lesssim \mu} L^1_\eta}\\
        &\quad \quad \quad \quad \quad || \vk^n   \langle k \rangle^{-1/2} |\eta -kt| Z_\xi||_{L^2_{|k| < \mu} L^2_\eta}\\
        &\lesssim \mu^{-1/2}\D_\alpha^{1/2}\mu^{1/4}(\D_\tau^{1/2} + \mu^{-\delta_*} \D_\gamma^{\delta_*} \D_{\tau}^{1/2 - \delta_*}) \E^{1/2}.
\end{split}
\end{equation}

Having completed the estimates for $T_{\alpha, Z_1}^x$, we turn our attention to $T_{\alpha, Z_2}^x$. We divide $T_{\alpha, Z_2}^x$ into $LH$ and $HL$ terms in the usual manner:
\begin{equation}
    T_{\alpha, Z_2}^x = T_{\alpha, Z_2, LH}^x + T_{\alpha, Z_2, HL, (H, \cdot)}^x + T_{\alpha, Z_2, HL, (L, \cdot)}^x.
\end{equation}
For the $LH$ term, we use $|k| \approx |k'|$ on the domain of integration, together with interpolation and boundedness of the Riesz Transform, and compute via Young's inequality, $A(k) \lesssim 1$, and Lemma \ref{half_energy_half_alpha_lemma},
\begin{equation}
    \begin{split}
        T_{\alpha,  Z_2, \cdot, LH}^x &\lesssim \iiiint_{\R^4} 1_{LH}\dk^{J} \langle \xi \rangle^{n} \sk^m A(k)   |\eta - kt|^2 p_\xi^{-1/4} |Z_\xi| \vkm^n  \\
        &\quad \quad \quad \quad \skm^{-1/2} |k-k'| p_{\xi-\xi'}^{-1/2} |Z_{\xi - \xi'}| \dkp^J \vkp^n \skp^m A(k') |\eta' -k't| |Z_{\xi'}| d\eta' d\eta dk' dk\\
        &\lesssim || \dk^J \langle \xi \rangle^n \sk^m A(k)^{3/4} |\eta -kt|^{3/2} Z_\xi||_{L^2_\xi} || \vk^n \sk^{-1/2} |k| p_\xi^{-1/2} Z_\xi||_{L^1_k L^2_\eta}\\
        &\quad \quad ||\dk^J \vk^n\sk^m  A(k) |\eta -kt| Z_\xi||_{L^2_k L^1_\eta}\\
        &\lesssim \mu^{-1/4} \D_\alpha^{1/4} \E^{1/4} \D_\tau^{1/2} \left(\E^{1/4-\delta_*} \mu^{-1/4 -\delta_*} \D_\alpha^{1/4 +\delta_*} + \E^{1/4} \mu^{-1/4} \D_\gamma^{1/4}\right),
 \end{split}
\end{equation}
which is sufficient since $\D_\tau \lesssim \E$. We treat the $HL$ case in a similar fashion, Starting with $T_{\alpha,  Z_2, \cdot, HL, (H, \cdot)}^x$, we use \eqref{HL_alpha_key_equation}, interpolation, and Lemma \ref{half_energy_half_alpha_lemma} to compute
\begin{equation}
    \begin{split}
        T_{\alpha,  Z_2, \cdot, HL, (H, \cdot)}^x &\lesssim \int_{|k| \geq \mu}\iiint_{\R^3} 1_{HL}\dk^{J} \langle \xi \rangle^{n} \sk^m (1_{|k| < 1} |k|^{-1/2} + 1_{|k| \geq 1})A(k)  |\eta - kt|^{3/2} |Z_\xi| \dkm^J  \\
        &\quad \quad \quad \quad \vkm^n \skm^m |k-k'| p_{\xi-\xi'}^{-1/2} |Z_{\xi - \xi'}| \vkp^n A(k') \langle k' \rangle^{-1/2} |\eta' -k't| |Z_{\xi'}| d\eta' d\eta dk' dk\\
        &\lesssim || \dk^J \langle \xi \rangle^n \sk^m A(k) |\eta -kt|^{3/2} Z_\xi||_{L^2_{|k| \geq 1}L^2_\eta} || \dk^J \langle \xi \rangle^n \sk^m |k| p_\xi^{-1/2} Z_\xi||_{L^2_\xi}\\
        &\quad \quad || \vk^n A(k) \langle k \rangle^{-1/2} |\eta -kt| Z_\xi||_{L^1_k L^1_\eta}\\
        &\quad + || \dk^J \langle \xi \rangle^n \sk^m A(k) |k|^{-1/2} |\eta -kt|^{3/2} Z_\xi||_{L^1_{\mu \leq |k| < 1}L^2_\eta} \\
        &\quad \quad || \dk^J \langle \xi \rangle^n \sk^m |k| p_\xi^{-1/2} Z_\xi||_{L^2_\xi} || \vk^n A(k) |\eta -kt| Z_\xi||_{L^2_k L^1_\eta}\\
        &\lesssim \mu^{-1/4} \D_\alpha^{1/4} \E^{1/4} \D_\tau^{1/2} \left(\E^{1/4-\delta_*} \mu^{-1/4 -\delta_*} \D_\alpha^{1/4 +\delta_*} + \E^{1/4} \mu^{-1/4} \D_\gamma^{1/4}\right)\\
        &\quad + \mu^{-1/4} \ln(1/\nu)^{1/2}\D_\alpha^{1/4} \E^{1/4} \D_\tau^{1/2} \left(\E^{1/4-\delta_*} \mu^{-1/4 -\delta_*} \D_\alpha^{1/4 +\delta_*} + \E^{1/4} \mu^{-1/4} \D_\gamma^{1/4}\right).
 \end{split}
\end{equation}
For the low frequency case of $T_{\alpha,  Z_2, \cdot, HL, (L, \cdot)}^x$, when we apply Young's inequality, we crucially place the $k$-factors in $L^1_{|k| < \mu}$ in order to gain powers of $\mu$ by H\"older's inequality. Applying Lemma \ref{main_D_tau_lemma} and $\D_\tau \lesssim \E$, we find
\begin{equation}
    \begin{split}
        T_{\alpha,  Z_2, \cdot, HL, (L, \cdot)}^x &\lesssim \int_{|k| < \mu} \iiint_{\R^3} 1_{HL}\dk^{J} \langle \xi \rangle^{n} \sk^m  |\eta - kt|^{3/2} |Z_\xi| \dkm^J \vkm^n \skm^m \\
        &\quad \quad \quad \quad |k-k'|^{1/2} \min(|k-k'|^{1/2},1) p_{\xi - \xi'}^{-1/2}|Z_{\xi - \xi'}| \vkp^n \langle k' \rangle^{-1/2}  |\eta' -k't| |Z_{\xi'}| d\eta' d\eta dk' dk\\
        &\lesssim || \dk^J \langle \xi \rangle^n \sk^m A(k) |\eta -kt|^{3/2} Z_\xi||_{L^1_{|k| < \mu}L^2_\eta} \\
        &\quad \quad || \dk^J \langle \xi \rangle^n \sk^m |k|^{1/2} \min(|k|^{1/2},1) p_\xi^{-1/2} Z_\xi||_{L^2_k L^1_\eta} || \vk^n \langle k \rangle^{-1/2} |\eta -kt| Z_\xi||_{L^2_\xi}\\
        &\lesssim \mu^{1/2}\E^{1/4} \mu^{-1/4}\D_\alpha^{1/4}(\E^{1/4}\D_\tau^{1/4} + \mu^{-\delta_*} \D_\gamma^{-\delta_*} \E^{1/4} \D_{\tau}^{1/4 - \delta_*}) \mu^{-1/2} \D_\gamma^{1/2}\\
        &\lesssim \mu^{-1/2-\delta_*} \D \E^{1/2}.
 \end{split}
\end{equation}
This completes the $T_{\alpha, Z}^x$ terms.
\subsubsection{y-derivatives}
Performing the standard decomposition and using boundedness of $N_k$ and $\mathfrak{J}_k$, we see
\begin{equation}
\begin{split}
    |T_{\alpha,Z}^y| &\lesssim \iiiint_{\R^4} \dk^{2J} \langle \xi \rangle^{n} \sk^{2m} A(k)^2 |\eta - kt|^2  \langle k \rangle^{1/2} |Z_\xi| \vkm^n \langle k-k' \rangle^{-1/2} p_{\xi-\xi'}^{-3/4}\\
    & \quad \quad \quad \quad |(\eta - \eta' + (k-k')t) Z_{\xi-\xi'}| \vkp^n \langle k' \rangle^{-1/2} |k'||Z_{\xi'}| d\eta' d\eta dk' dk\\
        &\quad + \iiiint_{\R^4} \dk^{2J} \langle \xi \rangle^{n} \sk^{2m} A(k)^2 |\eta - kt|^2 \langle k \rangle^{1/2} p_{\xi}^{-1/4}|Z_\xi| \vkm^n \langle k-k' \rangle^{-1/2} p_{\xi-\xi'}^{-1/2} \\
        &\quad \quad \quad \quad \quad |(\eta - \eta' + (k-k')t) Z_{\xi-\xi'}| \vkp^n \langle k' \rangle^{-1/2} |k'| |Z_{\xi'}| d\eta' d\eta dk' dk\\
        &\coloneqq T_{\alpha, Z_1}^y + T_{\alpha, Z_2}^y.
\end{split} 
\end{equation}
We start with $T_{\alpha, Z_1}^y$ which we split according to
$$T_{\alpha, Z_1}^y = T_{\alpha, Z_1,LH}^y +  T_{\alpha, Z_1, (H \cdot)}^y + T_{\alpha, Z_1, (L, \cdot)}^y$$
in the usual manner. Next, we observe that in the $LH$ regime, $A(k) \lesssim A(k-k')^{\delta_*}A(k')^{1 - \delta_*}$. Then by Young's inequality, interpolation, boundedness of the Reisz Transform, and Lemma \ref{alpha_energy_control}:
\begin{equation}\label{T_alpha_LH_y_Z1}
    \begin{split}
        T_{\alpha,Z_1, LH}^y &\lesssim \iiiint_{\R^4} 1_{LH}\dk^{J} \langle \xi \rangle^{n} \sk^m A(k) |\eta - kt|^2  |Z_\xi|  \vkm^n \langle k-k' \rangle^{-1/2} p_{\xi-\xi'}^{-3/4}\\
        &\quad \quad \quad \quad A(k-k')^{\delta_*} |(\eta - \eta' + (k-k')t) Z_{\xi-\xi'}| \\
        &\quad \quad \quad \quad \dkp^J \langle \xi' \rangle^n \skp^m A(k')^{1-\delta_*} |k'| |Z_{\xi'}| d\eta' d\eta dk' dk\\
        &\lesssim ||\dk^J \vk^n \sk^m A(k) |\eta -kt|^2 Z_\xi||_{L^2_\xi}\\
        &\quad \quad || \sk^{-1/2} A(k)^{\delta_*} p_{\xi}^{-1/4} Z_\xi||_{L^1_\xi} || \dk^J \vk^n \sk^m A(k)^{1-\delta_*}|k| Z_\xi||_{L^2_\xi}\\
        &\lesssim \mu^{-1/2} \D_\alpha^{1/2}(\E^{1/2} +  \mu^{-\delta_*/2} \E^{1/2}) \left(\mu^{-\delta_*/2}\D_\gamma^{1/4(1+\delta_*)} \D_\beta^{1/4(1-\delta_*)} + \mu^{1/2} \D_\beta^{1/2}\right). 
    \end{split}
\end{equation}
For the $HL$ case at high-in-$k$ frequencies, we apply a variation of \eqref{HL_alpha_key_equation}. Then interpolation, boundedness of the Riesz Transform, and Young's inequality allow us to write
\begin{equation}\label{alpha_Z1_HL}
    \begin{split}
        T_{\alpha,Z_1, HL,(H,\cdot)}^y &\lesssim \int_{|k| \geq \mu}\iiint_{\R^3} 1_{HL}\dk^{J} \langle \xi \rangle^{n} \sk^m A(k)(|k|^{-1/2}1_{\mu \leq |k| < 1} + 1_{|k| \geq 1}) |\eta - kt|^2  |Z_\xi|  \\
        &\quad \quad \quad \quad \dkm^J \vkm^n \sk^m A(k-k')^{1/2}|k-k'|  |(\eta - \eta') - (k-k')t|^{1/2} \\
        &\quad \quad \quad \quad p_{\xi-\xi'}^{-1/2} |Z_{\xi-\xi'}| \vkp^n \skp^{-1/2}  A(k')^{1/2} |Z_{\xi'}| d\eta' d\eta dk' dk\\
        &\lesssim \mu^{-1/2} \D_\alpha^{1/2} \D_\tau^{1/4} \D_{\tau \alpha}^{1/4} || \vk^n \langle k \rangle^{-1/2}  A(k)^{1/2} Z_{\xi} ||_{L^1_\xi}\\
        &\quad +  \mu^{-1/2} \ln(1/\nu)^{1/2} \D_\alpha^{1/2} \D_\tau^{1/4} \D_{\tau \alpha}^{1/4} || \vk^n \langle k \rangle^{-1/2}  A(k)^{1/2} Z_{\xi} ||_{L^2_k L^1_\eta}.
    \end{split}
\end{equation}
To handle the final factors of \eqref{alpha_Z1_HL}, we perform a simple interpolation (back in stationary coordinates) to find
\begin{equation}\label{simple_interp1}
\begin{split}
    || \vk^n \langle k \rangle^{-1/2}  A(k)^{1/2} Z_{\xi} ||_{L^1_\xi} &\lesssim || \vk^n  A(k)^{1/2} \langle \eta - kt \rangle^{1+ 2\delta_*} Z_{\xi} ||_{L^2_\xi}\\
    &\lesssim \E^{1/2} + \E^{1/2 - \delta_*} \mu^{\delta_*} \D_\gamma^{\delta_*},
\end{split}
\end{equation}
and
\begin{equation}\label{simple_interp2}
\begin{split}
    || \vk^n \langle k \rangle^{-1/2}  A(k)^{1/2} Z_{\xi} ||_{L^2_k L^1_\xi} &\lesssim || \vk^n  A(k)^{1/2} \langle \eta - kt \rangle^{1+ \delta_*} Z_{\xi} ||_{L^2_\xi}\\
    &\lesssim \E^{1/2} + \E^{1/2 - \delta_*/2} \mu^{\delta_*/2} \D_\gamma^{\delta_*/2}.
\end{split}
\end{equation}
Combining \eqref{alpha_Z1_HL}, \eqref{simple_interp1}, and \eqref{simple_interp2}, alongside $\D_\tau^{1/2} \lesssim \E^{1/2}$ gives
\begin{equation}
    T_{\alpha,Z_1, HL,(H,\cdot)}^y \lesssim \mu^{-1/2 - \delta_*} \D \E^{1/2},
\end{equation}
as desired. For the low frequency $HL$ case we proceed more simply. Here we have by interpolation, Young's inequality, and Lemma \ref{main_D_gamma_lemma},
\begin{equation}
    \begin{split}
        T_{\alpha,Z_1, HL,(L,\cdot)}^y &\lesssim \int_{|k| < \mu}\iiint_{\R^3} 1_{HL}\dk^{J} \langle \xi \rangle^{n} \sk^m  |\eta - kt|^2  |Z_\xi| \dkm^J \vkm^n \sk^m  \\
        &\quad \quad \quad \quad |k-k'| |(\eta - \eta') - (k-k')t|^{1/2} p_{\xi-\xi'}^{-1/2}|Z_{\xi-\xi'}|  \vkp^n \skp^{-1/2}  |Z_{\xi'}| d\eta' d\eta dk' dk\\
        &\lesssim \D_\alpha^{1/2} \D_\tau^{1/4} \D_{\tau \alpha}^{1/4} || \vk^n \langle k \rangle^{-1/2}  Z_{\xi} ||_{L^2_k L^1_\eta}\\
        &\lesssim \D_\alpha^{1/2} \D_\tau^{1/4} \D_{\tau \alpha}^{1/4}(\mu^{-1/4}\D_\gamma^{1/4} \E^{1/4} + \mu^{-1/4 - \delta_*} \D_\gamma^{1/4 + \delta_*} \E^{1/4+ \delta_*}).\\
        &\lesssim \mu^{-1/2 - \delta_*} \D \E^{1/2}.
    \end{split}
\end{equation}
We now turn our attention to $T_{\alpha, Z_2}^y$, which we split as
\begin{equation}
    T_{\alpha, Z_2}^y = T_{\alpha, Z_2, LH}^y + T_{\alpha, Z_2, HL, (H, \cdot)}^y+T_{\alpha, Z_2, HL, (L, \cdot)}^y.
\end{equation}
In the $LH$ case, we use a similar interpolation strategy as in \eqref{T_alpha_LH_y_Z1}. Namely, we use $A(k) \lesssim A(k-k')^{\delta_*} A(k')^{1-\delta_*}$. Then we use $|k| \approx |k'|$, interpolation, boundedness of the Riesz Transform, and Young's inequality, and Lemma \ref{main_D_gamma_lemma}:
\begin{equation}
    \begin{split}
        T_{\alpha, Z_2, LH}^y &\lesssim  \iiiint_{\R^4} 1_{LH}\dk^{J} \langle \xi \rangle^{n} \sk^m A(k) |\eta - kt|^{3/2} |Z_\xi| \vkm^n \langle k-k' \rangle^{-1/2} \\
        &\quad \quad \quad \quad \quad A(k-k')^{\delta_*} | Z_{\xi-\xi'}| \dkp^J \vkp^n \skp^m A(k')^{1 - \delta_*} |k'| |Z_{\xi'}| d\eta' d\eta dk' dk\\
        &\lesssim \mu^{-1/4} \D_\alpha^{1/4} \E^{1/4} ||\vk^n A(k)^{\delta_*} \langle k \rangle^{-1/2}  Z_{\xi} ||_{L^1_\xi} \left(\mu^{-\delta_*/2}\D_\gamma^{1/4(1+\delta_*)} \D_\beta^{1/4(1-\delta_*)} + \mu^{1/2} \D_\beta^{1/2}\right) \\
        &\lesssim \mu^{-1/4} \D_\alpha^{1/4} \E^{1/4} \mu^{-1/4}(1+ \mu^{ - \delta_*/2})\D_\gamma^{1/4} \E^{1/4}\left(\mu^{-\delta_*/2}\D_\gamma^{1/4(1+\delta_*)} \D_\beta^{1/4(1-\delta_*)} + \mu^{1/2} \D_\beta^{1/2}\right).
    \end{split}
\end{equation}
Our next estimate, on $T_{\alpha, Z_2, HL, (H, \cdot) }^y$ uses a variant of \eqref{HL_alpha_key_equation}, $|k| \lesssim |k-k'|$, interpolation, Young's inequality, boundedness of the Riesz Transform, and (a variant of) Lemma \ref{main_D_gamma_lemma} to compute
\begin{equation}\label{T_alpha_Z2_HL_H_y}
    \begin{split}
         T_{\alpha, Z_2, HL, (H, \cdot) }^y &\lesssim  \int_{|k| \geq \mu}\iiint_{\R^3} 1_{HL}\dk^{J} \langle \xi \rangle^{n} \sk^m A(k) (1_{\mu \leq |k| < 1}|k|^{-1/2} + 1_{|k| \geq 1})|\eta - kt|^{3/2} |Z_\xi|\\
         &\quad \quad \quad \quad \dkm^J  \langle \xi - \xi'\rangle^n \skm^m |\eta - \eta' + (k-k')t| p_{\xi-\xi'}^{-1/2}  A(k-k') |k-k'|\\
        &\quad \quad \quad \quad  \vkp^n | Z_{\xi-\xi'}|  \skp^{-1/2}|Z_{\xi'}| d\eta' d\eta dk' dk\\
        &\lesssim || \dk^J \vk^n \sk^m A(k) |\eta -kt|^{3/2} Z_\xi||_{L^2_{|k| \geq 1}L^2_\eta} \\
        &\quad || \dk^J \vk^n \sk^m A(k) |\eta-kt| |k| p_\xi^{-1/2}||_{L^2_\xi} || \vk^n \sk^{-1/2} Z_\xi||_{L^1_\xi}\\
        &\quad + || \dk^J \vk^n \sk^m |k|^{-1/2} A(k) |\eta -kt|^{3/2} Z_\xi||_{L^1_{\mu \leq |k| < 1}L^2_\eta} \\
        &\quad \quad || \dk^J \vk^n \sk^m A(k) |\eta-kt| |k| p_\xi^{-1/2}||_{L^2_\xi} || \vk^n \sk^{-1/2} Z_\xi||_{L^2_k L^1_\eta}\\
        &\lesssim \mu^{-1/4} \D_\alpha^{1/4} \E^{1/4} \D_{\tau \alpha}^{1/2} (\E^{1/4} \mu^{-1/4} \D_\gamma^{1/4} + \E^{1/4-\delta_*} \mu^{-1/4-\delta_*} \D_\gamma^{1/4+\delta_*})\\
        &\quad + || \dk^J \vk^n \sk^m A(k) |\eta -kt|^{3/2} Z_\xi||_{L^1_{\mu \leq |k| < 1}L^2_\eta} \\
        &\quad \quad \D_{\tau \alpha}^{1/2} (\E^{1/2}  + \E^{1/4-\delta_*/2} \mu^{-1/4-\delta_*/2} \D_\gamma^{1/4+\delta_*/2}).
    \end{split}
\end{equation}
The estimate \eqref{T_alpha_Z2_HL_H_y} is completed upon noting that $\D_{\tau \alpha} \lesssim \E$ and
\begin{equation}\label{alpha_mid_freq_interp}
    \begin{split}
        || \dk^J \vk^n \sk^m &|k|^{-1/2} A(k) |\eta -kt|^{3/2} Z_\xi||_{L^1_{\mu \leq |k| < 1}L^2_\eta} \lesssim\\
        &\ln(1/\nu)^{1/2}  \mu^{-1/4}\D_{\alpha}^{1/4} \min(\E^{1/4}, \mu^{-1/4}\D_\gamma^{1/4}).
    \end{split}
\end{equation}
Our next case is $T_{\alpha, Z_2, HL, (L, \cdot) }^y$. Here, we start by noting that $|k| < \mu$ implies $\langle k \rangle^{m-1/2} \lesssim 1$. Further using $|k'| \lesssim |k-k'|$, alongside Young's inequality, we use interpolation, boundedness of the Riesz Transform, and Young's inequality to obtain
\begin{equation}\label{low_alpha1}
    \begin{split}
        T_{\alpha, Z_2, HL, (L, \cdot) }^y &\lesssim  \int_{|k| < \mu}\iiint_{\R^3} 1_{HL}\dk^{J} \langle \xi \rangle^{n}  |\eta - kt|^{3/2} |Z_\xi| \dkm^J  \langle \xi - \xi'\rangle^n\\
         &\quad \quad \quad \quad  \skm^{-1/2} |k-k'| |(\eta-\eta') - (k-k')t|p_{\xi-\xi'}^{-1/2} | Z_{\xi-\xi'}|\\
         &\quad \quad \quad \quad \vkp^n \skp^{-1/2} |Z_{\xi'}| d\eta' d\eta dk' dk\\
        &\lesssim || \dk^J \vk^n |\eta -kt|^{3/2} Z_\xi||_{L^1_{|k| < \mu}L^2_\eta} \\
        &\quad \quad || \dk^J \vk^n \sk^{-1/2} |k| |\eta -kt| p_{\xi}^{-1/2} Z_\xi||_{L^2_\xi} || \vk^n \sk^{-1/2} Z_\xi||_{L^2_k L^1_\eta}.
    \end{split}
\end{equation}
Next, we observe that
\begin{equation}\label{low_alpha2}
    || \dk^J \vk^n A(k) |\eta -kt|^{3/2} Z_\xi||_{L^1_{|k| < \mu}L^2_\eta} \lesssim \D_{\alpha}^{1/4} \D_{\gamma}^{1/4}.
\end{equation}
Additionally, we note that
\begin{equation}\label{control_en_with_so}
    \langle k \rangle^{-1/2} \lesssim A(k)1_{|k| < \mu} + \mu^{-1/3}A(k)1_{|k| \geq \mu}.
\end{equation}
Thus we have
\begin{equation}\label{low_alpha3}
    || \dk^J \vk^n \sk^{-1/2} |\eta -kt| |k| p_{\xi}^{-1/2} Z_\xi||_{L^2_\xi} \lesssim (1+\mu^{-1/3}) D_{\tau \alpha}^{1/2}.
\end{equation}
By interpolation in $\eta$, usage of stationary coordinates, and \eqref{control_en_with_so} again,
\begin{equation}\label{low_alpha4}
    || \vk^n \sk^{-1/2} Z_\xi||_{L^2_k L^1_\eta} \lesssim (1+ \mu^{-(1/2 + \delta_*)/3})\E^{1/2}.
\end{equation}
Applying \eqref{low_alpha2}, \eqref{low_alpha3}, and \eqref{low_alpha4} to \eqref{low_alpha1} yields
\begin{equation}
     T_{\alpha, Z_2, HL, (L, \cdot) }^y \lesssim \D_\alpha^{1/4} \D_\gamma^{1/4} (1+ \mu^{-1/3}) \D_{\tau \alpha}^{1/2}(1+ \mu^{-(1/2 + \delta_*)/3} )\E^{1/2} \lesssim \mu^{-1/2 - \delta_*} \D \E^{1/2}.
\end{equation}
This completes the estimates on $T_{\alpha, Z}^y$.

\subsection{Alpha Terms for Q}

We write explicitly
\begin{equation}
    \begin{split}
        T_{\alpha,Q} &=  -c_\alpha\iint_{\R^2} \frac{\dk^{2J}}{M_k(t)} \langle \xi \rangle^{2n} \sk^{2m} A(k)^2  (N_k + c_\tau\mathfrak{J}_k)(\eta -kt)^2\mathrm{Re}\biggl(\iint_{\R^2} \langle k \rangle^{1/2} p_\xi^{1/4} \bar{Q}_\xi\\
    &\quad \quad \quad \quad \quad \quad \skm^{-1/2} p_{\xi -\xi'}^{-3/4} i (k-k') Z_{\xi-\xi'} \sgn(k') \skp^{-1/2}p_{\xi'}^{-1/4}  i(\eta'-k't) Q_{\xi'}d\eta' dk'\biggr) d\eta dk\\
    &\quad +c_\alpha\iint_{\R^2} \frac{\dk^{2J}}{M_k(t)} \langle \xi \rangle^{2n} \sk^{2m} A(k)^2 (N_k + c_\tau\mathfrak{J}_k)(\eta -kt)^2\mathrm{Re}\biggl(\iint_{\R^2} \sgn(k) \langle k \rangle^{1/2} p_\xi^{1/4}  \bar{Q}_\xi\\
    &\quad \quad \quad \quad \quad \quad \skm^{-1/2} p_{\xi -\xi'}^{-3/4} i ((\eta-\eta') - (k-k')t) Z_{\xi-\xi'} \sgn(k') \skp^{-1/2}p_{\xi'}^{-1/4}ik' Q_{\xi'}d\eta' dk'\biggr) d\eta dk\\
    &\eqqcolon T_{\alpha, Q}^x + T_{\alpha, Q}^y.
    \end{split}
\end{equation}

\subsubsection{x-derivatives}
Performing the initial splitting on the Fourier side based on the triangle inequality and boundedness of multipliers, we have
\begin{equation}
    \begin{split}
        |T_{\alpha, Q}^x| &\lesssim \iiiint_{\R^4} \dk^{2J} \langle \xi \rangle^{n} \sk^{2m} A(k)^2  \sk^{1/2} |\eta - kt|^2 |Q_\xi| \vkm^n \skm^{-1/2}|k-k'| \\
        &\quad \quad \quad \quad p_{\xi-\xi'}^{-3/4} |Z_{\xi - \xi'}| \vkp^n \skp^{-1/2}|\eta' -k't| |Q_{\xi'}| d\eta' d\eta dk' dk\\
        &\quad +\iiiint_{\R^4} \dk^{2J} \langle \xi \rangle^{n} \sk^{2m} A(k)^2  |k|^{1/2} |\eta - kt|^2  |Q_\xi| \vkm^n \skm^{-1/2} |k-k'| \\
        & \quad \quad \quad \quad \quad p_{\xi-\xi'}^{-1/2} |Z_{\xi - \xi'}| p_{\xi'}^{-1/4} \vkp^n \skp^{-1/2}|\eta' -k't| |Q_{\xi'}| d\eta' d\eta dk' dk\\
        &\eqqcolon T_{\alpha, Q_1}^x + T_{\alpha, Q_2}^x.
    \end{split}
\end{equation}
Similar to the $\gamma$ terms, $T_{\alpha, Q_1}^x$ can be controlled with the same techniques used to control $T_{\alpha, Z_1}^x$. We then split
\begin{equation}
    T_{\alpha, Q_2}^x = T_{\alpha, Q_2, LH}^x + T_{\alpha, Q_2, HL, (H, \cdot)}^x + T_{\alpha, Q_2, HL, (L, \cdot)}^x.
\end{equation}
We start with $T_{\alpha, Q_2, LH}^x$. First, since we are in the $LH$ case, $A(k) \approx A(k')$. Then by boundedness of the Riesz Transform, Lemma \ref{new_energy_lemmas}, Young's inequality, and interpolation
\begin{equation}
    \begin{split}
        T_{\alpha, Q_2, LH}^x &\lesssim \iiiint_{\R^4} 1_{LH}\dk^{2J} \langle \xi \rangle^{n} \sk^{m} A(k)   |\eta - kt|^2  |Q_\xi| \vkm^n \skm^{-1/2} |k-k'| p_{\xi-\xi'}^{-1/2} |Z_{\xi - \xi'}|\\
        &\quad \quad \quad \quad \dkp^J \vkp^n \sk^m A(k')|\eta' -k't|^{1/2} |Q_{\xi'}| d\eta' d\eta dk' dk\\
        &\lesssim \mu^{-1/2} \D_\alpha^{1/2}\D_\tau^{1/2} ||\dk^J \vk^n \sk^m A(k) |\eta - kt|^{1/2} Q_\xi||_{L^2_k L^1_\eta}\\
        &\lesssim \mu^{-1/2} \D_\alpha^{1/2}\D_\tau^{1/2} (\E^{1/2} + \mu^{-\delta_*} \D_{\alpha}^{\delta_*} \E^{1/2-\delta_*})\\
        &\lesssim \mu^{-1/2- \delta_*} \D\E^{1/2}.
    \end{split}
\end{equation}
For $T_{\alpha, Q_2, HL, (H, \cdot)}^x$, we use \eqref{HL_alpha_key_equation}. Then, noting the differences between $|k| \geq 1$ and $\mu < |k| \leq 1$, we apply Young's inequality and Lemma \ref{new_energy_lemmas},
\begin{equation}
    \begin{split}
        T_{\alpha, Q_2, HL, (H, \cdot)}^x &\lesssim \int_{|k| \geq \mu}\iiint_{\R^3} 1_{HL}\dk^{2J} \langle \xi \rangle^{n} A(k)(1_{\mu \leq |k| < 1} + 1_{|k| \geq \mu})   |\eta - kt|^2  |Q_\xi| \dkm^J\langle \xi - \xi'\rangle^n   \\
        &\quad \quad \quad \quad \skm^m |k-k'| p_{\xi-\xi'}^{-1/2} |Z_{\xi - \xi'}| \vkp^{n} \skp^{-1/2} A(k')|\eta' -k't|^{1/2} |Q_{\xi'}| d\eta' d\eta dk' dk\\
        &\lesssim \mu^{-1/2} \D_\alpha^{1/2} \D_{\tau}^{1/2} ||\dk^J \vk^n \sk^{-1/2} A(k) |\eta-kt|^{1/2} Q_\xi||_{L^1_\xi}\\
        &\quad + \mu^{-1/2} \ln(1/\nu)^{1/2} \D_\alpha^{1/2} \D_{\tau}^{1/2} ||\dk^J \vk^n \sk^{-1/2} A(k) |\eta-kt|^{1/2} Q_\xi||_{L^2_k L^1_\eta}\\
        &\lesssim \mu^{-1/2}\D_\alpha^{1/2} \D_{\tau}^{1/2} (\E^{1/2} + \mu^{-\delta_*} \D_\alpha^{\delta_*} \E^{1/2 - \delta_*})\\
        &\quad + \mu^{-1/2}\ln(1/\nu)^{1/2}\D_\alpha^{1/2} \D_{\tau}^{1/2} (\E^{1/2} + \mu^{-\delta_*/2} \D_\alpha^{\delta_*/2} \E^{1/2 - \delta_*/2})\\
        &\lesssim \mu^{-1/2 - \delta_*} \D \E^{1/2}.
    \end{split}
\end{equation}
The final case is $T_{\alpha, Q_2, HL, (L, \cdot)}^x$. Here we place the $k$-factors in $L^1_{|k|<\mu}$ with Young's inequality in order to gain $\mu^{1/2}$. This enables:
\begin{equation}
    \begin{split}
        T_{\alpha, Q_2, HL, (L, \cdot)}^x &\lesssim \int_{|k| < \mu}\iiint_{\R^3} 1_{HL}\dk^{2J} \langle \xi \rangle^{n}  |\eta - kt|^2  |Q_\xi| \dkm^J\langle \xi - \xi'\rangle^n \skm^m   \\
        &\quad \quad \quad \quad \quad |k-k'| p_{\xi-\xi'}^{-1/2} |Z_{\xi - \xi'}| \vkp^n \skp^{-1/2} |\eta' -k't|^{1/2} |Q_{\xi'}| d\eta' d\eta dk' dk\\
        &\lesssim \mu^{1/2-1/2} \D_\alpha^{1/2} || \dk^J \vk^n \sk^m |k| p_\xi^{-1/2} Z_\xi||_{L^2_\xi}\\
        &\quad ||\dk^J \vk^n \sk^{-1/2} |\eta-kt|^{1/2} Q_\xi||_{L^2_k L^1_\xi}\\
        &\lesssim \D_\alpha^{1/2}\D_{\tau}^{1/2}||\dk^J \vk^n \sk^{-1/2} |\eta-kt|^{1/2} Q_\xi||_{L^2_k L^1_\eta}.
    \end{split}
\end{equation}
Applying \eqref{control_en_with_so} alongside Lemma \ref{new_energy_lemmas}, we find
\begin{equation}
    \begin{split}
        T_{\alpha, Q_2, HL, (L, \cdot)}^x \lesssim \D_\alpha^{1/2}\D_{\tau}^{1/2}(\mu^{-1/6}\E^{1/2} + \mu^{-1/3 - \delta_*} \D_{\alpha}^{\delta_*} \E^{1/2 - \delta_*}) \lesssim \mu^{-1/2}\D \E^{1/2}.
    \end{split}
\end{equation}
This completes the estimates on $T_{\alpha, Q}^x$.

\subsubsection{y-derivatives}
We split $T_{\alpha, Q}^y$ as
\begin{equation}
\begin{split}
    |T_{\alpha,Q}^y| &\lesssim \iiiint_{\R^4} \dk^{2J} \langle \xi \rangle^{n} \sk^{2m} A(k)^2 |\eta - kt|^2  \sk^{1/2} |Q_\xi| \vkm^n \skm^{-1/2} p_{\xi-\xi'}^{-3/4}\\
    &\quad \quad \quad \quad |(\eta - \eta' + (k-k')t) Z_{\xi-\xi'}| \vkp^n \skp^{-1/2} |k'| |Q_{\xi'}| d\eta' d\eta dk' dk\\
        &\quad + \iiiint_{\R^4} \dk^{2J} \langle \xi \rangle^{n} \sk^{2m} A(k)^2 |\eta - kt|^2 \sk^{1/2} |Q_\xi| \vkm^n \skm^{-1/2} p_{\xi-\xi'}^{-1/2} \\
        &\quad \quad \quad \quad \quad |(\eta - \eta' + (k-k')t) Z_{\xi-\xi'}| \vkp^n p_{\xi'}^{-1/4}\skp^{-1/2} |k'| |Q_{\xi'}| d\eta' d\eta dk' dk\\
        &\coloneqq T_{\alpha, Q_1}^y + T_{\alpha, Q_2}^y.
\end{split} 
\end{equation}
For reasons previously discussed, we will only present the bound on $T_{\alpha, Q_2}^y$. We further decompose
$$T_{\alpha, Q_2}^y = T_{\alpha, Q_2, LH}^y + T_{\alpha, Q_2, HL, (H, \cdot)}^y + T_{\alpha, Q_2, HL, (L, \cdot)}^y.$$
Using interpolation, Young's inequality, $|k| \approx |k'|$, and Lemma \ref{strange_interp} we find
\begin{equation}
    \begin{split}
        T_{\alpha, Q_2, LH}^y &\lesssim \iiiint_{\R^4} 1_{LH} \dk^{J} \langle \xi \rangle^{n} \sk^m A(k) |\eta - kt|^2 |Q_\xi| \vkm^n \skm^{-1/2}\\
        &\quad \quad \quad \quad \quad | Z_{\xi-\xi'}|\dkp^J \vkp^n \skp^m A(k') p_{\xi'}^{-1/4} |k'| |Q_{\xi'}| d\eta' d\eta dk' dk\\
        &\lesssim \mu^{-1/2} \D_\alpha^{1/2} ||\vk^n \sk^{-1/2} Z_\xi||_{L^1_k L^2_\eta} || \dk^J \vk^n \sk^m |k| A(k) p_\xi^{-1/4} Q_\xi||_{L^2_k L^1_\eta}\\
        &\lesssim \mu^{-1/2}\D_\alpha^{1/2} \E^{1/2}( \D_\tau^{1/4}\D_\beta^{1/4} + \mu^{-\delta_*}\D_\alpha^{\delta_*}\D_{\tau \alpha}^{1/4}(\D_\beta^{1/8} \D_\gamma^{1/8 - \delta_*}  + \D_\beta^{1/4 - \delta_*})).
    \end{split}
\end{equation}
Next, we consider $T_{\alpha, Q_2, HL, (H, \cdot)}^y$. Here we employ a variant of \eqref{HL_alpha_key_equation}, $|k|, |k'| \lesssim |k-k'|$, Young's inequality, interpolation, and Lemma \ref{main_D_tau_alpha_lemma} to find
\begin{equation}
    \begin{split}
        T_{\alpha, Q_2, HL, (H, \cdot)}^y &\lesssim \int_{|k| \geq \mu}\iiint_{\R^3} 1_{HL} \dk^{J} \langle \xi \rangle^{n} \sk^m A(k)(1_{\mu \leq |k| < 1}|k|^{-1/2} + 1_{|k| \geq 1}) |\eta - kt|^2 |Q_\xi| \dkm^J\\
        &\quad \quad \quad \quad \langle \xi - \xi'\rangle^n \skm^m  p_{\xi - \xi'}^{-1/2} A(k-k') |k-k'|^{1/2} |(\eta - \eta') -(k-k')t|^{1/2} | Z_{\xi-\xi'}|\\
        &\quad \quad \quad \quad  \vkp^n \skp^{-1/2} p_{\xi'}^{-1/4} |k'|^{1/2} |Q_{\xi'}| d\eta' d\eta dk' dk\\
        &\lesssim \mu^{-1/2} \D_\alpha^{1/2} ||\dk^J \langle \xi \rangle^n \sk^m A(k) |k|^{1/2}|\eta - kt|^{1/2} p_{\xi}^{-1/4} Z_\xi||_{L^2_k L^1_\eta}\\
        &\quad||\vk^n \sk^{-1/2} |k|^{1/2} p_\xi^{-1/4} Q_\xi||_{L^1_k L^2_\eta}\\
        &\quad + \mu^{-1/2} \ln(1/\nu)^{1/2}\D_\alpha^{1/2} ||\dk^J \langle \xi \rangle^n \sk^m A(k) |k|^{1/2}|\eta - kt|^{1/2} p_{\xi}^{-1/4} Z_\xi||_{L^2_k L^1_\eta}\\
        &\quad \quad ||\vk^n \sk^{-1/2} |k|^{1/2} p_\xi^{-1/4} Q_\xi||_{L^2_\xi}\\
        &\lesssim \mu^{-1/2} \D_\alpha^{1/2} \left(\D_{\tau \alpha}^{1/4} \E^{1/4} + \D_{\tau \alpha}^{1/4} \mu^{-\delta_*} \D_\gamma^{\delta_*} \E^{1/4 - \delta_*}\right) \D_\tau^{1/4} \E^{1/4}\\
        &+ \mu^{-1/2} \ln(1/\nu)^{1/2} \D_\alpha^{1/2} \left(\D_{\tau \alpha}^{1/4} \E^{1/4} + \D_{\tau \alpha}^{1/4} \mu^{-\delta_*/2} \D_\gamma^{\delta_*/2} \E^{1/4 - \delta_*/2}\right) \D_\tau^{1/4} \E^{1/4}\\
        &\lesssim \mu^{-1/2 - \delta_*} \D \E^{1/2}.
    \end{split}
\end{equation}
Finally, we have $T_{\alpha, Q_2, HL, (L, \cdot)}^y$.  We proceed in a similar fashion as \eqref{low_alpha1}, exploiting the fact that $|k| < \mu$ implies $\sk^{m + 1/2} \lesssim 1$. Additionally, we have from $|k'| \lesssim |k-k'|$, boundedness of the Riesz Transform, interpolation, and \eqref{control_en_with_so} with Lemma \ref{main_D_tau_alpha_lemma}:
\begin{equation}
    \begin{split}
        T_{\alpha, Q_2, HL, (L, \cdot)}^y &\lesssim \int_{|k| < \mu}\iiint_{\R^3} 1_{HL} \dk^{J} \langle \xi \rangle^{n} |\eta - kt|^2 |Q_\xi| \dkm^J \langle \xi - \xi'\rangle^n \skm^{-1/2} \\
        &\quad \quad \quad \quad  |k-k'| |(\eta - \eta') - (k-k')t|^{1/2} p_{\xi-\xi'}^{-1/4}| Z_{\xi-\xi'}|\\
        &\quad \quad \quad \quad \vkp^n \skp^{-1/2} p_{\xi'}^{-1/4} |k'|^{1/2} |Q_{\xi'}| d\eta' d\eta dk' dk\\
        &\lesssim  \D_\alpha^{1/2} ||\dk^J \langle \xi \rangle^n \sk^{-1/2} |k|^{1/2} |\eta -kt|^{1/2} p_\xi^{-1/4} Z_\xi||_{L^2_k L^1_\eta}\\
        &\quad ||\vk^n \sk^{-1/2} |k|^{1/2} p_\xi^{-1/4} Q_\xi||_{L^2_\xi}\\
        &\lesssim \D_\alpha^{1/2}(\mu^{-1/6} \D_{\tau \alpha}^{1/4} \E^{1/4} + \mu^{-1/6 - \delta_*} \D_{\tau \alpha}^{1/4} \D_\gamma^{-\delta_*} \E^{1/4 - \delta_*}) \D_{\tau}^{1/4} \E^{1/4}\\
        &\lesssim \mu^{-1/2 - \delta_*} \D \E^{1/2}.
    \end{split}
\end{equation}
This concludes the discussion of the $T_{\alpha, Q}^y$ terms.

\subsection{Beta Terms for Z}

Writing out $T_{\beta, Z}$ \eqref{T_main} explicitly using \eqref{conv_NL_Z}, we have
\begin{equation}
\begin{split}
        T_{\beta,Z} &= -c_\beta \iint_{\R^2} \frac{\dk^{2J}}{M_k(t)} \langle k, \eta \rangle^{2n} \sk^{2m} B(k)^2 k (\eta-kt) \sk^{1/2} p_\xi^{-1/4} 2\mathrm{Re} \biggl( \iint_{\R^2} \bar{Z}_\xi \skm^{-1/2}\\
    &\quad \quad \quad \quad \quad \quad p_{\xi -\xi'}^{-3/4} i (k-k') Z_{\xi-\xi'} \skp^{-1/2} p_{\xi'}^{1/4} i(\eta'-k't) Z_{\xi'} dk' d\eta'\biggr) d\eta dk\\
    & \quad + c_\beta \iint_{\R^2} \frac{\dk^{2J}}{M_k(t)} \langle k, \eta \rangle^{2n} \sk^{2m} B(k)^2 k (\eta-kt) \sk^{1/2} p_\xi^{-1/4} 2\mathrm{Re} \biggl( \iint_{\R^2} \bar{Z}_\xi \skm^{-1/2}\\
    &\quad \quad \quad \quad \quad \quad p_{\xi -\xi'}^{-3/4} i ((\eta-\eta') - (k-k')t) Z_{\xi-\xi'} \skp^{-1/2} p_{\xi'}^{1/4} i k' Z_{\xi'} dk' d\eta' \biggr) d\eta dk\\
    & \eqqcolon T_{\beta, Z}^x + T_{\beta, Z}^y.
\end{split}
\end{equation}
We begin with $T_{\beta, Z}^x$.

\subsubsection{x-derivatives}
By the triangle inequality and boundedness of relevant Fourier multipliers
\begin{equation}\label{split_beta_z_x_terms}
    \begin{split}
         |T_{\beta, Z}^x| &\lesssim \iiiint_{\R^4} \dk^{2J}\langle \xi \rangle^{n} \sk^{2m} B(k)^2 \langle k \rangle^{1/2} |k| |\eta - kt| |Z_\xi| 
         \vkm^n \langle k-k' \rangle^{-1/2} |k-k'|  \\
         & \quad \quad \quad \quad p_{\xi - \xi'}^{-3/4} |Z_{\xi-\xi'}| \vkp^n \langle k' \rangle^{-1/2} |\eta' -k't| |Z_{\xi'}|  d\eta' d\eta dk' dk\\
        &\quad +\iiiint_{\R^4} \dk^{2J} \langle \xi\rangle^{n} \sk^{2m} B(k)^2 \sk^{1/2} |k| p_{\xi}^{-1/4} |\eta - kt| |Z_\xi| \vkm^n \skm^{-1/2} |k-k'|\\
        & \quad \quad \quad \quad \quad p_{\xi-\xi'}^{-1/2} |Z_{\xi-\xi'}| \vkp^n \skp^{-1/2}|\eta' - k't| |Z_{\xi'}| d\eta' d\eta dk' dk\\
        &\coloneqq T_{\beta,Z_1}^x+ T_{\beta,Z_2}^x.
    \end{split}
\end{equation}
To control $T_{\beta,Z_1}^x$, we split into $LH$ and $HL$ cases:
\begin{equation}
    T_{\beta,Z_1}^x = T_{\beta,Z_1, LH}^x + T_{\beta,Z_1, HL}^x.
\end{equation}
For the $LH$ cases, we use that $|k| \approx |k'|$, $B(k)^2 |k| \lesssim A(k)$, Young's inequality, interpolation, and Lemma \ref{main_D_tau_lemma}:
\begin{equation}
    \begin{split}
        T_{\beta,Z_1, LH}^x &= \iiiint_{\R^4} 1_{LH}\dk^{J}\langle \xi \rangle^{n} A(k) |\eta - kt| |Z_\xi|  \vkm^n \langle k-k' \rangle^{-1/2} |k-k'|\\
        &\quad \quad \quad \quad  p_{\xi - \xi'}^{-3/4} |Z_{\xi-\xi'}| \dkp^J \langle \xi'\rangle^n \sk^m |\eta' -k't| |Z_{\xi'}|  d\eta' d\eta dk' dk\\
        &\lesssim \E^{1/2} ||\sk^{-1/2} |k|p_\xi^{-3/4} Z_\xi||_{L^1_\xi} \mu^{-1/2} \D_\gamma^{1/2}\\
        &\lesssim \E^{1/2} \left(\D_\tau^{1/2} + \mu^{-\delta_*} \D_\tau^{1/2-\delta_*} \D_\gamma^{\delta_*}\right)\mu^{-1/2} \D_\gamma^{1/2}.
    \end{split}
\end{equation}
In the $HL$ case, we utilize a similar approach, only now using $|k| \lesssim |k-k'|$. Moreover, we place the $k'$ terms in $L^1_k$ in order to employ interpolation as follows:
\begin{equation}
    \begin{split}
        T_{\beta,Z_1, HL}^x &= \iiiint_{\R^4} 1_{HL}\dk^{J}\langle \xi \rangle^{n} A(k) |\eta - kt| |Z_\xi| \dkm^J\vkm^n\sk^m \\
        & \quad \quad \quad \quad |k-k'| \min(|k-k'|^{1/4}, 1) p_{\xi - \xi'}^{-3/4} |Z_{\xi-\xi'}| \vkp^n  |\eta' -k't| \skp^{-1/2}\\
        &\quad \quad \quad \quad \left(|k'|^{-1/4} 1_{|k-k'| < 1} + 1_{|k-k'| \geq 1}\right)|Z_{\xi'}|  d\eta' d\eta dk' dk\\
        &\lesssim \E^{1/2} ||\dk^J \vk^n \sk^m |k|
        \min(|k|^{1/4}, 1) p_\xi^{-3/4} Z_\xi||_{L^2_k L^1_\eta}\\
        &\quad || \vk^n \sk^{-1/2} \max(|k|^{-1/4},1) |\eta - kt| Z_\xi||_{L^1_k L^2_\eta}\\
        &\lesssim \E^{1/2} \left(\D_\tau^{1/2} + \mu^{-\delta_*} \D_\tau^{1/2-\delta_*} \D_\gamma^{\delta_*}\right)\mu^{-1/2} \D_\gamma^{1/2}.
    \end{split}
\end{equation}
Turning to $T_{\beta,Z_2}^x$, we use the standard $LH/HL$ splitting:
\begin{equation}
    T_{\beta,Z_2}^x = T_{\beta,Z_2, LH}^x + T_{\beta,Z_2, HL}^x.
\end{equation}
Using $B(k)^2|k| \lesssim A(k)$, we have by interpolation, Young's inequality, $|k-k'| \lesssim |k'|$, and Lemma \ref{main_D_tau_lemma}:
\begin{equation}\label{beta_Z2_LH}
    \begin{split}
        T_{\beta,Z_2, LH}^x &= \iiiint_{\R^4} 1_{LH}\dk^{J}\langle \xi \rangle^{n} \sk^m A(k) p_\xi^{-1/4} |k|^{1/2} |\eta - kt| |Z_\xi| \vkm^n \\
        &\quad \quad \quad \quad\skm^{-1/2} |k-k'|^{1/2}
         p_{\xi - \xi'}^{-1/2} |Z_{\xi-\xi'}| \dkp^J \sk^m |\eta' -k't| |Z_{\xi'}|  d\eta' d\eta dk' dk\\
         &\lesssim ||\langle c \lambda_k t \rangle^J \langle \xi \rangle^n \sk^m A(k)|\eta-kt| |k|^{1/2} p_\xi^{-1/4} Z_\xi||_{L^2_\xi}\\
         &\quad \quad \quad \quad || \vk^n \sk^{-1/2} |k|^{1/2}  p_\xi^{-1/2} Z_\xi||_{L^1_\xi} ||  \dk^J \vk^n \sk^m |\eta -kt| Z_{\xi} ||_{L^2_\xi}\\
        &\lesssim \D_{\tau\alpha}^{1/4} \E^{1/4} (\D_\tau^{1/4} \E^{1/4} + \mu^{-\delta_*} \D_\gamma^{\delta_*} \D_\tau^{1/4 - \delta_*} \E^{1/4}) \mu^{-1/2}\D_\gamma^{1/2}.
    \end{split}
\end{equation}
For the $HL$ bound, we use $|k'| \lesssim |k-k'|$, $B(k)^2 |k| \lesssim A(k) \lesssim 1$, interpolation, Young's inequality, boundedness of the Riesz Transform, and \ref{new_energy_lemmas}:
\begin{equation}
    \begin{split}
        T_{\beta,Z_2, HL}^x &= \iiiint_{\R^4} 1_{HL}\dk^{J}\langle \xi \rangle^{n} \sk^m A(k)   |\eta - kt|^{1/2} |Z_\xi| \dkm^J \langle \xi - \xi' \rangle^n \sk^m |k-k'|  \\
        &\quad \quad \quad \quad p_{\xi - \xi'}^{-1/2} |Z_{\xi-\xi'}| \vkp^n \langle k' \rangle^{-1/2} |\eta' -k't| |Z_{\xi'}|  d\eta' d\eta dk' dk\\
        &\lesssim ||\dk^J \vk^n \sk^m A(k) |\eta -kt|^{1/2} Z_\xi||_{L^2_k L^1_\xi }||\dk^J \langle \xi \rangle^n \sk^m |k| p_\xi^{-1/2} Z_\xi||_{L^2_\xi}\\
        &\quad \quad  || \vk^n \langle k \rangle^{-1/2} |\eta -kt| Z_\xi||_{L^1_k L^2_\eta}\\
        &\lesssim  \left(\E^{1/2}+ \mu^{-\delta_*} \D_\alpha^{\delta_*} \E^{1/2 - \delta_*}\right)\D_\tau^{1/2} \mu^{-1/2} \D_\gamma^{1/2},
    \end{split}
\end{equation}
thereby completing the estimates on $T_{\beta, Z}^x$.

\subsubsection{y-derivatives}
Splitting $T_{\beta, Z}^y$ through the triangle inequality and boundedness of $N_k$, $\mathfrak{J}_k$, and $M_k^{-1}$, we have
\begin{equation}\label{split_beta_z_y_terms}
    \begin{split}
         |T_{\beta, Z}^y| &\lesssim \iiiint_{\R^4} \dk^{2J}\langle \xi \rangle^{n} \sk^{2m} B(k)^2 \sk^{1/2} |k| |\eta - kt| |Z_\xi| \vkm^n \skm^{-1/2} p_{\xi - \xi'}^{-3/4}\\
         &\quad \quad \quad \quad |(\eta - \eta') - (k-k')t||Z_{\xi-\xi'}| \vkp^n \skp^{-1/2} |k'| |Z_{\xi'}|  d\eta' d\eta dk' dk\\
        &\quad +\iiiint_{\R^4} \dk^{2J} \langle \xi\rangle^{n} \sk^{2m} B(k)^2 |k| \sk^{1/2} p_{\xi}^{-1/4} |\eta - kt| |Z_\xi| \vkm^n \skm^{-1/2} p_{\xi-\xi'}^{-1/2} \\
         &\quad \quad \quad \quad |(\eta - \eta') - (k-k')t||Z_{\xi-\xi'}| \vkp^n \skp^{-1/2} |k'| |Z_{\xi'}| d\eta' d\eta dk' dk\\
        &\coloneqq T_{\beta,Z_1}^y+ T_{\beta,Z_2}^y.
    \end{split}
\end{equation}
We split $T_{\beta, Z_1}^y$ as
\begin{equation}
    T_{\beta, Z_1}^y = T_{\beta, Z_1, LH}^y + T_{\beta, Z_1, HL, (H, \cdot)}^y + T_{\beta, Z_1, HL, (L, H)}^y + T_{\beta, Z_1, HL, (L,L)}^y.
\end{equation}
In the $LH$ case, we use $|k| \approx |k'|$, $B(k) |k| \lesssim A(k) \lesssim A(k-k')^{\delta_*} A(k')^{1-\delta_*}$, interpolation, Young's inequality, boundedness of the Riesz transform,  and Lemma \ref{alpha_energy_control} to find
\begin{equation}
    \begin{split}
        T_{\beta, Z_1, LH}^y&\lesssim \iiiint_{\R^4} 1_{LH}\dk^{J}\langle \xi \rangle^{n} \sk^m |\eta - kt| |Z_\xi|  \vkm^n \skm^{-1/2} p_{\xi - \xi'}^{-1/4} \\
        & \quad \quad \quad \quad A(k-k')^{\delta_*}|Z_{\xi-\xi'}| A(k')^{1-\delta_*} \dkp^J \langle \xi'\rangle^n \skp^m |k'| |Z_{\xi'}|  d\eta' d\eta dk' dk\\
        &\lesssim \mu^{-1/2} \D_\gamma^{1/2} || \vk^n \sk^{-1/2} A(k)^{\delta_*} p_\xi^{-1/4} Z_\xi||_{L^1_\xi} \left(\mu^{-\delta_*/2}\D_\gamma^{1/4(1+\delta_*)} \D_\beta^{1/4(1-\delta_*)} + \mu^{1/2} \D_\beta\right)\\
        &\lesssim \mu^{-1/2} \D_\gamma^{1/2} (\E^{1/2} + \mu^{-\delta_*/2} \E^{1/2})\left(\mu^{-\delta_*/2}\D_\gamma^{1/4(1+\delta_*)} \D_\beta^{1/4(1-\delta_*)} + \mu^{1/2} \D_\beta\right).
    \end{split}
\end{equation}
For the $HL$ case of $T_{\beta, Z_1}^y$, we are able to move $k$-type derivatives onto the $k-k'$ factor since $|k|, |k'| \lesssim |k-k'|$. In particular, for $T_{\beta, Z_1, HL, (H,\cdot)}$, we use that $A(k)^{1/2}|k|^{1/2} \lesssim A(k-k')^{1/2} |k-k'|^{1/2}$. Then using interpolation, Young's inequality, Lemma \ref{main_D_gamma_lemma}, and $\D_\tau \lesssim \E$, we find:
\begin{equation}
    \begin{split}
        T_{\beta, Z_1, HL, (H, \cdot)}^y
        &\lesssim \int_{|k| \geq \mu}\iiint_{\R^3} 1_{HL} \dk^{J}\langle \xi \rangle^{n} \sk^m A(k)^{1/2} \max(|k|^{-1/6},1) |\eta - kt| |Z_\xi|\\
         &\quad \quad \quad \quad \dkm^J \vkm^n\skm^m 
        A(k-k')^{1/2} |k-k'| |(\eta - \eta') - (k-k')t|^{1/2} \\
        &\quad \quad \quad \quad  p_{\xi - \xi'}^{-1/2}|Z_{\xi-\xi'}| \vkp^n \skp^{-1/2} |Z_{\xi'}|  d\eta' d\eta dk' dk\\
        &\lesssim  ||\dk^J \vk^n \sk^m A(k)^{1/2}|\eta - kt| Z_\xi||_{L^2_{|k| \geq 1}L^2_\eta}\\
        &\quad ||\dk^J \sk^m A(k)^{1/2} |k| |\eta -kt|^{1/2} p_\xi^{-1/2} Z_\xi||_{L^2_{|k| \geq 1}L^2_\eta}|| \vk^n \sk^{-1/2} Z_\xi ||_{L^1_\xi}\\
        & \quad +  ||\dk^J \vk^n A(k)^{1/2} |k|^{-1/6} |\eta - kt| Z_\xi||_{L^1_{\mu \leq |k| < 1}L^2_\eta} \\
        &\quad \quad ||\dk^J \sk^m A(k)^{1/2} |k| |\eta -kt|^{1/2} p_\xi^{-1/2} Z_\xi||_{L^2_{ |k| \geq \mu}L^2_\eta} || \vk^n \sk^{-1/2}  Z_\xi ||_{L^2_k L^1_\eta}\\
        &\lesssim  \mu^{-1/4}\D_{\gamma}^{1/4} \E^{1/4} \D_\tau^{1/4} \D_{\tau \alpha}^{1/4} \mu^{-1/4} \D_\gamma^{1/4}(\E^{1/4} + \mu^{-\delta_*}\D_{\gamma}^{\delta_*} \E^{1/4 - \delta_*})\\
        &\quad + \mu^{-1/4}\D_{\gamma}^{1/4} \E^{1/4} \D_\tau^{1/4} \D_{\tau \alpha}^{1/4} \mu^{-1/4- \delta_*} \D_\gamma^{1/4 + \delta_*}\E^{1/4 - \delta_*}\\
        &\quad + \mu^{-1/2} \D_\gamma^{1/2} \D_\tau^{1/4} \D_{\tau \alpha}^{1/4} \E^{1/2}.
    \end{split}
\end{equation}
Next, we consider the low-in-$k$ cases. For $T_{\beta, Z_1, HL, (L, H)}^y$, we note that $|k| \leq |k'| \lesssim |k-k'|$. Then we have by Young's inequality and interpolation,
\begin{equation}
    \begin{split}
        T_{\beta, Z_1, HL, (L, H)}^y&\lesssim \int_{|k| <\mu} \int_{|k'| \geq \mu}\iint_{\R^2} 1_{HL} \dk^J   \vk^n  |\eta - kt| |Z_\xi| \dkm^J \vkm^n \\
        &\quad \quad \quad \skm^m |(\eta - \eta') - (k-k')t| p_{\xi - \xi'}^{-3/4}|Z_{\xi-\xi'}|\\
        &\quad \quad \quad \quad  \vkp^n \sk^m |k'| |Z_{\xi'}|  d\eta' d\eta dk' dk\\
        &\lesssim \int_{|k| <\mu} \int_{|k| \geq \mu}\iiint_{\R^3} 1_{HL}  \dk^J \vk^n |\eta - kt| |Z_\xi| \dkm^J \vkm^n \\
        &\quad \quad \quad \skm^m |(\eta - \eta') - (k-k')t| |k-k'|^{2/3} p_{\xi - \xi'}^{-3/4}|Z_{\xi-\xi'}|\\
        &\quad \quad \quad \quad \dkp^{J} \vkp^n \skp^m |k'|^{1/3} |Z_{\xi'}| d\eta' d\eta dk' dk\\
        &\lesssim || \dk^J \vk^n |\eta -kt| Z_\xi||_{L^1_{|k| \leq \mu} L^2_\eta} ||\dk^J \vk^n  \sk^m |k|^{2/3} |\eta - kt| p_\xi^{-3/4} Z_\xi||_{L^2_{|k| \geq \mu} L^1_\eta}\\
        &\quad \quad ||  \vk^n  \sk^m |k|^{1/3} Z_\xi ||_{L^2_\xi}\\
        &\lesssim  \mu^{1/2} \E^{1/2} (\mu^{-1/12} \D_\beta^{1/4} \D_\tau^{1/4} + \mu^{-1/3-\delta_*} \D_{\tau \alpha}^{1/2 - \delta_*} \D_\alpha^{\delta_*}) \mu^{-1/6} \D_\beta^{1/2},
    \end{split}
\end{equation}
where we have used
\begin{equation}
    \begin{split}
        || \vk^n  \sk^m |k|^{2/3} |\eta - kt| p_\xi^{-3/4}& Z_\xi||_{L^2_{|k| \geq \mu} L^1_\eta}\\
        &\lesssim || \vk^n  \sk^m |k|^{2/3} |\eta - kt| \langle \eta - kt \rangle^{1/2 + 2\delta_*} p_\xi^{-3/4} Z_\xi||_{L^2_{|k| \geq \mu} L^2_\eta}\\
        &\lesssim || \vk^n  \sk^m |k|^{2/3} |\eta - kt |^{1/2} p_\xi^{-1/2} Z_\xi||_{L^2_{|k| \geq \mu} L^2_\eta}\\
        &\quad \quad + || \vk^n  \sk^m |k|^{2/3} |\eta - kt|^{1 + 2\delta_*} p_\xi^{-1/2} Z_\xi||_{L^2_{|k| \geq \mu} L^2_\eta}\\
        &\lesssim \mu^{-1/12} \D_\beta^{1/4} \D_\tau^{1/4} + \mu^{-1/3-\delta_*} \D_{\tau \alpha}^{1/2 - \delta_*} \D_\alpha^{\delta_*}.
    \end{split}
\end{equation}
Lastly, we have the $ T_{\beta, Z_1, HL, (L, L)}^y$ case where all frequencies are low. Here we use Lemma \ref{main_D_tau_lemma}, boundedness of the Riesz Transform, Young's inequality, and interpolation to find
\begin{equation}
    \begin{split}
        T_{\beta, Z_1, HL, (L, L)}^y&\lesssim \int_{|k| <\mu} \int_{|k'| < \mu}\iint_{\R^2} 1_{HL}   \dk^J \vk^n \sk^m  |\eta - kt| |Z_\xi| \dkm^J \skm^m \vk^n\\
        &\quad \quad \quad |k-k'|^{3/4}p_{\xi - \xi'}^{-1/4}|Z_{\xi-\xi'}|   \vkp^n |Z_{\xi'}|  d\eta' d\eta dk' dk\\
        &\lesssim || \dk^J \vk^n \sk^m |\eta -kt| Z_\xi||_{L^2_{|k| \leq \mu} L^2_\eta} ||\dk^J \vk^n  \sk^m |k|^{3/4}  p_\xi^{-1/4} Z_\xi||_{L^2_{|k| \lesssim \mu} L^1_\eta}\\
        &\quad \quad ||\vk^n  Z_\xi ||_{L^1_{|k| < \mu}L^2_\eta}\\
        &\lesssim \mu^{-1/2} \D_\gamma^{1/2}(\mu^{-1/4} \D_\gamma^{1/4} \D_\tau^{1/4} + \mu^{-1/4 - \delta_*} \D_{\gamma}^{1/4 + \delta_*} \D_\tau^{1/4 - \delta_*}) \mu^{1/2} \E^{1/2}.
    \end{split}
\end{equation}
To address $T_{\beta, Z_2}^y$, we perform the decomposition
\begin{equation}
    T_{\beta, Z_2}^y = T_{\beta, Z_2, LH}^y + T_{\beta, Z_2, HL,(\cdot,H)}^y + T_{\beta, Z_2, HL,(\cdot, L)}^y.
\end{equation}
For the $LH$ case, we use Young's inequality, $|k| \approx |k'|$, interpolation, boundedness of the Riesz transform, $D_{\tau \alpha} \lesssim \E$, and Lemma \ref{main_D_gamma_lemma} to find:
\begin{equation}\label{beta_Z2_LH_y}
    \begin{split}
        T_{\beta, Z_2, LH}^y&\lesssim \iiiint_{\R^4} 1_{LH}\dk^{J} \langle \xi\rangle^{n}A(k) (|k|^{1/2}1_{|k| \geq \mu} + 1_{|k| < \mu})|\eta-kt| p_\xi^{-1/4} |Z_\xi|    \\
         &\quad \quad \quad \quad  \vkm^n \skm^{-1/2} |Z_{\xi-\xi'}| \dkp^J \skp^m \\
         &\quad \quad \quad \quad \vkp^n (|k'|^{1/2}1_{|k'| \geq \mu} + |k'| 1_{|k'| < \mu})|Z_{\xi'}| d\eta' d\eta dk' dk\\
         &\lesssim ||\dk^J \vk^n \sk^m (|k|^{1/2}1_{|k| \geq \mu} + 1_{|k| < \mu}) A(k) |\eta-kt| p_\xi^{-1/4} Z_\xi||_{L^2_\xi}|| \vk^n \sk^{-1/2} Z_\xi ||_{L^1_\xi}\\
         & \quad \quad ( \mu^{-1/4} \D_\gamma^{1/8} \D_\beta^{3/8} + \mu^{1/2} \D_\beta^{1/2})\\
         &\lesssim \D_{\tau \alpha}^{1/4} \E^{1/4} (\mu^{-1/4}\E^{-1/4}\D_{\gamma}^{-1/4} + \mu^{-1/4-\delta_*} \D_\gamma^{1/4 + \delta_*} \E^{1/4 - \delta_*}) (\mu^{-1/4} \D_\gamma^{1/8} \D_\beta^{3/8} + \mu^{1/2} \D_\beta^{1/2})\\
         &\lesssim \mu^{-1/2 - \delta_*} \D \E^{1/2}.
    \end{split}
\end{equation}
We now turn to $T_{\beta, Z_2, HL, (\cdot,H)}^y$. We use $|k| B(k)^{2} \lesssim A(k)$ and $|k|, |k'| \lesssim |k-k'|$.
We now employ interpolation, Young's inequality, and Lemma \ref{new_energy_lemmas}:
\begin{equation}\label{T_beta_Z2_HL_y_H}
    \begin{split}
        T_{\beta, Z_2, HL, (\cdot, H)}^y& \lesssim \int_{\R} \int_{|k'| \geq \mu}\iint_{\R^2} 1_{HL}\dk^{J} \langle \xi\rangle^{n} \sk^m A(k) |\eta - kt|^{1/2} |Z_\xi| \dkm^J \langle \xi - \xi'\rangle^n \sk^m p_{\xi -\xi'}^{-1/2} \\ &\quad \quad \quad \quad  |(\eta-\eta') - (k-k')t|   |k-k'|^{2/3} |Z_{\xi-\xi'}|  \vkp^n \langle k' \rangle^{-1/2} |k'|^{1/3}|Z_{\xi'}| d\eta' d\eta dk' dk\\
         &\lesssim ||\dk^J \vk^n \sk^m A(k) |\eta - kt|^{1/2} Z_\xi||_{L^2_k L^1_\eta}\\
         &\quad||\dk^J \vk^n \sk^m |k|^{2/3} |\eta -kt|p_\xi^{-1/2} Z_\xi||_{L^2_{|k| \geq \mu} L^2_\eta} || \vk^n \langle k \rangle^{-1/2} |k|^{1/3} Z_\xi ||_{L^1_{|k| \geq \mu} L^2_\eta}\\
         &\lesssim (\E^{1/2} + \mu^{-\delta_*} \D_\alpha^{\delta_*} \E^{1/2 - \delta_*}) \mu^{-1/3} \D_{\tau \alpha}^{1/2} \mu^{-1/6} \D_{\beta},
    \end{split} 
\end{equation}
which suffices as $\D_{\tau \alpha} \lesssim \E$. Our final term is estimated using boundedness of the Riesz Transform, $|k'| \lesssim |k-k'|$, Lemma \ref{main_D_gamma_lemma}, and interpolation:
\begin{equation}\label{T_beta_Z2_HL_y_L}
    \begin{split}
        T_{\beta, Z_2, HL, (\cdot, L)}^y& \lesssim \int_{\R} \int_{|k'| < \mu }\iint_{\R^2} 1_{HL}\dk^{J} \langle \xi\rangle^{n} \sk^m A(k) |\eta - kt| p_{\xi}^{-1/4} |Z_\xi| \dkm^J \langle \xi - \xi'\rangle^n \sk^m \\ &\quad \quad \quad \quad  \min(|k-k'|^{1/4},1) |Z_{\xi-\xi'}|  \vkp^n \langle k' \rangle^{-1/2} |k'| \max(|k'|^{-1/4},1)|Z_{\xi'}| d\eta' d\eta dk' dk\\
         &\lesssim ||\dk^J \vk^n \sk^m A(k) |\eta - kt|^{1/2} Z_\xi||_{L^2_\xi}||\dk^J \vk^n \sk^m \min(|k|^{1/4},1) Z_\xi||_{L^2_k L^1_\eta}\\
         &\quad || \vk^n \langle k \rangle^{-1/2} |k| \max(|k|^{-1/4},1) Z_\xi ||_{L^1_{|k| < \mu} L^2_\eta}\\
         &\lesssim \mu^{-1/4} \D_\gamma^{1/4} \E^{1/4} (\mu^{-1/4} \D_{\gamma}^{1/4}\E^{1/4} + \mu^{-1/4 - \delta_*} \D_{\gamma}^{1/4 + \delta_*} \E^{1/4 - \delta_*})\\
         &\quad || \vk^n \langle k \rangle^{-1/2} |k| \max(|k|^{-1/4},1) Z_\xi ||_{L^1_{|k| < \mu} L^2_\eta},
    \end{split} 
\end{equation}
which suffices once we note that
\begin{equation}
    || \vk^n \langle k \rangle^{-1/2} |k| \max(|k|^{-1/4},1) Z_\xi ||_{L^1_{|k| < \mu} L^2_\eta} \lesssim \D_\beta^{1/2 - \delta_*}\min(\D_\beta, \E)^{\delta_*}.
\end{equation}
This completes the bounds for the $\beta$ terms involving $Z$.

\subsection{Beta Terms for Q}\label{beta_terms_for_Q}
We expand $T_{\beta, Q}$ as 
\begin{equation}
\begin{split}
    T_{\beta,Q} &= c_\beta \iint_{\R^2} \frac{\dk^{2J}}{M_k(t)} \langle k, \eta \rangle^{2n} \sk^{2m}  \beta_k k (\eta-kt) 2\mathrm{Re}(\mathbb{NL}_k^{(Q)} \bar{Q}_k) d\eta dk\\
    &= -c_\beta \iint_{\R^2} \frac{\dk^{2J}}{M_k(t)} \langle k, \eta \rangle^{2n} \sk^{2m}  B(k)^2 k (\eta-kt) \sgn(k) \langle k \rangle^{1/2} p_\xi^{1/4} 2\mathrm{Re}\biggl(\iint_{\R^2} \bar{Q}_\xi \skm^{-1/2} p_{\xi -\xi'}^{-3/4}\\
    &\quad \quad \quad \quad i (k-k') Z_{\xi-\xi'}\sgn(k') \skp^{-1/2} p_{\xi'}^{-1/4} i(\eta'-k't) Q_{\xi'}d\eta' dk' \biggr) d\eta dk\\
    & \quad +c_\beta \iint_{\R^2} \frac{\dk^{2J}}{M_k(t)} \langle k, \eta \rangle^{2n} \sk^{2m}  B(k)^2 k (\eta-kt) \sgn(k) \sk^{1/2} p_\xi^{1/4} 2\mathrm{Re}\biggl(\iint_{\R^2} \bar{Q}_\xi \skm^{-1/2} p_{\xi -\xi'}^{-3/4}\\
    &\quad \quad \quad \quad i ((\eta-\eta') - (k-k')t) Z_{\xi-\xi'} \sgn(k') \skp^{-1/2} p_{\xi'}^{-1/4} i k' Q_{\xi'}d\eta' dk' \biggr) d\eta dk\\
    &\eqqcolon T_{\beta, Q}^x + T_{\beta, Q}^y.
\end{split}
\end{equation}

\subsubsection{x-derivatives}
By the triangle inequality
\begin{equation}\label{split_beta_q_x_terms}
    \begin{split}
         |T_{\beta, Q}^x| &\lesssim \iiiint_{\R^4} \dk^{2J}\langle \xi \rangle^{n} \vk^{2m} B(k)^2 \langle k \rangle^{1/2}  |k| |\eta - kt| |Q_\xi| \vkm^n \langle k-k'\rangle^{-1/2} |k-k'| \\
         &\quad \quad \quad \quad p_{\xi - \xi'}^{-3/4} |Z_{\xi-\xi'}| \vkp^n \langle k' \rangle^{-1/2} |\eta' -k't| |Q_{\xi'}|  d\eta' d\eta dk' dk\\
        &\quad +\iiiint_{\R^4} \dk^{2J} \langle \xi\rangle^{n} \sk^{2m} B(k)^2 \sk^{1/2} |k| |\eta - kt| |Q_\xi| \vkm^n \langle k-k' \rangle^{-1/2} |k-k'| p_{\xi-\xi'}^{-1/2}\\
        & \quad \quad \quad \quad \quad |Z_{\xi-\xi'}| \vkp^n \langle k' \rangle^{-1/2} p_{\xi}^{-1/4}|\eta' - k't| |Q_{\xi'}| d\eta' d\eta dk' dk\\
        &\coloneqq T_{\beta,Q_1}^x+ T_{\beta,Q_2}^x.
    \end{split}
\end{equation}
We note that $T_{\beta, Q_1}^x$ can be controlled with the same methods as $T_{\beta, Z_1}^x$, since $Q$ and $Z$ satisfy the same linear estimates. Furthermore, if we split $T_{\beta, Q_2}^x$ as
\begin{equation}
    T_{\beta, Q_2}^x = T_{\beta, Q_2, LH}^x + T_{\beta, Q_2, HL}^x,
\end{equation}
    then $T_{\beta, Q_2, LH}^x$ is estimated in the same manner as $T_{\beta, Q_2, LH}^x$, since we may reverse the roles of $k$ and $k'$ as $|k| \approx |k'|$. The only remaining term is $T_{\beta, Q_2, HL}^x$. Here, we use $|k| \lesssim |k-k'|$, $B(k)^2 |k| \lesssim A(k)$, interpolation and boundedness of the Reisz transform. Additionally, we use Young's inequality to place the $k$ factor in $L^1_\eta$ and then $L^2_k$, along with Lemma \ref{half_energy_half_alpha_lemma}:
\begin{equation}
\begin{split}
        T_{\beta, Q_2, HL}^x &\lesssim \iiiint_{\R^4}1_{HL}\dk^{J} \langle \xi\rangle^{n} \sk^m A(k)  |\eta - kt| |Q_\xi| \dkm^{J} \langle \xi - \xi'\rangle^{n} \skm^m |k-k'| p_{\xi-\xi'}^{-1/2} \\
        &\quad \quad \quad \quad |Z_{\xi-\xi'}| \vkp^n \langle k' \rangle^{-1/2} |\eta' - k't|^{1/2} |Q_{\xi'}| d\eta' d\eta dk' dk\\
        &\lesssim ||\dk^{J} \langle \xi\rangle^{n} \sk^m A(k)  |\eta - kt| Q_\xi||_{L^2_k L^1_\eta} D_\tau^{1/2} ||  \vk^n \langle k \rangle^{-1/2} |\eta - kt|^{1/2} Q_{\xi}||_{L^1_k L^2_\eta}\\
        &\lesssim ||\dk^{J} \langle k, \eta + kt\rangle^{n} \sk^m A(k)  \eta q_\xi||_{L^2_k L^1_\eta} \D_\tau^{1/2} \mu^{-1/4} \E^{1/4} \D_\gamma^{1/4}\\
        &\lesssim ||\dk^{J} \langle k, \eta + kt\rangle^{n}\sk^m \langle \eta \rangle^{1/2 + 2 \delta_* } A(k)  \eta q_\xi||_{L^2_k L^2_\eta} \mu^{-1/4} \D_\tau^{1/2} \mu^{-1/4} \E^{1/4} \D_\gamma^{1/4}\\
        &\lesssim (\mu^{-1/4} \E^{1/4} \D_\gamma^{1/4} + \mu^{-1/4 - \delta_*} \E^{1/4 - \delta_*} \D_\alpha^{1/4 + \delta_*}) \D_\tau^{1/2} \mu^{-1/4} \E^{1/4} \D_\gamma^{1/4},
\end{split}
\end{equation}
which suffices as $\D_\tau \lesssim E$. We have now completed the $T_{\beta, Q}^x$ terms.

\subsubsection{y-derivatives}
The $T_{\beta, Q}^y$ terms will, similar to the $T_{\beta, Q}^x$ terms, require only a small number of new calculations. By boundedness of the relevant Fourier multipliers, we have
\begin{equation}
    \begin{split}
         |T_{\beta, Q}^y| &\lesssim \iiiint_{\R^4} \dk^{2J}\langle \xi \rangle^{n} \sk^{2m} B(k)^2 \langle k \rangle^{1/2} |k||\eta - kt| |Q_\xi| \vkm^n \langle k-k' \rangle^{-1/2} p_{\xi - \xi'}^{-3/4}\\
         &\quad \quad \quad \quad |(\eta - \eta') - (k-k')t||Z_{\xi-\xi'}| \vkp^n \skp^{-1/2} |k'| |Q_{\xi'}|  d\eta' d\eta dk' dk\\
        &\quad +\iiiint_{\R^4} \dk^{2J} \langle \xi\rangle^{n} \sk^{2m} B(k)^2 \sk^{1/2} |k| |\eta - kt| |Q_\xi| \vkm^n \skm^{-1/2} p_{\xi-\xi'}^{-1/2} \\
         &\quad \quad \quad \quad |(\eta - \eta') - (k-k')t||Z_{\xi-\xi'}| \vkp^n \skp^{-1/2} |k'|p_{\xi}^{-1/4} |Q_{\xi'}| d\eta' d\eta dk' dk\\
        &\coloneqq T_{\beta,Q_1}^y+ T_{\beta,Q_2}^y.
    \end{split}
\end{equation}
The term $T_{\beta, Q_1}^y$ can be estimated in the same way as $T_{\beta, Z_1}^y$. Splitting $T_{\beta,Q_2}^y$ into $LH$ and $HL$ as
\begin{equation}
    T_{\beta,Q_2}^y = T_{\beta,Q_2, LH}^y + T_{\beta,Q_2, HL}^y,
\end{equation}
we see that $T_{\beta, Q_2, LH}$ can be treated the same as $T_{\beta, Z_2, LH}^y$, utilizing $|k| \approx |k'|$. This leaves the $HL$ term, which we further split as
$$T_{\beta,Q_2, HL}^y = T_{\beta,Q_2, HL,(H, \cdot)}^y + T_{\beta,Q_2, HL, (L, \cdot)}^y.$$
For $T_{\beta,Q_2, HL,(H, \cdot)}^y$, we use $|k|, |k'| \lesssim |k-k'|$, $B(k)^2|k| \lesssim A(k)$, Young's inequality, and boundedness of the Riesz transform to compute as follows and (a variation on) Lemma \ref{main_D_tau_lemma}
\begin{equation}
\begin{split}
       T_{\beta, Q_2, HL, (H, \cdot)}^y & \lesssim \int_{|k| \geq \mu }\iiint_{\R^3} \dk^{J} \langle \xi\rangle^{n} \sk^m A(k)^{1/2} \max(|k|^{-1/6},1) |\eta - kt| |Q_\xi| \dkm^J\\
       &\quad \quad \quad \quad \langle \xi - \xi'\rangle^n \skm^m  A(k-k')^{1/2} |k-k'|^{1/2} |(\eta - \eta') - (k-k')t|^{1/2} \\
         &\quad \quad \quad \quad p_{\xi-\xi'}^{-1/4} | |Z_{\xi-\xi'}| \langle k' \rangle^{-1/2} |k'|^{1/2}p_{\xi}^{-1/4} |Q_{\xi'}| d\eta' d\eta dk' dk\\
         &\lesssim ||\dk^{J} \vk^n \sk^m A(k)^{1/2} |\eta - kt| Q_\xi||_{L^2_{|k| \geq 1} L^2_\eta} \\
         &\quad \quad ||\dk^{J}\langle \xi \rangle^n \sk^m A(k)^{1/2} |k|^{1/2} p_\xi^{-1/4} Z_\xi ||_{L^2_{\xi}} || \langle k \rangle ^{-1/2} |k|^{1/2} p_\xi^{-1/4} Q_\xi||_{L^1_\xi}\\
         &\quad + ||\dk^{J} \vk^n \sk^m A(k)^{1/2} |\eta - kt| Q_\xi||_{L^1_{\mu \leq |k| < 1} L^2_\eta} \\
         &\quad \quad \quad ||\dk^{J}\langle \xi \rangle^n \sk^m A(k)^{1/2} |k|^{1/2} |\eta - kt|^{1/2} p_\xi^{-1/4} Z_\xi ||_{L^2_{\xi}}|| \langle k \rangle^{-1/2} |k|^{1/2}  p_\xi^{-1/4} Q_\xi||_{L^2_k L^1_\eta}\\
         &\lesssim \E^{1/4} \mu^{-1/4} \D_{\gamma}^{1/4} \D_{\tau \alpha}^{1/4} \E^{1/4}(\D_\tau^{1/4} \mu^{-1/4} \D_\gamma^{1/4} + \D_\tau^{1/4} \mu^{-1/4-\delta_*} \D_\gamma^{1/4 + \delta_*})\\
         &\quad +||\dk^{J} \vk^n \sk^m A(k)^{1/2} |\eta - kt| Q_\xi||_{L^2_{\mu \leq |k| < 1} L^2_\eta} \D_{\tau \alpha}^{1/4} \E^{1/4}\\
         &\quad \quad (\D_\tau^{1/4} \mu^{-1/8} \D_\gamma^{1/8} \E^{1/8} + \D_\tau^{1/4} \mu^{-1/4-\delta_*} \D_\gamma^{1/4 + \delta_*}).
\end{split}
\end{equation}
This suffices since 
$$||\dk^{J} \vk^n \sk^m A(k)^{1/2} |\eta - kt| Q_\xi||_{L^2_{\mu \leq |k| \leq 1} L^2_\eta} \lesssim \min(\E^{1/8} \mu^{-3/8}\D_\gamma^{3/8}, \mu^{-1/4} \E^{1/4} \D_\gamma^{1/4}).$$ 
Now $T_{\beta,Q_2, HL,(L, \cdot)}^y$ is easier, and we have more simply
\begin{equation}
    \begin{split}
         T_{\beta, Q_2, HL, (L, \cdot)}^y & \lesssim \int_{|k| < \mu }\iiint_{\R^3} \dk^{J} \langle \xi\rangle^{n} \sk^m  |\eta - kt| |Q_\xi| \dkm^J \langle \xi - \xi'\rangle^n \skm^m  |k-k'|^{1/2} \\
         &\quad \quad \quad \quad  |Z_{\xi-\xi'}| \langle k' \rangle^{-1/2} |k'|^{1/2}p_{\xi}^{-1/4} |Q_{\xi'}| d\eta' d\eta dk' dk\\
         &\lesssim ||\dk^{J} \vk^n \sk^m |\eta - kt| Q_\xi||_{L^1_{|k| \leq \mu} L^2_\eta} ||\dk^{J}\langle \xi \rangle^n \sk^m |k|^{1/2}  Z_\xi ||_{L^2_{\xi}}\\
         &\quad \quad \quad || \langle k \rangle^{-1/2} |k|^{1/2} p_\xi^{-1/4} Q_\xi||_{L^2_k L^1_\eta}\\
         &\lesssim \mu^{1/8} \D_\gamma^{3/8} \E^{1/8}, \mu^{1/4} \D_{\gamma} \mu^{-1/4} \D_\gamma^{1/4} \E^{1/4}\\
         &\quad \quad ( \D_\tau^{1/4} \E^{1/8} \mu^{-1/8} \D_\gamma^{1/8} + \D_\tau^{1/4 - \delta_*} \mu^{-1/4 - \delta_*} \D_\gamma^{1/4 + \delta_*})\\
         &\lesssim \mu^{-1/2 - \delta_*} \D \E^{1/2}.
    \end{split}
\end{equation}
This completes our estimates on $T_{\beta, Q}^y$, thereby finishing the proof of Lemma \ref{key_lemma}, which in turn cmpletes the proof of Theorem \ref{main_theorem}.

\section{Discussion of the Alternate Theorem}\label{alt_section}

The first difference in the proof of Theorem \ref{alt_theorem} as compared to Theorem \ref{main_theorem} comes in the definition of the symmetrized coordinates, which we define as\begin{equation}\label{def_of_symmetrized_coords_alt}
    \begin{split}
        &z \coloneqq |\partial_x|^{1/2}|\nabla|^{-1/2} \omega, \;\; \; q \coloneqq \sqrt{R} \partial_x |\nabla|^{1/2} |\partial_x|^{-1/2} \theta,\\
        &Z \coloneqq |\partial_X|^{1/2}p^{-1/4} \Omega, \; \; \; Q \coloneqq \sqrt{R} \partial_X p^{1/4} |\partial_X|^{-1/2} \Theta.
    \end{split}
\end{equation}
In terms of the symmetrized variables, \eqref{moving_reference_equations} becomes
\begin{equation}\label{Symmetrized_moving_reference_equations_alt}
    \begin{cases}
        &\partial_t Z - \frac{1}{4}\frac{\partial_t p}{p}Z +|\partial_X|^{1/2}p^{-1/4}\left(U \cdot \nabla_t \Omega\right) = - \nu p Z - \sqrt{R} |\partial_X|p^{-1/2} Q,\\
        & \partial_t Q + \frac{1}{4}\frac{\partial_t p}{p}Q+ \sqrt{R} \partial_X p^{1/4} |\partial_X|^{-1/2}\left(U \cdot \nabla_t\Theta\right) = -\kappa p Q + \sqrt{R} |\partial_X| p^{-1/2} Z,\\
        &U = \nabla_t^\perp \Psi, \quad - p \Psi = \Omega,\\
        &Z(0,X,Y) = Z_{in}(X,Y), \; \; Q(0,X,Y) = Q_{in}(X,Y).
    \end{cases}
\end{equation}
We redefine our quantitative stability norms to be
\begin{equation}\label{updated_base_norm}
\begin{split}
        ||F||_{\tilde{H}(0)} &\coloneqq \sum_{0 \leq j \leq 1}|| \langle \partial_X, \partial_Y \rangle ^n \langle \partial_X \rangle^m \langle \frac{\partial_X}{\mu}\rangle^{-j/3}\partial_Y^j F ||_{L_{X,Y}^2} + || \langle k, \partial_Y \rangle ^n  \hat{F} ||_{L_k^\infty L_Y^2}\\
    &\eqqcolon ||F||_{\tilde{H}^{(1)}(0)} + ||F||_{\tilde{H}^{(2)}(0)},
\end{split}
\end{equation}
\begin{equation}\label{update_target_norm}
\begin{split}
        ||G||_{\tilde{H}(t)} &\coloneqq \sum_{0 \leq j \leq 1}|| \langle c \lambda(\mu, \partial_x) t \rangle^J \langle \partial_X, \partial_Y \rangle^n \langle \partial_X \rangle^m \langle \frac{\partial_X}{\mu}\rangle^{-j/3} (\partial_Y - t \partial_X)^j G||_{L_{X,Y}^2} + || \langle k, \partial_Y \rangle^n  \hat{G} ||_{L_k^\infty L_y^2}\\
        &\eqqcolon ||G||_{\tilde{H}^{(1)}(t)} + ||G||_{\tilde{H}^{(2)}(t)}.
\end{split}
\end{equation}
The key theorem to prove is almost identical to Theorem \eqref{symmetric_theorem} with $\tilde{H}(0)$ and $\tilde{H}(t)$ having the updated definitions. However, there is a small but technically significant change in the statement regarding the invisid damping term:
\begin{theorem}\label{symmetric_theorem_alt}
Suppose $(Z_{in}, Q_{in})$ are initial data for \eqref{Symmetrized_moving_reference_equations_alt} and that $R > 1/4$, $\epsilon \in (0,1/2)$ are given. Then for all $m \in (0, \infty)$, $n \in [0,\infty)$, $J \in [1,\infty)$, and $\delta_* \in (0,1/12)$, there exists a constant $\delta = \delta(n, m,J,R, \epsilon, \delta_*) > 0$ independent of $\nu$ and $\kappa$ such that if
\begin{equation}
\begin{split}
        ||Z_{in}||_{\tilde{H}(0)} + ||Q_{in}||_{\tilde{H}(0)} = \zeta \leq \delta \mu^{1/2 +  \delta_*},
\end{split}
\end{equation}
    then for all $c = c(R, \epsilon) > 0$ sufficiently small (independent of $\mu$ and $\delta$) and all $\nu, \kappa \in (0,1)$ satisfying \eqref{diffusion_assumption}, the corresponding solution $(Z,Q)$ to \eqref{vorticity_perturb_eqn} satisfies the following estimate:
\begin{equation}
\begin{split}
            ||Z||_{\tilde{H}(t)} + ||Q||_{\tilde{H}(t)} &+ C \left(\int_0^t ||\frac{\partial_X }{|\nabla_t|} Z||_{\tilde{H}^{(1)}(s)}^2 + || \frac{ \partial_X }{|\nabla_t|} Q||_{\tilde{H}^{(1)}(s)}^2 ds\right)^{1/2}\\
            &+ C || \langle k, \partial_y\rangle \frac{k}{|\nabla_t|} (\hat{Z},\hat{Q}) ||_{L^\infty_k L^2([0,t])L^2_y}
            \leq 2\zeta, \quad \forall t\in [0,\infty),
\end{split}
\end{equation}
for some constant $C = C(R, \epsilon, c) > 0$ independent of $\nu$, $\kappa$, and $\delta$.
\end{theorem}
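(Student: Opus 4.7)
The plan is to mirror the proof of Theorem \ref{symmetric_theorem} (Sections \ref{outline_section}--\ref{nonlinear_section}), modified to track the two-component structure of the norm in \eqref{updated_base_norm}. I would decompose the energy as $\E = \E^{(1)} + \E^{(2)}$, where $\E^{(1)}$ is the $L^2_{k,\eta}$ piece built exactly as in \eqref{nonlinear_energy} (but with $\sk^{m}$ in place of $\sk^{m+1/2}$ to match the altered symmetrization \eqref{def_of_symmetrized_coords_alt}), and
$$\E^{(2)}(t) \coloneqq \sup_{k \in \R} \int_{\R} \dk^{2J} \langle k, \eta \rangle^{2n} E_k[Z_k, Q_k]\, d\eta,$$
with a matching decomposition $\D = \D^{(1)} + \D^{(2)}$. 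The theorem then follows from the same bootstrap scheme as Lemma \ref{bootstrap_lemma}, once I establish $\frac{d}{dt}\E \leq -4c\,\D + C\mu^{-1/2-\delta_*}\E^{1/2}\D$.

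The linear portion is essentially free: Proposition \ref{linear_proposition} is pointwise in $k$, so $\frac{d}{dt}E_k \leq -c_1 D_k - c_0 \lambda_k E_k$ for every fixed $k$. Integrating in $\eta$ and taking $\sup_k$ yields the required contraction of $\E^{(2)}$; the correction multiplier $M_k$ absorbs the derivative falling on $\dk^{2J}$ exactly as in Section \ref{outline_section}. The $\E^{(1)}$ linear decay is identical to the argument in Section \ref{linear_section}. Inverting the symmetrization \eqref{def_of_symmetrized_coords_alt} to recover estimates on $(\omega, \theta)$ proceeds as in Section \ref{proof_of_main}; the $L^\infty_k L^2_y$ dissipation term in the conclusion of the theorem corresponds directly to $\D^{(2)}$ restricted to its $\partial_X/|\nabla_t|$-weighted hypocoercive contributions (the $D_{k,\tau}$ and $D_{k,\rho}$ pieces of \eqref{linear_dissipation}).

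The heart of the matter is the nonlinear estimate. The change from $\langle \partial_x \rangle^{1/2}$ to $|\partial_x|^{1/2}$ in \eqref{def_of_symmetrized_coords_alt} replaces the factor $\sk^{1/2}\skm^{-1/2}\skp^{-1/2}$ appearing throughout the convolutions in \eqref{conv_NL_Z}--\eqref{conv_NL_Q} by $|k|^{1/2}|k-k'|^{-1/2}|k'|^{-1/2}$. At high frequencies the two agree, so every $HL$ and $LH$ estimate in Sections \ref{gamma_terms_for_Z}--\ref{beta_terms_for_Q} away from the origin carries over unchanged. The issue is strictly at low $k$: interpolation steps such as \eqref{interp_in_k_example}, which relied on $\sk^{-1/2}$ being integrated against $\sk^{-2m}$ in $L^1_k$ thanks to $m>0$, now face a non-integrable $|k|^{-1/2}$ weight near $k=0$. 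My plan is to redo each such estimate by placing the offending low-frequency factor in $L^\infty_k L^2_\eta$ instead of $L^1_k L^2_\eta$, applying Young's convolution in the form $L^1_k \ast L^\infty_k \hookrightarrow L^\infty_k$ (or, dually, $L^1_k \ast L^2_k \hookrightarrow L^2_k$), and absorbing the $L^\infty_k L^2_\eta$ norm into $\E^{(2),1/2}$. The complementary factor has a $|k|^{1/2}$ available, which restores integrability in $L^1_k$ at zero through a weight like $|k|^{1/2}\sk^{-m-1/2}$.

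The main obstacle is the bookkeeping: I must verify, case by case in the $LH_{(L,\cdot)}$, $HL_{(L,L)}$, and $HL_{(L,H)}$ regimes of Section \ref{nonlinear_section}, that exactly one factor of each triple product carries the non-integrable low-frequency loss and that it can be cleanly routed into $L^\infty_k L^2_\eta$. The analogous step in \cite{arbon_bedrossian} provides the template. Propagating $\E^{(2)}$ itself requires one further estimate: one needs to bound $\sup_k |\int (U \cdot \nabla_t \Omega)_{\xi} \bar{Z}_\xi \, d\eta|$ (and similarly for $Q$), which by the convolution structure of \eqref{conv_NL_Z}--\eqref{conv_NL_Q} and Young's inequality reduces to a product of an $L^1_k$ norm (controlled via $\E^{(1)}$, $\D^{(1)}$) and an $L^\infty_k$ norm (controlled via $\E^{(2)}$, $\D^{(2)}$), with no new smallness exponent beyond $\mu^{-1/2-\delta_*}$. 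Combining these produces the same differential inequality as Lemma \ref{bootstrap_lemma}, and the bootstrap closes under the hypothesis $\zeta \leq \delta\mu^{1/2+\delta_*}$, yielding Theorem \ref{symmetric_theorem_alt}.
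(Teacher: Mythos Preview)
Your overall strategy is right in spirit---the nonlinear estimates do split as you describe, with the $L^\infty_k$ component supplying control at low frequencies where $|k|^{-1/2}$ fails to be locally integrable---but there is a genuine structural gap in how you set up the bootstrap.

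You propose to prove a differential inequality $\frac{d}{dt}\E \leq -4c\,\D + C\mu^{-1/2-\delta_*}\E^{1/2}\D$ with $\E^{(2)}(t) = \sup_k \int \dk^{2J}\langle k,\eta\rangle^{2n} E_k\, d\eta$. But $\sup_k$ does not commute with $\frac{d}{dt}$ in a usable way: at a time $t$ where the supremum is attained at some $k_0(t)$, one has $\frac{d}{dt}\E^{(2)} = \partial_t F_{k_0}$, and the linear contraction gives $\partial_t F_{k_0} \leq -c\, D_{k_0}$ at \emph{that} frequency, not $-c\sup_k D_k$. There is no way to manufacture $-\D^{(2)}$ on the right-hand side of a pointwise-in-$t$ inequality. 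The paper abandons the differential form entirely and works with a \emph{time-integrated} bootstrap (Lemma \ref{bootstrap_lemma_alt}): for each fixed $k$ one writes $F_k(T) + \int_0^T \tilde{D}_k\, dt \leq F_k(0) + \int_0^T |NL_k|\, dt$ via the fundamental theorem of calculus, and only \emph{then} takes $\sup_k$. This forces the dissipation to be $\D^{(2)}(T) = \sup_k \int_0^T \int D_k\, d\eta\, dt$, with the supremum \emph{outside} the time integral---which is exactly the $L^\infty_k L^2([0,t]) L^2_y$ structure in the conclusion of the theorem.

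This ordering issue then propagates into every nonlinear estimate involving $\D^{(2)}$: one must apply H\"older in $t$ and Minkowski to commute $\sup_k$ past the time integration before interpolating (see the $T_\infty$ computations in Section \ref{alt_section}). Your plan to ``route the low-frequency factor into $L^\infty_k L^2_\eta$'' is correct, but it has to be executed inside an $L^p_t$ framework, not pointwise in $t$. A secondary mismatch: the paper's $\E^{(2)}$ carries only the $(N_k + c_\tau \mathfrak{J}_k)$-weighted core of $E_k$ (no $\alpha,\beta$ hypocoercive pieces, no $\dk^{2J}$ weight), consistent with the $L^\infty_k$ part of \eqref{update_target_norm}; your choice with the full $E_k$ and decay weight would force you to propagate unnecessary structure in $L^\infty_k$.
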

The proof of Theorem \ref{alt_theorem} follows from Theorem \ref{symmetric_theorem_alt} in the same manner as Theorem \ref{main_theorem} follows from Theorem \ref{symmetric_theorem}, as discussed in Section \ref{proof_of_main}. We therefore confine ourselves to proving Theorem \ref{symmetric_theorem_alt}.

The system \eqref{Symmetrized_moving_reference_equations_alt} differs from \eqref{Symmetrized_moving_reference_equations} only at the nonlinear level. Thus the pointwise energy $E_k$ \eqref{linear_energy}, the pointwise dissipation $D_k$ \eqref{linear_energy}, and the crucial linear estimates of Section \ref{linear_section} and Proposition \ref{linear_proposition} remain unchanged.

However, the nonlinear energy is modified to incorporate the supremum in $k$ term. We (re)define 
\begin{equation}\label{nonlinear_energy_alt}
    \begin{split}
        &\E^{(1)}[Z, Q] \coloneqq \iint_{\R^2} \frac{\dk^{2J}}{M_k(t)} \langle k, \eta \rangle^{2n} \langle k \rangle^{2m} 
 E_k[Z_k, Q_k] d\eta dk,\\
        & \E^{(2)}[Z, Q] \coloneqq  \sup_{k \in \R} \int_\R \langle k, \eta \rangle^{2n}  \left(N_k + c_\tau \mathfrak{J}_k\right)\left(|Z_k|^2 + |Q_k|^2 + \frac{1}{2\sqrt{R}}\mathrm{Re}\left(\frac{\partial_t p}{|k| p^{1/2}} Z_k \bar{Q}_k\right) \right) d\eta ,\\
        &\E[Z,Q] \coloneqq \E^{(1)}[Z,Q] + \E^{(2)}[Z,Q].
    \end{split}
\end{equation}
where $c$ and $M_k$ are as before. We also (re)define the nonlinear dissipation. Importantly, we now consider \textit{time-integrated} dissipations, as used in \cite{arbon_bedrossian}, since the conclusion of Theorem \ref{symmetric_theorem_alt} involves taking the supremum in $k$ after integrating in time:
\begin{equation}\label{nonlinear_dissipation_alt}
    \begin{split}
        &\D^{(1)}[Z, Q](T) \coloneqq \int_0^T \int_{\R^2} \frac{\dk^{2J}}{M_k(t)} \langle k, \eta \rangle^{2n} \langle k \rangle^{2m} 
 D_k[Z_k, Q_k] d\eta dk dt,\\
        & \D^{(2)}[Z, Q](T) \coloneqq  \sup_{k\in\R} \int_0^T\int_\R \langle k, \eta \rangle^{2n} \left(D_{k, \gamma}[Z_k, Q_k] + c_\tau \left(\frac{1}{2} - \frac{1}{4\sqrt{R}}\right) D_{k, \tau}[Z_k, Q_k] + \frac{1}{2}c_\rho D_{k,\rho}\right)d\eta dt,\\
        &\D[Z,Q] \coloneqq \D^{(1)}[Z, Q] + \D^{(2)}[Z, Q].
    \end{split}
\end{equation}
Furthermore, we (re)define the terms of the nonlinear dissipation coming from different components of the pointwise dissipation:
\begin{equation}\label{definition_of_gamma_and_tau_diss_alt}
    \begin{split}
        &\D_{\gamma}[Z,Q] \coloneqq \int_0^T \iint_{\R^2} \frac{\dk^{2J}}{M_k(t)} \langle k, \eta \rangle^{2n} \langle k \rangle^{2m}  
 \D_{k,\gamma}[Z_k, Q_k] dk d\eta dt + \sup_{k \in \R} \int_0^T \int_\R \langle k, \eta \rangle^{2n}  D_{k, \gamma}[Z_k, Q_k]d \eta dt,\\
 &\D_{\tau}[Z,Q] \coloneqq \int_0^T \iint_{\R^2} \frac{\dk^{2J}}{M_k(t)} \langle k, \eta \rangle^{2n} \langle k \rangle^{2m} 
 c_\tau D_{k,\tau}[Z_k, Q_k] dk d\eta dt + \sup_{k \in \R} \int_0^T \int_\R  c_\tau \left(\frac{1}{2} - \frac{1}{4\sqrt{R}}\right) \langle k, \eta \rangle^{2n}  D_{k, \tau}[Z_k, Q_k] d \eta dt,
    \end{split}
\end{equation}
and
\begin{equation}
    \D_{*}[Z,Q] \coloneqq \int_\R \frac{\dk^{2J}}{M_k(t)} \langle k, \eta \rangle^{2n} \langle k \rangle^{2m} c_{*} D_{k,*}[Z_k, Q_k] dk
\end{equation}
for $* \in \{ \alpha, \tau \alpha, \beta ,\rho, \rho\alpha\}$, letting $c_{\tau \alpha } = c_\tau c_\alpha$ and $c_{\rho \alpha} = c_\rho c_\alpha$. We note that the bootstrap Lemma \ref{bootstrap_lemma} was expressed in a purely differential form. However, since we now have supremum terms in the energy $\E$, we express the bootstrap in the following formulation, as in \cite{arbon_bedrossian}.
\begin{lemma}\label{bootstrap_lemma_alt}
    Let $(Z_{in}, Q_{in})$ be initial datum for \eqref{Symmetrized_moving_reference_equations_alt} such that $\E[Z_{in}, Q_{in}] < \infty$. Then there exists a constant $C > 0$ depending only on $m$, $J$, $R$, $\epsilon$ and the choice of $c$ such that $\forall T \in [0,\infty)$:
    \begin{equation}
        \E[Z,Q](T) \leq 2\E[Z,Q](0) - 4 c \D(T) + \mu^{-1/2 - \delta_*} 
 C^{1/2}\sup_{t \in [0,T]}\E^{1/2}[Z,Q](t) \D(T).
    \end{equation}
\end{lemma}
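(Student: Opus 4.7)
The plan is to separately produce two time-integrated energy inequalities, one for the $L^2_{k,\eta}$-based energy $\E^{(1)}$ and one for the $L^\infty_k L^2_\eta$-based energy $\E^{(2)}$, and then add them. For $\E^{(1)}$, the argument of Sections~\ref{outline_section}--\ref{nonlinear_section} can be reused almost verbatim: applying Proposition~\ref{linear_proposition} pointwise in $(k,\eta)$, integrating against the weight $\frac{\dk^{2J}}{M_k(t)}\langle k,\eta\rangle^{2n}\langle k\rangle^{2m}$, using the defining ODE~\eqref{correction_ODE} for $M_k$ to absorb the term with $\partial_t\langle c\lambda_k t\rangle^{2J}$, and integrating in $t\in[0,T]$, yields
\[
\E^{(1)}(T) + 8c\,\D^{(1)}(T) \;\leq\; \E^{(1)}(0) + |\mathcal{NL}^{(1)}(T)|,
\]
where $\mathcal{NL}^{(1)}(T)$ is the time integral of the nonlinear contribution analogous to $\mathcal{NL}$ in Section~\ref{nonlinear_section}. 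The only structural difference at the nonlinear level is the replacement of the symmetrizer $\langle \partial_X\rangle^{1/2}$ by $|\partial_X|^{1/2}$; this costs a half-derivative at low $k$, which is precisely what the new $\E^{(2)}$ piece is designed to restore (cf.\ the remark following the statement of Theorem~\ref{alt_theorem}). Concretely, every place in the proof of Lemma~\ref{key_lemma} where the factor $\langle k\rangle^{-m}$ with $m>0$ was used to make $\sk^{-1/2}$ integrable in $k$, one instead extracts the same factor in $L^\infty_k$ out of the test function and bounds it by $\E^{(2)\,1/2}$.

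For $\E^{(2)}$, we test the linear equations~\eqref{linear_system} pointwise in $k$ against $(\bar Z_k,\bar Q_k)$ in $L^2_\eta$ against the weight $\langle k,\eta\rangle^{2n}$, combine with the $\mathfrak{J}_k$ cross-term, and apply the pointwise inequality of Proposition~\ref{linear_proposition} to obtain, for each fixed $k\in\R$,
\[
\int_\R \langle k,\eta\rangle^{2n}(N_k + c_\tau \mathfrak{J}_k)\Bigl(|Z_k|^2 + |Q_k|^2 + \tfrac{1}{2\sqrt R}\mathrm{Re}\tfrac{\partial_t p}{|k|p^{1/2}}Z_k\bar Q_k\Bigr)(T)\,d\eta + 8c\int_0^T\!\!\int_\R \langle k,\eta\rangle^{2n} D_k^{(2)}\,d\eta\,dt \leq (\cdots)(0) + |\mathcal{NL}_k^{(2)}(T)|,
\]
where $D_k^{(2)}$ collects the $D_{k,\gamma}$, $D_{k,\tau}$, and $D_{k,\rho}$ pieces from~\eqref{linear_dissipation}. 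Taking the supremum in $k$ of both sides gives $\E^{(2)}(T) + 8c\,\D^{(2)}(T)\leq \E^{(2)}(0) + \sup_k|\mathcal{NL}_k^{(2)}(T)|$. The nonlinear term $\mathcal{NL}_k^{(2)}$ is, after unwinding~\eqref{Symmetrized_moving_reference_equations_alt}, a sum of convolutions in $\xi=(k,\eta)$ schematically identical to the $T_{*,Z}, T_{*,Q}$ of Section~\ref{nonlinear_section} with $* \in \{\gamma\}$ (no $\alpha$ or $\beta$ terms appear at the $\E^{(2)}$ level since $\E^{(2)}$ carries no $\alpha_k$ or $\beta_k$ weights). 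These are estimated by Young's convolution inequality in $(k,\eta)$: one of the three factors is placed in $L^\infty_k L^2_\eta$ (controlled by $\E^{(2)\,1/2}$), the ``middle'' factor in $L^1_k L^2_\eta L^1_t$ (controlled by $\mu^{-1/2-\delta_*}\D^{1/2}_\tau$ via interpolation as in~\eqref{interp_in_k_example} with $m>0$), and the remaining factor in $L^2_{k,\eta} L^2_t$ (controlled by $\D^{(1)\,1/2}$); Cauchy--Schwarz in time produces the product $\sup_{t\le T}\E^{1/2}(t)\cdot\D(T)$.

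Adding the two inequalities and using $\E^{(1)}(T)+\E^{(2)}(T)\leq \E(T)\leq \sup_{t\in[0,T]}\E(t)$ together with $\E(0)\leq \sup_{t\in[0,T]}\E(t)$ (the factor~$2$ in the statement absorbs the cross-terms coming from controlling $N_k+c_\tau\mathfrak{J}_k$ from below by a fixed constant depending on $R$) gives the claimed integral inequality. The main obstacle is the nonlinear bound on $\sup_k|\mathcal{NL}_k^{(2)}|$: unlike in Section~\ref{nonlinear_section}, we cannot first integrate in $k$ and hope to absorb via $\langle k\rangle^{-m}$, so the entire case analysis (low-high vs.\ high-low, and the sub-splittings on $|k|\lessgtr\mu$) must be redone with the outer norm being $L^\infty_k$ rather than $L^2_k$. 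The saving grace is that, because $\E^{(2)}$ itself provides an $L^\infty_k$ quantity, in every splitting one of the three convolution factors can be peeled off in $L^\infty_k$, after which the remaining two factors are handled by the same Young and interpolation arguments — together with the energy lemmas of Section~\ref{nonlinear_section} — that already drove the proof of Lemma~\ref{key_lemma}.
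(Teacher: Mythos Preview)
Your high-level plan --- a pointwise-in-$k$ energy/dissipation inequality for the $\gamma+\tau$ part of the functional, then taking $\sup_k$ --- is the right idea and matches the paper's strategy. However, there is a genuine gap in the step ``Taking the supremum in $k$ of both sides gives $\E^{(2)}(T) + 8c\,\D^{(2)}(T)\leq \E^{(2)}(0) + \sup_k|\mathcal{NL}_k^{(2)}(T)|$.'' This is false as written: the suprema defining $\E^{(2)}(T)=\sup_k e_k(T)$ and $\D^{(2)}(T)=\sup_k d_k$ can be attained at \emph{different} values of $k$, so from $e_k(T)+8c\,d_k\leq e_k(0)+n_k$ one only gets $\sup_k\bigl(e_k(T)+8c\,d_k\bigr)\leq \E^{(2)}(0)+\sup_k n_k$, and $\sup_k e_k(T)+8c\sup_k d_k\leq 2\sup_k\bigl(e_k(T)+8c\,d_k\bigr)$ (using nonnegativity of both summands). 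This is precisely where the factor $2$ on $\E(0)$ in the lemma originates --- not, as you claim, from bounding $N_k+c_\tau\mathfrak{J}_k$ from below. The paper makes this explicit in \eqref{non_lin_comps}: it bounds $\sup_k A_k + \sup_k B_k \leq 2\sup_k(A_k+B_k)$ to merge the dissipation-sup with the energy-sup into a single $\sup_k$ before invoking the linear cancellation.

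Two smaller points. First, Proposition~\ref{linear_proposition} is stated for the \emph{full} $E_k$ (including $\alpha,\beta$ terms); the partial functional $(N_k+c_\tau\mathfrak{J}_k)(|Z_k|^2+|Q_k|^2+\text{cross})$ does not directly inherit a clean inequality from it. You must return to \eqref{gamma_est} and \eqref{tau_est} and use the smallness of $c_\tau$ to absorb the $\tau$-errors; the paper does this via the auxiliary quantity $\tilde D_k$ in \eqref{key_non_lin_lower_bound}--\eqref{final_non_lin_sups}. Second, your sketch of the $\sup_k$ nonlinear estimate (``one factor in $L^\infty_k L^2_\eta$, one in $L^1_k L^2_\eta L^1_t$, Cauchy--Schwarz in time'') elides the main technical content: because the $\sup_k$ sits \emph{outside} the time integral in $T_\infty$, the order in which you apply H\"older in $t$, Young in $k$, and Minkowski to swap $k$ and $t$ genuinely matters. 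The paper's Sections~5.3--5.4 split on $|k-k'|\lessgtr 1$ specifically to decide, case by case, whether to put the low-$k$ factor into $L^\infty_k$ (controlled by $\E^{(2)}$ or $\D^{(2)}$) or into $L^1_k$ and only then interchange with the time integral.
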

Clearly, Lemma \ref{bootstrap_lemma_alt} implies Theorem \ref{symmetric_theorem_alt}. In Section \ref{nonlin_boot_sect}, we will performs computations similar to those at the beginning of Section \ref{nonlinear_section}, keeping in mind the supremum terms. We will also discuss the various components of the nonlinearity which need to be estimated to prove \ref{bootstrap_lemma_alt}.

\subsection{Nonlinear Bootstrap Set-up}\label{nonlin_boot_sect}
Maintaining the definitions of $\mathbb{L}_k^{(Z)}$ and $\mathbb{L}_k^{(Q)}$ from \eqref{lin_and_nonlin}, we redefine
\begin{equation}
    \begin{split}
        &\mathbb{NL}_k^{(Z)} \coloneqq - |k|^{1/2}p^{-1/4}\left(U \cdot \nabla_t \Omega\right)_k,\\
        &\mathbb{NL}_k^{(Q)} \coloneqq - \sqrt{R} ik p^{1/4} |k|^{-1/2}\left(U \cdot \nabla_t\Theta\right)_k.
    \end{split}
\end{equation}
Now by the fundamental theorem of calculus, we have for all $T > 0$:
\begin{equation}\label{big_energ_ineq}
    \begin{split}
    \E(t)  + &\D^{2}(T) \leq \E^{(1)}(0) + \int_0^T \frac{d}{dt} \E^{(1)}(t) dt\\
    &+ \sup_k \int_0^T \int_\R \langle k, \eta\rangle^{2n}(D_{k,\gamma} + c_\tau \left(\frac{1}{2} - \frac{1}{4\sqrt{R}}\right) D_{k,\tau} + \frac{1}{2}c_\rho D_{k,\rho}) d\eta dt\\
    &
 + \sup_k \int_\R \langle k, \eta\rangle^{2n} \biggl((N_k + c_\tau \mathfrak{J}_k)\biggl(|Z_k|^2 + |Q_k|^2 + \frac{1}{2\sqrt{R}}\mathrm{Re}\left(\frac{\partial_t p}{ |k| p^{1/2}} Z_k\bar{Q}_k \right) \biggr)(0) \\
 &\quad \quad \quad +  \int_0^T \frac{d}{dt}\biggl((N_k + c_\tau \mathfrak{J}_k)\biggl(|Z_k|^2 + |Q_k|^2 + \frac{1}{2\sqrt{R}}\mathrm{Re}\left(\frac{\partial_t p}{ |k| p^{1/2}} Z_k\bar{Q}_k \right) \biggr) \biggr)(t) dt \biggr) d\eta.
    \end{split}
\end{equation}
Relabeling the term $\mathcal{NL}$ from Section \ref{nonlinear_section} as $\mathcal{NL}^{(1)}$, We have as in \eqref{nonlin_deriv},
\begin{equation}\label{easy_non_lin}
    \frac{d}{dt}\mathcal{E}^{(1)} \leq - 4c \D^{(1)} + \mathcal{NL}^{(1)}.
\end{equation}
To handle the remaining terms in \eqref{big_energ_ineq}, we begin by noting that by $R > 1/4$, the definition of $\mu$ \eqref{mu_definition}, Young's product inequality, and Cauchy-Schwarz, we have for each $k \neq 0$ (similar to the work in Section \ref{linear_section}):
\begin{equation}\label{key_non_lin_lower_bound}
    \begin{split}
        (D_{k,\gamma} + c_\tau \left(\frac{1}{2} - \frac{1}{4\sqrt{R}}\right)D_{k,\tau} + \frac{1}{2}c_\rho D_{k,\rho}) &\leq 2 D_{k, \gamma} + \left(\frac{1}{2} - \frac{1}{4 \sqrt{R}}\right) c_\tau D_{k, \tau} + \frac{1}{2\sqrt{R}} D_{k,\rho}\\
        &\leq 2 p N_k\biggl(\nu |Z_k|^2 + \kappa |Q_k|^2 + \frac{\nu + \kappa}{4 \sqrt{R}} \frac{\partial_t p}{|k| p^{1/2}} \mathrm{Re}(Z_k \bar{Q}_k) \biggr)\\
        & \quad + c_\tau\frac{1}{2} k^2 p^{-1} \biggl(|Z_k|^2 + |Q_k|^2 + \frac{1}{2 \sqrt{R}}\frac{\partial_t p}{|k| p^{1/2}} \mathrm{Re}(Z_k \bar{Q}_k) \biggr)\\
        &\quad +N_k \frac{2 |k|^3}{p^{3/2}} \biggl(\frac{1}{2\sqrt{R}-1}(|Z_k|^2 + |Q_k|^2 + \frac{1}{2\sqrt{R}}\biggl(1 - \frac{1}{2\sqrt{R}-1} \frac{\partial_t p}{|k| p^{1/2}} \mathrm{Re}(Z_k \bar{Q}_k)\biggr) \biggr)\\
        & \eqqcolon \tilde{D}_k.
    \end{split}
\end{equation}
Applying \eqref{key_non_lin_lower_bound}, we find by the triangle inequality:
\begin{equation}\label{non_lin_comps}
    \begin{split}&\sup_k \int_0^T \int_\R \langle k, \eta\rangle^{2n}(D_{k,\gamma} + \left(\frac{1}{2} - \frac{1}{4\sqrt{R}}\right)c_\tau D_{k,\tau} + \frac{1}{2}c_\rho D_{k,\rho}) d\eta dt\\
    & \quad +
         \sup_k \int_\R \langle k, \eta\rangle^{2n} \biggl((N_k + c_\tau \mathfrak{J}_k)\biggl(|Z_k|^2 + |Q_k|^2 + \frac{1}{2\sqrt{R}}\mathrm{Re}\left(\frac{\partial_t p}{ |k| p^{1/2}} Z_k\bar{Q}_k \right) \biggr)(0)\\
 &\quad \quad \quad +  \int_0^T \frac{d}{dt}\biggl((N_k + c_\tau \mathfrak{J}_k)\biggl(|Z_k|^2 + |Q_k|^2 + \frac{1}{2\sqrt{R}}\mathrm{Re}\left(\frac{\partial_t p}{ |k| p^{1/2}} Z_k\bar{Q}_k \right) \biggr) \biggr)(t) dt \biggr) d\eta\\
 &\leq  \sup_k \int_0^T \int_\R \langle k, \eta \rangle^{2n} \tilde{D}_k d\eta dt  +
         \sup_k \int_\R \langle k, \eta\rangle^{2n} \biggl((N_k + c_\tau \mathfrak{J}_k)\biggl(|Z_k|^2 + |Q_k|^2 + \frac{1}{2\sqrt{R}}\mathrm{Re}\left(\frac{\partial_t p}{ |k| p^{1/2}} Z_k\bar{Q}_k \right) \biggr)(0)\\
 &\quad \quad \quad +  \int_0^T \frac{d}{dt}\biggl((N_k + c_\tau \mathfrak{J}_k)\biggl(|Z_k|^2 + |Q_k|^2 + \frac{1}{2\sqrt{R}}\mathrm{Re}\left(\frac{\partial_t p}{ |k| p^{1/2}} Z_k\bar{Q}_k \right) \biggr) \biggr)(t) dt \biggr) d\eta\\
 & \leq 2 \sup_k \int_\R \langle k, \eta\rangle^{2n} \biggl( (N_k + c_\tau \mathfrak{J}_k)\biggl(|Z_k|^2 + |Q_k|^2 + \frac{1}{2\sqrt{R}}\mathrm{Re}\left(\frac{\partial_t p}{ |k| p^{1/2}} Z_k\bar{Q}_k \right) \biggr)(0)\\
 &\quad \quad \quad + \int_0^T \tilde{D}_k + \frac{d}{dt}\biggl((N_k + c_\tau \mathfrak{J}_k)\biggl(|Z_k|^2 + |Q_k|^2 + \frac{1}{2\sqrt{R}}\mathrm{Re}\left(\frac{\partial_t p}{ |k| p^{1/2}} Z_k\bar{Q}_k \right) \biggr) \biggr)(t)dt \biggr) d \eta .
    \end{split}
\end{equation}
Continuing from \eqref{non_lin_comps}, we usw the triangle inequality again and the exact form of the time derivative, as computed in Section \ref{linear_section}:
\begin{equation}\label{final_non_lin_sups}
    \begin{split}
        &2 \sup_k \int_\R \langle k, \eta\rangle^{2n} \biggl( (N_k + c_\tau \mathfrak{J}_k)\biggl(|Z_k|^2 + |Q_k|^2 + \frac{1}{2\sqrt{R}}\mathrm{Re}\left(\frac{\partial_t p}{ |k| p^{1/2}} Z_k\bar{Q}_k \right) \biggr)(0)\\
 &\quad \quad \quad + \int_0^T \tilde{D}_k + \frac{d}{dt}\biggl((N_k + c_\tau \mathfrak{J}_k)\biggl(|Z_k|^2 + |Q_k|^2 + \frac{1}{2\sqrt{R}}\mathrm{Re}\left(\frac{\partial_t p}{ |k| p^{1/2}} Z_k\bar{Q}_k \right) \biggr) \biggr)(t)dt \biggr) d \eta\\
 &\leq 2 \E^{(2)}(0)\\
 &\quad \quad \quad + 2\sup_k \int_\R \langle k, \eta \rangle^{2n}\biggl| \int_0^T \tilde{D}_k + \frac{d}{dt}\biggl((N_k + c_\tau \mathfrak{J}_k)\biggl(|Z_k|^2 + |Q_k|^2 + \frac{1}{2\sqrt{R}}\mathrm{Re}\left(\frac{\partial_t p}{ |k| p^{1/2}} Z_k\bar{Q}_k \right) \biggr) \biggr)(t)dt \biggr| d\eta\\
 &\leq 2\E^{(2)}(0) + 2\sup_k \int_\R \langle k, \eta \rangle ^{2n} \int_0^T c_\tau 16 \pi \frac{R}{\epsilon}  \mu p \left(|Z_k|^2 + |Q_k|^2\right) + c_\tau \frac{\pi}{8\sqrt{R}}|k|^{3/2}p^{-3/4}\left(|Z_k|^2 + |Q_k|^2\right)\\
 & \quad \quad  |N_k + c_\tau\mathfrak{J}_k|\biggl|2\mathrm{Re}(\mathbb{NL}_k^{(Z)} \bar{Z}_k) + 2\mathrm{Re}(\mathbb{NL}_k^{(Q)} \bar{Q}_k) + \frac{1}{2\sqrt{R}}\frac{\partial_t p}{|k|p^{1/2}}\biggl( \mathrm{Re}(\mathbb{NL}_k^{(Z)} \bar{Q}_k) + \mathrm{Re}(\bar{Z}_k \mathbb{NL}_k^{(Q)})\biggr) \biggr|dt d\eta.
    \end{split}
\end{equation}
We now define
\begin{equation}\label{T_inf_def}
    T_\infty \coloneqq 2\sup_k \int_0^T \int_\R \langle k, \eta \rangle^{2n} |N_k + c_\tau\mathfrak{J}_k|\biggl|2\mathrm{Re}(\mathbb{NL}_k^{(Z)} \bar{Z}_k) + 2\mathrm{Re}(\mathbb{NL}_k^{(Q)} \bar{Q}_k) + \frac{1}{2\sqrt{R}}\frac{\partial_t p}{|k|p^{1/2}}\biggl( \mathrm{Re}(\mathbb{NL}_k^{(Z)} \bar{Q}_k) + \mathrm{Re}(\bar{Z}_k \mathbb{NL}_k^{(Q)})\biggr) \biggr|d \eta dt.
\end{equation}
Recalling that $c_\tau \leq \frac{1}{32 \pi} \min\{\frac{1}{8}, \frac{\epsilon}{R}\}$, we have from \eqref{easy_non_lin} and \eqref{final_non_lin_sups} applied to \eqref{big_energ_ineq}
\begin{equation}
\begin{split}
        \E(0) + \D^{(2)}(T) &\leq 2 \E(0) - 4 c \D^{(1)}(T) + \int_0^T \mathcal{NL}^{(1)} (t) dt + T_\infty(T)\\
    & \quad + \sup_k \int_0^T \int_\R \langle k, \eta\rangle^{2n} (\frac{1}{2}D_{k,\gamma} + \frac{1}{8} c_\rho D_{k,\rho} )d\eta dt,
\end{split}
\end{equation}
which implies that
\begin{equation}
    \begin{split}
        \E(0) + \frac{1}{2} \D^{(2)}(T) + 4c \D^{(1)}(T)\leq \int_0^T \mathcal{NL}^{(1)} (t) dt + T_\infty(T).
    \end{split}
\end{equation}
Hence the key to proving Lemma \eqref{bootstrap_lemma_alt} is to prove a bound of the form
\begin{equation}\label{pre_holder}
    \int_0^T \mathcal{NL}^{(1)}(t) dt + T_\infty(T) \lesssim \mu^{-1/2-\delta_*} \sup_{t \in [0,T]}\E^{1/2}(t) \D(T).
\end{equation}
Recall that in \eqref{T_main}, we defined the quantities $T_\gamma$, $T_\alpha$, and $T_\beta$. We now re-define these terms to be their time-integrated versions (suitably modified with the new $\mathbb{NL}_k^{(Z)}$ and $\mathbb{NL}_k^{(Q)}$) so that
\begin{equation}
     \int_0^T \mathcal{NL}^{(1)}(t) dt = T_\gamma(T) + T_\alpha(T) + T_\beta(T).
\end{equation}
Associated to each $T_{*}$, $* \in \{\gamma, \alpha, \beta\}$, we recall that $T_{*} = T_{*, Z} + T_{*,Q} + T_{*, m_1} + T_{*,m_2}$. We extend this decomposition to $T_\infty$ in the natural way and define
\begin{equation}\label{T_infty}
    \begin{split}
        T_\infty &\leq 4\sup_{k \in \R} \int_0^T \int_\R \langle k, \eta\rangle^{2n}|N_k + c_\tau\mathfrak{J}_k|\biggl|\mathrm{Re}(\mathbb{NL}_k^{(Z)} \bar{Z}_k)  \biggr| d\eta dt\\
        &\quad + 4\sup_{k \in \R} \int_0^T \int_\R \langle k, \eta\rangle^{2n}|N_k + c_\tau\mathfrak{J}_k|\biggl|\mathrm{Re}(\mathbb{NL}_k^{(Q)} \bar{Q}_k) \biggr| d\eta dt\\
        &\quad +  4\sup_{k \in \R} \int_0^T \int_\R \langle k, \eta\rangle^{2n}|N_k + c_\tau\mathfrak{J}_k| \frac{1}{2\sqrt{R}}|\frac{\partial_t p}{|k|p^{1/2}}| \biggl|\mathrm{Re}(\mathbb{NL}_k^{(Z)} \bar{Q}_k) \biggr| d\eta dt\\
        &\quad + 4\sup_{k \in \R} \int_0^T \int_\R \langle k, \eta\rangle^{2n}|N_k + c_\tau\mathfrak{J}_k| \frac{1}{2\sqrt{R}}|\frac{\partial_t p}{|k|p^{1/2}}| \biggl|\mathrm{Re}(\bar{Z}_k \mathbb{NL}_k^{(Q)})\biggr| d\eta dt \eta\\
        &\eqqcolon T_{\infty, Z} + T_{\infty, Q} + T_{\infty, m_1} + T_{\infty, m_2}.
    \end{split}
\end{equation}
The lemma corresponding to Lemma \ref{key_lemma} which suffices to prove Lemma \ref{bootstrap_lemma_alt} is then
\begin{lemma}\label{key_lemma_alt}
    For $T_\gamma$, $T_\alpha$, $T_\beta$, and $T_\infty$, we have for all $T > 0$:
\begin{equation}\label{key_eqn_alt}
        |T_\gamma| + |T_\alpha| + |T_\beta| + |T_\infty| \lesssim \mu^{-1/2 - \delta_*}  \sup_{t \in [0,T]}\E^{1/2}(t) \D(T),
    \end{equation}
    where the implicit constant is independent of $\nu, \kappa$, and $\mu$, but is allowed to depend on $\delta_*$, $\epsilon$, $m$, $J$, and $R$.
\end{lemma}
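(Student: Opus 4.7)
My plan is to treat the four terms $T_\gamma, T_\alpha, T_\beta, T_\infty$ separately, with the first three following from a minor adaptation of the arguments in Sections \ref{gamma_terms_for_Z}--\ref{beta_terms_for_Q}, and $T_\infty$ being the genuinely new ingredient. First, for each of $T_\gamma(T), T_\alpha(T), T_\beta(T)$, I would time-integrate the pointwise-in-$t$ bounds established in Lemma \ref{key_lemma}, then apply the Cauchy-Schwarz inequality in $t$ to convert products of the form $\sup_{t}\E^{1/2}(t) \cdot \D_*^{1/2}(t) \cdot \D_{**}^{1/2}(t)$ into $\sup_{t}\E^{1/2}(t) \cdot (\int_0^T \D_*(t)\,dt)^{1/2} \cdot (\int_0^T \D_{**}(t)\,dt)^{1/2}$, which is precisely the time-integrated dissipation appearing in $\D(T)$.

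The modification required due to the updated symmetrization \eqref{def_of_symmetrized_coords_alt} is that the factor $\langle k \rangle^{1/2}$ occurring on $Z_\xi$ in the convolutions \eqref{conv_NL_Z}--\eqref{conv_NL_Q} is replaced by $|k|^{1/2}$, and correspondingly $\langle k-k'\rangle^{-1/2}, \langle k'\rangle^{-1/2}$ are replaced by $|k-k'|^{-1/2}, |k'|^{-1/2}$ (modulo sign and similar bounded factors on the $Q$ side). At high frequencies the two quantities are comparable, so every high-frequency estimate goes through verbatim. At low frequencies the new symmetrization is weaker, so the interpolation arguments that invoked $m > 0$ (e.g. those producing $\int |k|^{-1/2}\langle k\rangle^{-1-2m}\,dk < \infty$ in \eqref{interp_in_k_example}) must be supplemented. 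The missing $|k|^{1/2}$ is exactly the quantity controlled by $\E^{(2)}$ and $\D^{(2)}$ via the $L^\infty_k L^2_\eta$ seminorm in \eqref{updated_base_norm} and \eqref{nonlinear_dissipation_alt}. Concretely, whenever an integral like $\|f_\xi\|_{L^1_k L^2_\eta}$ fails to converge under the alternate symmetrization, I would instead estimate a factor in $L^\infty_k L^2_\eta$ (controlled by $(\E^{(2)})^{1/2}$ or $(\D^{(2)})^{1/2}$) and the remaining convolution factors in $L^1_k L^2_\eta$ using the bulk norms, applying Young's convolution inequality in $k$.

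The main obstacle, and the one I would treat most carefully, is $T_\infty$. Writing the convolution explicitly as in \eqref{conv_NL_Z}, \eqref{conv_NL_Q}, one encounters terms of the schematic form
\begin{equation*}
    \sup_{k} \int_0^T\!\!\int_\R \langle k,\eta\rangle^{2n} |k|^{1/2} p_\xi^{-1/4}|Z_\xi|\left|\iint |k-k'|^{-1/2} p_{\xi-\xi'}^{-3/4}(k-k') Z_{\xi-\xi'}\, |k'|^{-1/2} p_{\xi'}^{1/4}(\eta'-k't) Z_{\xi'}\,d\eta' dk'\right| d\eta\,dt
\end{equation*}
and similar expressions for $Q$ and for the $y$-derivative. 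For the outer factor, I would place $|k|^{1/2}\langle k,\eta\rangle^n |Z_\xi|$ in $L^\infty_k L^2_\eta$, using $\sup_{t\in[0,T]}\E^{1/2}(t)$. For the convolution, I would pair Cauchy-Schwarz in $\eta$ with Young's convolution inequality in $k$ using an $L^\infty_k \cdot L^1_k \to L^\infty_k$ (or $L^2_k \cdot L^2_k \to L^\infty_k$) split. One natural split is to place one $Z$ factor in $L^\infty_k L^2_\eta$ via $\E^{(2)}$ or $\D^{(2)}$ and the other in $L^1_k L^2_\eta$ or $L^2_k L^2_\eta$ via the bulk dissipations $\D_\tau, \D_\gamma, \D_{\tau\alpha}, \D_\beta$, distinguishing the standard $LH/HL$ and high-in-$k$/low-in-$k$ cases as in Section \ref{nonlinear_section}. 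The time integration then uses Cauchy-Schwarz as for the bulk terms. The mixed terms $T_{\infty, m_1}, T_{\infty, m_2}$ are handled identically thanks to the uniform bound $|\partial_t p/(|k| p^{1/2})| \leq 2$ from \eqref{time_derivs}.

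The hardest point is securing the needed integrability in $k$ at low-in-$k$ frequencies in the $HL$ sub-regime for $T_\infty$, since the outer sup-in-$k$ gives no weight in $k$, the symmetrization no longer provides the half-derivative $\langle k\rangle^{1/2}$, and the $HL$ region forces $|k-k'| \gtrsim |k'|$, so we cannot trade $k$-derivatives onto the $k-k'$ factor without paying a $\mu^{-1/2}$ (matching the desired threshold). This is analogous to, but more delicate than, the low-frequency $HL$ cases in Sections \ref{gamma_terms_for_Z}--\ref{beta_terms_for_Q}, and is resolved by using the integrability of the Riesz multiplier $|k'|^{-1/2}$ against the $L^\infty_k L^2_\eta$ control of the third factor (analogous to the $L^\infty_k$ device used in \cite{arbon_bedrossian}). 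Combining, I expect a bound of the form $|T_\infty| \lesssim \mu^{-1/2-\delta_*}\sup_{t\in[0,T]}\E^{1/2}(t)\D(T)$, matching \eqref{key_eqn_alt}.
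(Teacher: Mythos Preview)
Your outline is correct in spirit — the bulk terms $T_\gamma,T_\alpha,T_\beta$ really do follow the arguments of Sections~\ref{gamma_terms_for_Z}--\ref{beta_terms_for_Q}, the low-frequency defect caused by replacing $\langle k\rangle^{1/2}$ with $|k|^{1/2}$ is indeed compensated by the $L^\infty_k$ components $\E^{(2)},\D^{(2)}$, and for $T_\infty$ the outer $k$-factor must land in $L^\infty_k$ with the two convolution factors paired as $L^1_k\cdot L^\infty_k$ or $L^2_k\cdot L^2_k$. However, there is a concrete mechanism you do not name, and without it the argument does not close.

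The definitions \eqref{nonlinear_dissipation_alt}--\eqref{definition_of_gamma_and_tau_diss_alt} place the supremum in $k$ \emph{outside} the time integral: $\D^{(2)}=\sup_k\int_0^T(\cdots)\,dt$. Your recipe ``time-integrate the pointwise-in-$t$ bounds from Lemma~\ref{key_lemma}, then apply Cauchy--Schwarz in $t$'' produces factors of the form $\bigl(\int_0^T\|\cdot\|_{L^1_kL^2_\eta}^2\,dt\bigr)^{1/2}$, i.e.\ time outside, $k$ inside. To interpolate such a factor in $k$ between $L^2_k$ (the bulk part of $\D_\gamma,\D_\tau$) and $L^\infty_k$ (the $\D^{(2)}$ part), you must first swap the order of $L^2_t$ and $L^1_k$ via Minkowski's inequality to obtain $\|\cdot\|_{L^1_kL^2_tL^2_\eta}$, and only then interpolate to $\|\cdot\|_{L^\infty_kL^2_tL^2_\eta}\lesssim(\D^{(2)})^{1/2}$. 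The paper does exactly this (see \eqref{replacement_lemma_1}, \eqref{replacement_lemma}, and the $T_\infty$ estimates), and the argument does not work without it. Relatedly, plain Cauchy--Schwarz in $t$ is not always enough: the paper repeatedly uses H\"older in time with exponents such as $L^4_t$, $L^{4/(1+4\delta_*)}_t$, $L^{2/(1-2\delta_*)}_t$, so that after interpolation every factor lands in a norm that matches the time-integrated $\D$ or $\sup_t\E$. Your statement ``the time integration then uses Cauchy--Schwarz as for the bulk terms'' understates this.

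A smaller correction: for $T_\infty$ the outer factor $|k|^{1/2}p_\xi^{-1/4}Z_\xi$ is not controlled solely by $\sup_t\E^{1/2}$. In the paper it is typically placed in $L^\infty_kL^4_tL^2_\eta$ and bounded by $\D_\tau^{1/4}\sup_t\E^{1/4}$ via interpolation between $L^\infty_kL^2_tL^2_\eta\lesssim(\D_\tau^{(2)})^{1/2}$ and $L^\infty_kL^\infty_tL^2_\eta\lesssim(\E^{(2)})^{1/2}$ (using boundedness of the Riesz multiplier). Also, the $y$-derivative cases of $T_\infty$ require decompositions such as $1=p_{\xi'}p_{\xi'}^{-1}$ (cf.\ \eqref{base_infty_Z1_LH_y}, \eqref{infty_Z2_HL_y}) before one can apply Young; your identification of the ``hardest point'' as the low-$k$ $HL$ regime and its resolution via ``integrability of the Riesz multiplier $|k'|^{-1/2}$'' does not match what is actually needed there.
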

The bounds on $T_\gamma$, $T_\alpha$, and $T_\beta$ for Lemma \ref{key_lemma_alt} are proven in a similar manner to Sections \ref{gamma_terms_for_Z} -- \ref{beta_terms_for_Q}. The first difference arises in the need to consider the time integral. When all frequencies are $\geq 1$ in magnitude, then simply applying Cauchy-Schwarz in time at the end suffices. Larger differences arise in the consideration of low frequencies ($|k| < \mu$) and intermediate frequencies ($\mu \leq |k| < 1)$. The intermediate regime tends to be beneficial, and in fact removes the logarithmic loss present in, for instance, \eqref{alpha_Z1_HL}. Furthermore, the terms like $\langle k \rangle^{1/2} p_\xi^{-1/4}$ previously posed a challenge at low frequencies, since we needed to find an additional half-derivative to cancel out $p_\xi^{-1/4}$. Now however, these are replaced with $|k|^{1/2} p_\xi^{-1/4}$, which naturally gives rise to factors like $\D_{\tau}^{1/4}$, which is one of the most beneficial dissipation factors.

Other differences arise in the consideration of low frequencies. Here, we encounter terms containing $|k|^{-1/2}$ and measured in $L^1$. Rather than interpolate these frequencies to $L^2_k$, we interpolate to $L^\infty_k$. Furthermore, greater care must be taken to ensure that the time integral and the supremum-in-$k$ estimates occur in the correct order.

To demonstrate these changes, we will include the bound on the $T_{\gamma, Z}^x$ terms, as this will show the key ideas needed to bound $T_\gamma$, $T_\alpha$, and $T_\beta$ in the new setting. Furthermore, we will show that $T_\infty$ obeys \eqref{key_eqn_alt}, as this is a new term which will require some subtly in handling the time-integration. 

\subsection{Bound on Gamma Terms for Z: x derivatives}

With the new notation, $T_{\gamma, Z}^x$ is given as
\begin{equation}
\begin{split}
        T_{\gamma, Z}^x &= - \int_0^T \iint_{\R^2} \frac{\dk^{2J}}{M_k(t)} \langle \xi \rangle^{2n}\langle k \rangle^{2m}  (N_k + c_\tau\mathfrak{J}_k)\mathrm{Re}\biggl(\iint_{\R^2} |k|^{1/2} p_\xi^{-1/4} \bar{Z}_\xi\\
    &\quad \quad \quad \quad \quad \quad | k-k' |^{-1/2} p_{\xi -\xi'}^{-3/4} i (k-k') Z_{\xi-\xi'} |k'|^{-1/2} p_{\xi'}^{1/4} i(\eta'-k't) Z_{\xi'}d\eta' dk'\biggr) d\eta dk dt,
\end{split}
\end{equation}
which we bound by the following, using the the triangle inequality:
\begin{equation}
    \begin{split}
        |T_{\gamma, Z}^x| &\lesssim \int_0^T\iiiint_{\R^4}  \dk^{2J} \langle \xi \rangle^{n} \langle k \rangle^{2m} |k|^{1/2} |Z_\xi| \vkm^n |k-k'|^{1/2} p_{\xi-\xi'}^{-3/4} |Z_{\xi -\xi'}| |k'|^{-1/2}\\
        & \quad \quad \quad \quad \quad \quad \langle \xi' \rangle^{n}|\eta' -k't| Z_{\xi'} d\eta' d\eta dk' dk dt\\
        &\quad + \int_0^T\iiiint_{\R^4} \dk^{2J}\langle \xi \rangle^{n}  \sk^{2m}|k|^{1/2} p_\xi^{-1/4} |Z_\xi| \langle \xi - \xi'\rangle^n |k-k'|^{1/2} p_{\xi-\xi'}^{-1/2} |Z_{\xi -\xi'}| \\
        & \quad \quad \quad \quad \quad \quad \quad \vkp^n |k'|^{-1/2} |\eta' -k't| Z_{\xi'} d\eta' d\eta dk' dk dt\\
        &\eqqcolon T_{\gamma, Z_1}^x + T_{\gamma, Z_2}^x.
    \end{split}
\end{equation}
Splitting as in Section \ref{gamma_terms_for_Z}, we decompose into $LH$ and $HL$ terms as
\begin{equation}
    T_{\gamma, Z_1}^x \eqqcolon T_{\gamma, Z_1, LH}^x + T_{\gamma, Z_1, HL}^x, \; \; \; T_{\gamma, Z_2}^x \eqqcolon T_{\gamma, Z_2, LH}^x + T_{\gamma, Z_2, HL}^x.
\end{equation}
We will start with the $T_{\gamma, Z_1}^x$ terms. In the $LH$ case, we estimate using Young's inequality, $|k| \approx |k'|$, and H\"older's inequality in time:
\begin{equation}\label{T_gamma_mod_Z1_LH}
    \begin{split}
        T_{\gamma, Z_1, LH}^x &\lesssim  \int_0^T \iiiint_{\R^4} 1_{LH} \dk^{J}\langle \xi \rangle^{n} \sk^m |Z_\xi|  \vkm^n  |k-k'|^{1/2} p_{\xi-\xi'}^{-3/4} |Z_{\xi-\xi'}|\\
        &\quad \quad \quad \quad \dkp^J \vkp^n \skp^m |(\eta' -k't) Z_{\xi'}|  d \eta' d\eta dk' dk dt\\
        &\lesssim \int_0^T ||\dk^{J}\langle \xi \rangle^{n} \sk^m  Z_\xi||_{L^2_\xi} || \vk^n |k|^{1/2} p_\xi^{-3/4} Z_{\xi}||_{L^1_\xi}\\
        &\quad \quad ||\dk^{J}\langle \xi \rangle^{n} \sk^m (\eta -kt) Z_\xi||_{L^2_\xi} dt\\
        &\lesssim \sup_{t \in [0,T]}\E^{1/2}(t)\left(\int_0^T || \vk^n |k|^{1/2} p_\xi^{-3/4} Z_{\xi}||_{L^1_\xi}^2 dt\right)^{1/2}\\
        &\quad \quad \left(\int_0^T||\dk^{J}\langle \xi \rangle^{n} \sk^m (\eta -kt) Z_\xi||_{L^2_\xi}^2 dt\right)^{1/2}.
    \end{split}
\end{equation}
The final factor of \eqref{T_gamma_mod_Z1_LH} is controlled by $\mu^{-1/2} \D_\gamma(T)$. To handle the $L^1_\xi$ factor, we need a variation on the themes of Lemmas \ref{new_energy_lemmas} and \ref{main_D_tau_lemma}. Indeed, by Minkowski's inequality, interpolating in $k$, switching to stationary coordinates, and interpolating in $\eta$, we find:
\begin{equation}\label{replacement_lemma_1}
    \begin{split}
    || \vk^n |k|^{1/2} p_\xi^{-3/4} Z_\xi||_{L^2_t([0,T])L^1_\xi} &\lesssim || \vk^n |k|^{1/2} p_\xi^{-3/4} Z_\xi||_{L^1_\xi L^2_t([0,T])}\\
    &\lesssim || \vk^n \langle k \rangle^{m-m} |k|^{1/2} (|k|^{3/4 - 3/4} 1_{|k| < 1} + |k|^{1/2-1/2}1_{|k| \geq 1}) p_\xi^{3/4} Z_\xi||_{L^1_\xi L^2_t([0,T])}\\
    &\lesssim || \vk^n |k| \max(|k|^{1/4},1) p_\xi^{-3/4}Z_\xi||_{L^\infty_k L^1_\eta L^2_t([0,T])}\\
    &\quad + || \vk^n \sk^m |k| \max(|k|^{1/4},1) p_\xi^{-3/4}Z_\xi||_{L^2_k L^1_\eta L^2_t([0,T])}\\
    &\lesssim || \vk^n |k|^{1/2}\max(|k|^{1/4},1) \max(|\eta-kt|^{1/4}, | \eta-kt |^{1/2 + 2 \delta_*}) p_\xi^{-3/4}Z_\xi||_{L^\infty_k L^2_\eta L^2_t([0,T])}\\
    &\quad + || \vk^n \sk^m |k|^{1/2} \max(|k|^{1/4},1)\max(|\eta-kt|^{1/4}, | \eta-kt |^{1/2 + 2 \delta_*}) p_\xi^{-3/4}Z_\xi||_{L^2_\xi L^2_t([0,T])}\\
    &\lesssim \D_\tau^{1/2} + \mu^{-\delta_*} \D_\gamma^{-\delta_*}\D_{\tau}^{1/2 - \delta_*}.
    \end{split}
\end{equation}
Applying \eqref{replacement_lemma_1} to \eqref{T_gamma_mod_Z1_LH}, we obtain
\begin{equation}
    T_{\gamma, Z_1, LH}^x \lesssim \mu^{-1/2 - \delta_*}\sup_{t \in [0,T]}\E^{1/2}(t) \D.
\end{equation}
Next for the $HL$ term, we utilize $|k|,|k'| \lesssim |k-k|'$, Young's inequality, and H\"older's inequality to compute
\begin{equation}
    \begin{split}
        T_{\gamma, Z_1, HL}^x &\lesssim \int_0^T \iiiint_{\R^4} 1_{HL} \dk^{J} \langle \xi \rangle^{n} \langle k \rangle^{m} |Z_\xi| \dkm^J \vkm^n \skm^m |k-k'| \min(|k-k'|^{1/4},1) p_{\xi-\xi'}^{-3/4}\\
        & \quad \quad \quad \quad \quad  \quad  |Z_{\xi-\xi'}| \vkp^n |k'|^{-1/2} (1 + 1_{|k-k'| \leq 1}|k'|^{-1/4})|(\eta'-k't)Z_{\xi'}| d\eta' d\eta dk' dk dt\\
        &\lesssim \sup_{t \in [0,T]}|| \dk^J \vk^n \sk^{m} Z_\xi||_{L_\xi^2}|| \dk^J \sk^m \min(|k|,|k|^{5/4}) p^{-3/4} Z_\xi ||_{L^2_t([0,T])L_k^2 L_\eta^1}\\& \quad \quad || \vk^n \sk^{-1/2}\max(|k|^{-1/4},1) |\eta - kt| Z_\xi ||_{L^2_t([0,T])L^1_k L^2_\eta}.
    \end{split}
\end{equation}
Now interpolation in $\eta$ in the style of Lemma \ref{main_D_tau_lemma} gives
$$|| \dk^J \sk^m \min(|k|,|k|^{5/4}) p^{-3/4} Z_\xi ||_{L^2_t([0,T])L_k^2 L_\eta^1} \lesssim \D_\tau^{1/2} + \mu^{-\delta_*}\D_\gamma^{\delta_*} \D_\tau^{1/2 - \delta_*},$$
while Minkowski's inequality and interpolation in $k$ yield
\begin{equation}
    \begin{split}
    || \vk^n |k|^{-1/2}\max(|k|^{-1/4},1) |\eta - kt| Z_\xi ||_{L^2_t([0,T])L^1_k L^2_\eta} & \lesssim || \vk^n |k|^{-1/2}\max(|k|^{-1/4},1) |\eta - kt| Z_\xi ||_{L^1_k L^2_t([0,T]) L^2_\eta}\\
    &\lesssim || \vk^n  |\eta - kt| Z_\xi ||_{L^\infty_k L^2_t([0,T]) L^2_\eta}\\
    &\quad \quad  + || \vk^n \sk^m |\eta - kt| Z_\xi ||_{L^2_k L^2_t([0,T]) L^2_\eta}\\
    &\lesssim \mu^{-1/2}\D_\gamma^{1/2},
    \end{split}
\end{equation}
so that altogether
\begin{equation}
    T_{\gamma, Z_1, HL}^x \lesssim \mu^{-1/2 - \delta_*}\sup_{t \in [0,T]}\E^{1/2}(t) \D.
\end{equation}
Having completed the necessary bounds on $T_{\gamma, Z_1}^x$, we turn our attention to $T_{\gamma, Z_2}^x$, starting with the $LH$ case. Here, we start by employing Young's inequality and H\"older's inequality:
\begin{equation}
    \begin{split}
        T_{\gamma, Z_2, LH}^x &\lesssim \int_0^T \iiiint_{\R^4} 1_{LH}\dk^{J} \langle \xi\rangle^{n} \sk^m |k|^{1/2} p_\xi^{-1/4} |Z_\xi| \vkm^{n} p_{\xi -\xi'}^{-1/2} |Z_{\xi-\xi'}|\\
        &\quad \quad \quad \quad \dkp^J \vkp^n \skp^m |(\eta' - k't) Z_{\xi'}| d\eta' d\eta dk' dk dt\\
        & \lesssim || \dk^J \langle \xi \rangle^n \langle k \rangle^m \min(|k|^{1/4}, 1) p_\xi^{-1/4} Z_\xi ||_{L^4_t([0,T])L^2_\xi} \\
        & \quad \quad || \vk^n   p_\xi^{-1/2} Z_\xi||_{L^4_t([0,T]) L^1_\xi}|| \dk^J \vk^n \sk^m (\eta-kt) Z_\xi ||_{L^2_t([0,T])L^2_\xi}\\
        &\lesssim \sup_{t \in [0,T]}\E^{1/4}(t) \D_\tau^{1/4} || \vk^n   p_\xi^{-1/2} Z_\xi||_{L^4_t([0,T]) L^1_\xi}\mu^{-1/2} \D_\gamma^{1/2}
    \end{split}
\end{equation}
Next, we use the following interpolation argument (incorporating Minkowski's inequality) which replaces Lemma \ref{main_D_tau_lemma}:
\begin{equation}\label{replacement_lemma}
    \begin{split}
    || \vk^n p_\xi^{-1/2} Z_\xi||_{L^4_t([0,T])L^1_\xi} &\lesssim || \vk^n 1_{|k| \geq 1} p_\xi^{-1/2} Z_\xi||_{L^4_t([0,T])L^1_\xi} + || \vk^n 1_{|k| < 1} p_\xi^{-1/2} Z_\xi||_{L^1_k L^4_t([0,T]) L^1_\eta}\\
    &\lesssim || \vk^n \sk^m |k|^{1/2} \min(|k|^{1/4},1) p_\xi^{-1/2} Z_\xi||_{L^4_t([0,T])L^2_k L^1_\eta} \\
    &\quad \quad || \vk^n |k|^{3/4} p_\xi^{-1/2} Z_\xi||_{L^\infty_k L^4_t([0,T]) L^1_\eta}\\
    &\lesssim || \vk^n \sk^m |k|^{1/2} \min(|k|^{1/4},1) \max(|\eta-kt|^{1/4}, |\eta -kt|^{1/2 +2\delta_*})p_\xi^{-1/2} Z_\xi||_{L^4_t([0,T])L^2_k L^2_\eta} \\
    &\quad \quad || \vk^n |k|^{3/4} p_\xi^{-1/2} \max(|\eta-kt|^{1/4}, |\eta -kt|^{1/2 +2\delta_*})Z_\xi||_{L^\infty_k L^4_t([0,T]) L^2_\eta}\\
    &\lesssim \sup_{t \in [0,T]}\E^{1/4}(t)\D_\tau^{1/4} + \mu^{-\delta_*} \D_\gamma^{\delta_*}  \D_\tau^{1/4 - \delta_*} \sup_{t \in [0,T]}\E^{1/4}(t).
    \end{split}
\end{equation}
We therefore find
\begin{equation}
    T_{\gamma, Z_2, LH}^x \lesssim \sup_{t \in [0,T]}\E^{1/4}(t) \D_\tau^{1/4}\sup_{t \in [0,T]}\E^{1/4}(t) ( \D_\tau^{1/4} + \mu^{-\delta_*} \D_\gamma^{\delta_*}  \D_\tau^{1/4 - \delta_*}) \mu^{-1/2} \D_\gamma^{1/2}.
\end{equation}
Our final estimate concerns $T_{\gamma, Z_2, HL}^x$. Unlike in the proof of Theorem \ref{main_theorem}, there is no need to split along $|k| \leq |k'|$ and $|k| > |k'|$. Instead we have more simply, through Young's inequalty, H\"older's inequality, and interpolation in $\eta$ similar to that done in \eqref{replacement_lemma}:
\begin{equation}
    \begin{split}
    T_{\gamma, Z_2, HL}^x &\lesssim \int_0^T \iiiint_{\R^4} 1_{HL}\dk^{J} \langle \xi\rangle^{n} \sk^m |k|^{1/2} p_\xi^{-1/4} |Z_\xi| \dkm^J \skm^m \vkm^{n}\\
    &\quad \quad \quad \quad |k-k'|^{1/2} p_{\xi -\xi'}^{-1/2} |Z_{\xi-\xi'}|
        |k'|^{-1/2} \vkp^n  |(\eta' - k't) Z_{\xi'}| d\eta' d\eta dk' dk dt\\
        &\lesssim || \dk^J \langle \xi \rangle^n \langle k \rangle^m \min(|k|^{1/4}, 1) p_\xi^{-1/4} Z_\xi ||_{L^4_t([0,T])L^2_\xi}\\
        &\quad \quad || \dk^J \langle \xi \rangle^n \langle k \rangle^m |k|^{1/2} \min(|k|^{1/4},1) p_\xi^{-1/2} Z_\xi ||_{L^4_t([0,T])L^2_k L^1_\eta}\\
        &\quad \quad ||\vk^n |k|^{-1/2} \max(|k|^{-1/4},1) (\eta-kt) Z_\xi||_{L^2_t([0,T])L^1_k L^2_\eta}\\
        &\lesssim  \sup_{t \in [0,T]}\E^{1/4}(t) \D_\tau^{1/4}( \sup_{t \in [0,T]}\E^{1/4}(t)\D_\tau^{1/4} + \mu^{-\delta_*} \D_\gamma^{\delta_*}  \D_\tau^{1/2 - \delta_*} \sup_{t \in [0,T]}\E^{1/4}(t))\\
        &\quad \quad ||\vk^n |k|^{-1/2} \max(|k|^{-1/4},1) (\eta-kt) Z_\xi||_{L^2_t([0,T])L^1_k L^2_\eta}.
    \end{split}
\end{equation}
The estimate is complete upon noting that
\begin{equation}
    \begin{split}
    ||\vk^n |k|^{-1/2} \max(|k|^{-1/4},1) (\eta-kt) Z_\xi||_{L^2_t([0,T])L^1_k L^2_\eta} & \lesssim ||\vk^n |k|^{-1/2} \max(|k|^{-1/4},1) (\eta-kt) Z_\xi||_{L^1_k L^2_t([0,T])L^2_\eta}\\
    &\lesssim ||\vk^n \sk^m  (\eta-kt) Z_\xi||_{L^2_k L^2_t([0,T])L^2_\eta}\\
    &\quad \quad + ||\vk^n   (\eta-kt) Z_\xi||_{L^\infty_k L^2_t([0,T])L^2_\eta}\\
    &\lesssim \mu^{-1/2} \D_\gamma^{1/2}.
    \end{split}
\end{equation}
This completes the work for $T_\gamma^x$.

\subsection{Bound on \texorpdfstring{$T_\infty$}{Supremum terms} for Z}
The proofs in this section will mirror in many ways the proofs in Section \ref{gamma_terms_for_Z}. Aside from the modified change of coordinates, the main differences lies in our application of Young's inequality and in the usage of H\"older's inequality in time. In Section \ref{gamma_terms_for_Z}, we were free to place two factors among $k$, $k'$, or $k-k'$ in $L^2_k$, with the remaining factor being placed in $L^1_k$. This $L^1_k$ factor was generally the relatively ``low" frequency between $k'$ and $k-k'$. However, here we need to place the $k$ factor in $L^\infty_k$, with the remaining factors going in $L^p_k$ and $L^{p'}_k$, where $p$ and $p'
$ are H\"older conjugates. Generally, they will be either $2$ and $2$ or $1$ and $\infty$. This matters since interpolation from $L^\infty_k$ is more difficult, as there are fewer valid terms of $\D$ and $\E$ which are $L^\infty_k$ based. Furthermore, we will need to take care that the relative ordering of the time-norm and the frequency norms are correct in all estimates. This is further complicated by the heavy use of interpolation. We begin by considering $T_{\infty, Z}$:
\begin{equation}
    \begin{split}
        T_{\infty,Z} &\leq  4\sup_{k \in\R} \int_0^T \int_{\R}  \langle \xi \rangle^{2n}  |N_k + c_\tau\mathfrak{J}_k|\mathrm{Re}\biggl|\iint_{\R^2} |k| p_\xi^{-1/4} \bar{Z}_\xi\\
    &\quad \quad \quad \quad \quad \quad |k-k'|^{-1/2} p_{\xi -\xi'}^{-3/4} i (k-k') Z_{\xi - \xi'} |k'|^{-1/2} p_{\xi'}^{1/4} i(\eta'-k't) Z_{\xi'}d\eta' dk'\biggr| d\eta dt\\
    &\quad +4\sup_{k \in \R}\int_0^T \int_{\R}  \langle \xi \rangle^{2n}  |N_k + c_\tau\mathfrak{J}_k|\mathrm{Re}\biggl|\iint_{\R^2} |k| p_\xi^{1/4} \bar{Z}_\xi\\
    &\quad \quad \quad \quad \quad \quad |k-k'|^{-1/2} p_{\xi -\xi'}^{-3/4} i ((\eta-\eta') - (k-k')t) Z_{\xi - \xi'} |k'|^{-1/2} p_{\xi'}^{1/4} i k' Z_{k'}d\eta' dk'\biggr| d\eta dt\\
    &\eqqcolon T_{\infty, Z}^x + T_{\infty, Z}^y.
    \end{split}
\end{equation}

\subsubsection{x-derivatives}
We start with the following decomposition:
\begin{equation}
    \begin{split}
        |T_{\infty, Z}^x| &\lesssim \sup_{\R} \int_0^T \iiint_{\R^3} \langle \xi \rangle^{n} |k|^{1/2} |Z_\xi| \vkm^n |k-k'|^{1/2} p_{\xi - \xi'}^{-3/4} |Z_{\xi-\xi'}| \vkp^n |k'|^{-1/2} |(\eta' - k't) Z_{\xi'}|  d \eta' d \eta dk' dt \\
        &\quad +\sup_{\R} \int_0^T \iiint_{\R^3} \langle \xi\rangle^{n} |k|^{1/2} p_\xi^{-1/4} |Z_\xi| \vkm^n |k-k'|^{1/2} p_{\xi-\xi'}^{-1/2} |Z_{\xi-\xi'}| \vkp^n |k'|^{-1/2}|(\eta'-k't)Z_{\xi'}|  d \eta' d\eta dk' dt\\
        &\coloneqq T_{\infty,Z_1}^x+ T_{\infty,Z_2}^x.
    \end{split}
\end{equation}
We now perform the further split,
\begin{equation}
    \begin{split}
        T_{\infty,Z_2}^x &\lesssim \sup_{\R} \int_0^T\iiint_{\R^3} 1_{LH}(1_{|k-k'| < 1} + 1_{|k-k'| \geq 1})\langle \xi\rangle^{n} |k|^{1/2} p_\xi^{-1/4} |Z_\xi| |k-k'|^{1/2} p_{\xi-\xi'}^{-1/2} \\
        &\quad \quad \quad \quad \quad |Z_{\xi-\xi'}| \vkp^n |k'|^{-1/2}|(\eta'-k't)Z_{\xi'}|  d \eta' d\eta dk' dt\\
        & \quad + \sup_{\R} \int_0^T \iiint_{\R^3} 1_{HL}\langle \xi\rangle^{n} |k|^{1/2} p_\xi^{-1/4} |Z_\xi| |k-k'|^{1/2} p_{\xi-\xi'}^{-1/2} |Z_{\xi-\xi'}| \vkp^n |k'|^{-1/2}\\
        &\quad \quad \quad \quad \quad \quad|(\eta'-k't)Z_{\xi'}|  d \eta' d\eta dk'dt\\
        &\eqqcolon T_{\infty,Z_2, LH}^{x, a} + T_{\infty, Z_2, LH}^{x,b} + T_{\infty,Z_2, HL}^x.
    \end{split}
\end{equation}
We begin by treating $T_{\infty,Z_2, LH}^{x, a}$, where we are in the $LH$ regime and $|k-k'| < 1$. Here, we use $|k-k'| \lesssim |k'|$. Then, we employ Young's inequality in $\eta$, followed by H\"older's inequality in time, and the Young's inequality in $k$. Together with interpolation, this gives us
\begin{equation}\label{infty_z2_LH}
    \begin{split}
        T_{\infty, Z_2, LH}^{x,a} &\lesssim \sup_\R \int_0^T \iiint_{\R^3} 1_{LH} 1_{|k-k'| < 1}\langle \xi\rangle^{n} |k|^{1/2} p_\xi^{-1/4} |Z_\xi| \vkm^n p_{\xi -\xi'}^{-1/2}\\
        &\quad \quad \quad \quad \quad |Z_{\xi-\xi'}| |(\eta' - k't) Z_{\xi'}| d\eta' d\eta dk' dt \\
        & \lesssim ||\langle \xi \rangle^n \frac{|k|^{1/2}}{p_\xi^{1/4}} Z_\xi ||_{L^\infty_k L^4_t([0,T]) L^2_\eta}  || \vk^n 1_{|k| < 1} p_\xi^{-1/2} Z_\xi||_{L^1_k L^4_t([0,T]) L^1_\eta}|| \vk^n (\eta-kt) Z_\xi ||_{L^\infty_k L^2_t([0,T]) L^2_\xi}\\
        &\lesssim \D_\tau^{1/4} \sup_{t \in [0,T]}\E^{1/4}(t) || \vk^n 1_{|k| < 1}p_\xi^{-1/2} Z_\xi||_{L^1_k L^4_t([0,T]) L^1_\eta} \mu^{-1/2}\D_\gamma^{1/2}.
        \end{split}
    \end{equation}
To handle the $L^1_k L^4_t([0,T]) L^1_\eta$ factor, we use a variant of Lemma \ref{main_D_tau_lemma} in the style of \eqref{replacement_lemma}:
\begin{equation}
 || \vk^n 1_{|k| < 1} p_\xi^{-1/2} Z_\xi||_{L^1_k L^4_t([0,T]) L^1_\eta} \lesssim \sup_{t \in [0,T]}\E^{1/4}(t)\D_\tau^{1/4} + \mu^{-\delta_*} \D_\gamma^{\delta_*}  \D_\tau^{1/4 - \delta_*} \sup_{t \in [0,T]}\E^{1/4}(t),
\end{equation}
which yields the desired bound on $T_{\gamma, Z_2, LH}^{x,a}$. For $T_{\gamma, Z_2, LH}^{x,b}$, we again use Young's inequality in $\eta$ and then H\"older's inequality in time. Next, we use H\"older's inequality in $k$, then Minkowski's inequality. Lastly we use Young's inequality in $k$ followed by H\"older's inequality in time, although now we place the factors in different $L^p_k$ spaces:
\begin{equation}\label{infty_z2_LH_b}
    \begin{split}
        T_{\infty, Z_2, LH}^{x,b} &\lesssim \sup_\R \int_0^T \iiint_{\R^3} 1_{LH} 1_{|k-k'| \geq 1}\langle \xi\rangle^{n} |k|^{1/2} p_\xi^{-1/4} |Z_\xi| \vkm^n p_{\xi -\xi'}^{-1/2}\\
        &\quad \quad \quad \quad \quad |Z_{\xi-\xi'}| |(\eta' - k't) Z_{\xi'}| d\eta' d\eta dk' dt\\
        & \lesssim ||\langle \xi \rangle^n \frac{|k|^{1/2}}{p_\xi^{1/4}} Z_\xi ||_{L^\infty_k L^4_t([0,T]) L^2_\eta} \sup_\R \biggl(\int_0^T \biggl( \int_\R || \langle k-k', \eta\rangle^n 1_{|k-k'| \geq 1} p_{k-k',\eta}^{-1/2} Z_{(k-k',\eta)}||_{L^1_\eta}\\
        &\quad \quad \quad \quad \quad \quad \quad \quad \quad \quad|| \langle k' , \eta\rangle^n (\eta-k't) Z_{(k',\eta)} ||_{L^2_\eta}dk'\biggr)^4 dt\biggr)^{3/4}\\
        &\lesssim \D_\tau^{1/4} \sup_{t \in [0,T]}\E^{1/4}(t) \biggl(\int_0^T \biggl(  \sup_\R \int_\R || \langle k-k', \eta\rangle^n 1_{|k-k'| \geq 1} p_{k-k',\eta}^{-1/2} Z_{(k-k',\eta)}||_{L^1_\eta}\\
        &\quad \quad \quad \quad \quad \quad \quad \quad \quad \quad|| \langle k' , \eta\rangle^n (\eta-k't) Z_{(k',\eta)} ||_{L^2_\eta}dk'\biggr)^4 dt\biggr)^{3/4}\\
        &\lesssim \D_\tau^{1/4} \sup_{t \in [0,T]}\E^{1/4}(t)  || \vk^n 1_{|k| \geq 1} p_\xi^{-1/2} Z_\xi||_{L^4_t([0,T]) L^2_k  L^1_\eta}|| \vk^n (\eta-kt) Z_\xi ||_{L^2_t([0,T]) L^2_k L^2_\xi}\\
        &\lesssim \D_\tau^{1/4} \sup_{t \in [0,T]}\E^{1/4}(t) || \vk^n 1_{|k| \geq 1} p_\xi^{-1/2} Z_\xi||_{L^4_t([0,T]) L^2_k  L^1_\eta} \mu^{-1/2}\D_\gamma^{1/2}.
        \end{split}
    \end{equation}
Applying a variation of Lemma \ref{main_D_tau_lemma} following the pattern of \eqref{replacement_lemma} gives the desired bound on $T_{\infty, Z_2, LH}^{x,b}$. For $T_{\infty, Z_2, HL}$, we start by using $|k'| \lesssim |k-k'|$. Then we have by Young's inequality, H\"older's inequality and interpolation in $k$ with $m > 0$,
\begin{equation}
    \begin{split}
        T_{\infty, Z_2, HL}^x &\lesssim \sup_\R \int_0^T \iiint_{\R^3}  1_{HL} \langle \xi\rangle^{n} |k|^{1/2} p_\xi^{-1/4} |Z_\xi| \vkm^n |k-k'|^{1/2} \min(|k-k'|^{1/4},1) p_{\xi-\xi'}^{-1/2} \\
        &\quad\quad\quad\quad |Z_{\xi-\xi'}|
     |k'|^{-1/2}\min(|k'|^{-1/4},1)|(\eta' - k't) Z_{\xi'}| d\eta' d\eta dk' dt\\
        &\lesssim || \langle \xi \rangle^n \frac{|k|^{1/2}}{p_\xi^{1/4}} Z_\xi ||_{L^\infty_k L^4_t([0,T]) L^2_\eta}  ||\langle \xi \rangle^n |k|^{1/2} \min(|k|^{1/4},1) p_\xi^{-1/2} Z_\xi||_{L^\infty_k L^4_t([0,T]) L^1_\eta}\\
        &\quad \quad || \vk^n|k|^{-1/2} \max(|k|^{-1/4},1) (\eta-kt) Z_\xi ||_{L^1_k L^2_t([0,T])L^2_\eta}\\
        & \lesssim D_\tau^{1/4} \sup_{t \in [0,T]}\E^{1/4}(t) \left(||\langle \xi \rangle^n |k|^{1/2} \min(|k|^{1/4},1) p_\xi^{-1/2} Z_\xi||_{L^\infty_k L^4_t([0,T]) L^1_\eta}\right)\mu^{-1/2} \mathcal{D}^{1/2}_\gamma.
    \end{split}
\end{equation}
Applying interpolation in $\eta$ similar to that performed in \eqref{replacement_lemma}, we find
\begin{equation}
\begin{split}
    T_{\infty, Z_2, HL}^x & \lesssim D_\tau^{1/4} \sup_{t\in [0,T]} \E^{1/2}(t)\left(\D_\tau^{1/4} + \mu^{-\delta_*} \D_\gamma^{\delta_*}\D_{\tau}^{1/4 - \delta_*} \right)\mu^{-1/2} \mathcal{D}^{1/2}_\gamma\\
    &\lesssim \mu^{-1/2-\delta_*} \D\sup_{t \in [0,T]}\E^{1/2}(t).
\end{split}
\end{equation}
We now turn to the $Z_1$ terms. Here, we split according to
\begin{equation}
    T_{\infty, Z_1}^x \leq T_{\infty, Z_1, LH} + T_{\infty, Z_1, HL}^x.
\end{equation}
In the $HL$ case, we use $|k|, |k'| \lesssim |k-k'|$, alongside Young's inequality, H\"older's inequality, interpolation in $k$, and a variation on Lemma \ref{main_D_tau_lemma}:
\begin{equation}
    \begin{split}
        T_{\infty, Z_1, HL}^x & \lesssim \sup_{\R} \int_0^T\iiint_{\R^3} 1_{HL} \langle \xi \rangle^{n} |Z_\xi| \vkm^n \min(|k-k'|, |k-k'|^{5/4})  p_{\xi - \xi'}^{-3/4} \\
        &\quad \quad \quad \quad|Z_{\xi-\xi'}| \vkp^n |k'|^{-1/2}\left(|k'|^{-1/4}1_{|k-k'| \leq 1} + 1_{|k-k'|>1}\right) |(\eta' - k't) Z_{\xi'}|  d \eta' d \eta dk' dt\\
        &\lesssim ||\langle \xi \rangle^{n} Z_\xi ||_{L^\infty_k L^4_t([0,T]) L^2_\eta} || \vk^n \min(|k|, |k|^{5/4}) p_\xi^{-3/4} Z_\xi||_{L^\infty_k L^4_t([0,T]) L^1_\eta}  \\
        &\quad \quad \quad || \vk^n |k|^{-3/4} (\eta - kt) Z_\xi||_{L^1_k L^4_t([0,T]) L^2_\eta}\\
        &\lesssim \sup_{t \in [0,T]}\E^{1/2}(t)(\D_\tau^{1/2} +  \mu^{-\delta_*} \D_\tau^{1/2 - \delta_*} \D_\gamma^{\delta_*} )\mu^{-1/2} D_\gamma^{1/2}.
    \end{split}
\end{equation}
The $LH$ case is not significantly harder. We use $|k| \approx |k'|$, Young's inequality, interpolation, H\"older's inequality and a variant of Lemma \ref{main_D_tau_lemma} in the spirit of \eqref{replacement_lemma_1} to find:
\begin{equation}
    \begin{split}
        T_{\infty, Z_1, LH}^x & \lesssim \sup_{\R}\int_0^T \iiint_{\R^3} 1_{LH} \langle \xi \rangle^{n} |Z_\xi| |k-k'|^{1/2} p_{\xi - \xi'}^{-3/4} |Z_{\xi-\xi'}| \langle \xi' \rangle^n |(\eta' - k't) Z_{\xi'}|  d \eta' d \eta dk' dt\\
        &\lesssim ||\langle \xi \rangle^{n} Z_\xi ||_{L^\infty_k L^\infty_t([0,T]) L^2_\eta} || \min(|k|, |k|^{5/4}) |k|^{-3/4} p_\xi^{-3/4} Z_\xi||_{L^1_k L^2_t([0,T]) L^1_\eta}\\
        &\quad \quad \quad || \langle \xi \rangle^n (\eta - kt) Z_\xi||_{L^\infty_k L^2_t([0,T]) L^2_\eta}\\
        &\lesssim \sup_{t \in [0,T]}\E^{1/2}(t) (\D_\tau^{1/2} +  \mu^{-\delta_*} \D_\tau^{1/2 - \delta_*} \D_\gamma^{\delta_*} )\mu^{-1/2} D_\gamma^{1/2},
    \end{split}
\end{equation}
which suffices.

\subsubsection{y-derivatives}
We need to bound
\begin{equation}
    \begin{split}
        |T_{\infty, Z}^y| &\lesssim \sup_\R \int_0^T \iiint_{\R^3}  \langle \xi \rangle^{n} |k|^{1/2} |Z_\xi| \vkm^n |k-k'|^{-1/2} p_{\xi-\xi'}^{-3/4} |\eta - \eta' + (k-k')t|\\
        &\quad \quad \quad \quad |Z_{\xi-\xi'}| \vkp^n |k'|^{1/2} |Z_{\xi'}| d\eta' d\eta dk' dt \\
        &\quad + \sup_\R \int_0^T \iiint_{\R^3} \langle \xi \rangle^{n} |k|^{1/2} p_{\xi}^{-1/4}|Z_\xi| |k-k'|^{-1/2} \vkm^n p_{\xi-\xi'}^{-1/2}\\
        &\quad \quad \quad \quad \quad |(\eta - \eta' + (k-k')t) Z_{\xi-\xi'}| \vkp^n|k'|^{1/2} |Z_{\xi'}| d\eta' d\eta dk' dt\\
        &\eqqcolon T_{\infty, Z_1}^y + T_{\infty, Z_2}^y.
    \end{split}
\end{equation}
We split according to
\begin{equation}
    T_{\infty, Z_1}^y \leq T_{\infty, Z_1, LH}^y + T_{\infty, Z_1, HL, (\cdot, H)}^y +  T_{\infty, Z_1, HL, (\cdot, L)}^y.
\end{equation}
In the $LH$ setting, we exploit $|k| \approx |k'|$, $Z_{\xi'} = p_{\xi'} p_{\xi'}^{-1} Z_{\xi'}$, the triangle inequality, and boundedness of the Riesz Transform to write
\begin{equation}\label{base_infty_Z1_LH_y}
    \begin{split}
        T_{\infty, Z_1, LH}^y &\lesssim \sup_\R \int_0^T \iiint_{\R^3}  1_{LH}\langle \xi \rangle^{n} |\eta - kt| |Z_\xi| \vkm^n |k-k'|^{-1/2} p_{\xi-\xi'}^{-1/4} \\
        &\quad \quad \quad \quad \quad \quad | Z_{\xi-\xi'}| \langle \xi' \rangle^n |k'| |\eta' - k't| p_{\xi'}^{-1} |Z_{\xi'}|\\
        &\quad \quad \quad \quad +1_{LH}\langle \xi \rangle^{n} \min(|k|^{1/4},1) |Z_\xi| \vkm^n |k-k'|^{-1/2} \max(|k-k'|^{1/4},1) \\
        &\quad \quad \quad \quad \quad \quad |(\eta - \eta') - (k-k')t|^{1/2} |Z_{\xi-\xi'}| \langle \xi' \rangle^n |k'| |\eta' - k't| p_{\xi'}^{-1} |Z_{\xi'}|\\
        & \quad \quad \quad \quad + 1_{LH}\langle \xi \rangle^{n} |k||Z_\xi| \vkm^n |k-k'|^{-1/2} p_{\xi - \xi'}^{-1/4} \\
        &\quad \quad \quad \quad \quad \quad |Z_{\xi-\xi'}| \langle \xi' \rangle^n |k'
        |^2 p_{\xi'}^{-1} |Z_{\xi'}| d\eta' d\eta dk' dt\\
        & \eqqcolon T_{\infty, Z_1, LH}^{y,a} + T_{\infty, Z_1, LH}^{y,b} + T_{\infty, Z_1, LH}^{y,c}.
    \end{split}
\end{equation}
Here we see some of the additional complications which arise because of the time-integration interacting with the interpolation. In particular, to estimate each term of \eqref{base_infty_Z1_LH_y}, we will follow a similar procedure to \eqref{infty_z2_LH} and \eqref{infty_z2_LH_b} by splitting according to $|k-k'| < 1$ and $|k-k'| \geq 1$. Treating each component with Young's ineuqality, H\"older's inequality, and Minkowski's inequality as needed, we obtain
\begin{equation}
    \begin{split}
    T_{\infty, Z_1, LH}^{y,a} &\lesssim || \langle \xi \rangle^n |\eta-kt| Z_\xi||_{L^\infty_k L^2_t([0,T]) L^2_\eta}\biggl( || \vk^n |k|^{-1/2} p_\xi^{-1/4} 1_{|k| < 1}Z_{\xi}||_{L^1_k L^{1/\delta_*}_t([0,T]) L^1_\eta}\\
    & \quad \quad\quad\quad\quad\quad\quad\quad\quad\quad\quad\quad\quad\quad\quad||\langle \xi \rangle^n |k'| p_\xi^{-1/2} Z_\xi||_{L^{\infty}_k L^{2/(1-2\delta_*)}_t([0,T])L^2_\eta}\\
    &\quad \quad\quad\quad\quad\quad\quad\quad\quad\quad\quad\quad\quad +|| \vk^n |k|^{-1/2} p_\xi^{-1/4} 1_{|k| > 1}Z_{\xi}||_{L^{1/\delta_*}_t([0,T]) L^2_k  L^1_\eta}\\
    &\quad \quad\quad\quad\quad\quad\quad\quad\quad\quad\quad\quad\quad\quad\quad ||\langle \xi \rangle^n |k'| p_\xi^{-1/2} Z_\xi||_{L^{2/(1-2\delta_*)}_t([0,T]) L^{2}_k L^2_\eta}\biggr).
    \end{split}
\end{equation}
Now we carefully interpolate in $\eta$ and $k$ (similar to \eqref{replacement_lemma}) to obtain
\begin{equation}
    \begin{split}
        T_{\infty, Z_1, LH}^{y,a} &\lesssim \mu^{-1/2} \D_\gamma^{1/2}\sup_{t \in [0,T]}\E^{1/2 - \delta_*}(t) \mu^{-\delta_*} \D_\gamma^{\delta_*}\sup_{t \in [0,T]}\E^{\delta_*}(t)\D_\tau^{1/2-\delta_*},
    \end{split}
\end{equation}
which suffices. For $T_{\infty, Z_1, LH}^{y,b}$, we also split $|k-k'| < 1$ and $|k-k'| \geq 1$. Applying Young's and H\"older's inequalities, then interpolating, we find
\begin{equation}
    \begin{split}
        T_{\infty, Z_1, LH}^{y,b} &\lesssim ||\langle \xi \rangle^n \min(|k|^{1/4},1) Z_\xi||_{L^\infty_k L^{4/(1-4\delta_*)}_t([0,T]) L^1_\eta}\\
        &\quad  \biggl(||\vk^n |k|^{-3/4}|\eta -kt|^{1/2} 1_{|k| < 1} Z_\xi||_{L^1_k L^{4}_t([0,T]) L^2_\eta} ||\langle \xi \rangle^n |k| p_{\xi}^{-1/2} Z_\xi||_{L^\infty_k L^{2/(1+2\delta_*)}_t([0,T]) L^2_\eta}\\
        &\quad \quad + ||\vk^n |k|^{-3/4}|\eta -kt|^{1/2} 1_{|k| \geq 1} Z_\xi||_{L^{4}_t([0,T]) L^2_k L^2_\eta} ||\langle \xi \rangle^n |k| p_{\xi}^{-1/2} Z_\xi||_{ L^{2/(1+2\delta_*)}_t([0,T]) L^2_k L^2_\eta}\biggr)\\
        &\lesssim \sup_{t\in [0,T]} \E^{1/4-\delta_*}(t) \mu^{-1/4 - \delta_*} \D_\gamma^{1/4 + \delta_*}  \sup_{t\in [0,T]} \E^{1/4}(t) \mu^{-\delta_*}\D_\gamma^{1/4} \sup_{t\in [0,T]} \E^{\delta_*}(t) \D_\tau^{1/2 - \delta_*}.
    \end{split}
\end{equation}
Our last step in controlling $T_{\infty, Z_1, LH}^y$ is to estimate the $T_{\infty, Z_1, LH}^{y,c}$ term. We apply Young's inequality in $\eta$, H\"older's inequality in time, Young's inequality in $k$, and interpolate in $k$ and $\eta$:
\begin{equation}
    \begin{split}
    T_{\infty, Z_1, LH}^{y,c} & \lesssim || \langle \xi \rangle^n|k| Z_\xi||_{L^\infty_k L^2_t([0,T]) L^2_\eta}\\
    &\quad \biggl( ||\vk^n |k|^{-1/2} p_\xi^{-1/4} 1_{|k| < 1} Z_\xi||_{L^1_k L^{1/\delta_*}_t([0,T]) L^1_\eta} ||\langle \xi \rangle^n |k| p_\xi^{-1/2} Z_\xi ||_{L^\infty_k L^{2/(1-\delta_*)}_t([0,T])L^2_\eta}\\
    &\quad \quad +  ||\vk^n |k|^{-1/2} p_\xi^{-1/4} 1_{|k| \geq 1} Z_\xi||_{L^{1/\delta_*}_t([0,T]) L^2_k  L^1_\eta} ||\langle \xi \rangle^n |k| p_\xi^{-1/2} Z_\xi ||_{L^{2/(1-\delta_*)}_t([0,T]) L^2_k L^2_\eta}\biggl)\\
    &\lesssim \mu^{-1/2} \D_\gamma^{1/2} \sup_{t \in [0,T]}\E^{1/2 - \delta_*}(t) \mu^{-\delta_*}\D_\gamma^{\delta_*} \sup_{t \in [0,T]}\E^{\delta_*}(t) \D_\tau^{1/2 + \delta_*}.
    \end{split}
\end{equation}
Having completed the $LH$ terms for $T_{\infty, Z_1}^y$, we turn to the $HL$ terms. In the $HL$ cases, we treat high and low frequencies in $k'$ differently. For high frequencies, we use a variant of Lemma \ref{main_D_tau_lemma}, Young's inequality, and $|k|, |k'| \lesssim |k-k'|$ to compute
\begin{equation}
    \begin{split}
        T_{\infty, Z_1, HL, (\cdot, H)}^y &\lesssim \sup_\R \int_0^T \int_{|k'| \geq \mu}\iint_{\R^2}  \langle \xi \rangle^{n}  |Z_\xi| \vkm^n |k-k'|^{1/2} (1_{|k-k'| \geq 1} + |k-k'|^{1/4}) 1_{|k-k'| < 1}p_{\xi-\xi'}^{-1/4}\\
        &\quad \quad \quad \quad \quad \quad |Z_{\xi-\xi'}| \vkp^n |k'|^{-1/2}(1_{|k-k'| \geq 1} + |k'|^{-1/4} 1_{|k-k'| < 1})|k'|^{1/2} |Z_{\xi'}| d\eta' d\eta dk' dt\\
        &\lesssim \sup_{t \in [0,T]}\E^{1/2} ||\vk^n|k|^{1/2} \min(1, |k|^{1/4}) p_\xi^{-1/4} Z_\xi||_{L^\infty_k L^2_t([0,T])L^1_\eta}\\
        &\quad \quad \quad || \langle \xi \rangle^n \max(|k|^{-1/2}, |k|^{-3/4}) |k|^{1/2} Z_\xi||_{L^1_{|k| \geq \mu} L^2_t([0,T]) L^2_\eta}\\
        &\lesssim \E^{1/2}(\mu^{-1/4} \D_\gamma^{1/4} \D_\tau^{1/4} + \D_\tau^{1/4 - \delta_*} \mu^{-1/4 - \delta_*} \D_\gamma^{1/4 + \delta_*}) \mu^{-1/8} \D_\beta^{3/8} \mu^{-1/8} \D_\gamma^{1/8}.
    \end{split}
\end{equation}
For low-in-$k'$ frequencies, we use $p_\xi^{1/4} \lesssim p_{\xi-\xi'}^{1/4} + p_{\xi'}^{1/4}$ to write
\begin{equation}
    \begin{split}
         T_{\infty, Z_1, HL, (\cdot, L)}^y & \lesssim \sup_\R \int_0^T \int_{|k'| < \mu}\iint_{\R^2}  1_{HL}\langle \xi \rangle^{n} |k|^{1/2} p_{\xi}^{-1/4} \langle \xi - \xi' \rangle^n  |Z_\xi| |k-k'|^{-1/2} p_{\xi-\xi'}^{-1/2} \\
         & \quad \quad \quad \quad \quad |(\eta - \eta' + (k-k')t) Z_{\xi-\xi'}|  \vkp^n |k'|^{1/2} |Z_{\xi'}| d\eta' d\eta dk'\\
         &\quad + \sup_\R\int_{|k'| < \mu}\iint_{\R^3}  1_{HL}\langle \xi \rangle^{n} |k|^{1/2} p_{\xi}^{-1/4} |Z_\xi| \vkm^n |k-k'|^{-1/2} p_{\xi-\xi'}^{-3/4}\\
         & \quad \quad \quad \quad \quad |(\eta - \eta' + (k-k')t) Z_{\xi-\xi'}| \vkp^n |k'|^{1/2} p_{\xi'}^{1/4} |Z_{\xi'}| d\eta' d\eta dk' dt\\
         &\lesssim T_{\infty, Z_2, HL}^y + (I),
    \end{split}
\end{equation}
where
\begin{equation}
\begin{split}
    (I) &\coloneqq \sup_\R \int_0^T \int_{|k'| < \mu}\iint_{\R^3}  1_{HL}\langle \xi \rangle^{n} |k|^{1/2} p_{\xi}^{-1/4} |Z_\xi| \vkm^n |k-k'|^{-1/2} p_{\xi-\xi'}^{-3/2} \\
    &\quad \quad \quad \quad |(\eta - \eta' + (k-k')t) Z_{\xi-\xi'}| \vkp^n |k'|^{1/2} p_{\xi'}^{1/4} |Z_{\xi'}| d\eta' d\eta dk' dt.
    \end{split}
\end{equation}
The term $T_{Z_2, HL}^y$ is controlled in \eqref{infty_Z2_HL_y}. Then to treat $(I)$, we employ $|k'| \lesssim |k-k'|$,  alongside Young's inequality and a variant of Lemma \ref{main_D_tau_lemma} as follows:
\begin{equation}
    \begin{split}
        (I) & \lesssim \sup_\R \int_0^T \int_{|k'| < \mu}\iint_{\R^3}  1_{HL}\langle \xi \rangle^{n} |k|^{1/2} p_{\xi}^{-1/4}|Z_\xi| \vkm^n |k-k'|^{1/2} \min(1, |k-k'|^{1/4})\\
        &\quad \quad \quad \quad p_{\xi - \xi'}^{-1/4}|Z_{\xi-\xi'}| \vkp^n (1_{|k -k'| < 1}|k'|^{-1/4}+ 1_{|k-k'| \geq 1}) p_{\xi'}^{1/4} |Z_{\xi'}| d\eta' d\eta dk' dt\\
        &\lesssim ||\langle \xi \rangle^{n} |k|^{1/2} p_{\xi}^{-1/4} Z_\xi||_{L^\infty_k L^2_t([0,T]) L^2_\eta} || \vk^n  \min(|k|^{1/2}, |k|^{3/4} )p_{\xi}^{-1/4} Z_\xi||_{L^\infty_k L^4_t([0,T]) L^1_\eta}\\
        &\quad \quad \quad \quad || \vk^n |k|^{-3/4} p_{\xi}^{1/4} Z_\xi||_{L^1_{|k|<\mu} L^4_t([0,T]) L^2_\eta}\\
        &\lesssim \D_\tau^{1/4} \sup_{t \in [0,T]}\E^{1/4}(t)\mu^{-1/4}\left(\D_\tau^{1/4} \D_\gamma^{1/4} + \mu^{-\delta_*} \D_\tau^{1/4 - \delta_*} \D_\gamma^{1/4 + \delta_*}\right) \mu^{-1/4} \D_\gamma^{1/4} \sup_{t \in [0,T]}\E^{1/4}(t).
        \end{split}
\end{equation}
To complete our estimate of $T_{\infty, Z}^y$, it remains to bound $T_{\infty, Z_2}^y$. We split into $LH$ and $HL$ terms as usual:
\begin{equation}
    T_{\infty, Z_2}^y \leq T_{\infty, Z_2, LH}^y + T_{\infty,Z_2, HL}^y.
\end{equation}
For the $LH$ terms, we again use $p_\xi^{1/4} \lesssim p_{\xi-\xi'}^{1/4} + p_{\xi'}^{1/4}$. Furthermore, we split each component of the resulting decomposition along $|k-k'| < 1$ and $|k-k'| \geq 1$. Then we proceed by interpolation, $|k| \approx |k'|$, Young's inequality, and a variant of Lemma \ref{main_D_gamma_lemma} adapted to the new setting:
\begin{equation}
    \begin{split}
        T_{\infty, Z_2, LH}^y &\lesssim \sup_\R \int_0^T \iiint_{\R^3} 1_{LH} \langle \xi \rangle^{n} |k| p_{\xi}^{-1/2}|Z_\xi| |k-k'|^{-1/2} \vkm^n \vkp^n \\
        &\quad\quad \quad \quad \biggl( p_{\xi-\xi'}^{1/4}|Z_{\xi-\xi'}||Z_{\xi'}| + (|k-k'|^{-1/4} |k'|^{1/4} 1_{|k'| \leq 1} + 1_{|k'| > 1})|Z_{\xi-\xi'}|p_{\xi'}^{1/4} |Z_{\xi'}|\biggr)d\eta' d\eta dk' dt\\
        &\lesssim ||\vk^n k p
        _\xi^{-1/2} Z_\xi||_{L^\infty_k L^{2/(1-2\delta_*)}_t([0,T]) L^2_\eta}\\
        &\quad \biggl( || \vk^n |k|^{-3/4} p_\xi^{1/4} 1_{|k| < 1}Z_\xi||_{L^1_k L^{4}_t([0,T]) L^2_\eta}  || \vk^n \min(1, |k|^{1/4}) Z_\xi||_{L^\infty_k L^{4/(1+4 \delta_*)}L^1_\eta}\\ 
        &\quad \quad + || \vk^n |k|^{-1/2} p_\xi^{1/4} 1_{|k| \geq 1}Z_\xi||_{ L^{4}_t([0,T]) L^2_k L^2_\eta}  || \vk^n \min(1, |k|^{1/4}) Z_\xi||_{ L^{4/(1+4 \delta_*)} L^2_k L^1_\eta}\\
        &\quad \quad +
        || \vk^n |k|^{-1/2} 1_{|k| < 1}Z_\xi||_{L^1_k L^{4/(1+4 \delta_*)} L^1_\eta} ||\vk^n p_\xi^{1/4} Z_\xi||_{L^\infty_k L^4_t([0,T])L^2_\eta}\\
        &\quad \quad  +
        || \vk^n |k|^{-1/2} 1_{|k| \geq 1}Z_\xi||_{L^{4/(1+4 \delta_*)} L^2_k 
L^1_\eta} ||\vk^n p_\xi^{1/4} Z_\xi||_{ L^4_t([0,T]) L^2_k L^2_\eta} \biggr)\\
        &\lesssim \sup_{t\in [0,T]}\E^{\delta_*}(t) \D_\tau^{1/2 - \delta_*}\biggl(\mu^{-1/4}\D_\gamma^{1/4}\sup_{t \in [0,T]} \E^{1/4}(t) (\mu^{-1/4 - \delta_*} \D_\gamma^{1/4 + \delta_*} \sup_{t \in [0,T]}\E^{1/4 - \delta_*}(t) + \mu^{-1/4} \D_\gamma^{1/4} \E^{1/4})\\
        &\quad \quad \quad \quad \quad + (\mu^{-1/4 - \delta_*} \D_\gamma^{1/4 + \delta_*} \sup_{t \in [0,T]}\E^{1/4 - \delta_*}(t) + \mu^{-1/4} \D_\gamma^{1/4} \E^{1/4}(t))\mu^{-1/4}\D_\gamma^{1/4} \sup_{t \in [0,T]}\E^{1/4}(t) \biggr)\\
&\lesssim \mu^{-1/2 - \delta_*} \D \sup_{t \in [0,T]}\E^{1/2}(t).
    \end{split}
\end{equation}
For the $HL$ terms, we use a similar strategy to that employed in \eqref{decomposed_HL_gamma_y} of Section \ref{gamma_terms_for_Z}, namely $1 = p_{\xi'}^{-1} p_{\xi'}$. We then distribute by the triangle inequality, and employ $|k'| \lesssim |k-k'|$ and boundedness of the Riesz Transform to arrive at
\begin{equation}\label{infty_Z2_HL_y}
    \begin{split}
         T_{\infty, Z_2, HL}^y & \lesssim \sup_\R \int_0^T \iiint_{\R^3} 1_{HL}\langle \xi \rangle^{n} \langle \xi - \xi'\rangle^n \vkp^n \biggl( |k|^{1/2} p_{\xi}^{-1/4}|Z_\xi| |k-k'|^{1/2} Z_{\xi-\xi'}| |k'|^{3/2} p_{\xi'}^{-1} |Z_{\xi'}|\\
         &\quad \quad \quad \quad \quad + |k|^{1/2} p_{\xi}^{-1/4}|\eta -kt|^{1/2}|Z_\xi| \min(1,|k-k'|^{1/4})\\
         &\quad \quad \quad \quad \quad \quad \quad \quad |Z_{\xi-\xi'}| |k'|^{-1/2}(1_{|k-k'| \geq 1} + |k'|^{-1/4}1_{|k-k'| < 1}) |k'|^{1/2} |\eta' -k't|^{3/2}p_{\xi'}^{-1} |Z_{\xi'}|\\
         &\quad \quad \quad \quad \quad + |k|^{1/2} p_{\xi}^{-1/4}|Z_\xi| |(\eta -\eta')-(k-k')t|^{1/2}|Z_{\xi-\xi'}|\\
          &\quad \quad \quad \quad \quad \quad \quad \quad |k'|^{-1/2} |k'|^{1/2} |\eta' -k't|^{3/2}p_{\xi'}^{-1} |Z_{\xi'}|\biggr)d\eta' d\eta dk'.
        \end{split}
\end{equation}
Continuing from \eqref{infty_Z2_HL_y}, by Young's inequality, H\"older's inequality, and interpolation in $k$ (using $m > 0$):
\begin{equation}\label{T_infty_Z2_HL_y}
    \begin{split}
        T_{\infty, Z_2, HL}^y &\lesssim || \langle \xi \rangle^n |k|^{1/2} p_\xi^{-1/4} Z_\xi||_{L^\infty_k L^4_t ([0,T])L^2_\eta} || \langle \xi \rangle^n |k|^{1/2} Z_\xi||_{L^\infty_k L^4_t ([0,T])L^2_\eta}\\
        &\quad \quad \quad \quad || \vk^n |k|^{1/2} p_\xi^{-1/2} Z_\xi||_{L^1_k L^{2}_t([0,T])L^1_\eta}\\
        & \quad + || \langle \xi \rangle^n |k|^{1/2} p_\xi^{-1/4} |\eta-kt|^{1/2} Z_\xi||_{L^\infty_k L^4_t([0,T]) L^2_\eta} \\
        &\quad \quad \biggl( || \langle \xi \rangle^n \min(1,|k|^{1/4}) Z_\xi||_{L^\infty_k L^{4/(1+4\delta_*)}_t([0,T]) L^1_\eta} || \vk^n |k|^{-3/4} |k|^{1/2} |\eta -kt|^{1/2} p_\xi^{-1/2} 1_{|k| < 1}Z_\xi||_{L^1_k L^{2/(1-2\delta_*)}_t([0,T]) L^2_\eta}\\
        &\quad \quad +  || \langle \xi \rangle^n \min(1,|k|^{1/4}) Z_\xi||_{ L^{4/(1+4\delta_*)}_t([0,T]) L^2_k L^1_\eta} || \vk^n |k|^{1/2} |\eta -kt|^{1/2} p_\xi^{-1/2} Z_\xi||_{ L^{2/(1-2\delta_*)}_t([0,T]) L^2_k L^2_\eta}\biggr)\\
        &\quad + || \langle \xi \rangle^n |k|^{1/2} p_\xi^{-1/4} Z_\xi||_{L^\infty_k L^4_t([0,T]) L^2_\eta} || \langle \xi \rangle^n |\eta -kt|^{1/2} Z_\xi||_{L^\infty_k L^4_t([0,T]) L^2_\eta}\\
        &\quad \quad \quad \quad ||  \vk^n |k|^{-1/2} |k|^{1/2} p_\xi^{-1/4}Z_\xi||_{L^1_k L^2_t([0,T])L^1_\eta}\\
        &\lesssim \D_\tau^{1/4} \sup_{t \in [0,T]}\E^{1/4}(t) \mu^{-1/4} \D_\gamma^{1/4} \sup_{t \in [0,T]}\E^{1/4}(t) || \vk^n |k|^{1/2} p_\xi^{-1/2} Z_\xi||_{L^1_k L^{2}_t([0,T])L^2_\eta}\\
        &\quad + \sup_{t \in [0,T]}\E^{1/4}(t)\mu^{-1/4}\D_\gamma^{1/4}|| \langle \xi \rangle^n \min(1,|k|^{1/4}) Z_\xi||_{L^\infty_k L^{4/(1+4\delta_*)}_t([0,T]) L^1_\eta}\\
        &\quad \quad \quad \quad\sup_{t \in [0,T]}\E^{\delta_*}(t)\D_\tau^{1/4-\delta_*}\mu^{-1/4}\D_\gamma^{1/4}\\
        &\quad + \D_\tau^{1/4} \sup_{t \in [0,T]}\E^{1/4}(t) \mu^{-1/4} \D_\gamma^{1/4} \sup_{t \in [0,T]}\E^{1/4}(t) ||  \vk^n |k|^{-1/2} |k|^{1/2} p_\xi^{-1/4}Z_\xi||_{L^1_k L^2_t([0,T])L^1_\eta}.
        \end{split}
\end{equation}
Interpolating in $\eta$ and $k$, following the ideas of \eqref{replacement_lemma} and Lemmas \ref{main_D_tau_lemma} and \ref{main_D_gamma_lemma} gives, using $m > 0$:
\begin{equation}\label{T_infty_combo_interp}
\begin{split}
    &|| \vk^n |k|^{1/2} p_\xi^{-1/2} Z_\xi||_{L^1_k L^{2}_t([0,T])L^2_\eta} \lesssim \mu^{-1/4-\delta_*} \D_{\gamma}^{1/4+\delta_*}\D_\tau^{1/4 - \delta_*} + \D_\tau^{1/2},\\
    &|| \langle \xi \rangle^n \min(1,|k|^{1/4}) Z_\xi||_{L^\infty_k L^{4/(1+4\delta_*)}_t([0,T]) L^1_\eta} \lesssim \mu^{-1/4-\delta_*} \D_\gamma^{1/4 + \delta_*} \sup_{t \in [0,T]}\E^{1/4 - \delta_*}(t),\\
    &|| \langle \xi \rangle^n \min(1,|k|^{1/4}) Z_\xi||_{ L^{4/(1+4\delta_*)}_t([0,T]) L^2_k L^1_\eta} \lesssim \mu^{-1/4-\delta_*} \D_\gamma^{1/4 + \delta_*} \sup_{t \in [0,T]}\E^{1/4 - \delta_*}(t),\\
    &||  \vk^n |k|^{-1/2} |k|^{1/2} p_\xi^{-1/4}Z_\xi||_{L^1_k L^2_t([0,T])L^1_\eta} \lesssim \mu^{-1/4} \D_\gamma^{1/4}(\D_\tau^{1/4} + \mu^{-\delta_*} \D_\gamma^{\delta_*} \D_\tau^{1/4 - \delta_*}).
\end{split}
\end{equation}
Combining \eqref{T_infty_Z2_HL_y} with \eqref{T_infty_combo_interp},
we obtain
\begin{equation}
    T_{\infty, Z_2, HL}^y \lesssim \mu^{-1/2 - \delta_*} \D \sup_{t \in [0,T]}\E^{1/2}(t),
\end{equation}
as desired. This completes the bounds on $T_{\infty,Z}$ for the purposes of Lemma \ref{key_lemma_alt}.
\subsection{Bound on \texorpdfstring{$T_\infty$}{Supremum terms} for Q}
Our final terms to bound for the purposes of Lemma \ref{bootstrap_lemma} are $T_{\infty, Q}^x$ and $T_{\infty, Q}^y$, which are defined via
\begin{equation}
    \begin{split}
        T_{\infty,Q} &\leq  4\sup_{k \in \R} \int_0^T\int_{\R}  \langle \xi \rangle^{2n}  |N_k + c_\tau\mathfrak{J}_k|\mathrm{Re}\biggl|\iint_{\R^2} k p_\xi^{1/4} \bar{Q}_\xi\\
    &\quad \quad \quad \quad \quad \quad |k-k'|^{-1/2} p_{\xi -\xi'}^{-3/4} i (k-k') Z_{\xi - \xi'} k'^{-1} |k'|^{1/2} p_{\xi'}^{-1/4}  i(\eta'-k't) Q_{\xi'}d\eta' dk'\biggr| d\eta dt \\
    &\quad +4 \sup_{k \in \R}\int_0^T \int_{\R}  \langle \xi \rangle^{2n}  |N_k + c_\tau\mathfrak{J}_k|\mathrm{Re}\biggl|\iint_{\R^2} k p_\xi^{1/4} |k|^{-1/2}|k-k'|^{-1/2} \bar{Q}_\xi\\
    &\quad \quad \quad \quad \quad \quad  p_{\xi -\xi'}^{-3/4} i ((\eta-\eta') - (k-k')t) Z_{\xi - \xi'} k'^{-1} |k'|^{1/2} p_{\xi'}^{-1/4} i k' Q_{\xi'}d\eta' dk'\biggr| d\eta dt\\
    &\eqqcolon T_{\infty, Q}^x + T_{\infty, Q}^y.
    \end{split}
\end{equation}
\subsubsection{x-derivatives}
We begin by noting that
\begin{equation}
    \begin{split}
    |T_{\infty, Q}^x| & \lesssim \sup_\R \int_0^T \iiint_{\R^3} |k|^{1/2} \langle \xi \rangle^{n}  |Q_\xi| \vkm^n |k-k'|^{1/2} p_{\xi-\xi'}^{-3/4} |Z_{\xi-\xi'}|  \\
    &\quad \quad \quad \quad \vkp^n |k'|^{-1/2}| \eta' -k't| |Q_{\xi'}| d\eta' d\eta dk' dt \\
        &\quad + \sup_\R \int_0^T \iiint_{\R^3}  \langle \xi \rangle^{n} |k|^{1/2} |Q_\xi|\vkm^n |k-k'|^{1/2} p_{\xi-\xi'}^{-1/2} |Z_{\xi-\xi'}| \\
    &\quad \quad \quad \quad \vkp^n p_{\xi'}^{-1/4} |k'|^{-1/2} |\eta'-k't||Q_{\xi'}| d\eta' d\eta dk' dt\\
        &\eqqcolon T_{\infty, Q_1}^x + T_{\infty, Q_2}^x.
    \end{split}
\end{equation}
As we have seen in previous sections, $T_{\infty, Q_1}^x$ can be estimated in the same manner as $T_{\infty, Z_1}^x$. For $T_{\infty, Q_2}^x$, we split
\begin{equation}
    T_{\infty, Q_2}^x \leq T_{\infty, Q_2, LH}^x + T_{\infty, Q_2, HL}^x.
\end{equation}
The $HL$ and $LH$ cases have similar structures but slightly different technical details. Starting with $LH$, we use $|k| \approx |k'|$, $|k-k'| \lesssim |k|$ boundedness of the Riesz transform, Young's inequality, H\"older's inequality, interpolation, and the second line of \eqref{T_infty_combo_interp} to obtain:
\begin{equation}\label{T_infty_Q2_LH_x}
    \begin{split}
        T_{\infty, Q_2, LH}^x & \lesssim \sup_\R \int_0^T \iiint_{\R^3}  1_{LH}\langle \xi \rangle^{n} \min(|k|^{1/4}, 1)|Q_\xi| \vkm^n |k-k'|^{1/2}\\
        &\quad \quad \quad \quad \quad (|k-k'|^{-1/4}1_{|k| \leq 1} + 1_{|k| > 1})  p_{\xi-\xi'}^{-1/2} |Z_{\xi-\xi'}|  \vkp^n |\eta'-k't|^{1/2} |Q_{\xi'}| d\eta' d\eta dk' dt\\
        &\lesssim ||\langle \xi \rangle^n \min(|k|^{1/4}, 1) Q_\xi||_{L^\infty_k L^{4/(1+4\delta_*)}_t([0,T]) L^1_\eta}\\
        &\quad  \biggl(|| \vk^n |k|^{-3/4} |k| p_\xi^{-1/2} 1_{|k| < 1} Z_\xi||_{L^1_k L^{2/(1-2\delta_*)}_t([0,T])L^2_\eta}|| \vk^n |\eta -kt|^{1/2} Q_\xi||_{L^\infty_k L^4_t([0,T])L^2_\eta}\\
        & \quad  + || \vk^n |k|^{-1/2} |k| p_\xi^{-1/2} 1_{|k| \geq 1} Z_\xi||_{L^{2/(1-2\delta_*)}_t([0,T]) L^2_k L^2_\eta}|| \vk^n |\eta -kt|^{1/2} Q_\xi||_{L^4_t([0,T]) L^2_k L^2_\eta} \biggr)\\
        &\lesssim \mu^{-1/4} \D_\gamma^{1/4 + \delta_*}\sup_{t \in [0,T]}\E^{1/4 - \delta_*}(t) \sup_{t\in [0,T]}\E^{\delta_*}(t) \D_\tau^{1/2 - \delta_*} \sup_{t \in [0,T]}\E^{1/4 }(t) \mu^{-1/4} \D_\gamma^{1/4}.
    \end{split}
\end{equation}
For the $HL$ case, we have using $|k|, |k'| \lesssim |k-k'|$, Young's inequality and interpolation similar to \eqref{T_infty_Q2_LH_x}:
\begin{equation}
    \begin{split}
        T_{\infty, Q_2, HL}^x & \lesssim \sup_\R \int_0^T \iiint_{\R^3}  1_{HL }\langle \xi \rangle^{n} \min(|k|^{1/4},1)|Q_\xi| \langle \xi - \xi'\rangle^n |k-k'|p_{\xi-\xi'}^{-1/2} |Z_{\xi-\xi'}|\\
        &\quad \quad \quad \quad \vkp^n |k'|^{-1/2}(|k'|^{-1/4}1_{|k| < 1} + 1_{|k| \geq 1}) |\eta'-k't|^{1/2}|Q_{\xi'}| d\eta' d\eta dk' dt\\
        &\lesssim ||\langle \xi \rangle^n \min(|k|^{1/4}, 1) Q_\xi||_{L^\infty_k L^{4/(1+4 \delta_*)}_t([0,T]) L^1_\eta}\\
        &\quad \biggl( || \langle \xi \rangle^n k p_\xi^{-1/2} Z_\xi||_{L^\infty_k L^{2/(1-2\delta_*)}_t([0,T]) L^2_\eta} || \vk^n |k|^{-3/4} 1_{|k| < 1} |\eta -kt|^{1/2} Q_\xi||_{L^1_k L^4_t([0,T]) L^2_\eta}\\
        &\quad \quad + || \langle \xi \rangle^n k p_\xi^{-1/2} Z_\xi||_{ L^{2/(1-2\delta_*)}_t([0,T]) L^2_k L^2_\eta} || \vk^n |k|^{-1/2} 1_{|k| \geq 1} |\eta -kt|^{1/2} Q_\xi||_{ L^4_t([0,T]) L^2_k L^2_\eta} \biggr)\\
        &\lesssim \mu^{-1/4 - \delta_*} \D_\gamma^{1/4+\delta_*} \E^{1/4-\delta_*} \sup_{t \in [0,T]}\E^{\delta_*}(t)\D_\tau^{1/2 - \delta_*} \sup_{t \in [0,T]}\E^{1/4 }(t) \mu^{-1/4} \D_\gamma^{1/4}.
    \end{split}
\end{equation}
This completes the estimates on $T_{\infty, Q}^x$.

\subsubsection{y-derivatives}
We turn last of all to $T_{\infty, Q}^y$. We have
\begin{equation}
    \begin{split}
        |T_{\infty, Q}^y|
        &\lesssim \sup_\R \int_0^T \iiint_{\R^3} \langle \xi \rangle^{n} |k|^{1/2} |Q_\xi| \vkm^n |k-k'|^{-1/2} p_{\xi -\xi'}^{-3/4} \\
        &\quad \quad \quad \quad |\eta - \eta' - (k-k')t| |Z_{\xi -\xi'}| \vkp^n |k'|^{1/2}|Q_{\xi'}| d\eta' d\eta dk' dt\\
        &\quad + \sup_\R \int_0^T\iiint_{\R^3} \langle \xi \rangle^{n} |k|^{1/2} |Q_\xi| \vkm^n |k-k'|^{-1/2} p_{\xi -\xi'}^{-1/2}\\
        &\quad \quad \quad \quad \quad |\eta - \eta' - (k-k')t| |Z_{\xi -\xi'}| \vkp^np_{\xi'}^{-1/4}|k'|^{1/2}|Q_{\xi'}| d\eta' d\eta dk' dt \\
        &\eqqcolon T_{\infty, Q_1}^y + T_{\infty, Q_2}^y.
    \end{split}
\end{equation}
Due to the similarities with $T_{\infty, Z}^y$, we will only explicitly present the bound on $T_{\infty, Q_2, HL}^y$. Using a similar method as \eqref{infty_Z2_HL_y}, we have by boundedness of the Riesz Transform and Young's inequality:
\begin{equation}\label{T_infty_Q2_HL_y}
    \begin{split}
        T_{\infty, Q_2, HL}^y & \lesssim \sup_\R \int_0^T \iiint_{\R^3} 1_{HL}\langle \xi \rangle^{n} \langle \xi - \xi'\rangle^n \vkp^n \biggl( |k|^2 p_{\xi}^{-1}|Q_\xi| |k-k'|^{1/2} |Z_{\xi-\xi'}| p_{\xi'}^{-1/4} |Q_{\xi'}|\\
         &\quad \quad \quad \quad \quad + |k|^{1/2} |\eta- kt|^{3/2} p_\xi^{-1} |Q_\xi| \min(1,|k-k'|^{1/4})\\
         &\quad \quad \quad \quad \quad \quad \quad \quad |Z_{\xi-\xi'}| \max(|k'|^{-1/4},1) |k'|^{1/2} |k'|^{-1/2} |\eta' -k't|^{1/2} p_{\xi'}^{-1/4} |Q_{\xi'}|\\
         &\quad \quad \quad \quad \quad + |k|^{1/2} |\eta -kt|^{3/2}p_{\xi}^{-1}|Z_\xi| |(\eta -\eta')-(k-k')t|^{1/2}|Z_{\xi-\xi'}|\\
          &\quad \quad \quad \quad \quad \quad \quad \quad |k'|^{-1/2}|k'|^{1/2} p_{\xi'}^{-1/4}  |Q_{\xi'}|d\eta' d\eta dk' dt\\
          &\lesssim || \langle \xi \rangle^n |k|p_\xi^{-1/2} Q_\xi||_{L^\infty_k L^{2/(1-2\delta_*)} L^2_\eta}\\
          &\quad \biggl( || \langle \xi \rangle^n |k|^{1/2} Z_\xi||_{L^\infty_k L^4_t([0,T]) L^2_\eta} || \vk^n  1_{|k| < 1}p_\xi^{-1/4} Q_\xi||_{L^1_k L^{4/(1+4\delta_*)}_t([0,T]) L^1_{\eta}}\\
          &\quad + || \langle \xi \rangle^n |k|^{1/2} Z_\xi||_{L^4_t([0,T]) L^2_k L^2_\eta} || \vk^n  1_{|k| \geq 1}p_\xi^{-1/4} Q_\xi||_{ L^{2/(1+2\delta_*)}_t([0,T]) L^2_k L^1_{\eta}}\biggr)\\
          & \quad + || \langle \xi \rangle^n |k|^{1/2} p_\xi^{-1/4} Q_\xi||_{L^\infty_k L^{4/(1-4\delta_*}_t([0,T])L^2_\eta} || \langle \xi \rangle^n \min(1,|k|^{1/4}) Z_\xi||_{L^\infty_k L^{4/(1+4\delta_*}_t([0,T])L^1_\eta} \\
        &\quad \quad \quad || \vk^n \max(|k|^{-3/4},|k|^{-1/2})|k|^{1/2}  p_\xi^{-1/4} |\eta-kt|^{1/2} Q_\xi||_{L^1_k L^2_t([0,T])L^2_\eta}\\
        &\quad + || \langle \xi \rangle^n |k|^{1/2} p_\xi^{-1/4} Q_\xi||_{L^\infty_k L^4_t([0,T]) L^2_\eta} || \langle \xi \rangle^n |\eta -kt|^{1/2} Z_\xi||_{L^\infty_k L^4_t([0,T]) L^2_\eta}\\
        &\quad \quad \quad \quad || \vk^n |k|^{-1/2} |k|^{1/2} p_\xi^{-1/4}Q_\xi||_{L^1_k L^2_t([0,T])L^1_\eta}.
    \end{split}
\end{equation}
Applying interpolation to \eqref{T_infty_Q2_HL_y} (similar to \eqref{T_infty_combo_interp}) , along with $\D_\tau \lesssim \E$, we find
\begin{equation}
    \begin{split}
        T_{\infty, Q_2, HL}^y & \lesssim \D_\tau^{1/2 - \delta_*} \sup_{t\in [0,T]}\E^{\delta_*}(t) \mu^{-1/4} \D_\gamma^{1/4} \sup_{t\in [0,T]}\E^{1/4}(t) \sup_{t\in [0,T]}\E^{1/4-\delta_*}(t) (\D_\tau^{1/4} + \mu^{-1/4} \D_\gamma^{1/4})\mu^{-\delta_*}\D_\gamma^{\delta_*}\\
        &\quad + \D_\tau^{1/4 - \delta_*} \sup_{t\in [0,T]}\E^{1/4+\delta_*} \mu^{-1/4 - \delta_*} \D_\gamma^{1/4 + \delta_*}\sup_{t\in [0,T]}\E^{1/4-\delta_*}(t)\D_\tau^{1/4} \mu^{-1/4} \D_\gamma^{1/4} \\
        &\quad + \D_\tau^{1/4} \sup_{t\in [0,T]}\E^{1/4}(t) \mu^{-1/4} \D_\gamma^{1/4} \sup_{t\in [0,T]}\E^{1/4}(t) \mu^{-1/4} \D_\gamma^{1/4}(\D_\tau^{1/4} + \mu^{-\delta_*} \D_\gamma^{\delta_*} \D_\tau^{1/4 - \delta_*})\\
        &\lesssim \mu^{-1/2 -\delta_*} \D \sup_{t \in [0,T]}\E^{1/2}(t).
    \end{split}
\end{equation}
This concludes the proof of Lemma \ref{bootstrap_lemma_alt} and hence of Theorem \ref{alt_theorem}.

\section{Appendix}

We collect here a series of interpolation Lemmas used throughout Section \ref{nonlinear_section}. There are many sub-varieties of these lemmas, and the exact details of each are not particularly mathematically insightful. The key in all of them is to interpolate from $L^1_\eta$ to $H^s_\eta$ for some $s >1/2$. For frequencies $|\eta| < 1$, we can gain powers of $|\eta|$ up to $|\eta|^{s'}$ for $s' < 1/2$. The main variations in the lemmas come from what powers $s$ and $s'$ to choose, depending on the desired estimates and the presence of powers of $|k|$.
We note that the norms used in Section \ref{nonlinear_section} contain various powers of $\dk$ and $\vk$. However, the presence or absence of these terms does not impact the final estimate. Hence for the sake of space and readability, we exclude them from the following lemmas. Regarding $\sk$, we will use $\sk^m$ to interpolate from $L^1_k$ to $L^2_k$, and so its presence cannot be ignored.

\begin{lemma}\label{new_energy_lemmas}
For any $\delta_* \in (0,1/12)$, the following estimates hold:
\begin{subequations}
\begin{equation}\label{energy_with_gamma}
    \begin{split}
        || \langle k \rangle^{-1/2} p_\xi^{-1/4} Z_\xi ||_{L^1_k L^1_\eta} &\lesssim ||  \langle k \rangle^m \min(|k|^{1/4}, 1) p_\xi^{-1/4} Z_\xi ||_{L^2_k L^1_\eta}\\
        &\lesssim \E^{1/2} + \mu^{-\delta_*} \D_\gamma^{\delta_*} \E^{1/2 - \delta_*},
\end{split}
\end{equation}
\begin{equation}\label{energy_with_alpha}
    ||\sk^m A(k) |\eta - kt|^{1/2} Z_\xi||_{L^2_k L^1_\eta} \lesssim \E^{1/2} + \mu^{-\delta_*}\D_\alpha^{\delta_*} \E^{1/2 - \delta_*}.
\end{equation}
\end{subequations}

\end{lemma}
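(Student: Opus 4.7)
The plan is to reduce the two estimates to a single weighted Cauchy–Schwarz inequality in $\eta$, with a low/high splitting around the diagonal $\{|\eta-kt|\leq 1\}$, and then obtain the factor $\mu^{-\delta_*}\D^{\delta_*}\E^{1/2-\delta_*}$ by a log-convex interpolation between the energy and dissipation. The opening inequality $\|\sk^{-1/2}p_\xi^{-1/4}Z_\xi\|_{L^1_kL^1_\eta}\lesssim \|\sk^m\min(|k|^{1/4},1)p_\xi^{-1/4}Z_\xi\|_{L^2_kL^1_\eta}$ is just Cauchy–Schwarz in $k$: inserting $1=[\sk^m\min(|k|^{1/4},1)][\sk^{-m}\max(|k|^{-1/4},1)]$ reduces the estimate to the finiteness of
\begin{equation}
\int_{\R}\sk^{-1-2m}\max(|k|^{-1/2},1)\,dk,
\end{equation}
which holds precisely because $m>0$ controls the tails and the $|k|^{-1/2}$ singularity at the origin is integrable.

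For the substantive estimate \eqref{energy_with_gamma}, I would first control the inner $L^1_\eta$ norm pointwise in $k$ by splitting $\int p_\xi^{-1/4}|Z_\xi|\,d\eta = I_{\rm near}(k)+I_{\rm far}(k)$ across $\{|\eta-kt|\lessgtr 1\}$. On the near region a plain Cauchy–Schwarz in $\eta$ gives $I_{\rm near}(k)\leq \|p_\xi^{-1/4}\|_{L^2(|\eta-kt|\leq 1)}\|Z_\xi\|_{L^2_\eta}$, and a direct radial computation yields $\|p_\xi^{-1/4}\|_{L^2(|\eta-kt|\leq 1)}^2\lesssim |k|^{-1}$ when $|k|\geq 1$ and $\lesssim \log(2/|k|)$ when $|k|<1$. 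On the far region, inserting the $L^2_\eta$ weight $\langle\eta-kt\rangle^{-1/2-2\delta_*}$ and Cauchy–Schwarz produce $I_{\rm far}(k)\lesssim \|\langle\eta-kt\rangle^{1/2+2\delta_*}p_\xi^{-1/4}Z_\xi\|_{L^2_\eta}$, and the elementary bound $\langle\eta-kt\rangle^{1/2+2\delta_*}p_\xi^{-1/4}\lesssim p_\xi^{\delta_*}$ (from $|\eta-kt|\leq p_\xi^{1/2}$) reduces this to $\|p_\xi^{\delta_*}Z_\xi\|_{L^2_\eta}$.

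Squaring, weighting by $\sk^{2m}\min(|k|^{1/2},1)$ and integrating in $k$, the near-contribution becomes $\int \sk^{2m}\min(|k|^{1/2},1)\{|k|^{-1}\mathbf 1_{|k|\geq 1}+\log(2/|k|)\mathbf 1_{|k|<1}\}\|Z_\xi\|_{L^2_\eta}^2\,dk$; the multiplier is pointwise bounded, since $|k|^{1/2}\log(2/|k|)$ is bounded on $|k|<1$ and $\sk^{2m}|k|^{-1}\leq \sk^{2m}$ on $|k|\geq 1$, so this piece is controlled by $\|\sk^m Z_\xi\|_{L^2_\xi}^2\lesssim \E$. The far-contribution becomes $\|\sk^m p_\xi^{\delta_*}Z_\xi\|_{L^2_\xi}^2$, and the log-convex interpolation
\begin{equation}
\|\sk^m p_\xi^{\delta_*}Z_\xi\|_{L^2_\xi}\leq \|\sk^m Z_\xi\|_{L^2_\xi}^{1-2\delta_*}\|\sk^m p_\xi^{1/2}Z_\xi\|_{L^2_\xi}^{2\delta_*}\lesssim \E^{1/2-\delta_*}(\mu^{-1/2}\D_\gamma^{1/2})^{2\delta_*}
\end{equation}
delivers exactly $\mu^{-\delta_*}\D_\gamma^{\delta_*}\E^{1/2-\delta_*}$.

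The second estimate \eqref{energy_with_alpha} follows the same recipe: the weighted Cauchy–Schwarz with $\langle \eta-kt\rangle^{-1/2-2\delta_*}$ reduces matters to bounding $\|A(k)|\eta-kt|^{1/2}\langle\eta-kt\rangle^{1/2+2\delta_*}Z_\xi\|_{L^2_\eta}$, which on the near region is $\lesssim \|A(k)Z_\xi\|_{L^2_\eta}\lesssim \E^{1/2}$-friendly and on the far region is dominated by $A(k)|\eta-kt|p_\xi^{\delta_*}|Z_\xi|$; interpolating $\|\sk^m A(k)(\eta-kt)p_\xi^{\delta_*}Z_\xi\|_{L^2_\xi}$ between $\|\sk^m A(k)(\eta-kt)Z_\xi\|_{L^2_\xi}\lesssim \E^{1/2}$ (which is part of the $\D_{k,\alpha}/\mu$ bookkeeping in $E_k$) and $\|\sk^m A(k)(\eta-kt)p_\xi^{1/2}Z_\xi\|_{L^2_\xi}\lesssim \mu^{-1/2}\D_\alpha^{1/2}$ produces the claimed bound. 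I expect the only subtle point to be the logarithmic singularity of $\|p_\xi^{-1/4}\|_{L^2_{\rm loc}}$ near $k=0$; the paper's cosmetic factor $\min(|k|^{1/4},1)$ in the norm is precisely what is needed to tame it, and the rest is routine.
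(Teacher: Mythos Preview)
Your proof is correct and follows essentially the same approach as the paper: Cauchy--Schwarz in $k$ to pass from $L^1_k$ to $L^2_k$, then a near/far split in $\eta$ with a weighted Cauchy--Schwarz and log-convex interpolation between $\E$ and $\D_\gamma$ (resp.\ $\D_\alpha$). The only cosmetic difference is that the paper passes to stationary coordinates and uses the weight $|\eta|^{1/4}$ on the near region to sidestep the logarithmic singularity of $\|p_\xi^{-1/4}\|_{L^2_{\mathrm{loc}}}$ at small $|k|$, whereas you compute the logarithm explicitly and kill it with the factor $\min(|k|^{1/4},1)$; both routes land on the same bound.
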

\begin{proof}
We begin with the proof of \eqref{energy_with_gamma}.The estimate
    $$||  \langle k \rangle^{-1/2} p_\xi^{-1/4} Z_\xi ||_{L^1_k L^1_\eta} \lesssim  || \langle k \rangle^m \min(|k|^{1/4}, 1) p_\xi^{-1/4} Z_\xi ||_{L^2_k L^1_\eta}$$
    follows immediately from interpolation in $k$ using $m > 0$. We then switch from moving coordinates to stationary coordinates and interpolate in $\eta$ as follows:
    \begin{equation}
        \begin{split}
             ||\langle k \rangle^m \min(|k|^{1/4}, 1) p_\xi^{-1/4} Z_\xi ||_{L^2_k L^1_\eta} &=  || \langle k \rangle^m \min(|k|^{1/4}, 1) |\xi|^{-1/2} z_\xi ||_{L^2_k L^1_\eta}\\
             &= || (|\eta|^{1/4}|\eta|^{-1/4}1_{|\eta|<1} + |\eta|^{1/2 + 2\delta_*}|\eta|^{-1/2 - \delta_*}1_{|\eta| \geq 1} ) \\
             &\quad \quad \quad  \langle k \rangle^m \min(|k|^{1/4}, 1) |\xi|^{-1/2} z_\xi ||_{L^2_k L^1_\eta}\\
             &\lesssim ||\langle k \rangle^{m} \max(|\eta|^{1/4}, |\eta|^{1/2 + 2\delta_*}) \min(|k|^{1/4},1) |\xi|^{-1/2} z_\xi||_{L^2_\xi}.
        \end{split}
    \end{equation}
    Now we distinguish between low and high frequencies in $\eta$ to find, via interpolation, boundedness of the Riesz Transform, and returning to the moving coordinates
    \begin{equation}\label{energy_lemma_key_eqn}
        \begin{split}
            ||\langle k \rangle^{m} \max(|\eta|^{1/4}, |\eta|^{1/2 + 2\delta_*})& \min(|k|^{1/4},1) |\xi|^{-1/2} z_\xi||_{L^2_\xi}\\ &= ||1_{|\eta| < 1}\langle k \rangle^{m} |\eta|^{1/4} \min(|k|^{1/4},1) |\xi|^{-1/2} z_\xi||_{L^2_\xi}\\&\quad + ||1_{|\eta| \geq 1}\langle k \rangle^{m} |\eta|^{1/2 + 2\delta_*} \min(|k|^{1/4},1) |\xi|^{-1/2} z_\xi||_{L^2_\xi}\\
            &\lesssim ||1_{|\eta| < 1}\langle k \rangle^{m} |\eta|^{1/4}|k|^{1/4} |\xi|^{-1/2} z_\xi||_{L^2_\xi}\\&\quad + ||1_{|\eta| \geq 1}\langle k \rangle^{m} |\eta|^{1/2 + 2\delta_*} |\xi|^{-1/2} z_\xi||_{L^2_\xi}\\
            &\lesssim ||\langle k \rangle^{m}  z_\xi||_{L^2_\xi} + ||\langle k \rangle^{m} |\eta|^{2\delta_*} z_\xi||_{L^2_\xi}\\
            &\lesssim \E^{1/2} + ||\langle k \rangle^{m} |\eta - kt| Z_\xi||_{L^2_\xi}^{2 \delta_*} ||\langle k \rangle^{m}  Z_\xi||_{L^2_\xi}^{1-2 \delta_*}\\
            &\lesssim \E^{1/2} + \mu^{-\delta_*} \D_\gamma^{\delta_*} \E^{1/2 - \delta_*},
        \end{split}
    \end{equation}
    as desired. To prove \eqref{energy_with_alpha}, we transform coordinates and interpolate as in \eqref{energy_lemma_key_eqn}:
    \begin{equation}
        \begin{split}
            ||\sk^m A(k) |\eta - kt|^{1/2} Z_\xi||_{L^2_k L^1_\eta} & = ||\sk^m A(k) |\eta|^{1/2} z_\xi||_{L^2_k L^1_\eta}\\
            &\lesssim ||\sk^m A(k) |\eta|^{1/2} \langle \eta \rangle^{1/2 + 2 \delta_*} z_\xi||_{L^2_\xi}\\
            &\lesssim ||\sk^m A(k) |\eta - kt|^{1/2} Z_\xi||_{L^2_\xi} + ||\sk^m A(k) |\eta - kt|^{1 + 2 \delta_*} Z_\xi||_{L^2_\xi}\\
            &\lesssim  \E^{1/2} + \mu^{-\delta_*}\D_\alpha^{\delta_*} \E^{1/2 - \delta_*}.
        \end{split}
    \end{equation}
\end{proof}

\begin{lemma}\label{main_D_tau_lemma}
    For any $\delta_* \in (0,1/12)$ fixed, the following estimates hold:
    \begin{subequations}
    \begin{equation}
        \begin{split}
            ||\sk^{-1/2} |k| p_\xi^{-3/4} Z_\xi||_{L^1_\xi} &\lesssim ||\sk^{m} \min(|k|^{1/4},1) |k| p_\xi^{-3/4} Z_\xi||_{L^1_\xi}\\
            & \lesssim \D_\tau^{1/2} + \mu^{-\delta^*} \D_\gamma^{\delta_*} \D_\tau^{1/2 - \delta_*},
        \end{split}
    \end{equation}
        \begin{equation}
            ||\sk^m |k| p_\xi^{-1/2} Z_\xi||_{L^2_k L^1_\eta} \lesssim D_\tau^{1/2} + \mu^{-1/4-\delta_*} \D_\gamma^{1/4 + \delta_*} \D_{\tau}^{1/4 - \delta_*},
        \end{equation}
        \begin{equation}
            ||\sk^m \min(|k|^{1/4},1) |k| p_\xi^{-1/4} Z_\xi||_{L^2_k L^1_\eta} \lesssim \mu^{-1/4} \D_\gamma^{1/4}(\D_\tau^{1/4} + \mu^{-\delta_*} \D_\gamma^{\delta_*} \D_\tau^{1/4 - \delta_*}),
        \end{equation}
        \begin{equation}
            || \vk^n \sk^{-1/2} |k|^{1/2}  p_\xi^{-1/2} Z_\xi||_{L^1_\xi} \lesssim \D_\tau^{1/4} \E^{1/4} + \mu^{-\delta_*} \D_\gamma^{\delta_*} \D_\tau^{1/4 - \delta_*} \E^{1/4}.
        \end{equation}
    \end{subequations}
\end{lemma}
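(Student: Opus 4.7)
My plan is to prove the four sub-estimates in parallel, using the same four-step template that was just applied in the proof of Lemma \ref{new_energy_lemmas}. First, whenever the left-hand side carries a $\sk^{-1/2}$ weight (as in the first and fourth inequalities), I would use Cauchy--Schwarz in $k$ to replace it by the stronger weight $\sk^m \min(|k|^{1/4},1)$; the two Jacobian factors $\sk^{-m-1/2}$ at high frequencies and $\min(|k|^{-1/4},1)$ at low frequencies are square-integrable precisely because $m>0$, which explains the appearance of that hypothesis. This step gives the trivial first inequality in part (a) and reduces every claim to an $L^2_k L^1_\eta$ or $L^1_\xi$ bound with a $\sk^m$ weight.

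Second, I would pass to stationary coordinates via $\eta \mapsto \eta + kt$, which replaces $p_\xi = k^2 + (\eta - kt)^2$ by $k^2 + \eta^2 = |\xi|^2$ and converts $Z_\xi$ into $z_\xi$. Each target becomes a time-independent norm of the form $\|\sk^m |k|^{a} |\xi|^{-b} z_\xi\|_{L^p_k L^1_\eta}$ for various exponents $(a,b,p)$. Third, I would interpolate in $\eta$ exactly as in \eqref{energy_lemma_key_eqn}: by Cauchy--Schwarz with the weight $\langle \eta \rangle^{1/2 + 2\delta_*}$, splitting on $\{|\eta| < 1\}$ and $\{|\eta| \geq 1\}$. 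On the low-frequency piece one extracts $|\eta|^{s}$ for $s < 1/2$ (used to absorb $|\xi|^{-1/2}$ singularities via $|\xi|^{-1/2} \leq |k|^{-1/2}$ or $|\xi|^{-1/2} \leq |\eta|^{-1/2}$ as appropriate), while on the high-frequency piece one is left with $|\eta|^{2\delta_*}$.

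Finally, I would change back to moving coordinates, so that $|\eta| = |\eta - kt|$ is controlled by $\mu^{-1/2}\D_\gamma^{1/2}$ (since $D_{k,\gamma} \gtrsim \mu p\,|Z_k|^2$) while the residual factor of $|k| |\xi|^{-1}$ is controlled by $\D_\tau^{1/2}$ (since $D_{k,\tau} = |k|^2 p^{-1}\,|Z_k|^2$). The interpolation exponent $2\delta_*$ from step three is exactly what produces the combinations $\mu^{-\delta_*}\D_\gamma^{\delta_*}\D_\tau^{1/2-\delta_*}$ in estimates (a) and (b). The third estimate differs because $p_\xi^{-1/4} = |\xi|^{-1/2}$ provides only half the derivative gain of $p_\xi^{-1/2}$, which forces a splitting $|k| |\xi|^{-1/2} = (|k| |\xi|^{-1})^{1/2} \cdot |k|^{1/2}$ and a redistribution as $\D_\tau^{1/4}(\mu^{-1/4}\D_\gamma^{1/4})$; the fourth is similar but replaces one power of $\D_\tau^{1/4}$ with $\E^{1/4}$ because the $|k|^{1/2}$ factor can be absorbed by the base energy rather than by a dissipation.

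The main obstacle is purely bookkeeping: for each of the four weights $(a,b,p)$ I must verify that the combination produced by the $\eta$-interpolation in step three lands on exponents that can be read off as $\D_\tau^{\theta_1}\D_\gamma^{\theta_2}\E^{\theta_3}$ with $\theta_1+\theta_2+\theta_3 = 1/2$ and the claimed power of $\mu$. No new inequality is needed beyond what was used for Lemma \ref{new_energy_lemmas}; the challenge is making sure each $\min(|k|^{\sigma},1)$ weight is matched with a compatible factor of $|\xi|^{-\tau}$ in the low-frequency split, and that no weight is consumed twice.
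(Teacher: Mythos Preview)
Your proposal is correct and matches the paper's approach exactly: the paper's proof of this lemma is a single sentence stating that all four estimates are ``variations on the ideas utilized in the proof of Lemma~\ref{new_energy_lemmas},'' and your four-step template (interpolate in $k$ using $m>0$, pass to stationary coordinates, interpolate in $\eta$ via $\langle\eta\rangle^{1/2+2\delta_*}$, then return to moving coordinates and read off $\D_\tau$, $\D_\gamma$, $\E$) is precisely that scheme. Your breakdown of how the differing powers of $p_\xi$ and $|k|$ redistribute among $\D_\tau$, $\D_\gamma$, and $\E$ in each of the four cases is the correct bookkeeping.
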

\begin{proof}
    The proofs of all of the estimates of Lemma \ref{main_D_tau_lemma} are variations on the ideas utilized in the proof of Lemma \ref{new_energy_lemmas}.
\end{proof}

\begin{lemma}\label{main_D_gamma_lemma}
For any $\delta_* \in (0,1/12)$ fixed, the following estimates hold:
\begin{subequations}
\begin{equation}\label{main_D_gamma_1}
    \begin{split}
    || \langle k \rangle^{-1/2} Z_\xi||_{L^1_\xi} & \lesssim || \langle k \rangle^{m} \min(|k|^{1/4},1) Z_\xi||_{L^2_k L^1_\eta}\\
    &\lesssim \mu^{-1/4}D_\gamma^{1/4}(\E^{1/4} + \mu^{-\delta_*} \D_\gamma^{\delta_*} \E^{1/4 - \delta_*}),
    \end{split}
\end{equation}
\begin{equation}\label{main_D_gamma_2}
        || \langle k \rangle^{m}  Z_\xi||_{L^2_k L^1_\eta} \lesssim \mu^{-1/8} \D_\gamma^{1/8}(\E^{3/8} +  \mu^{-1/4 - \delta_*} \D_\gamma^{1/4 + \delta_*} \E^{1/4 - \delta_*} ),
    \end{equation}
\begin{equation}\label{main_D_gamma_3}
        ||A(k)^{2\delta_*} \langle k \rangle^{-1/2}  Z_{\xi} ||_{L^1_\xi} \lesssim \mu^{-1/4}(1+\mu^{-\delta_*})\D_\gamma^{1/4}\E^{1/4}.
    \end{equation}
\end{subequations}
\end{lemma}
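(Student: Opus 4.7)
The plan is to emulate the proof of Lemma \ref{new_energy_lemmas}: for all three estimates, I would first use Cauchy--Schwarz in $k$ with the weight $\sk^m$ (whose reciprocal-squared is integrable precisely because $m>0$) to pass from $L^1_k$ to $L^2_k$; then switch from moving to stationary coordinates via $Z_k(\eta)=\hat z_k(\eta-kt)$, so that $p_\xi^{-1/2}$ becomes $|\xi|^{-1}$ and $|\eta-kt|$ becomes the stationary Fourier variable $\eta$; and finally interpolate the remaining $L^1_\eta$ norm into $L^2_\eta$ by inserting the trivial identity $1=|\eta|^{1/4}|\eta|^{-1/4}\mathbf{1}_{|\eta|<1}+|\eta|^{1/2+2\delta_*}|\eta|^{-1/2-\delta_*}\mathbf{1}_{|\eta|\geq1}$ and using that the negative-power factor is $L^2_\eta$-integrable on each piece.

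For \eqref{main_D_gamma_1}, the passage $L^1_k\to L^2_k$ requires only $\int_{|k|<1}|k|^{-1/2}dk+\int_{|k|\geq1}\sk^{-1-2m}dk<\infty$, and the reduction above leaves the task of bounding $\|\min(|k|^{1/4},1)\max(|\eta|^{1/4},|\eta|^{1/2+2\delta_*})\sk^m z_\xi\|_{L^2_\xi}$. I would handle the $|\eta|<1$ contribution by the elementary AM--GM bound $\min(|k|^{1/4},1)|\eta|^{1/4}\lesssim|\xi|^{1/2}$, followed by Cauchy--Schwarz between $\||\xi|z\|_{L^2}^2\lesssim\mu^{-1}\D_\gamma$ and $\|z\|_{L^2}^2\lesssim\E$, producing the clean $\mu^{-1/4}\D_\gamma^{1/4}\E^{1/4}$ term. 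For $|\eta|\geq1$ I would drop the $\min(|k|^{1/4},1)$ factor and use the interpolation $\||\eta|^{1/2+2\delta_*}z\|_{L^2}\leq\||\eta|z\|^{1/2+2\delta_*}\|z\|^{1/2-2\delta_*}$, producing the mixed piece $\mu^{-1/4-\delta_*}\D_\gamma^{1/4+\delta_*}\E^{1/4-\delta_*}$. Estimate \eqref{main_D_gamma_2} follows identically, except that the missing $\min(|k|^{1/4},1)$ weight forces replacement of $|\xi|^{1/2}$ by $|\eta|^{1/4}$, whose interpolation $\||\eta|^{1/4}z\|_{L^2}\leq\||\eta|z\|^{1/4}\|z\|^{3/4}$ gives the $\mu^{-1/8}\D_\gamma^{1/8}\E^{3/8}$ leading term.

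The subtler estimate is \eqref{main_D_gamma_3}, where the extra weight $A(k)^{2\delta_*}=\alpha_k^{\delta_*}$ must be used to avoid the $\D_\gamma^{\delta_*}\E^{-\delta_*}$ factor that otherwise arises at high $|\eta|$. The key observation is that the $j=1$ piece of the $\tilde H(t)$-norm carries the multiplier $\langle\partial_X/\mu\rangle^{-1/3}\approx A(k)$, so $\E$ controls $\int\alpha_k\eta^2|z_\xi|^2\,d\xi$ in addition to $\|z\|_{L^2}^2$. For the $|\eta|\geq1$ contribution I would factor
\begin{equation*}
\alpha_k^{2\delta_*}|\eta|^{1+4\delta_*}|z|^2=\bigl(\alpha_k|\eta|^2|z|^2\bigr)^{2\delta_*}\cdot\bigl(|\eta|^2|z|^2\bigr)^{1/2}\cdot\bigl(|z|^2\bigr)^{1/2-2\delta_*}
\end{equation*}
and apply the three-way H\"older inequality with exponents $\bigl(1/(2\delta_*),2,1/(1/2-2\delta_*)\bigr)$, which sum to $1$; this bounds the integral by $\E^{2\delta_*}\cdot\mu^{-1/2}\D_\gamma^{1/2}\cdot\E^{1/2-2\delta_*}=\mu^{-1/2}\D_\gamma^{1/2}\E^{1/2}$, and hence the $L^2$-norm by $\mu^{-1/4}\D_\gamma^{1/4}\E^{1/4}$ without any $\D_\gamma^{\delta_*}$ penalty. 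The low-$\eta$ piece is handled as in \eqref{main_D_gamma_1} via $A(k)^{2\delta_*}\min(|k|^{1/4},1)|\eta|^{1/4}\lesssim|\xi|^{1/2}$, and the $(1+\mu^{-\delta_*})$ prefactor on the right-hand side absorbs the worst of the two resulting constants.

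The main obstacle is the three-way H\"older step in \eqref{main_D_gamma_3}: once one recognizes that the $j=1$ component of $\E$ is exactly a bound on $\|\alpha_k^{1/2}\eta z\|_{L^2}^2$, the exponents fall into place, but the choice is not dictated by the simpler two-way interpolations that sufficed for \eqref{main_D_gamma_1} and \eqref{main_D_gamma_2}. Everything else is routine weighted Cauchy--Schwarz of the kind already carried out in the earlier appendix lemmas.
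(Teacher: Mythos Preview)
Your proposal is correct and follows essentially the same approach as the paper: Cauchy--Schwarz in $k$ using $m>0$, the $L^1_\eta\to L^2_\eta$ interpolation via $\max(|\eta|^{1/4},|\eta|^{1/2+2\delta_*})$ in stationary coordinates, and for \eqref{main_D_gamma_3} the three-way H\"older splitting that trades the $A(k)^{2\delta_*}$ weight against the $\alpha_k(\eta-kt)^2$ piece of the energy. The paper's written three-factor interpolation for \eqref{main_D_gamma_3} appears to contain a typo (two identical factors $\|\sk^m|\eta-kt|Z\|$), but your factorization $\alpha_k^{2\delta_*}|\eta|^{1+4\delta_*}|z|^2=(\alpha_k|\eta|^2|z|^2)^{2\delta_*}(|\eta|^2|z|^2)^{1/2}(|z|^2)^{1/2-2\delta_*}$ is the intended one and yields the stated bound.
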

\begin{proof}
    The estimate \eqref{main_D_gamma_1} has an interpolation proof similar to the proof of \eqref{energy_with_gamma}. For \eqref{main_D_gamma_2}, we lack the factor of $\min(|k|^{1/4},1)$, and so we have by a slightly different proof:
    \begin{equation}
        \begin{split}
            || \langle k \rangle^{m}  Z_\xi||_{L^2_k L^1_\eta} &= || \langle k \rangle^{m}  z_\xi||_{L^2_k L^1_\eta}\\
            &\lesssim || \langle k \rangle^{m} |\eta|^{1/4} z_\xi||_{L^2_\xi} + || \langle k \rangle^{m} |\eta|^{1/2 + 2 \delta_*} z_\xi||_{L^2_\xi}\\
            &\lesssim \mu^{-1/8} \D_\gamma^{1/8}(\E^{3/8} +  \mu^{-1/4 - \delta_*} \D_\gamma^{1/4 + \delta_*} \E^{1/4 - \delta_*}).
        \end{split}
    \end{equation}
    The proof of \eqref{main_D_gamma_3} is slightly more interesting. Starting with an interpolation in $k$, we have by $m > 1/2$:
    \begin{equation}
        ||A(k)^{2\delta_*} \langle k \rangle^{-1/2}  Z_{\xi} ||_{L^1_\xi} \lesssim || \langle k \rangle^{m} A(k)^{2\delta_*} \min(|k|^{1/4},1)  Z_{\xi} ||_{L^2_k L^1_\eta}.
    \end{equation}
    Then by interpolation in stationary coordinates, as we have previously seen,
    \begin{equation}
        \begin{split}
            || \langle k \rangle^{m} A(k)^{2\delta_*} \min(|k|^{1/4},1)  Z_{\xi} ||_{L^2_k L^1_\eta} & \lesssim || \langle k \rangle^{m} A(k)^{2\delta_*} \min(|k|^{1/4},1) |\eta-kt|^{1/4} Z_{\xi} ||_{L^2_\xi}\\
            &\quad + || \langle k \rangle^{m} A(k)^{2\delta_*} \min(|k|^{1/4},1) |\eta-kt|^{1/2 + 2 \delta_*} Z_{\xi} ||_{L^2_\xi}\\
            &\lesssim \mu^{-1/4} \D_\gamma^{1/4} \E^{1/4}\\
            &\quad + || \langle k \rangle^{m} |\eta-kt| Z_{\xi} ||_{L^2_\xi}^{1/2 - 2 \delta_*}|| \langle k \rangle^{m} |\eta-kt| Z_{\xi} ||_{L^2_\xi}^{1/2}\\
            &\quad \quad || \langle k \rangle^{m} A(k) |\eta-kt| Z_{\xi} ||_{L^2_\xi}^{2\delta_*}\\
            &\lesssim \mu^{-1/4}(1+\mu^{-\delta_*})\D_\gamma^{1/4} \E^{1/4},
        \end{split}
    \end{equation}
    proving \eqref{main_D_gamma_3}.
\end{proof}

\begin{lemma}\label{alpha_energy_control}
For any $\delta_* \in (0,1/12)$, the following estimate, purely in terms of the energy $\E$ holds:
\begin{equation}
    ||  \sk^{-1/2} A(k)^{2\delta_*} p_\xi^{-1/4} Z_\xi||_{L^1_\xi} \lesssim ||  \sk^{m} A(k)^{2\delta_*} \min(|k|^{1/4},1)p_\xi^{-1/4} Z_\xi||_{L^2_k L^1_\eta} \lesssim \E^{1/2} + \mu^{-\delta^*} \E^{1/2}.
\end{equation}
\end{lemma}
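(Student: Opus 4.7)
The plan is to establish the two inequalities in turn: the first by an elementary $L^1_k\to L^2_k$ interpolation exploiting $m>0$, and the second by passing to stationary Fourier variables and interpolating in $\eta$ against weights already controlled by $\E$.

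First, I would handle the inequality $||\sk^{-1/2}(\cdot)||_{L^1_\xi}\lesssim||\sk^m\min(|k|^{1/4},1)(\cdot)||_{L^2_k L^1_\eta}$ by Cauchy--Schwarz in $k$ against the weight $\sk^{-m-1/2}\max(|k|^{-1/4},1)$, whose square is integrable thanks to $m>0$; this mirrors the first step in the proofs of Lemmas~\ref{new_energy_lemmas} and \ref{main_D_tau_lemma}.

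For the second inequality, the key move is to switch to the stationary Fourier picture. Since $Z(t,X,Y)=z(t,X+tY,Y)$ forces $\hat Z(k,\eta)=\hat z(k,\eta-kt)$, the substitution $\tilde\eta=\eta-kt$ turns $p_\xi^{-1/4}Z_\xi$ into $|\xi|^{-1/2}z_{(k,\tilde\eta)}$ without changing the $L^2_k L^1_\eta$ norm (pointwise in $k$, the $L^1_\eta$ norm is translation-invariant). I would then split the $\eta$-integration into the regions $|\eta|\ge1$ and $|\eta|<1$. On $\{|\eta|\ge1\}$ one has $|\xi|^{-1/2}\le|\eta|^{-1/2}$; Cauchy--Schwarz in $\eta$ against the $L^2$-integrable weight $\langle\eta\rangle^{-1/2-\delta_*}$ reduces matters to controlling $||\langle\eta\rangle^{\delta_*}\sk^m A(k)^{2\delta_*}z_\xi||_{L^2_\xi}$, and translating back to moving coordinates turns $\langle\eta\rangle$ into $\langle\eta-kt\rangle$. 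The standard interpolation $||g^{\delta_*}f||_{L^2}\le||f||_{L^2}^{1-\delta_*}||gf||_{L^2}^{\delta_*}$ with $g=A(k)|\eta-kt|$ and $f=\sk^m Z_\xi$ then bounds this by $\E^{1/2}$, since the $c_\alpha\alpha_k(\eta-kt)^2$ piece of $E_k$ provides $||\sk^m A(k)(\eta-kt)Z_\xi||_{L^2}\lesssim\E^{1/2}$. On $\{|\eta|<1\}$ I would further separate $|k|\ge1$ (where $|\xi|^{-1/2}\le|k|^{-1/2}\le1$ and the bounded $\eta$-interval reduces $L^1_\eta$ to $L^2_\eta$ by Cauchy--Schwarz) from $|k|<1$ (where the mild logarithmic singularity $\int_{|\eta|<1}(k^2+\eta^2)^{-1/2}\,d\eta\lesssim\log(2/|k|)$ is absorbed into the factor $\min(|k|^{1/4},1)=|k|^{1/4}$, since $|k|^{1/2}\log(2/|k|)\lesssim1$ uniformly on $|k|<1$).

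The main obstacle is calibrating the $A(k)^{2\delta_*}$ factor against the Sobolev weight $\langle\eta\rangle^{\delta_*}$ on $\{|\eta|\ge1\}$: because $\E$ controls $A(k)|\eta-kt|$ but not $A(k)^2|\eta-kt|$, one must use the pointwise split $\langle\eta-kt\rangle^{\delta_*}A(k)^{2\delta_*}\lesssim A(k)^{2\delta_*}+\bigl(A(k)|\eta-kt|\bigr)^{\delta_*}A(k)^{\delta_*}$ together with H\"older (exponents $1/(1-\delta_*)$ and $1/\delta_*$) to land in exactly the $L^2$-norms $\E^{1/2}$ dominates. Carried out carefully this already gives the clean bound $\E^{1/2}$; the extra $\mu^{-\delta_*}\E^{1/2}$ permitted on the right-hand side is then a comfortable cushion (arising if one dominates $A(k)^{\delta_*}$ by the crude constant $\mu^{-\delta_*/3}$ coming from $A(k)\le \mu^{1/3}|k|^{-1/3}$) rather than a strictly necessary loss.
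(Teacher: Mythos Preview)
Your approach is correct and matches the paper's indicated strategy (the paper omits the proof, citing the ideas of Lemma~\ref{new_energy_lemmas} and equation~\eqref{main_D_gamma_3}). One minor streamlining: choosing the Sobolev weight $|\eta|^{-1/2-2\delta_*}$ rather than $\langle\eta\rangle^{-1/2-\delta_*}$ on $\{|\eta|\ge1\}$ leaves exactly $|\eta-kt|^{2\delta_*}$, which pairs with $A(k)^{2\delta_*}$ to form $(A(k)|\eta-kt|)^{2\delta_*}$ and is handled by a single interpolation $\|g^{2\delta_*}f\|_{L^2}\le\|gf\|_{L^2}^{2\delta_*}\|f\|_{L^2}^{1-2\delta_*}$, obviating the extra pointwise split you flag as the main obstacle.
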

\begin{proof}
We do not present the proof of Lemma \ref{alpha_energy_control}, since it follows similar ideas to those of Lemma \ref{new_energy_lemmas} and \eqref{main_D_gamma_3} from Lemma \ref{main_D_gamma_lemma}.
\end{proof}

\begin{lemma}\label{strange_interp}
The following estimate holds:
    \begin{equation}
        ||  \sk^m |k| A(k) p_\xi^{-1/4} Z_\xi||_{L^2_k L^1_\eta} \lesssim \D_\tau^{1/4}\D_\beta^{1/4} + \mu^{-\delta_*}\D_\alpha^{\delta_*}\D_{\tau \alpha}^{1/4}(\D_\beta^{1/8} \D_\gamma^{1/8 - \delta_*}  + \D_\beta^{1/4 - \delta_*}).
    \end{equation}
\end{lemma}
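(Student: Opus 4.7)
The plan is to adapt the interpolation template used throughout the appendix. Starting from the standard $L^1_\eta \hookrightarrow L^2_\eta$ bound
\begin{equation*}
||f||_{L^1_\eta} \lesssim |||\eta-kt|^{1/4} f||_{L^2_\eta} + |||\eta-kt|^{1/2+2\delta_*} f||_{L^2_\eta},
\end{equation*}
(justified, as in \eqref{energy_lemma_key_eqn}, by splitting $\{|\eta-kt|\leq 1\}$ from $\{|\eta-kt|>1\}$), I would decompose the target norm as $T_1 + T_2$, with $T_j$ the $L^2_\xi$ norm carrying the corresponding $|\eta-kt|$ weight applied to $\sk^m|k|A(k)p_\xi^{-1/4}Z_\xi$. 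The main term $\D_\tau^{1/4}\D_\beta^{1/4}$ will arise from $T_1$, and the two correction sub-terms from $T_2$ split according to whether $|k|\geq\mu$ or $|k|<\mu$. The central algebraic input is the elementary identity $\alpha_k^2 = \mu\beta_k$, verified immediately from the explicit formulae \eqref{definition_of_alpha_k}--\eqref{definition_of_beta_k}; this is what allows the $A(k)$ weight to be traded between $B(k)$ and powers of $\mu$.

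For $T_1$, I would write $T_1^2 = \int \sk^{2m} k^2\alpha_k |\eta-kt|^{1/2} p_\xi^{-1/2}|Z_\xi|^2 d\xi$ and apply Cauchy--Schwarz via the pointwise factorization $k^2\alpha_k|\eta-kt|^{1/2}p_\xi^{-1/2} = (k^2 p_\xi^{-1})^{1/2}(\alpha_k^2 k^2|\eta-kt|)^{1/2}$. The first factor integrates against $|Z|^2$ to give $\D_\tau$. For the second, substituting $\alpha_k^2 = \mu\beta_k$ rewrites the integrand as $\mu \lambda_k|\eta-kt|$; on the subregion $\{|\eta-kt|\leq\mu^{-1}\}$ this is $\leq \lambda_k$, giving $\D_\beta$ exactly and hence the main term $\D_\tau^{1/4}\D_\beta^{1/4}$. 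On the complement $\{|\eta-kt|>\mu^{-1}\}$ the trivial inequality $1 \leq (\mu|\eta-kt|)^{1/2+4\delta_*}$ converts the contribution into a multiple of $\mu^{1/2+4\delta_*}T_2^2$, which (since $\mu<1$) is absorbed into the $T_2$ estimate below.

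For $T_2$, the integrand $\sk^{2m}k^2\alpha_k|\eta-kt|^{1+4\delta_*}p_\xi^{-1/2}$ will be dominated pointwise by a product of dissipation integrands with exponents summing to $1$, after which multilinear H\"older's inequality will produce the two advertised corrections. On $\{|k|\geq\mu\}$, where $\alpha_k\lambda_k = \mu$ exactly, I expect a three-factor form built from the integrands of $\D_\alpha^{2\delta_*}$, $\D_{\tau\alpha}^{1/2}$, and $\D_\beta^{1/2-2\delta_*}$ to dominate the integrand, yielding the sub-term $\mu^{-\delta_*}\D_\alpha^{\delta_*}\D_{\tau\alpha}^{1/4}\D_\beta^{1/4-\delta_*}$; on $\{|k|<\mu\}$, where $\alpha_k\equiv 1$ and this matching degenerates, an extra $\D_\gamma$ factor is needed to absorb the residual $p$-weight, producing the alternative $\D_\beta^{1/8}\D_\gamma^{1/8-\delta_*}$ factor. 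A case check on the thresholds $|k|=\mu$ and $|\eta-kt|=1$ verifies the pointwise dominance in each regime.

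The main technical obstacle will be verifying this pointwise product inequality with the correct power of $\mu$, which requires careful bookkeeping of the anisotropic scaling of $\alpha_k$, $\beta_k$, and $\lambda_k$ across the threshold $|k|=\mu$. Beyond this bookkeeping, no new tools are needed: Cauchy--Schwarz, H\"older's inequality, the $L^1_\eta$ interpolation already used in Lemmas \ref{new_energy_lemmas}--\ref{alpha_energy_control}, and the elementary scaling identities $\alpha_k^2=\mu\beta_k$, $\beta_k k^2 = \lambda_k$, and $\alpha_k\lambda_k\leq\mu$ are enough to close the estimate.
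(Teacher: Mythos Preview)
Your overall template (the $L^1_\eta\to L^2_\eta$ interpolation, the identity $\alpha_k^2=\mu\beta_k$, and the Cauchy--Schwarz factorization producing $\D_\tau^{1/4}\D_\beta^{1/4}$) is the right one and matches the paper's approach. However, you have \emph{swapped the two frequency regimes} in the treatment of $T_2$, and this is a genuine gap. Your proposed three--factor product $(\D_\alpha)^{2\delta_*}(\D_{\tau\alpha})^{1/2}(\D_\beta)^{1/2-2\delta_*}$ does \emph{not} dominate the $T_2$ integrand on $\{|k|\geq\mu\}$: carrying out the pointwise check you yourself recommend, one finds (using $\alpha_k=\mu^{2/3}|k|^{-2/3}$, $\lambda_k=\mu^{1/3}|k|^{2/3}$, and $p\geq k^2$) a residual factor
\[
\frac{k^2\alpha_k|\eta-kt|^{1+4\delta_*}p^{-1/2}}{\mu^{-2\delta_*}(\alpha_k\mu p(\eta-kt)^2)^{2\delta_*}(\alpha_kk^2(\eta-kt)^2p^{-1})^{1/2}\lambda_k^{1/2-2\delta_*}}
=\mu^{1/6-2\delta_*/3}\,|k|^{1/3+8\delta_*/3}\,p^{-2\delta_*}\ \lesssim\ \mu^{1/6-2\delta_*/3}\,|k|^{1/3-4\delta_*/3},
\]
which is unbounded as $|k|\to\infty$ for any $\delta_*<1/4$. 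By contrast, on $\{|k|<\mu\}$ (where $\alpha_k=1$, $\lambda_k=k^2/\mu$) the same residual factor collapses to $\mu^{1/2-2\delta_*}|k|^{4\delta_*}p^{-2\delta_*}\leq\mu^{1/2-2\delta_*}\leq 1$, so the three--factor bound \emph{does} close there. The paper accordingly uses the three--factor product on $\{|k|<\mu\}$ and brings in the additional $\D_\gamma$ factor on $\{|k|\geq\mu\}$: it is precisely the full $p$ in $\D_\gamma$ (hence an extra $|k|^2$) that absorbs the $|k|^{1/3}$ surplus at high frequency. Your intuition that $\alpha_k\lambda_k=\mu$ exactly for $|k|\geq\mu$ is correct but misleading---that identity does not supply enough $|k|$-growth to close the three--factor bound there.

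There is also a smaller ordering issue in your $T_1$ argument: after Cauchy--Schwarz you try to bound the $\{|\eta-kt|>\mu^{-1}\}$ portion of the \emph{second factor} by $T_2^2$, but the required pointwise inequality $\alpha_k\lesssim |\eta-kt|^{4\delta_*}p^{-1/2}$ fails (take $|k|<\mu$ and $|\eta-kt|$ large). The easy fix is to split $\{|\eta-kt|\leq 1\}$ versus $\{|\eta-kt|>1\}$ \emph{before} Cauchy--Schwarz; on the latter region the $T_1$ integrand is pointwise dominated by the $T_2$ integrand, while on the former your Cauchy--Schwarz argument gives $\D_\tau^{1/4}\D_\beta^{1/4}$ directly.
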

\begin{proof}
    To prove Lemma \ref{strange_interp}, we will split into $|k| \geq \mu$ and $|k| < \mu$. For $|k| < \mu$, $A(k) = 1$. Hence
    \begin{equation}
        \begin{split}
            ||  \sk^m |k| A(k) p_\xi^{-1/4} Z_\xi||_{L^2_{|k| < \mu} L^1_\eta} = ||  \sk^m |k| p_\xi^{-1/4} Z_\xi||_{L^2_{|k| < \mu} L^1_\eta}.
        \end{split}
    \end{equation}
    Now by interpolation in stationary coordinates we find
    \begin{equation}
        \begin{split}
            ||  \sk^m |k| p_\xi^{-1/4} Z_\xi||_{L^2_{|k| < \mu} L^1_\eta} & \lesssim ||  \sk^m |k| p_\xi^{-1/4} \langle \eta - kt \rangle^{1/2 + \delta_*}Z_\xi||_{L^2_{|k| < \mu} L^2_\eta}\\
            &\lesssim ||\sk^m |k| p_\xi^{-1/4} Z_\xi||_{L^2_{|k| < \mu} L^2_\eta}\\
            &\quad + ||\sk^m |k| p_\xi^{-1/4} |\eta -kt|^{1/2 + 2 \delta_*} Z_\xi||_{L^2_{|k| < \mu} L^2_\eta}\\
            &\lesssim \D_\tau^{1/4} \D_\beta^{1/4}\\
            &\quad + ||\sk^m (|k| |\eta -kt| p_\xi^{-1/2})^{1/2} |k|^{1/2} |\eta -kt|^{2 \delta_*} Z_\xi||_{L^2_{|k| < \mu} L^2_\eta}\\
            &\lesssim \D_\tau^{1/4} \D_\beta^{1/4} + \D_{\tau \alpha}^{1/4} \mu^{-\delta_*} \D_\alpha^{\delta_*} \D_\beta^{1/4 - \delta_*}.
        \end{split}
    \end{equation}
    For high-in-$k$ frequencies, $A(k) = \mu^{1/3} |k|^{-1/3}$. Thus we obtain
    \begin{equation}
        \begin{split}
            ||  \sk^m |k| A(k) p_\xi^{-1/4} Z_\xi||_{L^2_{|k| \geq \mu} L^1_\eta} & \lesssim ||  \sk^m |k| A(k) \langle \eta - kt \rangle^{1/2 + 2 \delta_*} p_\xi^{-1/4} Z_\xi||_{L^2_{|k| \geq \mu} L^2_\eta}\\
            &\lesssim ||  \sk^m (|k| p_\xi^{-1/4})^{1/2} \mu^{1/3} |k|^{1/6} Z_\xi||_{L^2_{|k| \geq \mu} L^2_\eta}\\
            &\quad + ||  \sk^m (|k| A(k) |\eta-kt| p_\xi^{-1/2})^{1/2} \mu^{1/6} |k|^{1/3} |\eta-kt|^{ 2\delta_*} Z_\xi||_{L^2_{|k| \geq \mu} L^2_\eta}\\
            &\lesssim \D_\tau^{1/4} \D_\beta^{1/4} + \mu^{1/6}\D_{\tau \alpha}^{1/4}||\sk^m |k|^{1/3} Z_\xi||_{L^2_{|k| \geq \mu}L^2_\eta}^{1/4}||\sk^m |k| Z_\xi||_{L^2_{|k| \geq \mu}L^2_\eta}^{1/4 - 2 \delta_*}\\
            &\quad \quad \quad \quad ||\sk^m p_\xi^{1/4} |\eta -kt| Z_\xi||_{L^2_{|k| \geq \mu}L^2_\eta}^{2 \delta_*}\\
            &\lesssim \D_\tau^{1/4} \D_\beta^{1/4} + \mu^{-\delta_*}\D_{\tau \alpha}^{1/4}\D_\beta^{1/8} \D_\gamma^{1/8 - \delta_*} \D_\alpha^{\delta_*},
        \end{split}
    \end{equation}
    which completes the proof.
\end{proof}
\begin{lemma}\label{D_taualpha_and_alpha}
For any $\delta_* \in (0,1/12)$, the following holds
\begin{equation}
     ||\sk^m  \min(|k|^{1/4},1) |k|^{2/3} |\eta - kt| p_\xi^{-3/4} Z_\xi||_{L^2_{|k| \geq \mu} L^1_\eta} \lesssim \mu^{-1/3}(\D_{\tau\alpha}^{1/2} + \mu^{-\delta_*} \D_\alpha^{\delta_*} \D_{\tau \alpha}^{1/2 - \delta_*})
\end{equation}
\end{lemma}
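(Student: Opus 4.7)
The plan is to exploit the identity $A(k) = \mu^{1/3}|k|^{-1/3}$ on $\{|k|\geq \mu\}$, which lets me rewrite $|k|^{2/3} = \mu^{-1/3} A(k) |k|$ and pull the factor $\mu^{-1/3}$ outside the norm. After this reduction I need only show
\begin{equation*}
\|\sk^m \min(|k|^{1/4},1)\, A(k)\,|k|\,|\eta-kt|\, p_\xi^{-3/4}\, Z_\xi\|_{L^2_{|k|\geq\mu} L^1_\eta} \lesssim \D_{\tau\alpha}^{1/2} + \mu^{-\delta_*}\D_\alpha^{\delta_*}\D_{\tau\alpha}^{1/2-\delta_*}.
\end{equation*}
Following the template of Lemma \ref{new_energy_lemmas}, I would pass to stationary coordinates $\zeta = \eta - kt$ (under which $p_\xi = k^2+\zeta^2$ and $Z_\xi$ is replaced by $z_{k,\zeta}$), and then interpolate the $L^1_\zeta$ norm via Cauchy--Schwarz to get
\begin{equation*}
\|h\|_{L^1_\zeta} \lesssim \|\,|\zeta|^{1/4} h\|_{L^2_{|\zeta|<1}} + \|\,|\zeta|^{1/2+2\delta_*} h\|_{L^2_{|\zeta|\geq 1}}.
\end{equation*}
This splits the estimate into a low-$\zeta$ and a high-$\zeta$ piece, which give the two terms on the right-hand side of the lemma.

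For the low-$\zeta$ piece, the goal is to absorb the extra factor $|\zeta|^{1/4} p_\xi^{-1/4}$ that remains after peeling off the $\D_{\tau\alpha}$ template $A(k)|k||\zeta| p_\xi^{-1/2}$. When $|k|\geq 1$ this is immediate since $\min(|k|^{1/4},1)=1$ and $|\zeta|^{1/4} p_\xi^{-1/4} \leq |\zeta|^{1/4} |k|^{-1/2}\leq 1$. When $\mu\leq |k|<1$ the useful identity is the AM--GM type bound $|k|^{1/4}|\zeta|^{1/4} \leq (k^2+\zeta^2)^{1/4} = p_\xi^{1/4}$, which gives $\min(|k|^{1/4},1)\,|\zeta|^{1/4} p_\xi^{-1/4} \leq 1$. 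In both cases the low-$\zeta$ contribution is pointwise dominated by $\sk^m A(k)|k||\zeta| p_\xi^{-1/2} Z$, whose $L^2$ norm is controlled by $\D_{\tau\alpha}^{1/2}$.

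For the high-$\zeta$ piece I plan a Hölder interpolation between the $\D_{\tau\alpha}$ template (featuring $p_\xi^{-1/2}$) and the $\D_\alpha$ template (featuring $p_\xi^{1/2}$). The pointwise identity is
\begin{equation*}
A(k)|k||\zeta|^{3/2+2\delta_*} p_\xi^{-3/4} \leq \bigl(A(k)|k||\zeta| p_\xi^{-1/2}\bigr)^{1-2\delta_*} \bigl(A(k)\, p_\xi^{1/2}|\zeta|\bigr)^{2\delta_*} \cdot R,
\end{equation*}
with residual $R = |k|^{2\delta_*} |\zeta|^{1/2+2\delta_*} p_\xi^{-1/4-2\delta_*}$, which is harmlessly bounded by $1$ using $|k|^{2\delta_*}\leq p_\xi^{\delta_*}$ and $|\zeta|^{1/2+2\delta_*}\leq p_\xi^{1/4+\delta_*}$. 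Hölder in $(k,\zeta)$ with exponents $1/(1-2\delta_*)$ and $1/(2\delta_*)$ then yields the bound $\D_{\tau\alpha}^{(1-2\delta_*)/2}\cdot(\mu^{-1/2}\D_\alpha^{1/2})^{2\delta_*} = \mu^{-\delta_*}\D_{\tau\alpha}^{1/2-\delta_*}\D_\alpha^{\delta_*}$; the factor $\mu^{-1/2}$ arises because $\D_\alpha^{1/2}$ controls $\sqrt{\mu}\, A(k) p_\xi^{1/2}|\zeta| Z$ rather than $A(k) p_\xi^{1/2}|\zeta| Z$ itself.

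The main obstacle is the fact that $p_\xi^{-3/4}$ is more singular than the $p_\xi^{-1/2}$ naturally built into $\D_{\tau\alpha}$, so the excess $p_\xi^{-1/4}$ must be distributed between the two $\zeta$-regimes without spoiling the $\mu^{-1/3}$ scaling. The delicate point is the intermediate range $\mu\leq |k|<1$ in the low-$\zeta$ regime, where both $\min(|k|^{1/4},1)$ and $p_\xi^{-1/4}$ look potentially bad in isolation; it is precisely the AM--GM collapse $|k|^{1/4}|\zeta|^{1/4}\leq p_\xi^{1/4}$ that saves the estimate. In the high-$\zeta$ regime the trade-off between $\D_{\tau\alpha}$ and $\D_\alpha$ costs a factor $\mu^{-\delta_*}$, which is the origin of the mixed term in the statement.
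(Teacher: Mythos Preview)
Your proposal is correct and follows exactly the interpolation template established in the Appendix (Lemmas \ref{new_energy_lemmas}--\ref{main_D_tau_alpha_lemma}); the paper itself omits the proof for space. The key identifications $|k|^{2/3}=\mu^{-1/3}A(k)|k|$ on $\{|k|\geq\mu\}$, the low/high-$\zeta$ split, the AM--GM collapse $|k|^{1/4}|\zeta|^{1/4}\lesssim p_\xi^{1/4}$ for the low piece, and the H\"older interpolation between $\D_{\tau\alpha}$ and $\mu^{-1}\D_\alpha$ for the high piece are all sound and consistent with the paper's conventions.
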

\begin{proof}
We omit the proof for the sake of space.
\end{proof}
\begin{lemma}\label{half_energy_half_alpha_lemma}For any $\delta_* \in (0,1/12)$, the following holds:
    \begin{equation}
        ||\dk^J \sk^m  A(k) |\eta -kt| Z_\xi||_{L^2_k L^1_\eta} \lesssim \mu^{-1/4}(\E^{1/4} \D_\gamma^{1/4} + \mu^{-\delta_*}\E^{1/4-\delta_*} \D_{\alpha}^{1/4 + \delta_*})
    \end{equation}
\end{lemma}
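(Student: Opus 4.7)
My plan follows the two-step interpolation template of Lemmas~\ref{new_energy_lemmas}--\ref{strange_interp}: first reduce the inner $L^1_\eta$ norm to $L^2_\eta$ via weighted Cauchy--Schwarz, then control the resulting $L^2_\xi$ quantity by chained Cauchy--Schwarz inequalities that extract factors of $\E$, $\D_\gamma$, and $\D_\alpha$. Since the shift $\eta \mapsto \eta + kt$ is measure-preserving, we have $\|A(k)|\eta-kt|Z_\xi\|_{L^2_k L^1_\eta} = \|A(k)|\eta|z_\xi\|_{L^2_k L^1_\eta}$ after changing to stationary coordinates. Splitting the $\eta$-integration into $|\eta| < 1$ and $|\eta| \geq 1$ and applying Cauchy--Schwarz in $\eta$ against the $L^2$ weights $|\eta|^{-1/4}\mathbf{1}_{|\eta|<1}$ and $\langle \eta \rangle^{-1/2-2\delta_*}\mathbf{1}_{|\eta|\geq 1}$ reduces the estimate, upon returning to moving coordinates, to bounding $\|A(k)|\eta-kt|^{5/4} Z_\xi\|_{L^2_\xi}$ and $\|A(k)|\eta-kt|^{3/2+2\delta_*} Z_\xi\|_{L^2_\xi}$.

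For the principal (low-$\eta$) term, I would split the integrand as
\[
    A(k)^2(\eta-kt)^{5/2}|Z_\xi|^2 = \bigl(A(k)p_\xi^{1/4}|Z_\xi|\bigr) \cdot \bigl(A(k)(\eta-kt)^{5/2}p_\xi^{-1/4}|Z_\xi|\bigr)
\]
and apply Cauchy--Schwarz in $\xi$. The first factor $\int A(k)^2 p_\xi^{1/2}|Z_\xi|^2\,d\xi$ is bounded by $\mu^{-1/2}\D_\gamma^{1/2}\E^{1/2}$ via a further Cauchy--Schwarz using $A(k) \leq 1$, yielding precisely the $\D_\gamma$ factor appearing in the claim. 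The second factor is bounded via the pointwise inequality $(\eta-kt)^5 p_\xi^{-1/2} \leq (\eta-kt)^4 \leq (\eta-kt)^2 p_\xi$ to give $\mu^{-1}\D_\alpha$.

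For the correction (high-$\eta$) term, I would use $|\eta-kt|^{3+4\delta_*} \leq (\eta-kt)^2 p_\xi^{1/2+2\delta_*}$ combined with the pointwise interpolation
\[
    A(k)(\eta-kt)p_\xi^s|Z_\xi| \leq \bigl(A(k)(\eta-kt)|Z_\xi|\bigr)^{1-2s}\bigl(A(k)(\eta-kt)p_\xi^{1/2}|Z_\xi|\bigr)^{2s}
\]
at $s = 1/4 + \delta_*$. Integrating and applying H\"older in $\xi$ with conjugate exponents $1/(1-2s)$ and $1/(2s)$ produces $\E^{1/2-2\delta_*}(\mu^{-1}\D_\alpha)^{1/2+2\delta_*}$ for the squared integral, and taking square roots gives the advertised $\mu^{-1/4-\delta_*}\E^{1/4-\delta_*}\D_\alpha^{1/4+\delta_*}$.

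The main obstacle will be ensuring that the two Cauchy--Schwarz factors produced from the principal-term splitting combine into the stated $\mu^{-1/4}\E^{1/4}\D_\gamma^{1/4}$ form: a direct computation yields the intermediate quantity $\mu^{-3/8}\D_\gamma^{1/8}\D_\alpha^{1/4}\E^{1/8}$, and one must trade half of the $\D_\alpha^{1/4}$ factor against the correction-term bound (absorbing it into $\D_\alpha^{1/4+\delta_*}$ via the bootstrap hypothesis $\D_\alpha \lesssim \mu^{1+2\delta_*}$) while the remaining half combines with $\mu^{-1/8}\D_\gamma^{1/8}\E^{1/8}$ via Young's product inequality to give $\mu^{-1/4}\D_\gamma^{1/4}\E^{1/4}$. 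Carefully executing this trade without losing powers of $\mu$ is the subtle computational point that closes the estimate.
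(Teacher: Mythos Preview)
Your high-$\eta$ argument is correct: the pointwise interpolation at $s=\tfrac14+\delta_*$ followed by H\"older cleanly produces $\mu^{-1/4-\delta_*}\E^{1/4-\delta_*}\D_\alpha^{1/4+\delta_*}$.

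The gap is in the principal (low-$\eta$) term. Your splitting
\[
A(k)^2(\eta-kt)^{5/2}|Z_\xi|^2=\bigl(A(k)p_\xi^{1/4}|Z_\xi|\bigr)\bigl(A(k)(\eta-kt)^{5/2}p_\xi^{-1/4}|Z_\xi|\bigr)
\]
does yield the intermediate quantity $\mu^{-3/8}\D_\gamma^{1/8}\E^{1/8}\D_\alpha^{1/4}$, but this is \emph{not} dominated by either term on the right-hand side of the lemma. Your proposed rescue --- trading $\D_\alpha$ via a ``bootstrap hypothesis $\D_\alpha\lesssim\mu^{1+2\delta_*}$'' --- is invalid: the bootstrap controls $\E$, not the dissipation $\D_\alpha$, which carries no a priori smallness of this kind. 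In any case the lemma is a functional inequality that must hold independently of the bootstrap.

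The correct route for the principal term is much simpler and uses the very restriction you discard. In stationary coordinates the principal term lives on $|\eta|<1$, where $|\eta|^{5/4}\le|\eta|^{1/2}$. Hence
\[
\|1_{|\eta|<1}A(k)|\eta|^{5/4}z_\xi\|_{L^2_\xi}\le\|A(k)|\eta|^{1/2}z_\xi\|_{L^2_\xi}\le\||\xi|^{1/2}z_\xi\|_{L^2_\xi}=\|p_\xi^{1/4}Z_\xi\|_{L^2_\xi},
\]
using $A(k)\le1$ and $|\eta|^{1/2}\le|\xi|^{1/2}$. A single Cauchy--Schwarz between $\E$ and $\D_\gamma$ then gives $\|p_\xi^{1/4}Z_\xi\|_{L^2_\xi}\le\mu^{-1/4}\E^{1/4}\D_\gamma^{1/4}$, which is exactly the first term on the right-hand side. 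This is precisely the mechanism behind Lemma~\ref{main_D_gamma_lemma}, which the paper cites as the template.
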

\begin{proof}
    The proof does not have new ideas beyond those used in Lemma \ref{main_D_gamma_lemma}.
\end{proof}
\begin{lemma}\label{main_D_tau_alpha_lemma} For any $\delta_* \in (0,1/12)$, the following holds:
            \begin{equation}
        || \sk^m A(k) |k|^{1/2}|\eta - kt|^{1/2} p_{\xi}^{-1/4} Z_\xi||_{L^2_k L^1_\eta} \lesssim \D_{\tau \alpha}^{1/4} \E^{1/4} + \D_{\tau \alpha}^{1/4} \mu^{-\delta_*} \D_\gamma^{\delta_*} \E^{1/4 - \delta_*}
    \end{equation}
\end{lemma}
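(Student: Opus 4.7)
The plan is to follow the template of Lemmas \ref{new_energy_lemmas}--\ref{main_D_gamma_lemma}: upgrade the inner $L^1_\eta$ norm to $L^2_\eta$ by an integrable Bessel weight, split the $\xi$-integral according to $|\eta-kt|$, and then distribute the resulting $L^2_\xi$ weight among the integrands of $\D_{\tau \alpha}$, $\D_\gamma$, and $\E$ by Cauchy--Schwarz and log-convex H\"older. I would first use the pointwise-in-$k$ bound
\begin{equation*}
\|g\|_{L^1_\eta} \leq \|\langle \eta-kt\rangle^{-1/2-2\delta_*}\|_{L^2_\eta}\,\|\langle \eta-kt\rangle^{1/2+2\delta_*} g\|_{L^2_\eta} \lesssim \|\langle \eta-kt\rangle^{1/2+2\delta_*} g\|_{L^2_\eta},
\end{equation*}
valid because $1+4\delta_*>1$, to reduce matters to controlling $\|\sk^m A(k)|k|^{1/2}|\eta-kt|^{1/2}\langle\eta-kt\rangle^{1/2+2\delta_*}p_\xi^{-1/4} Z_\xi\|_{L^2_\xi}$.

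I would then split the $\xi$-integration into $\{|\eta-kt|\leq 1\}$ and $\{|\eta-kt|>1\}$. On the low set, $\langle \eta-kt\rangle^{1/2+2\delta_*}\approx 1$, so the effective weight is $w = \sk^m A(k)|k|^{1/2}|\eta-kt|^{1/2}p_\xi^{-1/4}$. The factorization $w^2 = \bigl(\sk^m A(k)|k||\eta-kt|p_\xi^{-1/2}\bigr)\cdot \bigl(\sk^m A(k)\bigr)$ allows Cauchy--Schwarz in $L^2_\xi$ to recognize the first factor as the pointwise integrand of $\D_{\tau \alpha}^{1/2}$ (via $A(k)^2=\alpha_k$ and the definition of $D_{k,\tau\alpha}$), while the second is bounded by $\E^{1/2}$ since $A(k)\leq 1$, producing the first summand $\D_{\tau \alpha}^{1/4}\E^{1/4}$.

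On the high set the effective weight is $w' = \sk^m A(k)|k|^{1/2}|\eta-kt|^{1+2\delta_*}p_\xi^{-1/4}$. An analogous split $(w')^2 = \bigl(\sk^m A(k)|k||\eta-kt|p_\xi^{-1/2}\bigr)\cdot\bigl(\sk^m A(k)|\eta-kt|^{1+4\delta_*}\bigr)$ reduces the task, via Cauchy--Schwarz, to bounding the auxiliary norm $\|\sk^m A(k)|\eta-kt|^{1+4\delta_*}Z_\xi\|_{L^2}$. For this I would first trade the excess power of $|\eta-kt|$ against the $p_\xi$-weight using $|\eta-kt|^{4\delta_*}\leq p_\xi^{2\delta_*}$, and then apply log-convex H\"older interpolation between the $\E^{1/2}$-weight $\sk^m A(k)|\eta-kt|$ and the $\D_\gamma^{1/2}$-weight $\sk^m\sqrt{\mu p_\xi}$ (absorbing mismatched powers using $A\leq 1$ and $|\eta-kt|\leq p_\xi^{1/2}$) to obtain $\|\sk^m A(k)|\eta-kt|^{1+4\delta_*}Z_\xi\|_{L^2}\lesssim \E^{(1-4\delta_*)/2}\,\mu^{-2\delta_*}\D_\gamma^{2\delta_*}$. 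Feeding this back into the Cauchy--Schwarz bound yields $\|w' Z_\xi\|_{L^2}\lesssim \D_{\tau \alpha}^{1/4}\mu^{-\delta_*}\D_\gamma^{\delta_*}\E^{1/4-\delta_*}$, the second summand, and adding the two pieces closes the estimate.

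The main obstacle is the log-convex step on the high-$|\eta-kt|$ set: the exponent $1+4\delta_*$ strictly exceeds the power $1$ of $|\eta-kt|$ available in the $\alpha$-part of $E_k$, so the surplus must be converted to a $p_\xi$-factor using $|\eta-kt|^2\leq p_\xi$ and then rewritten as $(\sqrt{\mu p_\xi})^{4\delta_*}/\mu^{2\delta_*}$ in order to produce a $\D_\gamma^{\delta_*}$ (rather than a $\D_\alpha^{\delta_*}$) and the accompanying explicit $\mu^{-\delta_*}$ in the stated bound; getting the exponents of $A(k)$, $|\eta-kt|$, and $p_\xi$ to balance simultaneously in the H\"older step, while using only $A\leq 1$ and $|\eta-kt|\leq p_\xi^{1/2}$ to absorb slack, is the delicate part of the argument.
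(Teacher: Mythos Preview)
Your proposal is correct and follows essentially the same route as the paper: upgrade $L^1_\eta$ to $L^2_\eta$ via the weight $\langle\eta-kt\rangle^{1/2+2\delta_*}$, split into low and high $|\eta-kt|$, and on each piece factor the squared weight so that Cauchy--Schwarz/log-convex H\"older isolates $\D_{\tau\alpha}^{1/4}$ together with an $\E$-controlled and a dissipation-controlled factor. The only cosmetic difference is that the paper passes to stationary coordinates $(z_\xi,|\xi|)$ before inserting the Bessel weight, whereas you work directly with $\langle\eta-kt\rangle$ in the moving frame; since this is a translation in $\eta$ for each fixed $k$, the two are equivalent.
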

\begin{proof}
We present the following straightforward interpolation:
    \begin{equation}
    \begin{split}
        || \sk^m A(k) &|k|^{1/2}|\eta - kt|^{1/2} p_{\xi}^{-1/4} Z_\xi||_{L^2_k L^1_\eta}\\
        &=|| \sk^m A(k) |k|^{1/2}|\eta|^{1/2} |\xi|^{-1/2} z_\xi||_{L^2_k L^1_\eta}\\
        &\lesssim || \sk^m A(k) |k|^{1/2}|\eta|^{1/2} |\xi|^{-1/2} z_\xi||_{L^2_\xi}\\
        &+ \quad || \sk^m A(k) |k|^{1/2}|\eta|^{1 + 2\delta_*} |\xi|^{-1/2} z_\xi||_{L^2_k L^2_\xi}\\
        &\lesssim \D_{\tau \alpha}^{1/4} \E^{1/4}\\
        &\quad + || \sk^m (A(k) |\eta-kt| |k|p_\xi^{-1/2})^{1/2}\\
        &\quad \quad \quad \quad \quad \quad (A(k)|\eta-kt|)^{1/2} |\eta- kt|^{2\delta_*}Z_\xi||_{L^2_{xi}}\\
        &\lesssim \D_{\tau \alpha}^{1/4} \E^{1/4} + \D_{\tau \alpha}^{1/4} \mu^{-\delta_*} \D_\gamma^{\delta_*} \E^{1/4 - \delta_*}.
    \end{split}
\end{equation}
\end{proof}

%\addtocontents{toc}{\protect\setcounter{tocdepth}{0}}
\section*{Acknowledgement}

This material is based upon work supported by the National Science Foundation Graduate Research Fellowship Program under Grant No. DGE-2034835. Any opinions, findings, and conclusions or recommendations expressed in this material are those of the authors and do not necessarily reflect the views of the National Science Foundation.

The author would like to thank J. Bedrossian for helpful conversations and suggestions.\\

%\addtocontents{toc}{\protect\setcounter{tocdepth}{1}}

\bibliographystyle{plain} % We choose the "plain" reference style
\bibliography{citations} % Entries are in the refs.bib file

\end{document}